\newtheorem{lemma}{Lemma}[section]
\newtheorem{theorem}[lemma]{Theorem}
\newtheorem{corollary}[lemma]{Corollary}
\newtheorem{proposition}[lemma]{Proposition}
\theoremstyle{definition}
\newtheorem{definition}[lemma]{Definition}
\theoremstyle{remark}
\newtheorem{remark}[lemma]{Remark}
\newcommand{\C}{\mathbb{C}}
\newcommand{\D}{\mathbb{D}}
\newcommand{\N}{\mathbb{N}}
\newcommand{\Q}{\mathbb{Q}}
\newcommand{\R}{\mathbb{R}}
\newcommand{\Z}{\mathbb{Z}}
\newcommand{\cC}{\mathcal{C}}
\DeclareMathOperator{\re}{Re}
\DeclareMathOperator{\im}{Im}
\DeclareMathOperator{\Int}{int}
\renewcommand{\epsilon}{\varepsilon}
\renewcommand{\phi}{\varphi}
\DeclareMathOperator{\ind}{ind}
\numberwithin{equation}{section}
\date{\today}
\begin{document}

\title[Schwarz reflections and the Tricorn]{Schwarz reflections and the Tricorn}

\author[S.-Y. Lee]{Seung-Yeop Lee}
\address{Department of Mathematics and Statistics, University of South Florida, Tampa, FL 33620, USA}
\email{lees3@usf.edu}

\author[M. Lyubich]{Mikhail Lyubich}
\address{Institute for Mathematical Sciences, Stony Brook University, NY, 11794, USA}
\email{mlyubich@math.stonybrook.edu}
\thanks{M. Lyubich was partially supported by NSF grants DMS-1600519 and 1901357.}

\author[N. G. Makarov]{Nikolai G. Makarov}
\address{Department of Mathematics, California Institute of Technology, Pasadena, California 91125, USA}
\email{makarov@caltech.edu}

\author[S. Mukherjee]{Sabyasachi Mukherjee}
\address{School of Mathematics, Tata Institute of Fundamental Research, 1 Homi Bhabha Road, Mumbai 400005, India}
\email{sabya@math.tifr.res.in}
\thanks{S. Mukherjee was supported by the Institute for Mathematical Sciences at Stony Brook University, Department of Atomic Energy, Government of India, under project no.12-R\&D-TFR-5.01-0500, an endowment from Infosys Foundation, and SERB research grant SRG/2020/000018 during parts of the work on this project.}

\maketitle

\begin{abstract}
We continue our exploration of the family $\mathcal{S}$ of Schwarz reflection maps with respect to the cardioid and a circle which was initiated in our earlier work. We prove that there is a natural combinatorial bijection between the geometrically finite maps of this family and those of the basilica limb of the Tricorn, which is the connectedness locus of quadratic anti-holomorphic polynomials. We also show that every geometrically finite map in $\mathcal{S}$ arises as a conformal mating of a unique geometrically finite quadratic anti-holomorphic polynomial and a reflection map arising from the ideal triangle group. We then follow up with a combinatorial mating description for the periodically repelling maps in $\mathcal{S}$. Finally, we show that the locally connected topological model of the connectedness locus of $\mathcal{S}$ is naturally homeomorphic to such a model of the basilica limb of the Tricorn.
\\

{\sc Titre.} Sur les r{\'e}flections de Schwarz et la Tricorn \\

{\sc R\'esum\'e.} Nous poursuivons l'exploration d'une famille $\mathcal{S}$ d'applications de r{\'e}flections de Schwarz par rapport la cardio{\"i}de et par rapport au cercle, qui a {\'e}t{\'e} initi{\'e}e dans des travaux ant{\'e}rieurs.
Nous prouvons qu'il y a une bijection naturelle de nature combinatoire entre les applications g{\'e}om{\'e}triquement finies de cette famille et celles du 
membre associ{\'e} {\`a} la basilique de la Tricorn, qui est le lieu de connexit{\'e} des applications polynomiales anti-holomorphes de degr{\'e} deux.
Nous prouvons aussi que toute application g{\'e}om{\'e}triquement finie dans $\mathcal{S}$ provient d'un accouplement conforme entre un polyn{\^o}me quadratique anti-holomorphe g{\'e}om{\'e}triquement fini uniquement d{\'e}termin{\'e} avec une application associ{\'e}e {\`a} un groupe engendr{\'e} par les r{\'e}flections par rapport aux c{\^o}t{\'e}s d'un triangle id{\'e}al. 
Nous continuons avec une description d'un accouplement combinatoire pour les applications p{\'e}riodiquement r{\'e}pulsives de $\mathcal{S}$. Enfin, nous montrons que le mod{\`e}le topologique locallement connexe du lieu de connexit{\'e} de $\mathcal{S}$ est naturellement hom{\'e}omorphe {\`a} un tel mod{\`e}le du membre associ{\'e} {\`a} la basilique de la Tricorn.
\end{abstract}

\setcounter{tocdepth}{1}
\tableofcontents

\section{Introduction}\label{intro}

\subsection{Quadrature domains, and Schwarz reflections}\label{intro_1_subsec}
A domain in the complex plane is called a quadrature domain if the Schwarz reflection map with respect to its boundary extends meromorphically to its interior. They first appeared in the work of Davis \cite{Dav74}, and independently in the work of Aharonov and Shapiro \cite{AS1,AS2,AS}. Since then, quadrature domains have played an important role in diverse areas of complex analysis such as quadrature identities \cite{Dav74, AS, Sak82, Gus}, extremal problems for conformal mapping \cite{Dur83, ASS99, She00}, Hele-Shaw flows \cite{Ric72, EV92, GV06}, Richardson's moment problem \cite{Sak78, EV92, GHMP00}, free boundary problems \cite{Sha92, Sak91, CKS00}, subnormal and hyponormal operators \cite{GP17}.
Moreover, topology of quadrature domains has important applications in physics, and leads to interesting classes of dynamical systems generated by Schwarz reflection maps.
 
It is known that except for a finite number of {\it singular} points (cusps and double points), the boundary of a quadrature domain consists of finitely many disjoint real analytic curves. Every non-singular boundary point has a neighborhood where the local reflection in $\partial\Omega$ is well-defined. The (global) Schwarz reflection $\sigma$ is an anti-holomorphic continuation of all such local reflections.

Round discs on the Riemann sphere are the simplest examples of quadrature domains. Their Schwarz reflections are just the usual circle reflections. Further examples can be constructed using univalent polynomials or rational functions. Namely, if $\Omega$ is a {\it simply connected} domain and $\phi : \D \to\Omega$ is a univalent map from the unit disc onto $\Omega$, then $\Omega$ is a quadrature domain if and only if $\phi$ is a rational function. In this case, the Schwarz reflection $\sigma$ associated with $\Omega$ is semi-conjugate by $\phi$ to reflection in the unit circle.

\begin{figure}[ht!]
\captionsetup{width=0.96\linewidth}
\centering
\includegraphics[scale=0.2]{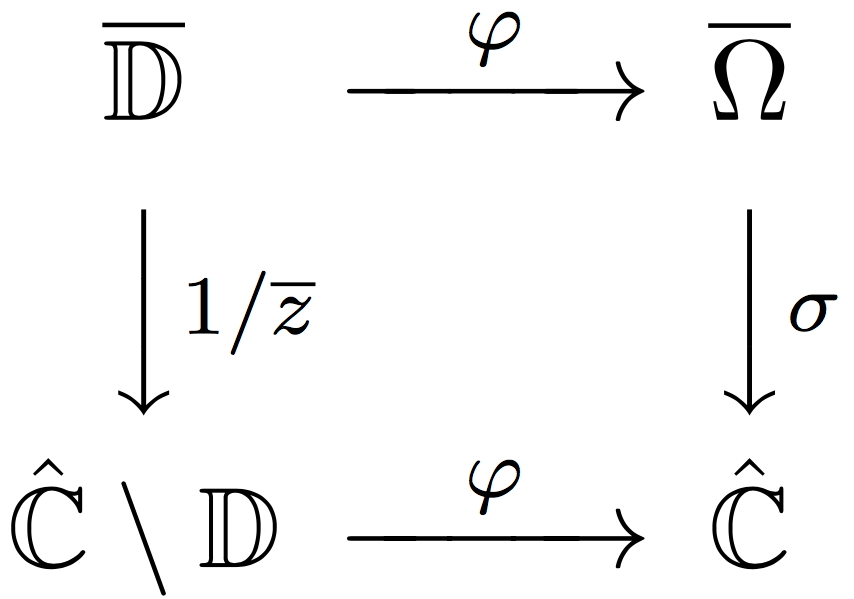}
\caption{The rational map $\phi$ semi-conjugates the reflection map $1/\overline{z}$ of $\D$ to the Schwarz reflection map 
$\sigma$ of $\Omega$.}
\label{comm_diag_schwarz}
\end{figure}

To a disjoint union of quadrature domains, one can associate a dynamical system generated by the corresponding Schwarz reflections. In \cite{LM}, dynamical ideas were applied to the theory of quadrature domains with some physical implications. A systematic exploration of such dynamical systems was then launched in \cite{LLMM1}. Two specific examples of Schwarz reflection maps (associated with simply connected quadrature domains) that appeared in \cite{LLMM1} are Schwarz reflections of the interior of the {\it cardioid} curve and the exterior of the {\it deltoid} curve,
$$\left\{\frac z2-\frac{z^2}4:~|z|<1\right\}\quad{\rm and}\quad
\left\{\frac{1}{z}+\frac{z^2}{2}:~|z|<1\right\}.$$
One of the principal goals of that paper was to develop a general method of producing conformal matings between groups and anti-polynomials using Schwarz reflection maps. In particular, it was proved in \cite[\S 4]{LLMM1} that the Schwarz reflection map of the deltoid is a mating of the \emph{ideal triangle group} and the anti-polynomial $\overline{z}^2$.

\subsection{Coulomb gas ensembles and algebraic droplets}

Quadrature domains naturally arise in the study of $2$-dimensional Coulomb gas models. Consider $N$ electrons located at points $\lbrace z_j\rbrace_{j=1}^N$ in the complex plane, influenced by a strong ($2$-dimensional) external electrostatic field arising from a uniform non-zero charge density. Let the scalar potential of the external electrostatic field be $N\cdot Q:\C\to\R\cup\{+\infty\}$ (note that the scalar potential is rescaled so that it is proportional to the number of electrons). We assume that $Q=+\infty$ outside some compact set $L$, and finite (but not identically zero) on $L$ (see Figure~\ref{electron}). Since the charge density of the potential $Q$ is assumed be uniform (and non-zero), it follows that $Q$ has constant Laplacian on $L$. It follows that we can write $Q(z)= \vert z\vert^2-H(z)$ (on $L$), where $H$ is harmonic. In various physically interesting cases, the scalar potential is assumed to be \emph{algebraic}; i.e., $\frac{\partial H}{\partial z}$ is a rational function.  

\begin{figure}[ht!]
\captionsetup{width=0.96\linewidth}
\centering
\includegraphics[scale=0.2]{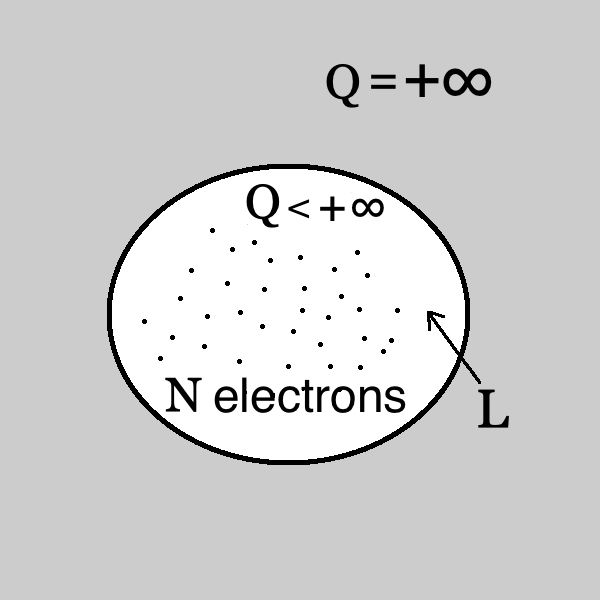}
\caption{The external potential $Q$ is infinite outside a compact set $L$. In the limit, the electrons condensate on a compact subset $T$ (of $L$) which is called a droplet.}
\label{electron}
\end{figure}

The combined energy of the system resulting from particle interaction and external potential is: $$\mathfrak{E}_Q(z_1,\cdots,z_N)=\displaystyle\sum_{i\neq j} \ln\vert z_i-z_j\vert^{-1}+N\sum_{j=1}^N Q(z_j).$$ In the equilibrium, the states of this system are distributed according to the Gibbs measure with density $$\frac{\exp(-\mathfrak{E}_Q(z_1,\cdots,z_N))}{Z_N},$$ where $Z_N$ is a normalization constant known as the ``partition function".

An important topic in statistical physics is to understand the limiting behavior of the ``electron cloud" as the number of electrons $N$ grows to infinity. Under some mild regularity conditions on $Q$, in the limit the electrons condensate on a compact subset $T$ of $L$, and they are distributed according to the normalized area measure of $T$ \cite{EF, HM}. Thus, the probability measure governing the distribution of the limiting electron cloud is completely determined by the shape of the ``droplet" $T$.

If $Q$ is algebraic, the complementary components of the droplet $T$ are quadrature domains \cite{LM}. For example, the deltoid (the compact set bounded by the deltoid curve) is a droplet in the physically interesting case of the (localized) ``cubic" external potential, see \cite{Wie02}. 

The $2D$ Coulomb gas model described above is intimately related to logarithmic potential theory with an algebraic external field \cite{ST97} and the corresponding random normal matrix models, where the same probability measure describes the distribution of eigenvalues \cite{TBAZW, ABWZ}.

We will call any compact set $T\subset\C$ such that $T^c:=\widehat{\C}\setminus T$ is a disjoint union of quadrature domains an {\it algebraic droplet}, and we define the corresponding Schwarz reflection as a map $\sigma:~\overline{T^c} \to \widehat\C$. Iteration of Schwarz reflections was used in \cite{LM} to answer some questions concerning topology and singular points of quadrature domains and algebraic droplets, as well as the Hele-Shaw flow\footnote{It is a flow on the space of planar compact sets such that the normal velocity of the boundary of a compact set is proportional to its harmonic measure. This is an example of the so-called DLA or diffusion-limited aggregation process.} of such domains. At the same time, these dynamical systems seem to be quite interesting in their own right, and in this paper we will take a closer look at them.

\subsection{Anti-holomorphic dynamics}
In this paper, we will be mainly concerned with dynamics of Schwarz reflection maps (which are anti-holomorphic) with a single critical point. It is not surprising that their dynamics is closely related to the dynamics of quadratic anti-holomorphic polynomials (anti-polynomials for short). 

The dynamics of quadratic anti-polynomials and their connectedness locus, the Tricorn, was first studied in \cite{CHRS} (note that they called it the Mandelbar set). Their numerical experiments showed structural differences between the Mandelbrot set and the Tricorn. However, it was Milnor who first observed the importance of the Tricorn; he found little Tricorn-like sets as prototypical objects in the parameter space of real cubic polynomials \cite{M3}, and in the real slices of rational maps with two critical points \cite{M4}. Nakane followed up by proving that the Tricorn is connected \cite{Na1}, in analogy to Douady and Hubbard's classical proof of connectedness of the Mandelbrot set. Later, Nakane and Schleicher \cite{NS} studied the structure and dynamical uniformization of hyperbolic components of the Tricorn. It transpired from their study that while the even period hyperbolic components of the Tricorn are similar to those in the Mandelbrot set (the connectedness locus of quadratic holomorphic polynomials), the odd period ones have a completely different structure. Then, Hubbard and Schleicher \cite{HS} proved that the Tricorn is not pathwise connected, confirming a conjecture by Milnor.
Techniques of anti-holomorphic dynamics were used in \cite{KS, BeEr, BMMS} to answer certain questions with physics motivation.

Recently, the topological and combinatorial differences between the Mandelbrot set and the Tricorn have been systematically studied by several people. The combinatorics of external dynamical rays of quadratic anti-polynomials was studied in \cite{Sa} in terms of orbit portraits, and this was used in \cite{MNS} where the bifurcation phenomena, boundaries of odd period hyperbolic components, and the combinatorics of parameter rays were described. It was proved in \cite{IM1} that many rational parameter rays of the Tricorn non-trivially accumulate on persistently parabolic regions, and Misiurewicz parameters are not dense on the boundary of the Tricorn. These results are in stark contrast with the corresponding features of the Mandelbrot set. In this vein, Gauthier and Vigny \cite{GV} constructed a bifurcation measure for the Tricorn, and proved an equidistribution result for Misiurewicz parameter with respect to the bifurcation measure. As another fundamental difference between the Mandelbrot set and the Tricorn, it was proved in \cite{IM2} that the Tricorn contains infinitely many baby Tricorn-like sets, but these are not dynamically homeomorphic to the Tricorn. Tricorn-like sets also showed up in the recent work of Buff, Bonifant, and Milnor on the parameter space of a family of rational maps with real symmetry \cite{BBM1} (also see \cite{CFG}).

\subsection{Ideal triangle group and the reflection map $\rho$} 

The \emph{ideal triangle group} $\mathcal{G}$ is generated by the reflections in the sides of a hyperbolic triangle $\Pi$ (in the open unit disk $\D$) with zero angles. Denoting the (anti-M{\"o}bius) reflection maps in the three sides of $\Pi$ by $\rho_1$, $\rho_2$, and $\rho_3$, we have $$\mathcal{G}=\langle\rho_1, \rho_2, \rho_3: \rho_1^2=\rho_2^2=\rho_3^2=\mathrm{id}\rangle<\mathrm{Aut}(\D).$$ $\Pi$ is a fundamental domain of the group. The tessellation of $\D$ by images of the fundamental domain under the group elements is shown in Figure~\ref{tessellation_pic}.

In order to model the dynamics of Schwarz reflection maps, we define a map $$\rho:\D\setminus\Int{\Pi}\to\D$$ by setting it equal to $\rho_k$ in the connected component of $\D\setminus\Int{\Pi}$ containing $\rho_k(\Pi)$ (for $k=1,2,3$). The map $\rho$ extends to an orientation-reversing double covering of $\mathbb{T}=\partial\D$ admitting a Markov partition $\mathbb{T}=[1,e^{2\pi i/3}]\cup[e^{2\pi i/3},e^{4\pi i/3}]\cup[e^{4\pi i/3},1]$ with transition matrix $$M:=\begin{bmatrix} 0 & 1 & 1\\ 1 & 0 & 1\\ 1 & 1 & 0\end{bmatrix}.$$

The anti-doubling map $$m_{-2}:\R/\Z\to\R/\Z,\ \theta\mapsto-2\theta$$ (which models the dynamics of quadratic anti-polynomials on their Julia sets) admits the same Markov partition as above with the same transition matrix. This allows one to construct a circle homeomorphism $\mathcal{E}:\mathbb{T}\to\mathbb{T}$ that conjugates the reflection map $\rho$ to the anti-doubling map $m_{-2}$. The conjugacy $\mathcal{E}$, which is a version of the Minkowski's question mark function, serves as a connecting link between the dynamics of Schwarz reflections and that of quadratic anti-polynomials (see the article by Shaun Bullett in \cite[\S 7.8]{BrFa} for a detailed exposition of the Minkowski's question mark function, and \cite[\S 4.4.2]{LLMM1} for an explicit relation between Minkowski's question mark function and $\mathcal{E}$). The conjugacy $\mathcal{E}$ plays a crucial role in the paper (see Section~\ref{ideal_triangle} for details).

\subsection{Dynamical decomposition: tiling and non-escaping sets} 

Let us now describe the basic dynamical objects associated with iteration of Schwarz reflection maps. Given an algebraic droplet $T$ and the corresponding reflection $\sigma: \overline{T^c}\to \widehat\C$, we partition $\widehat\C$ into two invariant sets. The first one is an open set called the {\it tiling set}. It is the set of all points that eventually escape to $T$ (where $\sigma$ is no longer defined). Alternatively, the tiling set is the union of all ``tiles", the {\it fundamental tile} $T$ (with singular points removed) and the components of all its pre-images under the iterations of $\sigma$. The second invariant set is the {\it non-escaping} set, the complement of the tiling set; it is analogous to the filled in Julia set in polynomial dynamics. The dynamics of $\sigma$ on the non-escaping set is much like the dynamics of an anti-holomorphic rational-like map (which may be ``pinched"). On the other hand, the dynamics on the tiling set exhibits features of reflection groups; this is particularly evident if the tiling set is {\it unramified} (i.e., if it does not contain any critical point of $\sigma$).

This is precisely the case if $T$ is the deltoid. Figure~\ref{deltoid_reflection_pic} shows the tiling and the non-escaping sets as well as their common boundary, which is simultaneously analogous to the Julia set of an anti-polynomial and to the limit set of a group. In fact, it was shown in \cite[\S 4]{LLMM1} that the Schwarz reflection $\sigma$ of the deltoid is the \emph{unique conformal mating} of the anti-polynomial $z\mapsto \overline{z}^2$ and the reflection map $\rho$ in the following sense: the conformal dynamical systems
$$\rho: \overline{\mathbb D}\setminus\Int{\Pi}\to  \overline{\mathbb D}$$
and
$$f_0: \widehat\C\setminus\mathbb D\to \widehat\C\setminus\mathbb D,\ z\mapsto\overline{z}^2$$ can be glued together by the circle homeomorphism $\mathcal{E}$ (which conjugates $\rho$ to $f_0$ on $\mathbb{T}$) to yield a (partially defined) topological map $\eta$ on a topological $2$-sphere. There exists a unique conformal structure on this $2$-sphere which makes $\eta$ an anti-holomorphic map conformally conjugate to $\sigma$.

\begin{figure}[ht!]
\captionsetup{width=0.96\linewidth}
\begin{center}
\includegraphics[scale=0.4]{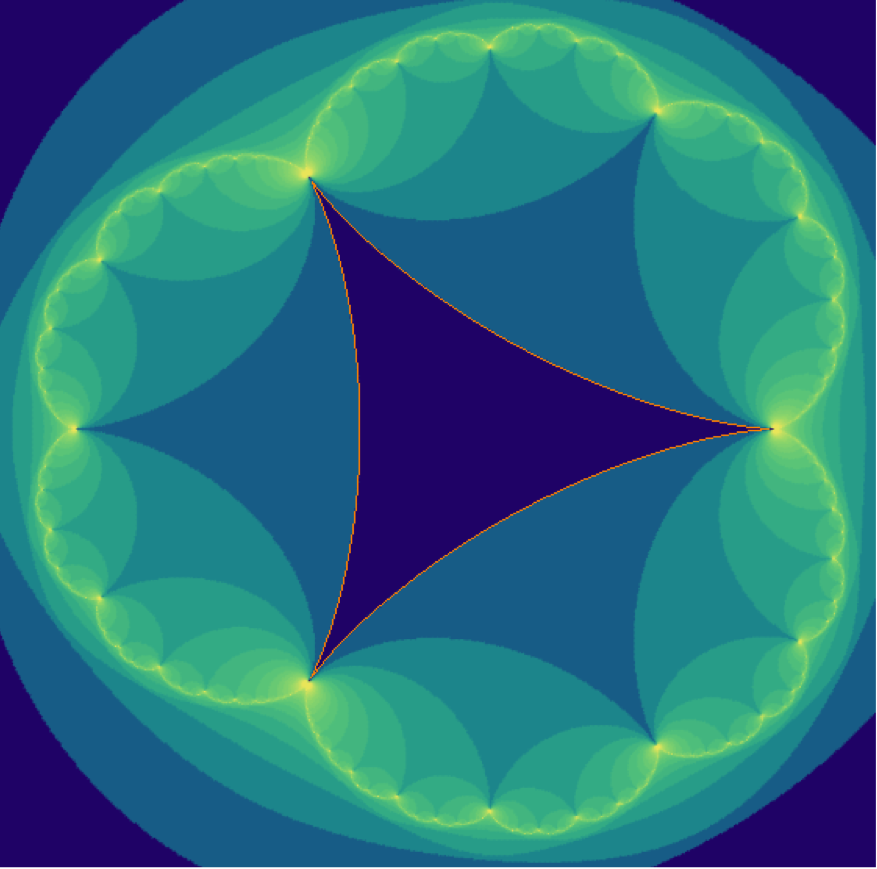}
\end{center}
\caption{Schwarz dynamics of the deltoid with tiles of various ranks shaded. The interior and the exterior of the bright green Jordan curve are the tiling set and the non-escaping set, respectively.}
\label{deltoid_reflection_pic}
\end{figure}

\subsection{Circle-and-cardioid family and main results} One of the principal goals of this paper is to study the dynamics of the following one-parameter family of Schwarz reflection maps. Consider the cardioid 
$$
\heartsuit:=\phi(\D),\ \textrm{where}\ \phi(u)= u/2-u^2/4;
$$ 
i.e., $\heartsuit$ is the domain bounded by the cardioid curve defined in Subsection~\ref{intro_1_subsec}.
For each complex number $a\in\C\setminus(-\infty,-1/12)$, the circle centered at $a$ circumscribing the cardioid $\heartsuit$ touches $\partial\heartsuit$ at exactly one point. Let $r_a$ be the radius of this circumcircle, $T_a$ be the \emph{droplet} $\overline{B(a,r_a)}\setminus\heartsuit$, and let $F_a$ denote the corresponding Schwarz reflection map: the circle reflection $\sigma_a$ in the exterior of $B(a,r_a)$ and the reflection $\sigma$ with respect to the cardioid in its interior (see Figure~\ref{c_and_c_fig}). This family of Schwarz reflections maps $F_a$ is denoted by $\mathcal{S}$ and is referred to as the \emph{circle-and-cardioid family}.

\begin{figure}[ht!]
\captionsetup{width=0.96\linewidth}
\centering
\includegraphics[scale=0.12]{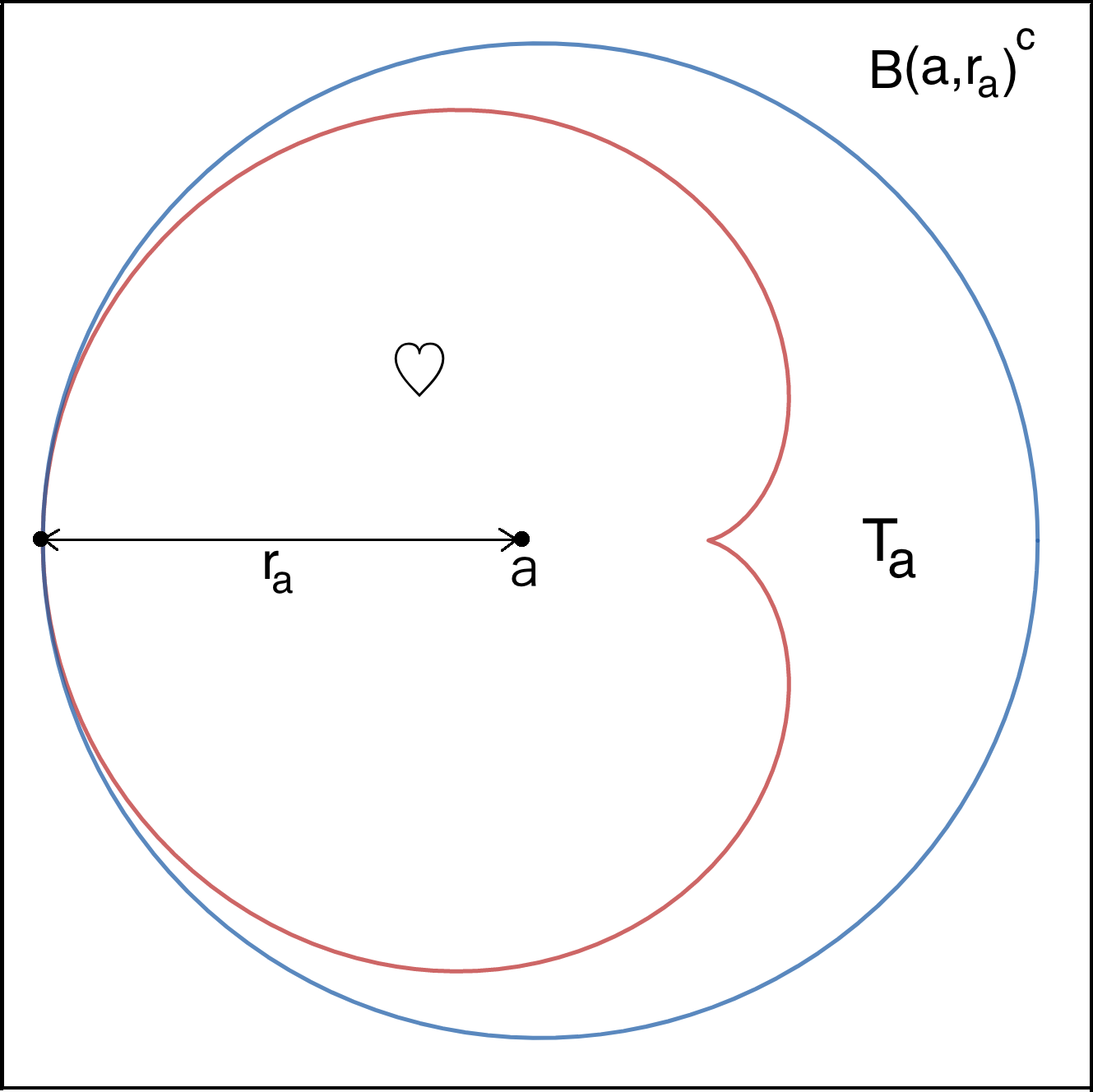}
\caption{The circle $\{\vert w-a\vert=r_a\}$ is the circumcircle to the cardioid $\heartsuit$ centered at $a$. The domain of $F_a$ is the union of $\overline{\heartsuit}$ and the exterior disk $B(a,r_a)^c:=\widehat{\C}\setminus B(a,r_a)$.}
\label{c_and_c_fig}
\end{figure}

The Schwarz reflection map $F_a$ is unicritical; indeed, the circle reflection map $\sigma_a$ is univalent, while the cardioid reflection map $\sigma$ has a unique critical point at the origin. Note that the droplet $T_a$ has two singular point on its boundary. Removing these two singular point from $T_a$, we obtain the \emph{desingularized droplet} $T_a^0$ (which is also called the \emph{fundamental tile}). The \emph{non-escaping set} of $F_a$ (denoted by $K_a$) consists of all points that do not escape to the fundamental tile $T_a^0$ under iterates of $F_a$, while the \emph{tiling set} of $F_a$ (denoted by $T_a^\infty$) is the set of points that eventually escape to $T_a^0$. The connected components of $\Int{K_a}$ are called \emph{Fatou components}. The boundary of the tiling set is called the \emph{limit set}, and is denoted by $\Gamma_a$. It is instructive to think of the tiling set, the non-escaping set, and the limit set of $F_a$ as the analogues of the basin of infinity, the filled Julia set, and the Julia set (respectively) of a polynomial.

As in the case of quadratic polynomials, the non-escaping set of $F_a$ is {\it connected} if and only if it contains the unique critical point of $F_a$; i.e., the critical point does not escape to the fundamental tile. If the critical point of $F_a$ does not escape to the fundamental tile $T_a^0$, then the conformal map $\psi_a$ from $T_a^0$ onto $\Pi$ extends to a biholomorphism between the tiling set $T_a^\infty$ and the unit disk $\D$. Moreover, the extended map $\psi_a$ conjugates $F_a$ to the reflection map $\rho$. On the other hand, if the critical point escapes to the fundamental tile, the corresponding non-escaping set is totally disconnected (see Figure~\ref{various_limit_sets}).

This leads to the notion of the {\it connectedness locus} $\cC(\mathcal{S})$ as the set of parameters with connected non-escaping sets (see Figure~\ref{conn_locus_tessellation}). Equivalently, $\cC(\mathcal{S})$ is exactly the set of parameters for which the tiling set is unramified.

The geometrically finite maps (i.e., maps with attracting/parabolic cycles, and maps with non-escaping, strictly pre-periodic critical point) of $\mathcal{S}$ are of particular importance. They belong to the connectedness locus $\cC(\mathcal{S})$, and their topological and analytic properties are more tractable. For instance, if $F_a$ is geometrically finite, then the limit set $\Gamma_a$ of $F_a$ is locally connected, and the area of $\Gamma_a$ is zero.

For any $c_0$ in the Tricorn with a locally connected Julia set, one can glue the conformal dynamical systems
$$\rho: \overline{\mathbb D}\setminus\Int{\Pi}\to  \overline{\mathbb D}\quad \mathrm{and}\quad f_{c_0}: \mathcal{K}_{c_0}\to\mathcal{K}_{c_0},\ z\mapsto\overline{z}^2+c_0$$ (where $\mathcal{K}_{c_0}$ is the filled Julia set of $f_{c_0}$) by (a factor of) the circle homeomorphism $\mathcal{E}$ yielding a (partially defined) topological map $\eta$ on a topological $2$-sphere. We say that $F_{a_0}$ is the (unique) conformal mating of $\rho$ and the quadratic anti-polynomial $f_{c_0}$ if this topological $2$-sphere admits a (unique) conformal structure that turns $\eta$ into an anti-holomorphic map conformally conjugate to $F_{a_0}$.

\begin{figure}[ht!]
\captionsetup{width=0.96\linewidth}
\centering
\includegraphics[scale=0.27]{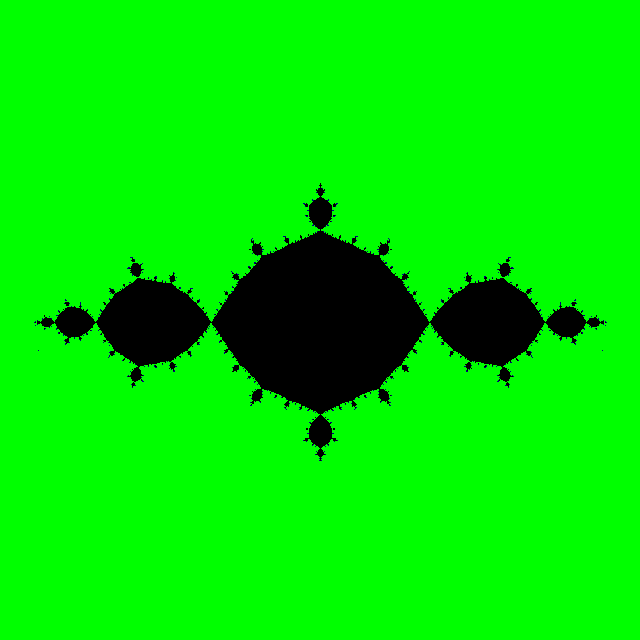}\ \includegraphics[scale=0.2]{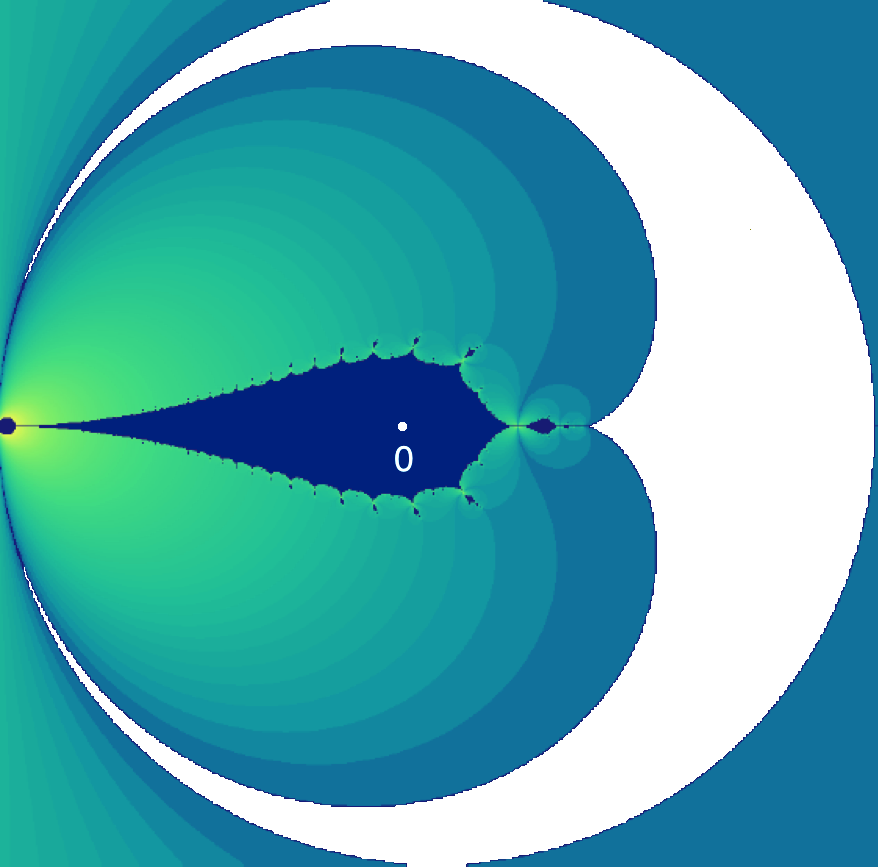}
\caption{Left: The filled Julia set of the map $\overline{z}^2-1$. The map has a super-attracting cycle of period $2$. Right: The part of the non-escaping set of $F_0$ inside the cardioid (in dark blue) with the critical point $0$ marked. The map has a super-attracting cycle of period $2$.}
\label{schwarz_basilica_1}
\end{figure}

The current paper has the dual objective of producing a topological model of the parameter space of the family $\mathcal{S}$, and proving that every geometrically finite map in $\mathcal{S}$ is a conformal mating of a unique geometrically finite quadratic anti-polynomial and the reflection map $\rho$ arising from the ideal triangle group (see Subsection~\ref{C_and_C_subsec} for the definition of $\rho$).

\begin{theorem}[Geometrically finite maps are mating]\label{filled_julia_top_conjugacy}
Every geometrically finite map in $\mathcal{S}$ is a conformal mating of a unique geometrically finite quadratic anti-polynomial and the reflection map $\rho$. 
\end{theorem}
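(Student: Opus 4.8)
The plan is to exploit the natural dichotomy of the dynamical plane of $F_a$ into the non-escaping set $K_a$ and the tiling set $T_a^\infty$, glued along the limit set $\Gamma_a$, and to identify the dynamics on each of these pieces with one of the two maps being mated. Recall that a conformal mating of a geometrically finite quadratic anti-polynomial $p_c(z)=\bar z^2+c$ and the reflection map $\rho$ is a map on a topological sphere that is conformally conjugate to $p_c$ on a copy of its filled Julia set $K(p_c)$, conformally conjugate to $\rho$ on a copy of the relevant invariant domain of the ideal triangle group, with the two pieces glued along their boundaries via a conjugacy of the boundary dynamics. I would produce these two conjugacies for $F_a$ and then check that they assemble into such a mating.

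First, on the tiling set, I would follow the scheme used for the deltoid reflection in \cite[\S 5]{LLMM1}. The fundamental tile of $F_a$ (the droplet $T_a$ with its two singular points removed) maps as a conformal isomorphism onto a definite region, and its iterated $F_a$-preimages tessellate $T_a^\infty$ in exact combinatorial correspondence with the tessellation produced by $\rho$ (see \cite[\S 3]{LLMM1} for the construction of $\rho$). Matching the fundamental domains and spreading the identification around by the dynamics yields a conformal conjugacy between $(T_a^\infty, F_a)$ and the tiling-set action of $\rho$. Geometric finiteness guarantees that this conjugacy extends continuously to $\Gamma_a$, inducing a homeomorphism between $\Gamma_a$ and the limit set of $\rho$ that intertwines the two boundary dynamics.

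Second, on the non-escaping set, I would use that $F_a$ is unicritical with critical point at the origin, so that its restriction near $K_a$ is an anti-holomorphic branched double cover. From this I would extract an anti-polynomial-like structure and \emph{straighten} it, by anti-quasiconformal surgery, to a quadratic anti-polynomial $p_c$ in the basilica limb of the Tricorn. For a \emph{geometrically finite} map the resulting hybrid equivalence should upgrade to a genuine conformal conjugacy between $(K_a,F_a)$ and $(K(p_c),p_c)$, continuous up to the boundary, using the local holomorphic and anti-holomorphic normal forms at the finitely many parabolic and preperiodic-critical points together with the rigidity of geometrically finite parameters. The combinatorial correspondence between geometrically finite maps in $\mathcal{S}$ and the basilica limb of the Tricorn then determines which parameter $c$ occurs and forces $c$ to be unique up to the trivial symmetry of the family.

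Finally, I would glue. The two conformal conjugacies must agree on $\Gamma_a$ because both are built from the same boundary combinatorics, namely the matching Carathéodory loops and ray-landing patterns on the two sides. Assembling them exhibits $F_a$ as a map carrying a conformal copy of $(K(p_c),p_c)$ and a conformal copy of the $\rho$-tessellation, glued along matched boundary dynamics, which is exactly the conformal mating of $p_c$ and $\rho$. The main obstacle I expect lies in the second step together with this gluing: producing the \emph{conformal} (not merely topological) conjugacy on $K_a$ in the presence of parabolic points, and verifying that the two conjugacies patch across $\Gamma_a$ to give an honest conformal mating. Concretely, one must show that $\Gamma_a$ is \emph{conformally removable}, so that the glued conformal structures combine to the standard one, and one must control the conjugacies near the parabolic cusps where the local geometry degenerates; the continuity of ray-landing and the local normal forms at parabolic and critical points are precisely what should make the boundary identifications match.
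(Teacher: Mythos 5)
Your first step is fine and is exactly what the paper does on the tiling set (this is \cite[Proposition~6.31]{LLMM1}), but your second step contains a fatal error. You propose to extract an anti-polynomial-like restriction of $F_a$ near $K_a$ and straighten it by anti-quasiconformal surgery to a quadratic anti-polynomial. No such straightening exists, and the paper states the obstruction explicitly: the external map of $F_a$ is $\rho$, a two-to-one circle covering with three \emph{parabolic} fixed points, while the external map of $\overline{z}^2+c$ is $m_{-2}$, which has three \emph{repelling} fixed points; since there is no quasisymmetric conjugacy between a parabolic and a repelling germ, maps in $\mathcal{S}$ cannot be quasiconformally straightened to any family of (anti-)rational maps. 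Concretely, the candidate restriction $F_a:\hat{\C}\setminus\Int{E_a^1}\to\hat{\C}\setminus\Int{T_a^0}$ is a degree-two branched cover, but its domain is not compactly contained in its range: the two regions touch at the singular points $\frac14$ and $\alpha_a$, which are non-repelling fixed points of $F_a$ lying in $K_a$, and a hybrid equivalence would have to carry this cusp/tangency dynamics to repelling fixed points of the anti-polynomial. So the hybrid equivalence you intend to ``upgrade'' to a conformal conjugacy does not exist in the first place, and geometric finiteness cannot rescue it.

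The paper's actual route avoids surgery entirely. It first proves Theorem~\ref{thm_comb_bijec_pcf}, the combinatorial bijection $\chi$ between geometrically finite parameters of $\mathcal{S}$ and of $\mathcal{L}$, using realization theorems for orbit portraits and laminations (Theorems~\ref{realization_orbit_portrait_parabolic_schwarz} and~\ref{rat_lam_realized}, Propositions~\ref{misi_para_land} and~\ref{misi_dyn_para_rays}) together with the rigidity results of Subsection~\ref{rigidity_hyp_para}, which are proved by pullback arguments (Propositions~\ref{rigidity_center}--\ref{rigidity_misi}). The conjugacy on $K_{a_0}$ --- topological, and conformal on $\Int{K_{a_0}}$ --- then comes from the matching laminations (related by $\mathcal{E}$), local connectivity of $\Gamma_{a_0}$ and $\mathcal{J}_{\chi(a_0)}$, and the conformally conjugate first return maps on periodic Fatou components, via an extension of \cite[Proposition~8.1]{LLMM1}; this is where geometric finiteness genuinely enters. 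Finally, the gluing is done abstractly: $X=\overline{\D}\vee_{\xi}\mathcal{K}_{\chi(a_0)}$ is shown to be a topological sphere by Moore's theorem, and the conformal structure on $X$ is defined by pulling back the standard structure under the glued homeomorphism $\mathfrak{H}:(X,Y)\to(\hat{\C},\overline{\Omega}_{a_0})$, which sidesteps the conformal-removability issue you flag. Your instincts about where the delicacies lie (parabolic points, the boundary gluing) are reasonable, but the step your entire plan rests on --- straightening --- is precisely the step that is impossible in this family.
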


Let us mention in this respect that in the 1990s, Bullett and Penrose discovered holomorphic correspondences that are matings of quadratic holomorphic polynomials and the modular group \cite{BP}. More recently, Bullett and Lomonaco studied the dynamics of such correspondences and showed that they also appear as matings of certain rational maps and the modular group \cite{BuLo2,BuLo1}. The conclusion of Theorem~\ref{filled_julia_top_conjugacy} can be viewed as a similar phenomenon in the anti-holomorphic world, which produces in a simple and systematic way an abundant supply of such examples.

The proof of Theorem~\ref{filled_julia_top_conjugacy} requires a thorough understanding of the relation between the geometrically finite maps in $\mathcal{S}$ and those in the \emph{basilica limb} $\mathcal{L}$ of the Tricorn (see Subsection~\ref{sec_basilica_limb} for the definition of the basilica limb of the Tricorn). We establish such a relation via combinatorial models of the maps which we briefly describe below.

In usual holomorphic dynamics, the uniformization of the basin of infinity of an (anti-)polynomial extends continuously to the Julia set, provided that the Julia set is connected and locally connected. Similarly, for parameters $a$ in the connectedness locus $\cC(\mathcal{S})$, there is a dynamically defined conformal isomorphism $\psi_a$ between the tiling set $T_a^\infty$ and the unit disk $\mathbb{D}$ that conjugates $F_a$ to the reflection map $\rho$ (see Subsection~\ref{C_and_C_subsec}, also compare \cite[Proposition~5.38]{LLMM1}). Moreover, if the limit set of such an $F_a$ is locally connected, then $\psi_a^{-1}$ extends continuously to the limit set. This yields a topological model of the non-escaping set of $F_a$ as the quotient of the closed unit disk by a geodesic lamination (analogous to polynomial laminations). 

To glue the action of the reflection map $\rho$ with that of quadratic anti-polynomials, we use a topological conjugacy $\mathcal{E}$ (see Subsection~\ref{C_and_C_subsec}) between $\rho$ (which models the external dynamics of the maps in $\mathcal{S}$) and the anti-doubling map $\theta\mapsto-2\theta$ on the circle (which models the external dynamics of quadratic anti-polynomials). The desired relation between the geometrically finite maps mentioned above is achieved by showing that $\mathcal{E}$ induces a bijective correspondence between the laminations of geometrically finite maps in $\mathcal{S}$ and those of geometrically finite maps in $\mathcal{L}$.

\begin{theorem}[Combinatorial bijection between geometrically finite maps]\label{thm_comb_bijec_pcf}
There exists a natural bijection between the geometrically finite parameters in $\mathcal{S}$ and those in $\mathcal{L}$ such that the laminations of the corresponding maps are related by $\mathcal{E}$ and the dynamics on the respective periodic Fatou components are conformally conjugate.
\end{theorem}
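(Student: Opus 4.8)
The plan is to use the boundary conjugacy $\mathcal{E}$ as a dictionary at the level of laminations, and then to promote this dictionary to an honest bijection by combining a realization statement on each side with combinatorial rigidity of geometrically finite maps. The guiding principle throughout is that a geometrically finite map is determined, up to conformal conjugacy on its finitely many cycles of periodic Fatou components, by its lamination together with the periodic/pre-periodic data of the critical orbit. I would therefore first define the correspondence on laminations via $\mathcal{E}$, then show it is well defined (carries admissible laminations to admissible laminations of the right type, preserving the basilica constraint and the geometrically finite structure), and finally show it is bijective and refines to the asserted conformal conjugacy of Fatou dynamics.

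To each geometrically finite $F_a \in \mathcal{S}$ I attach its lamination $\lambda_a$: geometric finiteness forces $\Gamma_a$ to be locally connected, so the uniformization $\psi_a$ of the tiling set extends continuously to the circle, and $\lambda_a$ is the $\rho$-invariant geodesic lamination recording the fibers of this extension. It is generated, under the $\rho$-dynamics, by the finitely many leaves encoding the identifications at the critical value and along the repelling or parabolic cycles carrying the periodic Fatou components. A geometrically finite anti-polynomial $f_c \in \mathcal{L}$ likewise carries a $(-2\theta)$-invariant rational lamination $\lambda_c$ generated by the external angles at the analogous characteristic points. Transporting by $\mathcal{E}$, which conjugates $\rho$ to $\theta \mapsto -2\theta$, the image $\mathcal{E}(\lambda_a)$ is $(-2\theta)$-invariant, and its finitely many generating leaves inherit the same landing pattern, periods and pre-periods. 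The key claim is that $\mathcal{E}(\lambda_a)$ meets the admissibility conditions (compatible linking, correct periods, the attracting/parabolic dichotomy matching the Fatou dynamics of $F_a$) characterizing rational laminations of geometrically finite anti-polynomials, and that the period-two datum singling out the basilica limb is preserved, so that $\mathcal{E}(\lambda_a) \in \mathcal{L}$. Running the same argument with $\mathcal{E}^{-1}$ produces the candidate inverse assignment.

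To turn these assignments into a bijection I would invoke, on each side, (i) realization: every admissible lamination of the appropriate type is the lamination of an actual geometrically finite map in the family; and (ii) rigidity: two geometrically finite maps in the same family sharing a lamination are conformally conjugate on their periodic Fatou components. On the Tricorn side both statements are classical for geometrically finite quadratic anti-polynomials. On the $\mathcal{S}$ side I would organize the argument component by component, using the combinatorial mating criterion and parameter structure developed in the companion work: a hyperbolic (resp. parabolic) component of $\mathcal{S}$ is matched with the corresponding component of $\mathcal{L}$ through their common root lamination, and individual maps inside the component are then matched by the conformal conjugacy class of the first-return map on the periodic Fatou components --- the multiplier in the attracting case, the parabolic (petal and residue) data in the parabolic case. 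This simultaneously yields the bijection and the asserted conformal conjugacy on periodic Fatou components.

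The main obstacle, I expect, is the realization statement on the $\mathcal{S}$ side: unlike the polynomial families, $\mathcal{S}$ is not a priori known to contain every combinatorial type, so one must show that the wakes and hyperbolic/parabolic components prescribed by the admissible $\rho$-invariant laminations actually occur in $\cC(\mathcal{S})$. I would handle this by transferring existence from $\mathcal{L}$, where realization is known, through a parameter version of the correspondence: establishing that the combinatorial assignment is continuous and monotone on the model laminations, so that the limb and component decomposition of $\cC(\mathcal{S})$ mirrors that of $\mathcal{L}$, forcing each required component of $\mathcal{S}$ into existence. The subtlety here is that this existence argument must be run without presupposing the mating description of Theorem~\ref{filled_julia_top_conjugacy}, since that theorem is downstream of the present one.
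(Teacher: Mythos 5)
Your overall architecture (transport laminations by $\mathcal{E}$, then combine realization on each side with rigidity) matches the paper's, and your injectivity ingredient is essentially the paper's Subsection~\ref{rigidity_hyp_para}. The genuine gap is exactly where you locate it: realization on the $\mathcal{S}$ side. Your proposed fix --- ``transferring existence from $\mathcal{L}$ through a parameter version of the correspondence,'' by showing the combinatorial assignment is ``continuous and monotone on the model laminations, so that the limb and component decomposition of $\cC(\mathcal{S})$ mirrors that of $\mathcal{L}$'' --- is circular as stated. The assertion that the component/limb structure of $\cC(\mathcal{S})$ mirrors that of $\mathcal{L}$ is (together with Theorem~\ref{thm_model_homeo}) essentially the content of the theorem you are proving; before the bijection exists there is no parameter-level map whose continuity or monotonicity could be established, and no soft transfer is available because $F_a$ cannot be quasiconformally straightened to any anti-polynomial (its external map $\rho$ has parabolic fixed points while $m_{-2}$ has repelling ones), and the family depends only real-analytically on $a$. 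So the existence of maps in $\cC(\mathcal{S})$ with prescribed combinatorics has to be produced by an argument internal to $\mathcal{S}$, which your proposal does not supply.

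What the paper actually does at this step is build the parameter-space machinery of $\mathcal{S}$ ``from outside'': Theorem~\ref{thm_unif_exterior_conn_locus} gives a homeomorphism $\mathbf{\Psi}$ from the escape locus onto $\D_2$, which defines parameter rays of $\mathcal{S}$; Proposition~\ref{misi_para_land} shows every parameter ray at a strictly pre-periodic angle lands at a Misiurewicz parameter whose dynamical ray at that angle lands at the critical value (realizing all Misiurewicz-type laminations, via Theorem~\ref{rat_lam_realized} and Proposition~\ref{rigidity_misi} for uniqueness); Theorem~\ref{realization_orbit_portrait_parabolic_schwarz} and Proposition~\ref{para_ray_periodic} show that parameter rays at periodic angles land at even-type parabolic parameters or accumulate on root parabolic arcs realizing any prescribed $\rho$-orbit portrait; hyperbolic parameters are then obtained by perturbing these parabolic parameters into adjacent hyperbolic components, using the bifurcation structure (Theorem~\ref{ThmBifArc_schwarz}, Theorem~\ref{Exactly 3}). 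Two smaller inaccuracies in your matching of conformal data: for odd-period components the invariant is the Koenigs ratio (a $3$-fold branched covering onto $\D$, Theorem~\ref{unif_hyp_schwarz}), not the multiplier; and on parabolic arcs the correct matching datum is the critical Ecalle height, not residue data --- indeed the residue index functions on corresponding arcs generally do \emph{not} agree, which is precisely why $\chi$ is discontinuous (Section~\ref{straightening_discont}).
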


The above bijection is called ``combinatorial straightening", and is denoted by $\chi$. While the existence of the map $\chi$ follows from well-known realization results in polynomial dynamics, the proof of the fact that $\chi$ is a bijection lies at the heart of the technical difficulties of this paper. 

Injectivity of $\chi$ is equivalent to ``combinatorial rigidity" of geometrically finite maps in $\mathcal{S}$; more precisely, one needs to prove that geometrically finite maps in $\mathcal{S}$ are completely determined by their combinatorial models (or laminations) and suitable conformal invariants associated with them (see Subsection~\ref{rigidity_hyp_para}). We establish such rigidity results via a ``Pullback Argument''. In fact, due to certain geometric features of the quadrature domains under consideration, the proof also involves an analysis of the boundary behavior of conformal maps near cusps and double points. 

\begin{figure}[ht!]
\captionsetup{width=0.96\linewidth}
\begin{center}
\includegraphics[scale=0.45]{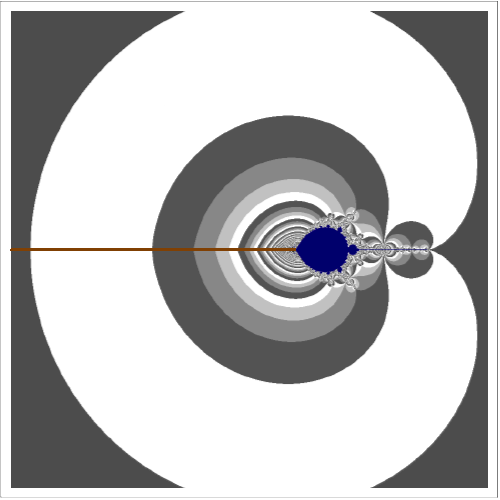}
\end{center}
\caption{The escape locus $\C\setminus\left((-\infty,-1/12)\cup\cC(\mathcal{S})\right)$ is tessellated by parameter tiles, a few of which are marked. The brown line stands for the slit $(-\infty,-1/12)$, and the connectedness locus $\cC(\mathcal{S})$ is shown in blue. The initial parameter $-1/12$ of $\cC(\mathcal{S})$ corresponds to the fat basilica parameter of the Tricorn.}
\label{conn_locus_tessellation}
\end{figure}

On the other hand, surjectivity of $\chi$ amounts to finding geometrically finite maps in $\cC(\mathcal{S})$ with prescribed laminations and conformal data. Note that since Schwarz reflection maps are not defined on the whole sphere (they are not even branched covers), there is no ``Thurston Realization Theorem'' available for such classes of maps. To circumvent this problem, we design a suitable machinery for description of the external structure of $\cC(\mathcal{S})$ (the ``escape locus'' of $\mathcal{S}$); namely, we uniformize the escape locus of $\mathcal{S}$, and tessellate the escape locus by dynamically meaningful tiles. More precisely, for every map $F_a$ with a disconnected limit set (i.e., when $a\notin\cC(\mathcal{S})$), there is a conformal map $\psi_a$, defined on a proper subset of the tiling set containing the critical value $\infty$, that conjugates $F_a$ to $\rho$ (see Subsection~\ref{C_and_C_subsec}). Using the map $\psi_a$, we prove the following result (compare Figure~\ref{conn_locus_tessellation}). 

\begin{theorem}[Uniformization of the escape locus]\label{thm_unif_exterior_conn_locus}
The map $$\pmb{\Psi}:\C\setminus((-\infty,-1/12)\cup\cC(\mathcal{S}))\to\D_2,$$ $$a\mapsto\psi_a(\infty)$$ is a homeomorphism, where $\D_2$ is a simply connected subset of the unit disk $\D$.
\end{theorem}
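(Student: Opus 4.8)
The plan is to adapt the Douady--Hubbard uniformization of the complement of the Mandelbrot set to the present anti-holomorphic family, in the spirit of Nakane's analysis of the multicorns. The quantity $\psi_a(\infty)$ records the location of the escaping critical value $\infty$ inside the model $\D_2$ for the external dynamics carried by $\rho$, and the goal is to show that this parameter-to-critical-value coordinate is a global homeomorphism onto $\D_2$.

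First I would check that $\mathbf{\Psi}$ is well defined and continuous. For $a$ in the escape locus $\C\setminus((-\infty,-1/12)\cup\cC(\mathcal{S}))$ the critical value lies in the tiling set $T_a^\infty$, so $\psi_a(\infty)\in\D_2$ is furnished by \cite[Proposition~6.31]{LLMM1}. Since $F_a$ varies real-analytically with $a$, the normalization of $\psi_a$ near the fundamental tile together with its equivariant continuation along the escaping orbit of $\infty$ (via $\psi_a\circ F_a=\rho\circ\psi_a$) shows that $a\mapsto\psi_a(\infty)$ is real-analytic; continuity is the first checkpoint.

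Next I would show that $\mathbf{\Psi}$ is a local homeomorphism and proper. For the former it suffices that the Jacobian of $\mathbf{\Psi}$ never vanishes, a transversality (``wringing'') statement: an infinitesimal move of $a$ must displace $\infty$ across the leaves of the foliation on $T_a^\infty$ obtained by pulling back the $\rho$-invariant foliation of $\D_2$ under $\psi_a$. As $F_a$ is only anti-holomorphic, I would not invoke the open mapping theorem directly, but instead pass to the holomorphic second iterate $F_a^2$ (equivalently use that the conjugacy $\psi_a$ to the anti-holomorphic model $\rho$ is conformal) and compare the motion of the critical value with the equivariant motion of the tessellation. For properness I would show that $\mathbf{\Psi}(a)$ leaves every compact subset of $\D_2$ as $a$ approaches the boundary of the escape locus: as $a\to\cC(\mathcal{S})$ the escape time of the critical orbit tends to infinity and $\psi_a(\infty)$ tends to the boundary of $\D_2$, and as $a$ tends to the slit (in particular to its endpoint $-1/12$) the circumscribing-circle configuration degenerates, again driving the critical value out of $\D_2$.

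Finally, a proper local homeomorphism onto the connected domain $\D_2$ is a finite-sheeted covering map, so its number of sheets is constant; I would evaluate this number near $a=\infty$. For $|a|$ large the critical value escapes in one step --- indeed $F_a(\infty)=\sigma_a(\infty)=a$, which lies in the fundamental tile --- so $\mathbf{\Psi}$ is asymptotic to an explicit homeomorphism onto the first-escape region and is one-to-one there; hence the covering has a single sheet and $\mathbf{\Psi}$ is a homeomorphism. Pulling back the $\rho$-tessellation of $\D_2$ then produces the promised tessellation of the escape locus. The step I expect to be hardest is the nonvanishing of the Jacobian (local injectivity): the transversality estimate has to be carried out by hand in the anti-holomorphic category and made uniform up to the boundary, which is delicate near the degenerate endpoint $-1/12$; the careful normalization of $\psi_a$ from \cite{LLMM1} should be the essential input there.
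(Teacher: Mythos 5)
Your overall skeleton coincides with the paper's: show $\mathbf{\Psi}$ is continuous, proper, and locally invertible, and conclude via the covering-space argument that a proper local homeomorphism onto the simply connected domain $\D_2$ is a homeomorphism. The properness discussion is also in the right spirit (the paper carries it out in three cases --- $|a_k|\to\infty$, $a_k\to\cC(\mathcal{S})$, and $a_k$ tending to the slit --- the last two being essentially what you describe, though the slit case requires a genuinely delicate hyperbolic-geometry estimate through a pinching channel, via Lemma~\ref{slit_negative}).

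The genuine gap is in your treatment of local invertibility, which is the heart of the theorem. You propose to prove that ``the Jacobian of $\mathbf{\Psi}$ never vanishes'' by a transversality/wringing estimate, passing to the holomorphic second iterate. But passing to $F_a^{\circ 2}$ does not restore holomorphic dependence on the parameter --- the family is only real-analytic in $a$ --- and, unlike the B\"ottcher coordinate in the polynomial setting, $\psi_a$ has no explicit formula: it is built abstractly from a Riemann map of the fundamental tile and equivariant lifting, so there is no computational access to a Jacobian at all. This is exactly the obstruction the paper flags (``lack of holomorphic parameter dependence \ldots forcing us to adopt a more topological route''), and the paper's actual mechanism is completely different: given a target $u$ near $u_0=\mathbf{\Psi}(a_0)$, one performs a quasiconformal deformation --- a $\rho$-equivariant Beltrami form on the model moving $u_0$ to $u$, pulled back by $\psi_{a_0}$ to an $F_{a_0}$-invariant form, solved by the measurable Riemann mapping theorem, with \cite[Lemma~6.24]{LLMM1} guaranteeing the straightened map again lies in $\mathcal{S}$ --- thereby constructing a continuous local inverse branch $u\mapsto a(u)$ directly, with no differentiability of $\mathbf{\Psi}$ ever needed. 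Moreover, when $u_0$ lies on a tile boundary $\partial\Pi\cap\D$ this deformation is insufficient and the paper needs a second, harder surgery (thickening/thinning the fundamental tile along curves $R_i, R_o$, with Warschawski-type asymptotics of conformal maps near the cusps to interpolate quasiconformally); your Jacobian framing does not register this case as different, and it is not even clear that $\mathbf{\Psi}$ is smooth there. Finally, your sheet-counting near $a=\infty$ is both unnecessary (simple connectivity of $\D_2$ already forces a covering to be trivial) and a non sequitur as stated: injectivity of $\mathbf{\Psi}$ on a neighborhood of $\infty$ does not bound the cardinality of fibers, since a point in the image could have other preimages of moderate modulus; and the claimed ``explicit homeomorphism onto the first-escape region'' for large $|a|$ is unsubstantiated --- the paper proves instead that $\mathbf{\Psi}(a)\to\partial\D_2$ as $|a|\to\infty$, which is a properness statement, not an asymptotic model.
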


The proof of Theorem~\ref{thm_unif_exterior_conn_locus} has some features in common with the proof of connectedness of the Mandelbrot set, but lack of holomorphic parameter dependence of the maps $F_a$ adds significant subtlety to the situation forcing us to adopt a more topological route. The uniformization $\pmb{\Psi}$ allows us to use the tessellation of the unit disk arising from the ideal triangle group to produce a tessellation of the escape locus of $\mathcal{S}$. One can then define ``external parameter rays" for $\mathcal{S}$ as ``spines'' of suitable sequences of tiles in the tessellation of the escape locus. Equivalently, these external rays are obtained by pulling back a Cayley graph of the ideal triangle group via the uniformization $\pmb{\Psi}$. We remark that although the parameter rays and tiles of $\mathcal{S}$ are reminiscent of usual ray-equipotential structures in escape loci of polynomial parameter spaces, the parameter rays of $\mathcal{S}$ are not defined as pre-images of radial lines (see Definition~\ref{para_ray_schwarz} and \cite[\S 2]{LLMM1} for the precise definition). Finally, the surjectivity part of Theorem~\ref{thm_comb_bijec_pcf} is established by realizing geometrically finite maps with prescribed combinatorics (in $\mathcal{S}$) as limit points of suitable external parameter rays of $\mathcal{S}$.

The tessellation of the escape locus and the study of the landing/accumulation properties of the external rays of $\mathcal{S}$ not only play a key role in the proof of bijection between geometrically finite maps mentioned above, but also enable us to study the connectedness locus $\cC(\mathcal{S})$ from outside. Indeed, these results combined with our knowledge of the corresponding situation for the Tricorn allow us to demonstrate that the lamination model of the connectedness locus of the circle-and-cardioid family precisely corresponds to that of the basilica limb of the Tricorn under the circle homeomorphism $\mathcal{E}$ (see Figure~\ref{schwarz_basilica_limb} for pictures of the two parameter spaces in question). This proves that the locally connected models of the two connectedness loci are homeomorphic (see Subsection~\ref{sec_basilica_limb} for the definition of the \emph{abstract basilica limb} $\widetilde{\mathcal{L}}$ of the Tricorn, and Subsection~\ref{model_homeo} for that of the \emph{abstract connectedness locus} $\widetilde{\cC(\mathcal{S})}$ of $\mathcal{S}$).

\begin{figure}[ht!]
\captionsetup{width=0.96\linewidth}
\begin{center}
\includegraphics[scale=0.18]{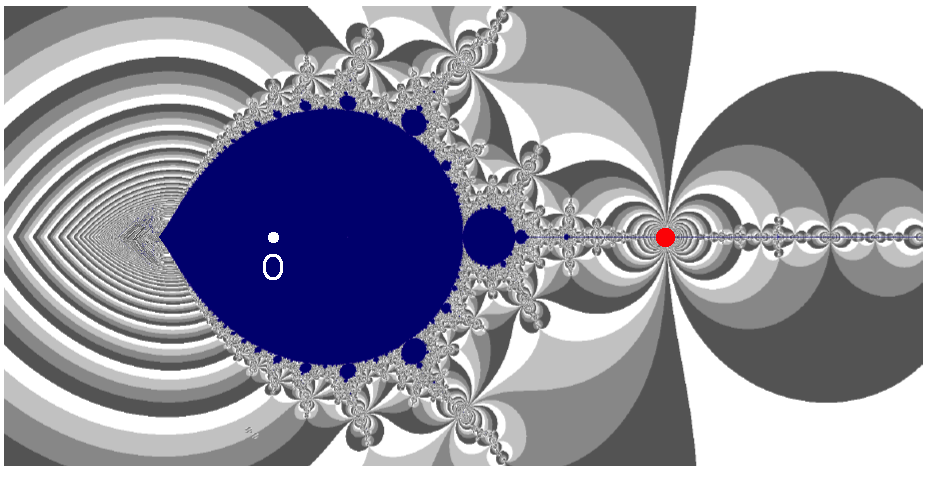}\ \includegraphics[scale=0.34]{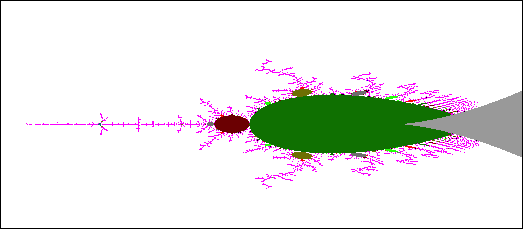}
\end{center}
\caption{Left: A blow-up of $\cC(\mathcal{S})$ around the principal hyperbolic component (having its center at $a=0$) is shown. The marked red parameter (tip of the doubling Mandelbrot copy) corresponds to a Misiurewicz map for which the critical point $0$ lands at a fixed point in three iterates. Right: The region to the left of the grey region (which is a part of the principal hyperbolic component of the Tricorn) is the real basilica limb of the Tricorn.}
\label{schwarz_basilica_limb}
\end{figure}

\begin{theorem}[Homeomorphism between models]\label{thm_model_homeo}
The map $\mathcal{E}$ induces a homeomorphism between the abstract connectedness locus $\widetilde{\cC(\mathcal{S})}$ of the family $\mathcal{S}$ and the abstract basilica limb $\widetilde{\mathcal{L}}$ of the Tricorn.
\end{theorem}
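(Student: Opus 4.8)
The plan is to reduce the statement to an equality of abstract laminations on the circle and then invoke the general principle that a circle homeomorphism carrying one geodesic lamination onto another descends to a homeomorphism between the associated pinched-disk quotients. Recall that $\widetilde{\cC(\mathcal{S})}$ and $\widetilde{\mathcal{L}}$ are, by construction, the quotients of the closed disk by parameter laminations $\lambda_{\mathcal{S}}$ and $\lambda_{\mathcal{L}}$, where two angles are identified precisely when the corresponding parameter rays co-land. Since $\mathcal{E}$ is a homeomorphism of the angle circle (conjugating $\rho$ to $\theta\mapsto-2\theta$), it suffices to prove that $\mathcal{E}(\lambda_{\mathcal{S}})=\lambda_{\mathcal{L}}$; the induced map on the quotients is then automatically a homeomorphism, and verifying that it is well defined and continuous is routine once the laminations match.

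First I would show that each of the two laminations is the closure of the sub-lamination generated by its \emph{rational} leaves, i.e. by the co-landing identifications occurring at geometrically finite (parabolic and Misiurewicz) parameters. For $\lambda_{\mathcal{L}}$ this is part of the established combinatorial description of the Tricorn and its basilica limb. For $\lambda_{\mathcal{S}}$ I would establish the analogous parameter-ray landing statements using Theorem~\ref{thm_unif_exterior_conn_locus}: the uniformization $\mathbf{\Psi}$ of the escape locus furnishes external parameter rays at every angle, and a standard argument, transported to the Schwarz-reflection setting, shows that rational parameter rays land at geometrically finite parameters, while each geometrically finite parameter is the landing point of a finite, explicitly describable set of rational rays. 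This identifies the generating leaves of $\lambda_{\mathcal{S}}$ with the co-landing patterns at geometrically finite maps in $\mathcal{S}$.

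Next I would match the generators under $\mathcal{E}$. The combinatorial bijection $\chi$ of Theorem~\ref{thm_comb_bijec_pcf} puts the geometrically finite parameters of $\mathcal{S}$ and $\mathcal{L}$ in correspondence so that their dynamical laminations are related by $\mathcal{E}$; since the external dynamics of the two families are conjugated by $\mathcal{E}$, the prime-end angles of the parameter rays co-landing at a geometrically finite parameter $a$ are carried by $\mathcal{E}$ precisely to those co-landing at $\chi(a)$. Hence $\mathcal{E}$ sends each generating leaf of $\lambda_{\mathcal{S}}$ to a generating leaf of $\lambda_{\mathcal{L}}$, and conversely. Because $\mathcal{E}$ is a circle homeomorphism, it maps the closure of a lamination onto the closure of the image lamination, so $\mathcal{E}(\lambda_{\mathcal{S}})=\lambda_{\mathcal{L}}$, which completes the argument.

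The main obstacle I anticipate lies in the parameter-ray landing analysis: proving that rational parameter rays of $\mathcal{S}$ actually land, and controlling their co-landing relations, in the presence of the anti-holomorphic pathologies inherited from the Tricorn (parabolic \emph{arcs}, and the non-landing/wiggling of umbilical cords that prevents $\mathcal{L}$ from being locally connected). One must verify that these pathologies affect the two parameter spaces in exactly the same combinatorial fashion, so that the abstract (pinched-disk) models remain well defined and $\mathcal{E}$-compatible even where the actual loci fail to be locally connected. This is where the parallel between the Schwarz-reflection family and the Tricorn has to be pushed to its full strength, leaning on Theorem~\ref{thm_unif_exterior_conn_locus} for the $\mathcal{S}$-side and on the established parameter-ray theory of the Tricorn for the $\mathcal{L}$-side.
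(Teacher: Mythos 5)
Your proposal is correct and essentially reproduces the paper's proof: the paper likewise matches the co-landing/co-accumulation patterns of rational parameter rays of $\mathcal{S}$ and $\mathcal{L}$ under $\mathcal{E}$ (via the ray-landing analysis of Section~\ref{odd_period_parabolic} and Corollaries~\ref{para_rays_correspond} and~\ref{misi_rays_correspond}, which are consequences of the bijection $\chi$ of Theorem~\ref{thm_comb_bijec_pcf}), and then uses the fact that $\mathcal{E}$ is a circle homeomorphism to pass to the smallest closed equivalence relations, identify the geodesic laminations, and descend to the pinched-disk quotients. The only caveat is that both abstract models are \emph{defined} as the closure of the relation generated by rational rays (landing at a common parabolic/Misiurewicz parameter or accumulating on a common root parabolic arc), so your ``first step'' is vacuous by definition, and the generating identifications at odd-period parabolic parameters must be phrased via co-accumulation on root arcs rather than co-landing, exactly the pathology you flag at the end.
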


Combining Theorem~\ref{thm_unif_exterior_conn_locus} and Theorem~\ref{thm_model_homeo}, one obtains a description of the parameter space of the circle-and-cardioid family as a ``combinatorial mating" of the basilica limb of the maps with (a part of) the unit disk $\D$ equipped with its tessellation arising from the ideal triangle group (compare \cite{Dud} for an analogous mating description of the parameter space of a certain family of quadratic rational maps).

To conclude, it is worth mentioning that there are several compelling reasons for adopting a combinatorial approach to describe the topology of the connectedness locus $\cC(\mathcal{S})$. The ``external map'' of $F_a$ is given by the map $\rho$, which is a two-to-one covering of the circle with three \emph{parabolic} fixed points. On the other hand, the external map of a quadratic anti-polynomial is given by $\theta\mapsto-2\theta$, which is a two-to-one covering of the circle with three \emph{repelling} fixed points. As there is no quasisymmetric conjugacy between a parabolic and a repelling fixed point, one cannot quasiconformally straighten $F_a$ to a quadratic anti-polynomial. In fact, there exists no (anti-)rational map of degree two with three parabolic fixed points (alternatively, there is no (anti-)Blaschke map with more than one parabolic fixed point). Consequently, maps in $\mathcal{S}$ cannot be quasiconformally straightened to any family of (anti-)rational maps. 

In addition to the above obstacles, there are intrinsic issues with anti-holomorphic parameter spaces that make straightening maps ill-behaved (see Section~\ref{straightening_discont}, also compare \cite[Theorem~1.1]{IM2}). Since the Tricorn is known to be non-locally connected (with quite non-uniform wiggly features in various places), one needs to work with its locally connected topological model to develop a tractable theory. On the other hand, there are deep MLC-type problems of combinatorial rigidity for the Tricorn that are still open (compare \cite[\S 38]{L6}), and go beyond the scope of the current work. Any progress in this direction would bring our topological models closer to the actual connectedness loci.

\subsection{Organization of the paper}

In Section~\ref{sec_back}, we give a detailed description of the dynamics of quadratic anti-polynomials and their connectedness locus, the Tricorn. Although many techniques used in the study of the Mandelbrot set can be adapted to investigate the Tricorn, lack of holomorphic parameter dependence adds complexity to the situation. Moreover, lack of quasi-conformal rigidity of parameters on the boundary of the Tricorn results in various topological subtleties. We discuss some of the essential topological differences between the Mandelbrot set and the Tricorn, and record all the results that we will need in the paper.

Sections~\ref{ideal_triangle} contains a description of the ideal triangle group, the associated reflection map $\rho$, and the conjugacy $\mathcal{E}$ between $\rho$ and $\overline{z}^2$. In Section~\ref{sec_quad_domain}, we briefly review the basic definitions and some general properties of quadrature domains and Schwarz reflection maps.

Section~\ref{schwarz_circle_cardioid} is a recapitulation of the circle-and-cardioid family $\mathcal{S}$ that was introduced in \cite{LLMM1}. Some basic dynamical results about the maps in the circle-and-cardioid family (that were proved in \cite{LLMM1}) are also collected here.
 
In Section~\ref{unif_exterior_conn_locus}, we begin our study of the parameter space of $\mathcal{S}$. The main goal of this section is to prove Theorem~\ref{thm_unif_exterior_conn_locus}, which states that the conformal position of the escaping critical value produces a homeomorphism between the escape locus of $\mathcal{S}$ and a suitable simply connected domain. 

In Section~\ref{hyperbolic}, we describe the structure of hyperbolic components of $\cC(\mathcal{S})$. As in the case of the Tricorn, the hyperbolic components of odd period vastly differ from their even-period counterparts. 

Section~\ref{comb_rigidity_orbit_portraits} concerns combinatorics of geometrically finite maps. In Subsection~\ref{comb_orbit_portrait}, we introduce an important combinatorial object called \emph{orbit portraits} which, as in the polynomial case, describes the landing patterns of dynamical rays landing on a periodic orbit. Subsequently in Subsection~\ref{rigidity_hyp_para}, we prove a number of crucial rigidity statements (Theorems~\ref{rigidity_center},~\ref{rigidity_misi}) to the effect that PCF parameters in $\mathcal{S}$ are uniquely determined by their combinatorial models (orbit portraits for centers of hyperbolic components, and laminations for Misiurewicz parameters). This subsection also contains some rigidity statements for hyperbolic and parabolic maps. 

In Section~\ref{odd_period_parabolic}, we carry out a detailed study of the landing/accumulation properties of parameters rays of $\mathcal{S}$ at (pre-)periodic angles (under $\rho$). This requires a complete combinatorial understanding of parabolic parameters of $\cC(\mathcal{S})$. The odd period parabolic parameters of $\cC(\mathcal{S})$ and the structure of bifurcations across such parameters are studied in Subsection~\ref{odd_para_doubling}. Subsection~\ref{para_orbit_portrait} contains a combinatorial realization result for parabolic parameters as landing/accumulation points of parameter rays at periodic angles. In Subsection~\ref{para_pre_per}, we investigate landing properties of parameter rays of $\mathcal{S}$ at strictly pre-periodic angles. In particular, we characterize parameter rays landing at Misiurewicz parameters in terms of combinatorial properties of their dynamical planes. The results of this section play an important role in the proofs of our main theorems. 

In Section~\ref{comb_bijec_pcf}, we define the combinatorial straightening map $\chi$ on all geometrically finite maps of $\mathcal{S}$. More precisely, we send a geometrically finite map $F_a$ to the unique geometrically finite map of the Tricorn so that the homeomorphism $\mathcal{E}$ sends the lamination of the former to that of the latter, and the conformal conjugacy classes of the first return map to the characteristic Fatou components of the corresponding maps are the same. The fact that such a member of the Tricorn can be found follows from the combinatorial structure of the corresponding laminations, landing properties of external parameter rays of the Tricorn, and our understanding of the closures of the hyperbolic components. 

Thanks to the combinatorial rigidity results proved in Subsection~\ref{rigidity_hyp_para}, the above combinatorial straightening map turns out to be injective.

We proceed to show that the straightening map is surjective onto all geometrically finite maps of the Tricorn. We use landing properties of external parameter rays of $\mathcal{S}$ prepared in Subsection~\ref{para_pre_per} to show that Misiurewicz maps in $\mathcal{S}$ with prescribed lamination can be found as limit points of suitable parameter rays. To achieve the goal for hyperbolic parameters in $\mathcal{S}$, we first realize parabolic parameters using results from Subsection~\ref{para_orbit_portrait}. Since parabolic parameters lie on boundaries of hyperbolic components, this allows us to realize hyperbolic parameters by perturbing parabolic parameters inside hyperbolic components. This part of the argument involves a thorough understanding of odd period hyperbolic components of $\cC(\mathcal{S})$ and their bifurcation structure. This yields our desired combinatorial bijection between the geometrically finite maps of $\mathcal{S}$ and $\mathcal{L}$, and completes the proof of Theorem~\ref{thm_comb_bijec_pcf}. 

In the next Section~\ref{sec_PCF_mating}, we put together Theorem~\ref{thm_comb_bijec_pcf} and standard techniques in holomorphic dynamics to complete the proof of Theorem~\ref{filled_julia_top_conjugacy}. 

In Subsection~\ref{model_homeo}, we construct a locally connected model for $\cC(\mathcal{S})$, and use the landing properties of parameter rays to complete the proof of Theorem~\ref{thm_model_homeo}.

The final Section~\ref{straightening_discont} is devoted to a discussion of possible analytic improvements of the straightening map $\chi$. In fact, we show that the map $\chi$ has ``built-in" discontinuities. It is worth mentioning that discontinuity of straightening maps is typical in anti-holomorphic dynamics, and is related to ``non-universality" of certain conformal invariants (compare \cite[\S 8]{IM2}). 

In Appendix~\ref{unique_app}, we use softer arguments (that avoid conformal removability) to demonstrate that the deltoid Schwarz reflection map naturally arises as the conformal mating of $\overline{z}^2$ and the map $\rho$ arising from the ideal triangle group. We also employ similar arguments to show that the circle-and-cardioid family is canonical in the sense that, if a quadratic anti-polynomial $\overline{z}^2+c$ in the real basilica limb of the Tricorn is conformally mateable with the refection map $\rho$, then (up to M{\"o}bius conjugation) the corresponding conformal mating necessarily lies in the circle-and-cardioid family. We use this fact to prove uniqueness of the conformal mating of the Basilica anti-polynomial $\overline{z}^2-1$ and $\rho$.

 Appendix~\ref{conf_asymp_appendix} uses a classical result of Warschawski to prepare some analytic tools regarding boundary behavior of conformal maps near cusps. These technical results play a crucial role in the rigidity theorems proved in Section~\ref{comb_rigidity_orbit_portraits}.

\bigskip

\noindent\textbf{Acknowledgements.} The authors are grateful to the anonymous referee for detailed comments and suggestions for improvement.

\section{Background on holomorphic dynamics}\label{sec_back} 

\noindent \textbf{Notation:} 
\begin{itemize}
\item The complex conjugation map on the Riemann sphere is denoted by $\iota$. 
\item The complex conjugate of a set $X\subset{\C}$ will be denoted by $\iota(X)$, while $\overline{X}$ will stand for the topological closure of $X$. \item $B(a,r)$ (respectively $\overline{B}(a,r)$) will stand for the open (respectively closed) disk centered at $a$ with radius $r$. 
\item The complement of $X\subset\widehat{\C}$ will be denoted by $X^c$; i.e., $X^c:=\widehat{\C}\setminus X$. 
\end{itemize}
\medskip

This preliminary section will be a brief survey of several fundamental results on the dynamics and parameter spaces of quadratic polynomials and anti-polynomials. The results collected in this section will be repeatedly used in the rest of the paper.

\subsection{Dynamics of complex quadratic polynomials: the Mandelbrot set}\label{mandel}

The quadratic polynomial family is undoubtedly the most well-studied family of maps in holomorphic dynamics. Although it is the simplest family of nonlinear holomorphic maps, their dynamics and parameter space turn out be highly non-trivial. In particular, many powerful methods were developed to study the parameter space of these maps leading to remarkable theorems. These results and methods laid the foundation of the study of general parameter spaces of holomorphic maps, and act as a strong motivational factor in our investigation.

With this in mind, we will briefly review some aspects of the dynamics and parameter space of complex quadratic polynomials in this subsection. We will mainly touch upon the concepts and results that are directly (or indirectly) related to our study of the dynamics and parameter spaces of Schwarz reflections. 

Any complex quadratic polynomial can be affinely conjugated to a map of the form $p_c(z)=z^2+c$. The \emph{filled Julia set} $K(p_c)$ is defined as the set of all points that remain bounded under all iterations of $p_c$. The boundary of the filled Julia set is defined to be the \emph{Julia set} $J(p_c)$. The complement of the filled Julia set is called the \emph{basin of infinity}, and is denoted by $A_\infty(c)$.

For a periodic orbit (equivalently, a cycle) $\mathcal{O} = \{ z_1, z_2, \cdots, z_n \}$ of $p_c$, we denote
by $\mu \left(c,\mathcal{O}\right):=(p_c^{\circ n})'(z_1)$ the
\emph{multiplier} of $\mathcal{O}$ (the definition is independent of the choice of $z_i$). A periodic orbit $\mathcal{O}$ of $p_c$ is called \emph{super-attracting}, \emph{attracting}, \emph{neutral}, or \emph{repelling} if $\mu(c,\mathcal{O})=0$, $0<\vert\mu(c,\mathcal{O})\vert<1$, $\vert\mu(c,\mathcal{O})\vert=1$, or $\vert\mu(c,\mathcal{O})\vert>1$ (respectively). A neutral cycle is called \emph{parabolic} if the associated multiplier is a root of unity. Otherwise, it is called \emph{irrational}.

Every quadratic polynomial $p_c$ has two finite fixed points, and the sum of the multipliers of these two fixed points is $2$.

We now recall the existence of local uniformizing coordinates for attracting cycles of holomorphic maps. Let $z_0$ be an attracting fixed point of a holomorphic map $g$; i.e., $0<\vert g'(z_0)\vert<1$. By a classical theorem of Koenigs, there exists a conformal change of coordinates $\kappa$ in a neighborhood $U$ of $z_0$ such that $\kappa(z_0)=0$ and $\kappa\circ g(z) = g'(z_0)\kappa(z)$, for all $z$ in $U$. The map $\kappa$ is known as Koenigs linearizing coordinate, and it is unique up to multiplication by a non-zero complex number \cite[Theorem~8.2]{M1new}.

For every quadratic polynomial, $\infty$ is a super-attracting fixed point. It is well-known that there is a conformal map $\phi_c$ near $\infty$ such that $\displaystyle \lim_{z\to \infty} \phi_c(z)/z=~1$ and $\phi_c\circ p_c(z)= \phi_c(z)^2$ \cite[Theorem~9.1]{M1new}. The map $\phi_c$ is called the \emph{B{\"o}ttcher coordinate} for $p_c$. The function $G_c(z):=\log\vert\phi_c\vert$ can be extended as a subharmonic function to the entire complex plane such that it vanishes precisely on $K(p_c)$ and has a logarithmic singularity at $\infty$. In other words, $G_c$ is the Green's function of $K(p_c)$ \cite[Corollary~9.2, Definition~9.6]{M1new}. The level curves of $G_c$ are called \emph{equipotentials} of $p_c$. As for the conformal map $\phi_c$ itself, it extends as a conformal isomorphism to an equipotential containing $c$, when $K(p_c)$ is disconnected, and extends as a biholomorphism from $\widehat{\mathbb{C}} \setminus K(p_c)$ onto $\widehat{\mathbb{C}} \setminus \overline{\mathbb{D}}$ when $K(p_c)$ is connected \cite[Theorem~9.3, Theorem~9.5]{M1new}. The \emph{dynamical ray} $R_c(t)$ of $p_c$ at an angle $t\in\R/\Z$ is defined as the pre-image of the radial line at angle $t$ under $\phi_c$. Evidently, $p_c$ maps the dynamical ray $R_c(t)$ to the dynamical ray $R_c(2t)$.

We say that the dynamical ray $R_c(t)$ of $p_c$ lands if $\overline{R_c(t)} \cap K(p_c)$ is a singleton; this unique point is called the landing point of $R_c(t)$. It is worth mentioning that for a quadratic polynomial with connected filled Julia set, every dynamical ray at a periodic angle (under multiplication by $2$) lands at a repelling or parabolic periodic point, and conversely, every repelling or parabolic periodic point is the landing point of at least one periodic dynamical ray \cite[\S 18]{M1new}. 

The filled Julia set $K(p_c)$ of a quadratic polynomial is either connected, or totally disconnected. In fact, $K(p_c)$ is connected if and only the unique critical point $0$ does not lie in the basin of infinity $A_\infty(c)$ \cite[Expos{\'e}~III, \S1, Proposition~1]{orsay} \cite[Chapter~VIII, Theorem~1.1]{CG1}. This dichotomy leads to the notion of the connectedness locus.

\begin{definition}
The \emph{Mandelbrot set} $\mathcal{M}$ is the connectedness locus of complex quadratic polynomials; i.e., $$\mathcal{M}:=\{c\in\C: K(p_c)\ \mathrm{is\ connected}\}.$$
\end{definition}

\begin{figure}[ht!]
\captionsetup{width=0.96\linewidth}
\begin{center}
\includegraphics[scale=0.35]{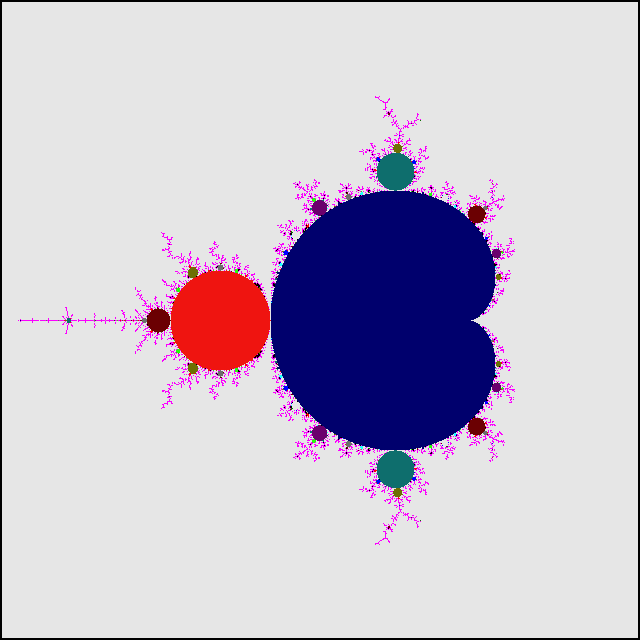}
\end{center}
\caption{Mandelbrot set, the connectedness locus of quadratic holomorphic polynomials $z^2+c$.}
\label{mandel_pic}
\end{figure}

A parameter $c\in\mathcal{M}$ is called \emph{hyperbolic} if $p_c$ has a (necessarily unique) attracting cycle. A connected component $H$ of the set of all hyperbolic parameters is called a \emph{hyperbolic component} of $\mathcal{M}$. The following classical result is a consequence of holomorphic parameter dependence of the maps $p_c$ \cite[Expos{\'e}~XIX, Theorem~1]{orsay} \cite[Chapter~VIII, Theorem~1.4, Theorem~2.1]{CG1}.

\begin{theorem}[Hyperbolic components of $\mathcal{M}$]\label{mandel_hyp}
Every hyperbolic component of $\mathcal{M}$ is a connected component of the interior of $\mathcal{M}$. For every hyperbolic component $H$, there exists some $n\in\N$ such that each $p_c$ in $H$ has an attracting cycle of period $n$. The multiplier of the unique attracting cycle of $p_c$ (where $c\in H$) defines a biholomorphism from $H$ onto the open unit disk $\D$ in the complex plane. This map is called the \emph{multiplier map} of $H$, and is denoted by $\mu_H$.

Moreover, every parameter on the boundary of a hyperbolic component $H$ has a unique neutral cycle of period dividing $n$. The derivative of $p_c^{\circ n}$ at this neutral cycle defines a continuous extension of $\mu_H$ up to $\partial H$.
\end{theorem}

The above proposition directly leads to the notion of centers and roots of hyperbolic components. 

\begin{definition}
\label{Def:RootsAndCenters}
The \emph{center} of a hyperbolic component $H$ is the unique parameter in $H$ which has a super-attracting cycle (equivalently, where the multiplier map vanishes).

The \emph{root} of $H$ is the unique parameter $c\in\partial H$ with $\mu_H(c)=1$. 
\end{definition}

The unique hyperbolic component of period one (also called the principal hyperbolic component of $\mathcal{M}$) will be of particular importance to us. We denote it by $\heartsuit$. A straightforward computation shows that the inverse of the multiplier map of $\heartsuit$ takes the form $$\phi:\D\to\heartsuit$$ $$\phi(\mu)= \mu/2-\mu^2/4.$$

Roots of hyperbolic components are intimately related to bifurcation phenomena in $\mathcal{M}$. If $c$ is a parabolic parameter of $\mathcal{M}$ such that $p_c$ has a $k$-periodic cycle of multiplier $e^{2\pi ip/q}$ (where $\mathrm{gcd}(p,q)=1$ and $q\geq 2$), then $c$ lies on the boundary of a hyperbolic component $H$ of period $k$ and a hyperbolic component $H'$ of period $kq$. Moreover, $c$ is the root of $H'$ \cite[Expos{\'e}~XIV, \S 5, Proposition 5]{orsay} .

The following theorem plays a basic role in the study of $\mathcal{M}$. For a proof, see  \cite[Expos{\'e}~VIII, \S I.3, Theorem~1]{orsay}.

\begin{theorem}[Connectedness of $\mathcal{M}$]\label{mandel_connected}
The map $\Phi : \mathbb{C} \setminus \mathcal{M} \rightarrow \mathbb{C} \setminus \overline{\mathbb{D}}$, defined by $c \mapsto \phi_c(c)$ (where $\phi_c$ is the B\"{o}ttcher coordinate for $p_c$) is a biholomorphism. In particular, the Mandelbrot set is compact, connected, and full.
\end{theorem}

The above theorem allows one to define parameter rays of the Mandelbrot set as pre-images of radial lines under $\Phi$. More precisely, the \emph{parameter ray} of $\mathcal{M}$ at angle $\theta$ is defined as
$$\mathcal{R}_\theta :=\{ \Phi^{-1}\left(re^{2\pi i\theta}\right), r>1 \}.$$
If $\displaystyle \lim_{r \rightarrow 1^+} \Phi^{-1}\left(re^{2\pi i\theta}\right)$ exists, we say
that $\mathcal{R}_\theta$ lands. The parameter rays of the Mandelbrot set have been profitably used to reveal its combinatorial and topological structure. In particular, it is known that all parameter rays of $\mathcal{M}$ at rational angles land. For a complete description of landing patterns of rational parameter rays and the corresponding structure theorem of the Mandelbrot set, see \cite[Theorem~1.1]{S1a}.

One of the most conspicuous features of the Mandelbrot set is its self-similarity. In \cite{DH2}, Douady and Hubbard developed the theory of polynomial-like maps to study this self-similarity. They proved the ``straightening theorem'' that, under certain circumstances, allows one to study a sufficiently large iterate of a polynomial by associating a simpler dynamical system, namely a polynomial of smaller degree, to it \cite[Chapter~I, Theorem~1]{DH2}. They used it to explain the existence of infinitely many small homeomorphic copies of the Mandelbrot set in itself \cite[Chapter~II, Proposition~14]{DH2}. It is worth mentioning that continuity of the straightening map from a baby Mandelbrot set to the original Mandelbrot set is an essential consequence of quasiconformal rigidity of parameters on the boundary of $\mathcal{M}$ \cite[Chapter~I, Proposition~7]{DH2}. In fact, straightening maps are typically discontinuous in the parameter spaces of higher degree polynomials \cite{I}.

For a more general and comprehensive discussion of straightening of quadratic-like families, see \cite[Chapter 6]{L6}. 

We refer the readers to \cite{orsay} for an account of the early development of the subject, and to \cite{L6} for a more comprehensive account containing many sophisticated results on the Mandelbrot set.

\subsection{Dynamics of quadratic anti-polynomials: the Tricorn}\label{sec_tricorn}

In this Section, we recall some known results on the dynamics of quadratic anti-polynomials, and their parameter space. The reason to include this fairly detailed survey is twofold. Since the Schwarz reflection maps are anti-holomorphic and they depend only real-analytically (and not holomorphically) on the parameter, some of the purely holomorphic techniques used to study the Mandelbrot set fail to work in this setting. It is, therefore, not too surprising that the tools required to study the dynamics and parameter space of quadratic anti-polynomials find widespread applications in our study of the parameter space of Schwarz reflections. Secondly, some important topological features of the parameter space of anti-polynomials differ from their holomorphic counterpart. These differences serve as a mental guide in our analysis. Readers familiar with anti-holomorphic dynamics (or unwilling go through this lengthy exposition) may skip to Subsection~\ref{sec_basilica_limb} where the \emph{abstract basilica limb} of the Tricorn is defined, and come back to this section whenever required. 

Any quadratic anti-polynomial, after an affine change of coordinates, can be written in the form $f_c(z) = \overline{z}^2 + c$ for $c \in \mathbb{C}$. In analogy to the holomorphic case, the set of all points that remain bounded under all iterations of $f_c(z) = \overline{z}^2 + c$ (for $c \in \mathbb{C}$) is called the \emph{filled Julia set} $\mathcal{K}_c$. The boundary of the filled Julia set is defined to be the \emph{Julia set} $\mathcal{J}_c$. This leads, as in the holomorphic case, to the notion of \emph{connectedness locus} of quadratic anti-polynomials:

\begin{definition}
The \emph{Tricorn} is defined as $\mathcal{T} = \{ c \in \mathbb{C} : \mathcal{K}_c$ is connected$\}$. 
\end{definition} 

\begin{figure}[ht!]
\captionsetup{width=0.96\linewidth}
\begin{center}
\includegraphics[width=0.4\linewidth]{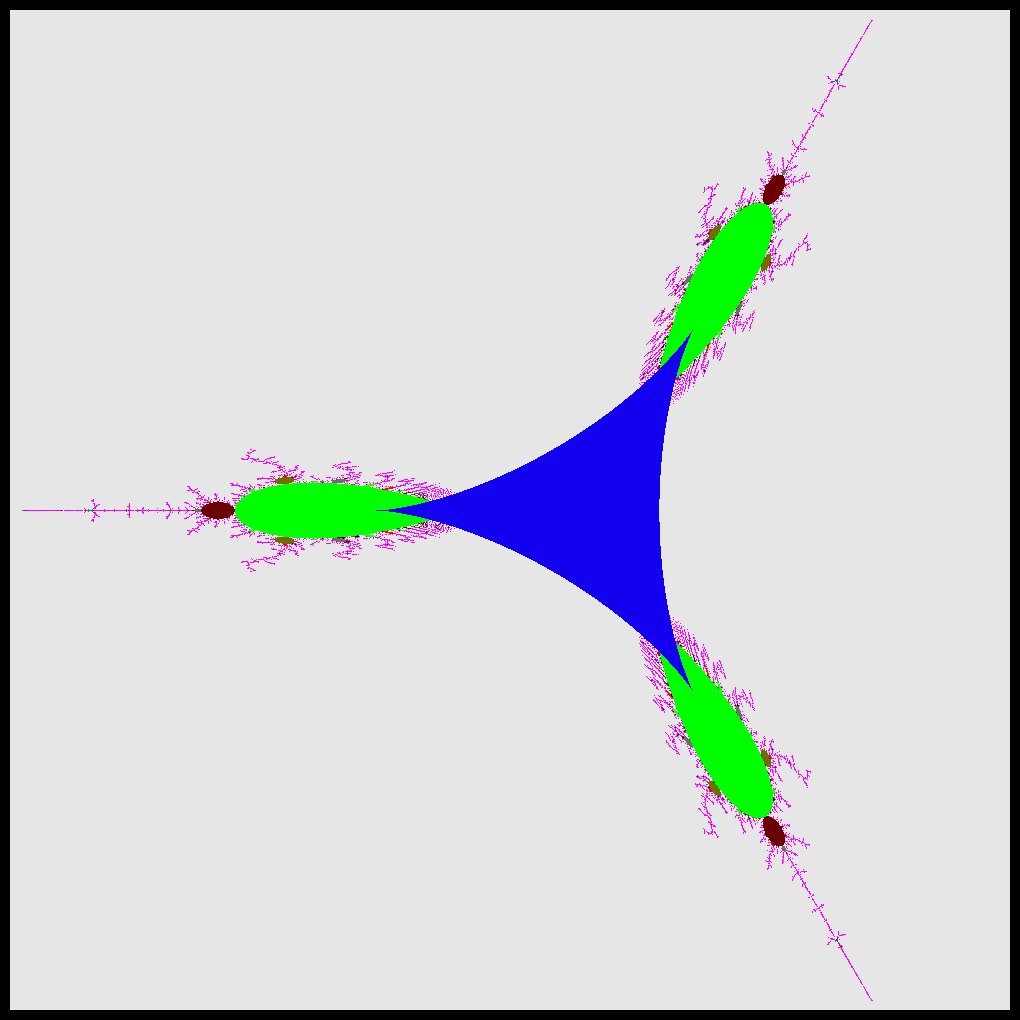}\quad \includegraphics[width=0.4\linewidth]{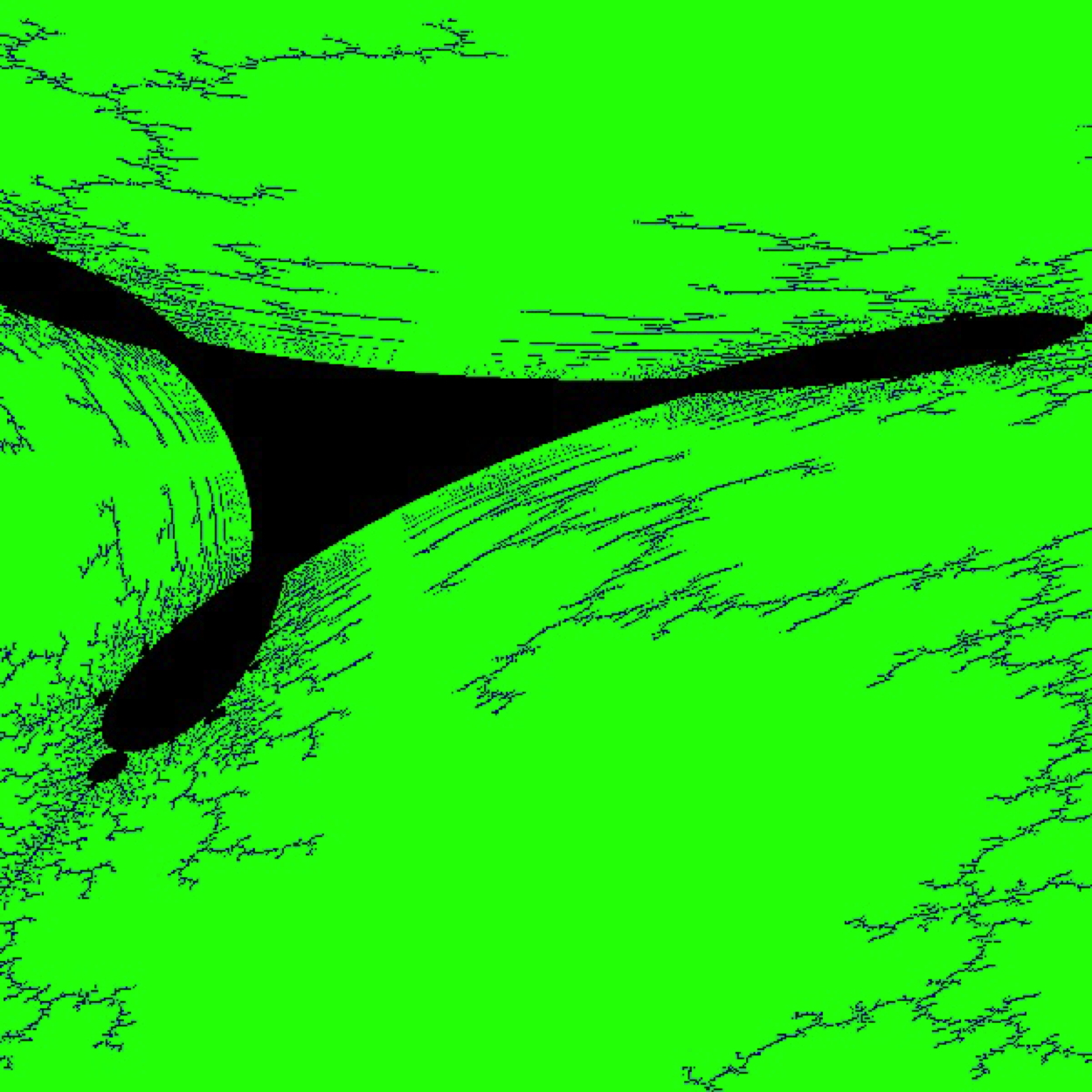} 
\end{center}
\caption{Left: Tricorn, the connectedness locus of quadratic anti-polynomials $\overline{z}^2+c$. Right: Wiggling of umbilical cord on an odd period hyperbolic component of the Tricorn.}
\label{tricorn_pic}
\end{figure}

\begin{remark}\label{symmetry}
The anti-polynomials $f_c$ and $f_{\omega c}$ are conformally conjugate via the linear map $z \mapsto  \omega z$, where $\omega = \exp(\frac{2\pi i}{3})$. It follows that $\mathcal{T}$ has a $3$-fold rotational symmetry (see Figure~\ref{tricorn_pic}).
\end{remark} 

\begin{remark}
The Tricorn (see Figure~\ref{tricorn_pic}) can be thought of as an object of intermediate complexity between one dimensional and higher dimensional parameter spaces. Combinatorially speaking, Douady's famous `plough in the dynamical plane, and harvest in the parameter space' principle continues to stand us in good stead since our maps are unicritical and our parameter space is still real two-dimensional. However, the iterates of a quadratic anti-polynomial $f_c$ only depend real-analytically on the parameter $c$ (unlike the iterates of $z^2+c$, which depend holomorphically on $c$). This results in significant topological differences between the Tricorn and the Mandelbrot set. Note that since the second iterate of $f_c$ is $(z^2+\overline{c})^2+c$, the space of quadratic anti-polynomials can be viewed as the real slice $\{(a,b)\in\C^2: a=\overline{b}\}$ of the family of biquadratic polynomials $\{(z^2+a)^2+b: a,b\in\C\}$. The polynomials $(z^2+a)^2+b$ generically have two infinite critical orbits, much like cubic polynomials. Hence, the dynamics and parameter space of quadratic anti-polynomials also resemble in many respects the connectedness locus of cubic polynomials.
\end{remark}

\subsubsection{Dynamical plane of anti-polynomials}\label{anti_poly_dyn_subsubsec}

Similar to the holomorphic case, we have a notion of B{\"o}ttcher coordinates for anti-polynomials. By \cite[Lemma~1]{Na1}, there is a conformal map $\phi_c$ near $\infty$ that conjugates $f_c$ to $\overline{z}^2$. As in the holomorphic case, $\phi_c$ extends conformally to an equipotential containing $c$, when $c\notin \mathcal{T}$, and extends as a biholomorphism from $\widehat{\mathbb{C}} \setminus \mathcal{K}_c$ onto $\widehat{\mathbb{C}} \setminus \overline{\mathbb{D}}$ when $c \in \mathcal{T}$.

\begin{definition}\label{dyn_ray}
The dynamical ray $R_c(\theta)$ of $f_c$ at an angle $\theta$ is defined as the pre-image of the radial line at angle $\theta$ under $\phi_c$.
\end{definition}

The dynamical ray $R_c(\theta)$ maps to the dynamical ray $R_c(-2\theta)$ under $f_c$. It follows that, at the level of external angles, the dynamics of $f_c$ can be studied by looking at the simpler map $$m_{-2}:\R/\Z\to\R/\Z,\ m_{-2}(\theta)=-2\theta.$$ We refer the readers to \cite[\S 3]{NS}, \cite{Sa} for details on the combinatorics of the landing pattern of dynamical rays for unicritical anti-polynomials.

For an anti-holomorphic germ $g$ fixing a point $z_0$, the quantity $\frac{\partial g}{\partial\overline{z}}\vert_{z_0}$ is called the \emph{multiplier} of $g$ at the fixed point $z_0$. One can use this definition to define multipliers of periodic orbits of anti-holomorphic maps (compare \cite[\S 1.1]{Sa1}). A cycle is called attracting (respectively, super-attracting or parabolic) if the associated multiplier has absolute value between $0$ and $1$ (respectively, is $0$ or a root of unity). A map $f_c$ is called hyperbolic (respectively, parabolic) if it has a (super-)attracting (respectively, parabolic) cycle. A connected component of the set of all hyperbolic parameters is called a \emph{hyperbolic component} of $\mathcal{T}$.

\begin{remark}\label{anti_fixed_points_count}
Recall that every quadratic polynomial $p_c$ has three distinct fixed points in $\widehat{\mathbb{C}}$ (except for $c=\frac14$, which has a simple fixed point at $\infty$, and a double fixed point at $\frac{1}{2}$). On the other hand, the number of distinct fixed points of a quadratic anti-polynomial $f_c$ drops from $5$ to $3$ as $c$ exits the principal hyperbolic component; i.e., the hyperbolic component of period one (the central blue region in Figure~\ref{tricorn_pic}). More precisely, for $c$ in the principal hyperbolic component, $f_c$ has two attracting and three repelling fixed points (in $\widehat{\mathbb{C}}$), while for $c$ outside the closure of the principal hyperbolic component, $f_c$ has one attracting and two repelling fixed points. 

It is worth mentioning that the Lefschetz fixed point index of an attracting (respectively, repelling) fixed point of $f_c$ (which is defined as the winding number of $f_c(z)-z$ along the boundary of a small disc centered at a fixed point) is $+1$ (respectively, $-1$). By the Lefschetz-Hopf Theorem, the sum of the indices of all fixed points must be $-1$ (since the topological degree of $f_c$ on $\widehat{\C}$ is $-2$). Thus outside the closure of the principal hyperbolic component, the loss of an attracting fixed point of $f_c$ must be accompanied by the loss of a repelling fixed point. This explains why the number of fixed points of $f_c$ must drop by two. 
\end{remark}

A (super-)attracting cycle of $f_c$ belongs to the interior of $\mathcal{K}_c$, and a parabolic cycle lies on the boundary of $\mathcal{K}_c$ (see \cite[\S 5, Theorem~5.2]{M1new}). Moreover, a parabolic periodic point necessarily lies on the boundary of a Fatou component (i.e., a connected component of $\Int{\mathcal{K}_c}$) that contains an attracting petal of the parabolic germ such that the forward orbit of every point in the component converges to the parabolic cycle. In the attracting (respectively, parabolic) case, the forward orbit of the critical point $0$ converges to the attracting (respectively, parabolic) cycle. In either case, the unique Fatou component containing the critical value is called the \emph{characteristic} Fatou component.

It is well-known that if $f_c$ has a connected Julia set, then all rational dynamical rays of $f_c$ land at repelling or parabolic (pre-)periodic points. This allows us to introduce an important combinatorial object that will play a key role later in the paper. 

\begin{definition}\label{def_rat_lami}
The \emph{rational lamination} of a quadratic anti-polynomial $f_c$ (with connected Julia set) is defined as an equivalence relation on $\mathbb{Q}/\mathbb{Z}$ such that $\theta_1 \sim \theta_2$ if and only if the dynamical rays $R_c(\theta_1)$ and $R_c(\theta_2)$ land at the same point of $J(f_c)$. The rational lamination of $f_c$ is denoted by $\lambda(f_c)$.
\end{definition}

The next proposition lists the basic properties of rational laminations.

\begin{proposition}\label{prop_rat_lam}
The rational lamination $\lambda(f_c)$ of a quadratic anti-polynomial $f_c$ satisfies the following properties.

\begin{enumerate}
\item $\lambda(f_c)$ is closed in $\Q/\Z\times\Q/\Z$.

\item Each  $\lambda(f_c)$-equivalence class $A$ is a finite subset of $\Q/\Z$.

\item If $A$ is a $\lambda(f_c)$-equivalence class, then $m_{-2}(A)$ is also a $\lambda(f_c)$-equivalence class.

\item If $A$ is a $\lambda(f_c)$-equivalence class, then $A\mapsto m_{-2}(A)$ is consecutive reversing.

\item $\lambda(f_c)$-equivalence classes are pairwise unlinked.
\end{enumerate}
\end{proposition}
\begin{proof}
The proof of \cite[Theorem~1.1]{kiwi} applies mutatis mutandis to the anti-holomorphic setting.
\end{proof}

\begin{definition}\label{formal_rat_lami}
An equivalence relation $\lambda$ on $\Q/\Z$ satisfying the conditions of Proposition~\ref{prop_rat_lam} is called a \emph{formal rational lamination} under $m_{-2}$.
\end{definition}

\subsubsection{Uniformization of the exterior of the Tricorn}

The following result was proved by Nakane \cite{Na1}.

\begin{theorem}[Real-analytic uniformization]\label{RealAnalUniformization}
The map $\Phi : \mathbb{C} \setminus \mathcal{T} \rightarrow \mathbb{C} \setminus \overline{\mathbb{D}}$, defined by $c \mapsto \phi_c(c)$ (where $\phi_c$ is the B\"{o}ttcher coordinate near $\infty$ for $f_c$) is a real-analytic diffeomorphism. In particular, the Tricorn is connected.
\end{theorem}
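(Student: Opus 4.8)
The plan is to adapt the Douady--Hubbard proof of connectedness of the Mandelbrot set, replacing the holomorphic arguments by real-analytic ones wherever anti-holomorphicity intervenes. First I would construct the B\"ottcher coordinate $\phi_c$. Although $f_c$ is anti-holomorphic, its second iterate $f_c^2(z) = (z^2 + \overline{c})^2 + c$ is a genuine (holomorphic in $z$) polynomial of degree $4$, so it carries a standard holomorphic B\"ottcher coordinate $\phi_c$, tangent to the identity at $\infty$, conjugating $f_c^2$ to $w \mapsto w^4$. A uniqueness argument for anti-holomorphic square roots of $w \mapsto w^4$ near $\infty$ then shows that this same $\phi_c$ satisfies the degree-two functional equation $\phi_c \circ f_c = \overline{\phi_c}^2$; thus $\phi_c$ is holomorphic in $z$ and, since $f_c^2$ depends real-analytically on $c$ (through $c$ and $\overline{c}$), $\phi_c$ depends real-analytically on $c$ on its domain of definition. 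When $c \notin \mathcal{T}$, the critical orbit escapes, so $\phi_c$ extends along the basin of $\infty$ to a neighborhood of $\infty$ whose boundary passes through the escaping critical point; in particular $c = f_c(0)$ lies in its domain. Hence $\Phi(c) = \phi_c(c)$ is well-defined, real-analytic, and satisfies $|\Phi(c)| > 1$, i.e.\ $\Phi$ maps $\mathbb{C} \setminus \mathcal{T}$ into $\mathbb{C} \setminus \overline{\mathbb{D}}$.

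Next I would check that $\Phi$ is proper. The associated parameter potential $G(c) = \log|\Phi(c)|$ extends continuously to $\mathbb{C}$ by setting $G \equiv 0$ on $\mathcal{T}$, with $G(c) \to 0$ exactly as $c \to \partial\mathcal{T}$; together with the estimate $\phi_c(z) \sim z$ near $\infty$, which gives $\Phi(c) \sim c$ as $c \to \infty$, this shows that the $\Phi$-preimage of any annulus $\{1 + \epsilon \le |w| \le R\}$ is compact. So $\Phi$ is a proper real-analytic map onto its image.

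The crux---and the main obstacle, precisely because $\Phi$ is \emph{not} holomorphic---is to prove that $\Phi$ is a local diffeomorphism, i.e.\ that its real Jacobian $|\partial_c \Phi|^2 - |\partial_{\overline{c}}\Phi|^2$ never vanishes. In the Mandelbrot case one has $\partial_{\overline{c}}\Phi \equiv 0$ and the argument principle finishes the proof for free; here I would instead estimate the two Wirtinger derivatives directly, by differentiating the functional equation and the B\"ottcher series, and show that the holomorphic part $\partial_c \Phi$ strictly dominates the anti-holomorphic part $\partial_{\overline{c}}\Phi$ in modulus. This forces the Jacobian to be nonzero and, in fact, of constant sign. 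I expect this estimate to be the technically demanding step, since it is where the real-analytic (rather than complex-analytic) nature of the family must be confronted head-on.

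Finally, a proper local diffeomorphism onto the connected manifold $\mathbb{C} \setminus \overline{\mathbb{D}}$ is a covering map, hence surjective; since $\mathbb{C} \setminus \overline{\mathbb{D}}$ deformation retracts onto a circle and $\Phi$ winds exactly once over large circles (a consequence of the normalization $\phi_c(z) = z + O(1)$ at $\infty$), the covering has degree one, so $\Phi$ is a diffeomorphism. For the last assertion, extending $\Phi$ by $\infty \mapsto \infty$ gives a homeomorphism from $\widehat{\mathbb{C}} \setminus \mathcal{T}$ onto the topological disk $\widehat{\mathbb{C}} \setminus \overline{\mathbb{D}}$; since the complement of $\mathcal{T}$ in the sphere is then connected and simply connected, Alexander duality (or the standard separation argument) shows that $\mathcal{T}$ is connected.
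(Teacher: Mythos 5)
Your skeleton (anti-B\"ottcher coordinate obtained from the second iterate $f_c^{\circ 2}$, extension of $\phi_c$ past the critical value when the critical orbit escapes, properness via the parameter Green's function together with $\Phi(c)\sim c$ at infinity, and then the proper-local-diffeomorphism-implies-covering argument with the degree read off near infinity) is sound, and it matches the architecture of the proof this statement actually rests on: the paper quotes the theorem from Nakane \cite{Na1} without proof, and it reproduces Nakane's scheme in detail when proving the analogous statement for the family $\mathcal{S}$ (Theorem~\ref{thm_unif_exterior_conn_locus}: properness plus local invertibility, hence a covering map, hence a homeomorphism).

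The crux of your proposal, however, is a genuine gap rather than a deferred technicality. You propose to prove local invertibility by differentiating the B\"ottcher series and establishing $\vert\partial_c\Phi\vert>\vert\partial_{\overline{c}}\Phi\vert$ pointwise, but you offer no mechanism that could yield such an estimate, and there is a structural reason to doubt one exists: even in the holomorphic (Mandelbrot) case nobody proves $\Phi'\neq 0$ directly from the B\"ottcher expansion --- non-vanishing of the derivative there is a \emph{consequence} of the argument principle applied to a proper holomorphic map of degree one, and the argument principle is precisely the tool that is lost when $\Phi$ is only real-analytic. Nothing in the functional equation $\phi_c\circ f_c=\overline{\phi_c}^{\,2}$ controls how the coefficients of $\phi_c$ depend jointly on $(c,\overline{c})$, so your plan does not even specialize to the known proof in the holomorphic setting. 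What replaces this step in Nakane's proof (and in this paper's proof of Lemma~\ref{prop_loc_inv} for $\mathcal{S}$) is quasiconformal surgery: given $c_0\notin\mathcal{T}$ and a target $w$ near $\Phi(c_0)$, one post-composes the B\"ottcher coordinate with a quasiconformal homeomorphism moving $\Phi(c_0)$ to $w$, chosen equivariantly with respect to $w\mapsto\overline{w}^{\,2}$, pulls the resulting Beltrami form back by the dynamics to an $f_{c_0}$-invariant almost complex structure, and straightens it by the Measurable Riemann Mapping Theorem; the straightened map is again an anti-quadratic polynomial $f_{c(w)}$ with $\Phi(c(w))=w$, and $c(w)$ depends continuously on $w$. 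With local invertibility established this way, your properness and degree-one arguments (and the Alexander duality conclusion that $\mathcal{T}$ is connected) go through as written.
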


The previous theorem also allows us to define parameter rays of the Tricorn. 
\begin{definition}
The \emph{parameter ray} at angle $\theta$ of the Tricorn $\mathcal{T}$, denoted by $\mathcal{R}_{\theta}$, is defined as $\{ \Phi^{-1}(r e^{2 \pi i \theta}) : r > 1 \}$, where $\Phi$ is the real-analytic diffeomorphism from the exterior of $\mathcal{T}$ to the exterior of the closed unit disc in the complex plane constructed in Theorem~\ref{RealAnalUniformization}.
\end{definition}

\subsubsection{Uniformization of hyperbolic components} 

Recall that a map $f_c$ is called hyperbolic if it has a (super-)attracting cycle, and a hyperbolic component of $\mathcal{T}$ is defined as a connected component of the set of all hyperbolic parameters. Note that if $c$ lies in a hyperbolic component of odd (respectively even) period of $\mathcal{T}$, then the first return map of an attracting Fatou component of $f_c$ is anti-holomorphic (respectively holomorphic). Due to this dichotomy, one needs to study the topology of odd and even period hyperbolic components of $\mathcal{T}$ separately. The hyperbolic component of period $1$ can be studied by explicit computation \cite[Lemma~5.2]{NS} and is in some sense atypical. Hence we restrict our attention to higher period components, which is all we need in this paper.

Let $H$ be a hyperbolic component of even period $k$ of $\mathcal{T}$. For $c\in H$, the $k$-periodic attracting cycle of $f_c$ splits into two distinct attracting cycles of period $\frac{k}{2}$ under $f_c^{\circ 2}$. These two attracting cycles of $f_c^{\circ 2}$ have complex conjugate multipliers. Let $z_c$ be the attracting periodic point in the critical value Fatou component. We define $\mu_c:=(f_c^{\circ k})'(z_c)$. The map $c\mapsto (f_c^{\circ k})'(z_c)$ is called the \emph{multiplier map} of the hyperbolic component of even period $k$.

For $c\in H$, the restriction of $f_{c}^{\circ k}$ to the Fatou component $U_c$ containing $c$ is a degree $2$ proper holomorphic map. Moreover, $f_{c}^{\circ k}$ has a unique fixed point on $\partial U_c$. Choosing a Riemann map of $U_c$ that maps the attracting periodic point to $0$ and the unique boundary fixed point to $1$, we obtain a conjugacy between $f_c^{\circ k}\vert_{U_c}$ and a holomorphic Blaschke product of degree $2$ on $\D$. By construction, such a Blaschke product must be of the form $B_{a,\theta}^{+}(z)=e^{2\pi i\theta} z\frac{(z-a)}{(1-\overline{a}z)}$, where $a\in\mathbb{D}$ and $\theta=\theta(a)\in\R$ is selected so that $z=1$ is fixed by $B_{a,\theta}^{+}$. The unique such Blaschke product with a super-attracting fixed point is $B_{0,0}^{+}$. Let $\mathcal{B}^{+}$ be the space of all holomorphic Blaschke products $B_{a,\theta}^{+}$ where $a\in\mathbb{D}$ and $B_{a,\theta}^{+}(1)=1$.

Now let $H$ be a hyperbolic component of odd period $k\neq 1$ with center $c_0$. As before, for $c\in H$, let $z_c$ be the attracting periodic point of $f_c$ contained in the critical value Fatou component $U_c$. Let $\mathrm{Jac}(f_c^{\circ k},z_c)$ be the Jacobian determinant of $f_c^{\circ k}$ at $z_c$. A simple computation shows that $z_c$ is a periodic point of $f_c^{\circ 2}$ of period $k$, and the associated multiplier 
$$
(f_c^{\circ 2k})'(z_c)=-\mathrm{Jac}(f_c^{\circ k},z_c)=\left|\frac{\partial f_c^{\circ k}}{\partial\overline{z}}(z_c)\right|^2
$$ 
is real and positive (compare \cite[\S 1.1]{Sa1}). Clearly, one has to work a bit harder to define a meaningful conformal invariant that uniformizes a hyperbolic component $H$ of odd period. Unlike in the even period case, the natural conformal invariant for maps with odd period attracting cycles is not a purely local quantity; it uses the conformal position of the orbit of the critical point. The following definition was introduced in \cite[\S 6]{IM2} (see \cite[\S 5]{NS} for an equivalent formulation).

For $c\in H\setminus \lbrace c_0\rbrace$, there are two distinct critical orbits of the second iterate $f_c^{\circ 2}$ converging to an attracting cycle. One can choose two representatives of these two critical orbits (e.g. $c$ and $f_c^{\circ k}(c)$) in a fundamental domain (in the critical value Fatou component), and consider their ratio in a Koenigs linearizing coordinate. More precisely, let $\kappa_c : U_c\to \mathbb{C}$ be a Koenigs linearizing coordinate for $f_c^{\circ 2k}$ near $z_c$; i.e., $\kappa_c (f_c^{\circ 2k}(z))=(f_c^{\circ 2k})'(z_c) \kappa_c(z)$ for all $z\in U_c$. We define
\begin{align*}
\zeta_H(c)
&:=\frac{\kappa_c(f_c^{\circ k}(c))}{\kappa_c(c)}\;.
\end{align*} 
Since a Koenigs linearizing coordinate is unique up to multiplication by a non-zero number, the above ratio is independent of the choice of $\kappa_c$. At the center $c_0$, we define $\zeta_H(c_0)=0$. The map $\zeta_H$ is called the \emph{Koenigs ratio map} of the hyperbolic component $H$ of odd period $k$.

For $c\in H$, the restriction of $f_{c}^{\circ k}$ to the Fatou component $U_c$ containing $c$ is a degree $2$ proper anti-holomorphic map. Moreover, $f_{c}^{\circ k}$ has three fixed points on $\partial U_c$. Exactly one of them is a cut point of the Julia set, this point is called the dynamical root point of $f_c$ on $\partial U_c$. Choosing a Riemann map of $U_c$ that maps the attracting periodic point to $0$ and the dynamical root point to $1$, we obtain a conjugacy between $f_c^{\circ k}\vert_{U_c}$ and an anti-holomorphic Blaschke product of degree $2$ on $\D$. By construction, such a Blaschke product must be of the form $B^{-}_{a,\theta}(z)=e^{2\pi i\theta}\overline{z}\frac{(\overline{z}-a)}{(1-\overline{az})}$, where $a\in\mathbb{D}$ and $\theta=\theta(a)\in\R$ is selected so that $z=1$ is fixed by $B_{a,\theta}^{-}$. The unique such Blaschke product with a super-attracting fixed point is $B_{0,0}^{-}$. Let $\mathcal{B}^{-}$ be the space of all anti-holomorphic Blaschke products $B_{a,\theta}^{-}$ where $a\in\mathbb{D}$ and $B_{a,\theta}^{-}(1)=1$.

A direct calculation (or the Schwarz lemma) shows that $0$ is necessarily an attracting fixed point for every Blaschke product in $\mathcal{B}^{\pm}$. Clearly, both Blaschke product spaces $\mathcal{B}^{\pm}$ are simply connected as their common parameter space is the open unit disc $\mathbb{D}$. Thus, the spaces $\mathcal{B}^{\pm}$ can be endowed with real-analytic manifold structures (the appearance of $a$ and $\overline{a}$ in the definition of $B_{a,\theta}^\pm$ is an obstruction to the existence of a complex structure on $\mathcal{B}^{\pm}$). For both families of Blaschke products, we can define the multiplier/Koenigs ratio of the attracting fixed point. The next lemma elucidates the mapping properties of the multiplier/Koenigs ratio maps defined on $\mathcal{B}^{\pm}$ \cite[Lemma~5.4]{NS}.

\begin{lemma}\label{Blaschke}
The Blaschke product model spaces $\mathcal{B}^{\pm}$ are simply connected. Moreover, the Koenigs ratio map (respectively, the multiplier map) of the attracting fixed point defines a real-analytic $3$-fold branched covering from $\mathcal{B}^{-}$ (respectively a real-analytic diffeomorphism from $\mathcal{B}^{+}$) onto $\mathbb{D}$. 
\end{lemma}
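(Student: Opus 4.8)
The plan is to parameterize each model space by the single disk variable $a$, reduce both assertions to explicit \emph{modulus-preserving} self-maps of $\mathbb{D}$ fixing the center, and read off their degrees from their germs at $a=0$ together with the homotopy invariance of winding numbers. First I would record the parameterization. Imposing $B_{a,\lambda}^{\pm}(1)=1$ forces $\lambda=\frac{1-\overline{a}}{1-a}$, which is automatically unimodular for every $a\in\mathbb{D}$; thus $\lambda$ is determined by $a$, and $a\mapsto B_{a,\lambda}^{\pm}$ identifies each of $\mathcal{B}^{\pm}$ with $\mathbb{D}$. In particular both spaces are simply connected, and all subsequent invariants become self-maps of $\mathbb{D}$ fixing $a=0$.

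For $\mathcal{B}^{+}$, a direct expansion at the attracting fixed point $0$ gives multiplier $-\lambda a$, so the multiplier map is $\mu(a)=-\frac{1-\overline{a}}{1-a}\,a$. It is real-analytic (rational in $a,\overline{a}$ with nonvanishing denominator on $\mathbb{D}$) and satisfies $|\mu(a)|=|a|$, hence preserves every round circle $|a|=r$. Writing $a=re^{i\phi}$, the computation $\frac{\partial}{\partial\phi}\arg\mu=\frac{1-r^{2}}{|1-a|^{2}}>0$ (a Poisson kernel) shows that $\mu$ restricts to an orientation-preserving diffeomorphism of each such circle onto $|\mu|=r$ of total turning $2\pi$. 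Combined with modulus preservation and $\mu(0)=0$, this makes $\mu$ a real-analytic diffeomorphism of $\mathbb{D}$; the same angular positivity shows the Jacobian never vanishes, so no separate local-invertibility check is needed.

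For $\mathcal{B}^{-}$ I write $B^{-}=B_{a,\lambda}^{-}$. Here $0$ is attracting for $(B^{-})^{\circ 2}$ with real multiplier $\rho=|a|^{2}$, and I would fix the Koenigs coordinate $\kappa$ with $\kappa'(0)=1$, so that $\kappa(z)=\lim_{n}\rho^{-n}(B^{-})^{\circ 2n}(z)$. The unique critical point of $B^{-}$ in $\mathbb{D}$ is $c_*=\frac{1-\sqrt{1-|a|^{2}}}{a}$, and the Koenigs ratio is $\zeta(a)=\kappa(B^{-}(c_*))/\kappa(c_*)$. Feeding the limit formula into this ratio collapses it to $\zeta(a)=b\,\overline{\kappa(c_*)}/\kappa(c_*)$ with $b=-\lambda a$, whence $|\zeta(a)|=|a|$: the map is proper, preserves round circles, and $\zeta^{-1}(0)=\{0\}$. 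Expanding at the center, $b=-a+O(|a|^{2})$, $c_*=\tfrac12\overline{a}+O(|a|^{2})$ and $\kappa(c_*)=c_*+O(c_*^{2})$, so $\zeta(a)=-a^{2}/\overline{a}+O(|a|^{2})$; in polar coordinates the leading term is $re^{i\phi}\mapsto -re^{3i\phi}$. Since $\zeta$ is nonzero off the center, the winding number of $\phi\mapsto\zeta(re^{i\phi})$ is constant in $r$ and equals its value $3$ near the center, so $\zeta$ has degree $3$ (real-analyticity off the center coming from the real-analytic dependence of $\kappa$ and $c_*$ on $a$, with $\rho=|a|^2$ bounded away from $1$ on compacta).

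The remaining, and main, difficulty is to upgrade ``degree $3$, modulus-preserving'' to ``$3$-fold branched covering branched only over $0$''. It suffices to show $\zeta$ is orientation-preserving off the center, i.e. $\frac{\partial}{\partial\phi}\arg\zeta>0$ for $a\neq 0$, so that each circle $|a|=r$ covers $|\zeta|=r$ as an honest $3$-fold covering with no folds. Once orientation-preservation is known, Riemann--Hurwitz for proper maps of the disk, $1=3\cdot 1-\sum_{p}(e_{p}-1)$, forces $\sum_{p}(e_{p}-1)=2$; as the center already contributes $e_{0}-1=2$, there can be no further branch points, and $\zeta$ is the desired branched covering. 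The hard part is thus the \emph{global} monotonicity of $\arg\zeta$ in $\arg a$, which is invisible from the germ and requires controlling the real-analytic dependence of $\kappa$ (and of $c_*$) on the parameter across all of $\mathbb{D}$. This is precisely what separates the anti-holomorphic case from the holomorphic one: the extra factor $\overline{\kappa(c_*)}/\kappa(c_*)\sim a/\overline{a}$ is what raises the degree from $1$ to $3$.
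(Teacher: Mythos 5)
First, a point of reference: the paper does not prove this lemma at all --- it is quoted as background from Nakane--Schleicher \cite[Lemma~5.4]{NS} --- so your proposal has to stand on its own. Your parameterization (imposing $B^{\pm}_{a,\lambda}(1)=1$ forces $\lambda=(1-\overline{a})/(1-a)$, so $\mathcal{B}^{\pm}\cong\mathbb{D}$) is correct, hence so is simple connectivity, and your treatment of $\mathcal{B}^{+}$ is complete: $|\mu(a)|=|a|$ together with $\partial_{\phi}\arg\mu=(1-r^{2})/|1-a|^{2}>0$ does give a real-analytic diffeomorphism. The problem is the $\mathcal{B}^{-}$ half, which is the substantive half of the lemma, and there your argument stops exactly where the lemma begins. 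What you establish (modulo the caveats below) is that $\zeta$ preserves modulus and has topological degree $3$. But a modulus-preserving map whose restriction to each circle has degree $3$ can perfectly well have folds --- being, say, $5$-to-$1$ with signs over parts of each circle --- and then it is not a branched covering at all. The implication ``degree $3$ $\Rightarrow$ $3$-fold branched covering'' is exactly the strict monotonicity of $\arg\zeta$ in $\arg a$ on every circle $|a|=r$, and you explicitly defer this step (``the remaining, and main, difficulty'', ``the hard part'') without giving any argument for it. Since this monotonicity is a genuinely global statement about how the Koenigs coordinate $\kappa_{a}$ varies with the parameter, and is not a formal consequence of anything you prove, the branched-covering assertion --- i.e.\ the actual content of the lemma --- is not reached. (The Riemann--Hurwitz step is also redundant: once $\zeta$ is a local homeomorphism off the center, properness already makes it a covering of the punctured disk.)

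There are two further gaps in the same half. (i) Well-definedness: the formula $\zeta(a)=b\,\overline{\kappa(c_{*})}/\kappa(c_{*})$ presupposes $\kappa(c_{*})\neq 0$, i.e.\ that the critical orbit never lands exactly on the fixed point $0$; at such a parameter $\zeta$ would be $0/0$, and your ``nonvanishing on the punctured disk, hence constant winding number'' argument would collapse. This is true but needs proof: for instance, one checks $|B^{-}(c_{*})|<|a|$ and $|B^{-}(z)|<|z|$ for $z\neq 0$, so the orbit can never reach $\overline{a}$, the unique nonzero preimage of $0$. (ii) Your expansion $\kappa(c_{*})=c_{*}+O(c_{*}^{2})$ is not legitimate uniformly in the parameter: the quadratic coefficient of the normalized Koenigs coordinate for $(B^{-})^{\circ 2}$ is $g_{2}/\bigl(\rho(1-\rho)\bigr)\sim -1/\overline{a}$, which blows up as $a\to 0$, so the ``error term'' is of the same order $|a|$ as $c_{*}$ itself; in fact $\kappa(c_{*})\sim\overline{a}/4$, not $\overline{a}/2$. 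Your conclusion $\zeta(a)=-a^{2}/\overline{a}+O(|a|^{2})$ happens to survive because $|\zeta(a)|=|a|$ exactly and only $\arg\kappa(c_{*})$ enters the formula, but as written that step is unjustified and should be replaced by an argument along the orbit (the ratio $(B^{-})^{\circ(2n+1)}(c_{*})/(B^{-})^{\circ 2n}(c_{*})$ stabilizes at $-a^{2}/\overline{a}$ up to relative error $O(|a|)$). A minor remark: you evaluate the Koenigs ratio at the critical point rather than, as in the paper's convention, at the critical value; this only replaces $\zeta$ by $\overline{\zeta}$ and is harmless.
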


The above discussion shows that we can associate a unique element of $\mathcal{B}^{-}$ (respectively $\mathcal{B}^{+}$) to every $f_c$ in an odd (respectively even) period hyperbolic component $H$. We thus have a map $\eta_H$ from $H$ to $\mathcal{B}^{-}$ or $\mathcal{B}^{+}$. The following theorem, which gives a dynamical uniformization of the hyperbolic components, was proved in \cite[Theorem~5.6, Theorem~5.9]{NS} (cf. \cite[\S 5]{M12}).

\begin{theorem}[Uniformization of hyperbolic components]\label{hyp_unif}
Let $H$ be a hyperbolic component. The map $\eta_H:H\to\mathcal{B}^-$ (respectively, $\mathcal{B}^+$) is a real-analytic diffeomorphism.

\begin{enumerate}
\item If $H$ is of odd period, then $\eta_H:H\to\mathcal{B}^-$ respects the Koenigs ratio of the attracting cycle. In particular, the Koenigs ratio map is a real-analytic $3$-fold branched covering from $H$ onto the unit disk, ramified only over the origin.

\item If $H$ is of even period, then $\eta_H:H\to\mathcal{B}^+$ respects the multiplier of the attracting cycle. In particular, the multiplier map is a real-analytic diffeomorphism from $H$ onto the unit disk.
\end{enumerate}
\end{theorem}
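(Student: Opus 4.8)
The plan is to deduce the two ``in particular'' assertions by first showing that $\eta_H$ itself is a real-analytic homeomorphism onto the appropriate Blaschke model space ($\mathcal{B}^-$ in the odd case, $\mathcal{B}^+$ in the even case) and then composing with Lemma~\ref{Blaschke}, which already records the mapping degree of the Koenigs ratio map on $\mathcal{B}^-$ and of the multiplier map on $\mathcal{B}^+$. Thus the bulk of the work is to establish that $\eta_H$ is a homeomorphism, and I would organize this around four properties: real-analyticity, properness, local injectivity, and global injectivity (degree one).

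\emph{Real-analyticity.} For $c\in H$ the attracting cycle, its multiplier, the Koenigs coordinate $\kappa_c$ (or the linearizer in the even case), and the canonical Riemann map of $U_c$ normalized by the attracting periodic point and the dynamical root point all depend real-analytically on $c$. This follows from the implicit function theorem applied to the equation cutting out the attracting cycle, together with the real-analytic dependence of normalized uniformizations on their varying domains. Hence $\eta_H$ is real-analytic, and in particular continuous.

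\emph{Properness, injectivity, surjectivity.} I would show $\eta_H$ is proper by arguing that as $c$ tends to $\partial H$ within $H$, the model parameter $a\in\D$ of $\eta_H(c)$ tends to $\partial\D$; equivalently, the attracting cycle becomes indifferent. The point is that the cycle persists with well-defined multiplier up to $\partial H$, and its modulus cannot converge to an interior value of $\D$ at a boundary parameter, for otherwise hyperbolicity would persist in a neighborhood of that parameter, contradicting $c\in\partial H$. Properness guarantees that $\eta_H(H)$ is closed in the model space. The heart of the matter is injectivity, and here the lack of holomorphic parameter dependence forces a quasiconformal/rigidity route rather than a complex-analytic degree count. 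Suppose $\eta_H(c_1)=\eta_H(c_2)$. Then the first-return maps of $f_{c_1}$ and $f_{c_2}$ on their critical value Fatou components are conformally conjugate by construction; spreading this conjugacy over the grand orbit of the Fatou components and matching it with the B\"ottcher conjugacy $\phi_{c_2}^{-1}\circ\phi_{c_1}$ on the basins of infinity produces a quasiconformal conjugacy between $f_{c_1}$ and $f_{c_2}$ that is conformal off the Julia set. Since the Julia set of a hyperbolic anti-polynomial has zero area and hence carries no invariant line field, the conjugacy is conformal, whence $c_1=c_2$. Combined with real-analyticity and invariance of domain this shows $\eta_H$ is open, and together with properness it is a homeomorphism onto a nonempty open and closed subset of the connected space $\mathcal{B}^-$ or $\mathcal{B}^+$, hence onto all of it; surjectivity can alternatively be obtained directly by realizing any prescribed target Blaschke product through quasiconformal surgery on a chosen $f_c\in H$. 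Feeding the resulting homeomorphism into Lemma~\ref{Blaschke} then yields the claimed structure of the Koenigs ratio map (a $3$-fold branched cover of $\D$ ramified only over the center $c_0$, in the odd case) and of the multiplier map (a real-analytic diffeomorphism onto $\D$, in the even case).

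\emph{Main obstacle.} I expect the decisive difficulty to be the injectivity step: in the anti-holomorphic setting one cannot invoke holomorphicity of the multiplier map to count degrees, so one must carefully construct the global quasiconformal conjugacy from the local Blaschke model and then invoke rigidity of hyperbolic maps. The odd-period case is the more delicate one, since the natural scalar invariant $\zeta_H$ is only three-to-one onto $\D$; it is precisely the finer Blaschke parameter $a$ recorded by $\eta_H$, and not $\zeta_H$ alone, that must be shown to determine $c$.
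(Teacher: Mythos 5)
The paper itself contains no proof of Theorem~\ref{hyp_unif}: it is background, quoted directly from \cite[Theorems~5.6 and~5.9]{NS}, so the closest internal point of comparison is the paper's adaptation of that argument to the family $\mathcal{S}$ in Theorem~\ref{unif_hyp_schwarz}. Your reconstruction rests on the same two pillars as the cited proof and as that adaptation: quasiconformal rigidity of hyperbolic parameters for injectivity, and quasiconformal surgery --- gluing an arbitrary element of $\mathcal{B}^{\pm}$ into the cycle of Fatou components containing the critical orbit --- for realization, after which Lemma~\ref{Blaschke} gives both ``in particular'' clauses exactly as you say. The only organizational difference is the direction of the construction: \cite{NS} (and this paper, for $\mathcal{S}$) build the inverse map $\mathcal{B}^{\pm}\to H$ by surgery and conclude via invariance of domain, whereas you run $\eta_H$ forward and obtain surjectivity from properness together with openness, keeping surgery only as a fallback; both work, and your route buys an explicit boundary statement (the cycle becomes indifferent as $c\to\partial H$) at the cost of an extra properness argument that the surgery route does not need. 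One step to tighten: in your injectivity argument, a homeomorphism that is conformal off the Julia set is not automatically quasiconformal, even when the Julia set has zero area, so the quasiconformality of the glued conjugacy cannot simply be asserted; the standard repair --- and the one this paper uses for its own rigidity statements, Propositions~\ref{rigidity_center} and~\ref{rigidity_hyp_prop} --- is the Pullback Argument, i.e.\ start from a single globally quasiconformal map agreeing with the Fatou-coordinate and B\"ottcher conjugacies on fundamental regions and lift it iteratively, so that a uniform dilatation bound is built in before the zero-area property upgrades the limit to a conformal conjugacy.
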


\subsubsection{Bifurcation from even period hyperbolic components} We will now review some facts about neutral parameters and boundaries of hyperbolic components of the Tricorn. The following proposition states that every neutral (in particular, parabolic) parameter lies on the boundary of a hyperbolic component of the same period (see \cite[Theorem~2.1]{MNS}).

\begin{proposition}[Neutral parameters on boundary]\label{ThmIndiffBdyHyp} 
If $f_{c_0}(z) = \overline{z}^2+c_0$ has an neutral periodic point of period $k$, then every neighborhood of $c_0$ contains parameters with attracting periodic points of period $k$, so the parameter $c_0$ is on the  boundary of a hyperbolic component of period $k$ of the Tricorn. 

Moreover, every neighborhood of $c_0$ contains parameters for which all period $k$ orbits are repelling. 
\end{proposition}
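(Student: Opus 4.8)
The plan is to lift the problem to the holomorphic biquadratic family $g_{a,b}(z) = (z^2+a)^2+b$, in which $f_c^{\circ 2} = g_{\overline{c},c}$, and to exploit that periodic points and their multipliers vary \emph{holomorphically} in $(a,b)$, even though the anti-holomorphic family sits along the totally real slice $\Delta = \{(\overline{c},c):c\in\C\}$, on which the dependence is only real-analytic. A neutral period $k$ cycle of $f_{c_0}$ corresponds to a cycle of $g_{\overline{c_0},c_0}$ of period $\frac{k}{2}$ if $k$ is even and of period $k$ if $k$ is odd (an odd cycle of $f_{c_0}$ being a single cycle of $f_{c_0}^{\circ 2}$, whereas an even one splits into two complex conjugate cycles). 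Since its multiplier has absolute value $1$, in particular it is nonzero, so the implicit function theorem in the holomorphic family yields, for $(a,b)$ near $(\overline{c_0},c_0)$, a holomorphically varying cycle with holomorphic multiplier $\mu(a,b)$ satisfying $|\mu(\overline{c_0},c_0)|=1$. Restricting to $\Delta$ produces the real-analytic function $M(c):=\mu(\overline{c},c)$ with $|M(c_0)|=1$, and the whole problem reduces to locating the values of $M$ relative to the unit circle near $c_0$, since attracting (resp.\ repelling) corresponds to $|M|<1$ (resp.\ $|M|>1$).

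The even and odd cases now diverge. The real differential is $dM = \mu_b\,dc + \mu_a\,d\overline{c}$, so $M$ is a local diffeomorphism exactly when its real Jacobian $|\mu_b|^2-|\mu_a|^2$ is nonzero. In the even period case $M(c)=\lambda_c$ is the genuinely complex multiplier of the holomorphic return map, and once one knows $|\mu_a|\neq|\mu_b|$ at $(\overline{c_0},c_0)$, the map $M$ is open and sends a neighborhood of $c_0$ onto a neighborhood of the boundary point $M(c_0)$ of $\D$; this image meets both $\{|M|<1\}$ and $\{|M|>1\}$, giving nearby attracting and nearby repelling parameters. In the odd period case, neutrality forces the multiplier of $f_c^{\circ 2k}$ to equal $1$, since it is always real, nonnegative, and equal to $\vert\frac{\partial f_c^{\circ k}}{\partial\overline{z}}(z_c)\vert^2$; hence $M$ is real valued, which imposes $\mu_a=\overline{\mu_b}$, so the Jacobian necessarily vanishes and $M$ can never be a local diffeomorphism. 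Instead I would show that $\{M=1\}$ is locally a smooth real-analytic arc (a \emph{parabolic arc}) by checking that $c_0$ is a regular point, i.e.\ $dM = 2\,\mathrm{Re}(\mu_b\,dc)\neq 0$, equivalently $\mu_b(\overline{c_0},c_0)\neq 0$; then $\{M<1\}$ (attracting) lies on one side of the arc and $\{M>1\}$ (repelling) on the other, again producing both types of nearby parameters.

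It remains to upgrade ``the tracked cycle is repelling'' to ``all period $k$ cycles are repelling'' for the second assertion. Here I would invoke unicriticality: a quadratic anti-polynomial has a single critical orbit and hence at most one non-repelling cycle, so at $c_0$ the neutral cycle is the unique non-repelling cycle and every other cycle is repelling. The finitely many period $k$ cycles move continuously with $c$, repelling cycles stay repelling under small perturbations, and no new cycle can appear; therefore choosing the perturbation on the repelling side of the tracked cycle makes all period $k$ cycles repelling. Combined with the previous paragraph (and Theorem~\ref{hyp_unif} to identify the attracting region as a genuine period $k$ hyperbolic component), this places $c_0$ on the boundary of such a component and establishes both statements, modulo the nondegeneracy inputs.

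The main obstacle is precisely these nondegeneracy statements: that $|\mu_a|\neq|\mu_b|$ at $(\overline{c_0},c_0)$ in the even case, and that $\mu_b\neq 0$ there in the odd case. This is where the lack of holomorphic dependence on $c$ genuinely bites, because it is the position of the totally real slice $\Delta$ relative to the holomorphic level hypersurface $\{|\mu|=1\}$ that must be controlled, and a naive transversality count does not suffice. I expect to settle it not by direct computation but by analyzing the holomorphic one-parameter sub-families transverse to $\Delta$ together with the holomorphic fixed point index at the parabolic cycle, which rules out that $\Delta$ is tangent to the unit-circle level set (even case) or that $M$ has a critical point on it (odd case).
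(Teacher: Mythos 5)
Your key step fails at exactly the parameters the proposition is about. The implicit function theorem for continuing a periodic orbit of $g_{a,b}$ requires the multiplier to be different from $1$ (the orbit is a root of $g_{a,b}^{\circ m}(z)-z$, whose $z$-derivative is $\mu-1$), not different from $0$ as you assert. This is fatal in the odd-period case: as recalled in Section~\ref{sec_tricorn} (see the discussion preceding Proposition~\ref{normalization of fatou} and Proposition~\ref{LemOddIndiffDyn}), a neutral cycle of odd period $k$ automatically has $(f_{c_0}^{\circ 2k})'(z_0)=\vert\partial_{\overline{z}}f_{c_0}^{\circ k}(z_0)\vert^2=+1$, i.e.\ it is parabolic with multiplier exactly $1$. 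Hence the cycle admits no holomorphic continuation to a neighborhood of $(\overline{c_0},c_0)$, your function $M$ simply does not exist near $c_0$, and the picture you propose --- a smooth arc $\{M=1\}$ with $\{M<1\}$ (attracting) on one side and $\{M>1\}$ (repelling) on the other --- is not only unproved but false: crossing a parabolic arc, an attracting and a repelling period-$k$ orbit (two fixed points of the antiholomorphic return map $f_c^{\circ k}$) merge and \emph{annihilate}, giving rise to a single period-$2k$ orbit; this is precisely the bifurcation described in Theorem~\ref{ThmBifArc}. In particular, on the far side of the arc there is no period-$k$ continuation at all, attracting or repelling. The same breakdown of the IFT occurs at even-period parabolic parameters whose multiplier equals $1$ (roots of primitive even-period components), so the parabolic case --- the heart of the proposition, since all odd-period neutral parameters are parabolic --- is left untreated.

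Even where the continuation does exist (even period, multiplier $\neq 1$), the proposal is not a proof: the nondegeneracy statements $\vert\mu_a\vert\neq\vert\mu_b\vert$ (resp.\ $\mu_b\neq 0$) are precisely the content one would need, and you explicitly defer them to an unspecified index argument; they also cannot be read off from Theorem~\ref{hyp_unif} without circularity, since one does not yet know that $c_0$ lies on the boundary of a hyperbolic component. The final step has a further gap: for antiholomorphic families the number of period-$k$ orbits is \emph{not} locally constant --- pairs of fixed points of $f_c^{\circ k}$ are created and destroyed under perturbation, as above --- so the claim that ``no new cycle can appear'' is false, and unicriticality alone does not rule out that your chosen perturbation creates a brand-new attracting period-$k$ cycle (this is exactly what happens when $c$ enters the component from outside). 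For comparison: the paper does not reprove this statement; it invokes \cite[Theorem~2.1]{MNS}, and, as it remarks when transferring the result to the family $\mathcal{S}$ (proof of Proposition~\ref{ThmIndiffBdyHyp_schwarz}), that proof relies only on \emph{local} dynamical properties of anti-holomorphic maps near neutral periodic points, i.e.\ on perturbation theory of (anti-holomorphic) neutral and parabolic cycles, not on transversality of a multiplier map along the anti-holomorphic slice. A repair of your approach would have to replace the IFT step by such parabolic perturbation arguments (or by a Douady--Hubbard style counting/contradiction argument), not merely supply the missing nondegeneracy.
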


Using Theorem~\ref{hyp_unif}, one can define internal rays of hyperbolic components of $\mathcal{T}$. If $H$ is a hyperbolic component of even period, then all internal rays of $H$ land \cite[Lemma~2.19]{IM2}. If $H$ does not bifurcate from a hyperbolic component of odd period, then the landing point of the internal ray at angle $0$ is a parabolic parameter with an even-periodic parabolic cycle. This parameter is called the \emph{root} of $H$.

The bifurcation structure of even period hyperbolic components of the Tricorn is analogous to that in the Mandelbrot set. The following theorem was proved in \cite[Theorem~1.1]{MNS}.

\begin{theorem}[Bifurcations from even periods]\label{ThmEvenBif} 
If a quadratic anti-polynomial $f_c$ has a $2k$-periodic cycle with multiplier 
$e^{2\pi ip/q}$ with $\mathrm{gcd}(p,q)=1$, then $c$ sits on the boundary of a hyperbolic component of 
period $2kq$ of the Tricorn (and is the root thereof). 
\end{theorem}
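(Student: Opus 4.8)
The plan is to reduce this bifurcation to a classical \emph{holomorphic} phenomenon by passing to the second iterate, and then to control the real-analytic parameter dependence by working inside the holomorphic biquadratic family. Let $c_0$ be the given parameter and let $\mathcal{O}$ be its $2k$-periodic cycle of multiplier $\omega = e^{2\pi i p/q}$. Since the period is even, the first-return map on the associated Fatou component is holomorphic; concretely, $N := f_{c_0}^{\circ 2k}$ is holomorphic near each point $z_0$ of $\mathcal{O}$ with a parabolic fixed point of multiplier $\omega$, a primitive $q$-th root of unity (using $\gcd(p,q)=1$). By the Leau--Fatou flower theorem, $N$ has $q\nu$ attracting petals at $z_0$, where $\nu$ is the parabolic multiplicity. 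I would first check $\nu = 1$, which holds because $c_0$ lies on the boundary of a single period-$2k$ hyperbolic component $H$ (Proposition~\ref{ThmIndiffBdyHyp}) whose multiplier map is locally injective (Theorem~\ref{hyp_unif}(2)). With $\nu=1$, the $q$ attracting petals are permuted cyclically by $N$, so any attracting cycle created in these petals under perturbation has period $q$ for $N$, i.e. period $2kq$ for $f_c$. This fixes the expected period.

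Next I would realize the satellite component inside a genuinely holomorphic family. Embed $\{f_c\}$ into the biquadratic family $g_{a,b}(z) = (z^2+a)^2 + b$, holomorphic in $(a,b)\in\C^2$, whose anti-holomorphic slice $\{a = \overline{b}\}$ (with $c=b$) recovers $f_c^{\circ 2}$. At $(a_0,b_0) = (\overline{c_0}, c_0)$ the map $g$ has a period-$k$ cycle of multiplier $\omega$. In this holomorphic two-parameter family, standard transversality of the multiplier map (holomorphic dependence together with the implicit function theorem applied to the periodic-point equation) shows that the locus $\mathcal{P}$ of parameters carrying a period-$k$ cycle of multiplier $\omega$ is a smooth complex hypersurface through $(a_0,b_0)$, and classical holomorphic parabolic-bifurcation theory (Douady--Hubbard) produces an open set $W\subset\C^2$, attached to $\mathcal{P}$ along a sector of transverse directions, on which $g_{a,b}$ carries an \emph{attracting} period-$kq$ cycle --- equivalently, a period-$2kq$ cycle for the relevant anti-holomorphic iterate.

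The main obstacle is to show that the real-analytic slice $\Sigma = \{a=\overline{b}\}$ actually penetrates $W$ with $(a_0,b_0)$ in its closure; this is precisely where the lack of holomorphic parameter dependence of $c\mapsto f_c$ bites. I would resolve it by a transversality/nondegeneracy argument exploiting the anti-holomorphic symmetry: $\Sigma$ is the fixed-point set of the anti-holomorphic involution $\iota(a,b)=(\overline{b},\overline{a})$, hence totally real, and $\mathcal{P}$ is $\iota$-invariant. Comparing the totally real tangent plane $T_{(a_0,b_0)}\Sigma$ with the complex tangent line $T_{(a_0,b_0)}\mathcal{P}$ shows that $\Sigma$ is not tangent to $\mathcal{P}$, so it acquires a nonzero component in the transverse directions that span the sector defining $W$. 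The quantitative heart of the matter --- that the transverse directions realized by $\Sigma$ genuinely fall inside the attracting sector --- is the crux, and I would establish it by computing the leading term of the multiplier of the continued period-$2kq$ cycle as $c$ moves off $c_0$, that is, a residue/parabolicity-index computation in the Fatou coordinate, with the involution $\iota$ pinning down the admissible direction. This yields a period-$2kq$ hyperbolic component $H'$ of $\mathcal{T}$ with $c_0 \in \partial H'$.

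Finally I would identify $c_0$ as the root of $H'$. As $c\to c_0$ within $H'$, the attracting period-$2kq$ cycle converges onto the parabolic cycle $\mathcal{O}$ (the $q$ petals collapse to $z_0$), so its multiplier tends to $1$ and $c_0$ is the landing point of the internal ray of $H'$ at angle $0$, which by the definition recalled above is exactly the root of $H'$. I expect the transversality and index estimate of the third paragraph to be the genuine difficulty; the remaining steps are either classical holomorphic parabolic bifurcation or bookkeeping of periods.
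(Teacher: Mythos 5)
The paper itself does not prove this theorem: it is quoted as background from \cite[Theorem~1.1]{MNS}, so there is no in-paper argument to compare against, and your proposal must stand on its own. It does not, for two reasons. First, the symmetry claim your transversality argument leans on is false for $q\geq 3$. Writing $s_a(z)=z^2+a$, $s_b(z)=z^2+b$, so that $g_{a,b}=s_b\circ s_a$, one checks $s_b\circ g_{b,a}=g_{a,b}\circ s_b$ (so $g_{a,b}$ and $g_{b,a}$ have the same cycle multipliers) and $g_{\iota(a,b)}(z)=g_{\overline{b},\overline{a}}(z)=\overline{g_{b,a}(\overline{z})}$; hence $\iota$ conjugates multipliers, and $\iota\bigl(\operatorname{Per}_k(\omega)\bigr)=\operatorname{Per}_k(\overline{\omega})\neq\operatorname{Per}_k(\omega)$ unless $\omega=\pm 1$. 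Indeed, at $(a_0,b_0)=(\overline{c_0},c_0)$ the holomorphic map $f_{c_0}^{\circ 2}$ has \emph{two} period-$k$ parabolic cycles with conjugate multipliers $\omega$ and $\overline{\omega}$ (the $2k$-cycle of $f_{c_0}$ splits), so two distinct parabolic hypersurfaces cross at $(a_0,b_0)$, and the picture of a single $\iota$-invariant hypersurface $\mathcal{P}$ with which you compare tangent planes is wrong from the start. (Two smaller issues in the same paragraph: smoothness of $\operatorname{Per}_k(\omega)$ requires nonvanishing of the differential of the multiplier, which is a transversality theorem, not an application of the implicit function theorem; and for $q=1$ the cycle does not persist holomorphically at all, so that case of the statement is not covered by your persistence argument. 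Also, $\nu=1$ is better justified by counting the two finite critical orbits of $f_{c_0}^{\circ 2}$ against the two parabolic cycles than by local injectivity of the multiplier map on $H$, which says nothing about the petal multiplicity at a boundary point.)

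Second, and decisively, the step you yourself call ``the crux'' --- that the totally real slice $\Sigma=\{a=\overline{b}\}$ actually enters the attracting region $W$, with $(a_0,b_0)$ in the closure of $\Sigma\cap W$ --- is not carried out, only announced as a ``residue/parabolicity-index computation'' to be done. Transversality of $\Sigma$ to the parabolic locus cannot suffice, because what matters is the \emph{direction} of the perturbation: one side of $\operatorname{Per}_k(\omega)$ produces an attracting $kq$-cycle, the other a repelling one, and a totally real plane transverse to the hypersurface may a priori meet only the repelling side near the base point. That this directional estimate is the entire content of the theorem is shown by the odd-period case of the same family: there the identical naive transfer of holomorphic bifurcation theory fails completely, since boundaries of odd-period components consist entirely of parabolic parameters and bifurcation occurs along arcs of positive length (Proposition~\ref{LemOddIndiffDyn}, Theorem~\ref{ThmBifArc}). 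So until that computation is performed --- which is in effect what \cite{MNS} does --- your text is a plan with a correct reduction (pass to $f_c^{\circ 2}$, embed in the biquadratic family, exploit the anti-holomorphic involution) but not a proof. The concluding root identification also silently uses landing of the angle-$0$ internal ray of $H'$ and uniqueness of its landing point, but that is minor by comparison.
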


\subsubsection{Bifurcation from odd period hyperbolic components}\label{bif_odd_per_hyp_subsubsec}

We now turn our attention to the odd period hyperbolic components of the Tricorn. One of the main features of anti-holomorphic parameter spaces is the existence of abundant parabolics. In particular, the boundaries of odd period hyperbolic components of the Tricorn consist only of parabolic parameters \cite[Lemma~2.5]{MNS}.

\begin{proposition}[Neutral dynamics of odd period]\label{LemOddIndiffDyn}  
The boundary of a hyperbolic component of odd period $k$ consists 
entirely of parameters having a parabolic orbit of exact period $k$. In suitable 
local conformal coordinates, the $2k$-th iterate of such a map has the form 
$z\mapsto z+z^{q+1}+\ldots$ with $q\in\{1,2\}$. 
\end{proposition}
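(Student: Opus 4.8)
The plan is to split the statement into two parts: first, that every point of $\partial H$ carries a parabolic cycle of exact period $k$; and second, that the local normal form of the $2k$-th iterate is $z\mapsto z+z^{q+1}+\cdots$ with $q\in\{1,2\}$. The first part is an elementary continuity-and-maximality argument about the multiplier; the second part is where the genuinely anti-holomorphic phenomenon lives, and it is there that I expect the main difficulty.

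For the first part, fix $c\in H$ and let $g=f_c^{\circ k}$ be the first return map on the characteristic Fatou component $U_c$, an anti-holomorphic map of degree two. If $z_c\in U_c$ is the attracting periodic point and $\lambda:=\frac{\partial g}{\partial\overline z}(z_c)$ is its anti-holomorphic multiplier, then by the computation recalled in Subsection~\ref{sec_hyp_Tricorn} the holomorphic map $f_c^{\circ 2k}=g^{\circ 2}$ has multiplier $|\lambda|^2$ at $z_c$, which is real and lies in $[0,1)$ throughout $H$. I would argue this multiplier extends continuously to $\overline H$ with value $1$ on $\partial H$: the attracting cycle varies continuously and cannot escape to $\infty$ (its points remain in the filled Julia set, which is disjoint from the basin of the super-attracting fixed point at $\infty$), so it persists to any $c_0\in\partial H$ as a genuine $k$-cycle with distinct points, and the limiting $f_{c_0}^{\circ 2k}$-multiplier is some value in $[0,1]$. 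If that value were $<1$, the cycle would still be attracting, placing $c_0$ in a period-$k$ hyperbolic component meeting $H$, hence in $H$ — contradicting $c_0\in\partial H$. So the limit is exactly $1$, and $f_{c_0}$ has a parabolic cycle of exact period $k$ whose $f_{c_0}^{\circ 2k}$-multiplier is $+1$.

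For the second part, I would work with the anti-holomorphic germ $g=f_{c_0}^{\circ k}$ at the characteristic parabolic point $z_0\in\partial U_c$. Since $|\lambda|=1$, a rotation $z\mapsto\mu z$ conjugates $g$ to a germ with anti-holomorphic multiplier $\overline{\mu}\,\mu^{-1}\lambda$, and choosing $\mu$ appropriately normalizes this to $+1$; thus we may assume $g(z)=\overline z+a_2\overline z^2+\cdots$ with parabolic point at the origin. A direct substitution then gives
\[
g^{\circ 2}(z)=z+2\re(a_2)\,z^2+\bigl(2\re(a_3)+2|a_2|^2\bigr)z^3+\cdots,
\]
exhibiting real low-order coefficients, which reflects the anti-holomorphic symmetry of $g^{\circ 2}$ (it commutes with $g$ and pins a local symmetry axis). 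In any case $g^{\circ 2}$ is a holomorphic parabolic germ with multiplier $1$, hence conformally conjugate to $z\mapsto z+z^{q+1}+\cdots$ for a well-defined integer $q\ge 1$ equal to its number of attracting petals at $z_0$. It remains to prove $q\in\{1,2\}$.

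The heart of the matter — and the step I expect to be the main obstacle — is the bound $q\le 2$. The anti-holomorphic symmetry is essential here: a priori $g^{\circ 2}$ is only a degree-four holomorphic map, and the bound is strictly stronger than what the degree of $g^{\circ 2}$ alone supplies, so it must exploit that $g^{\circ 2}$ carries the anti-holomorphic square root $g$. I would package this through the anti-Blaschke model $\mathcal{B}^-$ of Lemma~\ref{Blaschke}: there $z_0$ becomes a parabolic boundary fixed point of a degree-two anti-holomorphic proper self-map of $\D$ whose three boundary fixed points contribute to a holomorphic fixed-point/index count, while $g$ permutes the $q$ attracting petals of $g^{\circ 2}$ as an orientation-reversing involution fixing the characteristic petal (the one lying in $U_c$). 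Balancing the fixed-point indices against the degree-two valence of $g$ should force $q\in\{1,2\}$, with $q=2$ occurring only at the isolated cusp parameters. The delicate points I would watch are the rotational normalization of the anti-holomorphic multiplier to $+1$, the verification that every attracting petal at $z_0$ is accommodated inside $U_c$ (so the count is truly governed by the degree-two return map and by unicriticality of $f_{c_0}$), and the bookkeeping translating petals of the first return map $g$ into the dynamics of the full map $f_{c_0}$.
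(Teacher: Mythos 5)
Your decomposition is the natural one, and several pieces are fine: the normalization of the anti-holomorphic multiplier to $+1$, the computation of $g^{\circ 2}$, and the openness argument showing that the limiting multiplier of $f_{c_0}^{\circ 2k}$ equals $1$ \emph{provided} the cycle survives with distinct points. Note that the paper does not reprove this statement: Proposition~\ref{LemOddIndiffDyn} is quoted from \cite[Lemma~2.5]{MNS}, and the analogous Proposition~\ref{hyp_odd_parabolic} for $\mathcal{S}$ defers to the same proof. Measured against that argument, your proposal has two genuine gaps. First, the claim that the attracting cycle "persists to any $c_0\in\partial H$ as a genuine $k$-cycle with distinct points" because it stays bounded is not a proof, and the inference is false in the holomorphic model: for $z^2+c$ the attracting period-$2$ cycle stays bounded as $c\to-3/4$, yet its two points collide there and the parabolic orbit at the boundary has period $1$, not $2$ (this is what happens at every satellite root of the Mandelbrot set). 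Ruling out collision is precisely where anti-holomorphy must enter: if the points merged, the limit would be a cycle of odd period $m$ properly dividing $k$ which is a multiple fixed point of $f_{c_0}^{\circ 2k}$, so its $f_{c_0}^{\circ 2m}$-multiplier is a root of unity; being real and non-negative (your own oddness argument) it equals $+1$; and a parabolic germ $h(z)=z+az^{q+1}+\cdots$ with multiplier $+1$ cannot absorb orbits of higher period, because $h^{\circ j}(z)=z+jaz^{q+1}+\cdots$ has a fixed point of the \emph{same} multiplicity $q+1$ at the origin, so by Rouch\'e all fixed points of $h_t^{\circ j}$ near the origin, for any small perturbation $h_t$, are already fixed points of $h_t$. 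None of this appears in your write-up.

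Second, you never actually prove $q\le 2$: "balancing the fixed-point indices \ldots should force $q\in\{1,2\}$" is a plan, not an argument, and the structure it leans on is wrong — when $q=2$ the return map $f_{c_0}^{\circ k}$ does \emph{not} fix the characteristic petal, it swaps the two petals (this is why, as the paper records, the characteristic Fatou component at a cusp has period $2k$ rather than $k$). The missing ingredient is global, not local: every cycle of attracting petals attracts the forward orbit of a critical point (Fatou), and $f_{c_0}$ has a single critical point, so all petals along the parabolic orbit form a \emph{single} cycle under $f_{c_0}$; hence $g=f_{c_0}^{\circ k}$ acts \emph{transitively} on the $q$ petals at the characteristic parabolic point. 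At the same time, the permutation $g$ induces on these petals is an involution, because its square $f_{c_0}^{\circ 2k}$ maps each of its own attracting petals into itself. A transitive action of an involution forces $q\le 2$. Without this transitivity input coming from unicriticality, no index count at $z_0$ or inside a single Blaschke-modelled Fatou component can give the bound — $f_{c_0}^{\circ 2k}$ has degree $4^k$, so purely local data is compatible with many petals — and your proposal never supplies that input.
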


This leads to the following classification of odd periodic parabolic points.

\begin{definition}\label{DefCusp}
A parameter $c$ will be called a {\em parabolic cusp} if it has a parabolic 
periodic point of odd period such that $q=2$ in the previous proposition. Otherwise, it is called a \emph{simple} parabolic parameter.
\end{definition}

In holomorphic dynamics, the local dynamics in attracting petals of parabolic periodic points is well-understood: there is a local coordinate $\psi^{\mathrm{att}}$ which conjugates the first-return dynamics to translation by $+1$ in a right half plane \cite[\S 10]{M1new}. Such a coordinate $\psi^{\mathrm{att}}$ is called a \emph{Fatou coordinate}. Thus, the quotient of the petal by the dynamics is isomorphic to a bi-infinite cylinder, called the \emph{{\'E}calle cylinder}. Note that Fatou coordinates are uniquely determined up to addition of a complex constant. 

In anti-holomorphic dynamics, the situation is at the same time restricted and richer. Since the real eigenvalues of an anti-holomorphic map at a neutral fixed point are $\pm 1$, neutral dynamics of odd period is always parabolic. In particular, for a neutral periodic point of odd period $k$, the $2k$-th iterate is holomorphic with multiplier $+1$. On the other hand, additional structure is given by the anti-holomorphic intermediate iterate. 

\begin{proposition}[Fatou coordinates] \cite[Lemma~2.3]{HS}\label{normalization of fatou}
Suppose $z_0$ is a parabolic periodic point of odd period $k$ of $f_c$ with only one petal (i.e.\ $c$ is not a cusp), and $U$ is a periodic Fatou component with $z_0 \in \partial U$. Then there is an open subset $V \subset U$ with $z_0 \in \partial V$, and $f_c^{\circ k} (V) \subset V$ so that for every $z \in U$, there is an $n \in \mathbb{N}$ with $f_c^{\circ nk}(z)\in 
V$. Moreover, there is a univalent map $\psi^{\mathrm{att}} \colon V \to \mathbb{C}$ with $\psi^{\mathrm{att}}(f_c^{\circ k}(z)) = \overline{\psi^{\mathrm{att}}(z)}+1/2$, and $\psi^{\mathrm{att}}(V)$ contains a right half plane. This map $\psi^{\mathrm{att}}$ is unique up to horizontal translation. 
\end{proposition}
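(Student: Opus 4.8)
The plan is to reduce the anti-holomorphic functional equation to the classical (holomorphic) Fatou coordinate for the second iterate, and then extract the anti-holomorphic symmetry from the uniqueness of Fatou coordinates. Write $h := f_c^{\circ k}$ (anti-holomorphic) and $g := f_c^{\circ 2k} = h \circ h$ (holomorphic). Since $c$ is not a cusp, Proposition~\ref{LemOddIndiffDyn} tells us that $g$ has a parabolic fixed point at $z_0$ with multiplier $+1$ and exactly one attracting petal. First I would invoke the classical Leau--Fatou flower theorem \cite[\S 10]{M1new}: there is an attracting petal $V$ together with a univalent Fatou coordinate $\Phi \colon V \to \C$ satisfying $\Phi(g(z)) = \Phi(z) + 1$, with $\Phi(V)$ containing a right half plane, and $\Phi$ unique up to an additive complex constant. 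Because $h$ commutes with $g$, fixes $z_0$, and there is only one attracting petal, $h$ permutes the $g$-petals and hence preserves this one; after shrinking $V$ to an $h$-invariant sub-petal we may assume $h(V) \subset V$, and since $U$ is the immediate parabolic basin, every $z \in U$ has some iterate $h^{\circ n}(z) \in V$.

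The key step is the half-iterate trick. Consider $\overline{\Phi \circ h}$, which is holomorphic because $h$ is anti-holomorphic. Using $g \circ h = h \circ g$ and the functional equation for $\Phi$, one computes
\begin{equation*}
\overline{\Phi(h(g(z)))} = \overline{\Phi(g(h(z)))} = \overline{\Phi(h(z)) + 1} = \overline{\Phi(h(z))} + 1,
\end{equation*}
so $\overline{\Phi \circ h}$ is again a Fatou coordinate for $g$. By uniqueness there is a constant $\beta \in \C$ with $\overline{\Phi(h(z))} = \Phi(z) + \beta$, equivalently $\Phi(h(z)) = \overline{\Phi(z)} + \overline{\beta}$. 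Applying this relation twice and comparing with $\Phi(g(z)) = \Phi(z) + 1$ forces $\beta + \overline{\beta} = 1$, i.e.\ $\re \beta = 1/2$.

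It then remains to normalize the additive freedom. Setting $\psi^{\mathrm{att}} := \Phi + \gamma$ and substituting into $\Phi(h(z)) = \overline{\Phi(z)} + \overline{\beta}$ turns the relation into $\psi^{\mathrm{att}}(h(z)) = \overline{\psi^{\mathrm{att}}(z)} + (\overline{\beta} + \gamma - \overline{\gamma})$; since $\re \beta = 1/2$, choosing $\gamma$ purely imaginary with $\im\gamma = \tfrac{1}{2}\im\beta$ makes the constant equal to $1/2$, yielding $\psi^{\mathrm{att}}(f_c^{\circ k}(z)) = \overline{\psi^{\mathrm{att}}(z)} + 1/2$, while $\psi^{\mathrm{att}}(V)$ still contains a right half plane. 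For uniqueness, if two such coordinates $\psi_1, \psi_2$ both satisfy the anti-holomorphic functional equation, then each conjugates $g$ to translation by $+1$ (apply the equation twice), so $\psi_2 = \psi_1 + \gamma$ for some constant $\gamma$; substituting back gives $\gamma = \overline{\gamma}$, i.e.\ $\gamma \in \R$, which is exactly a horizontal translation.

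I expect the only genuinely delicate point to be the geometric bookkeeping in the first paragraph --- verifying that $h$ (and not merely $g$) preserves a single attracting sub-petal $V$ on which all the functional equations hold simultaneously, and that $V$ absorbs the immediate basin $U$ --- whereas the algebra extracting $\re\beta = 1/2$ and the normalization of $\gamma$ are routine consequences of the uniqueness of the holomorphic Fatou coordinate.
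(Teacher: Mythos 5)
Your argument is correct, and it coincides with the proof of the result that the paper itself cites rather than proves (\cite[Lemma~2.3]{HS}): one passes to the classical Fatou coordinate $\Phi$ of the holomorphic return map $f_c^{\circ 2k}$, observes that $\overline{\Phi\circ f_c^{\circ k}}$ is again such a coordinate so that uniqueness gives $\overline{\Phi(f_c^{\circ k}(z))}=\Phi(z)+\beta$, composes twice to force $\re\beta=1/2$, and removes $\im\beta$ by an imaginary translation, leaving real translations as the residual freedom. The petal bookkeeping you flag (an $f_c^{\circ k}$-invariant sub-petal absorbing $U$) is handled the same way in that source; for instance one may take $V=P\cap f_c^{\circ k}(P)$ for a small attracting petal $P$ of $f_c^{\circ 2k}$, whose forward invariance under $f_c^{\circ k}$ follows from $f_c^{\circ 2k}(P)\subset P$ together with commutativity.
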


\begin{remark}
Note that the above proposition applies more generally to anti-holomorphic neutral periodic points such that the attracting petal(s) has (have) odd period.
\end{remark}

The map $\psi^{\mathrm{att}}$ will be called an  \emph{anti-holomorphic Fatou coordinate} for the petal $V$. The anti-holomorphic iterate interchanges both ends of the {\'E}calle cylinder, so it must fix one horizontal line around this cylinder (the \emph{equator}). The change of coordinate has been so chosen that the equator is the projection of the real axis.  We will call the vertical Fatou coordinate the \emph{{\'E}calle height}. The {\'E}calle height vanishes precisely on the equator. Of course, the same can be done in the repelling petal as well. We will refer to the equator in the attracting (respectively repelling) petal as the attracting (respectively repelling) equator. The existence of this distinguished real line, or equivalently an intrinsic meaning to {\'E}calle height, is specific to anti-holomorphic maps. 

The {\'E}calle height of the critical value plays a special role in anti-holomorphic dynamics. The next theorem, which is proved in  \cite[Theorem~3.2]{MNS}, proves the existence of real-analytic arcs of simple parabolic parameters on the boundaries of odd period hyperbolic components of the Tricorn.

\begin{theorem}[Parabolic arcs]\label{parabolic arcs}
Let $\widetilde{c}$ be a simple parabolic parameter of odd period. Then $\widetilde{c}$ is on a parabolic arc in the  following sense: there exists a real-analytic arc $\mathscr{C}$ of simple parabolic parameters $c(h)$ (for $h\in\mathbb{R}$) with quasiconformally equivalent but conformally distinct dynamics of which $\widetilde{c}$ is an interior point, and the {\'E}calle height of the critical value of $f_{c(h)}$ is $h$. 
\end{theorem}

The real-analytic arc of simple parabolic parameters constructed in the previous theorem is called a \emph{parabolic arc}, and the real-analytic map $c:\R\to\mathscr{C}$ is called it \emph{critical {\'E}calle height parametrization}.

\begin{remark}[Queer arcs]\label{queer_arc_rem}
It is worth mentioning that most of the topological differences between the Mandelbrot set and the Tricorn arise from the existence of quasiconformally conjugate parabolic parameters on the boundary of the Tricorn (while no two distinct parameters on the boundary of the Mandelbrot set are quasiconformally conjugate; compare Theorem~\ref{parabolic arcs} and \cite[Chapter~I, Proposition~7]{DH2}). We do not know whether there are any non-trivial quasiconformal conjugacy classes on the boundary of the Tricorn other than odd period parabolic arcs. This question has connections with the ``no invariant line fields" conjecture; in particular, non-existence of invariant line fields would imply that the parabolic arcs are the only non-trivial quasiconformal conjugacy classes on the boundary of $\mathcal{T}$.
\end{remark}

Let $f : U \rightarrow \mathbb{C}$ be a holomorphic function on a connected open set $U\ \left(\subset \mathbb{C}\right)$, and $\widehat{z}\in U$ be an isolated fixed point of $f$. Then, the residue fixed point index of $f$ at $\widehat{z}$ is defined to be the complex number

\begin{align*}
\displaystyle \iota(f, \widehat{z}) &= \frac{1}{2\pi i} \oint \frac{dz}{z-f(z)}.
\end{align*}
where we integrate in a small loop in the positive direction around $\widehat{z}$. If the multiplier $\mu:=f'(\widehat{z})$ is not equal to $+1$, then a simple computation shows that $\iota(f, \widehat{z}) = 1/(1-\mu)$. If $z_0$ is a parabolic fixed point with multiplier $+1$, then in local holomorphic coordinates the map can be written as $f(w) = w + w^{q+1} + \alpha w^{2q+1} + \cdots$ (putting $\widehat{z}=0$). A simple calculation shows that $\alpha$ equals the parabolic fixed point index. It is easy to see that the fixed point index does not depend on the choice of complex coordinates, so it is a conformal invariant (compare \cite[\S 12]{M1new}). 

By the fixed point index of a periodic orbit of odd period of $f_c$, we will mean the holomorphic fixed point index of the second iterate $f_c^{\circ 2}$ at that periodic orbit.

Let $\mathscr{C}$ be a parabolic arc of odd period $k$ and $c:\R\to\mathscr{C}$ be its critical {\'E}calle height parametrization (compare Theorem~\ref{parabolic arcs}). For any $h$ in $\mathbb{R}$, let us denote the residue fixed point index of the unique parabolic cycle of $f_{c(h)}^{\circ 2}$ by $\ind_{\mathscr{C}}(f_{c(h)}^{\circ 2})$. This defines a function $$\ind_{\mathscr{C}}: \mathbb{R}\to\mathbb{C},\ h\mapsto \ind_{\mathscr{C}}(f_{c(h)}^{\circ 2}).$$

Every parabolic arc limits at a parabolic cusp (of the same period) on each end. Moreover, in the dynamical plane of a parabolic cusp, the double parabolic points are formed by the merger of a simple parabolic point with a repelling point. The sum of the fixed point indices at the simple parabolic point and the repelling point converges to the fixed point index (which is necessarily a finite number) of the double parabolic point of the cusp parameter. This observation leads to the following asymptotic behavior of the parabolic fixed point index towards the ends of parabolic arcs (see \cite[Proposition~3.7]{HS} for a proof).

\begin{proposition}[Fixed point index on parabolic arc]\label{index goes to infinity}
The function $\ind_{\mathscr{C}}$ is real-valued and real-analytic. Moreover, $$\lim_{h\to\pm\infty}\ind_{\mathscr{C}}(h)=+\infty.$$
\end{proposition}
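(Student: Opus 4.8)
The plan is to establish the three assertions in turn, using the anti-holomorphic symmetry of the parabolic return map as the main tool. Fix the odd period $k$, and for a parameter $c(h)$ on the arc $\mathcal{C}$ let $z_0 = z_0(h)$ be a point of the parabolic cycle. Write $g := f_{c(h)}^{\circ k}$ for the (anti-holomorphic) first return of $f_{c(h)}$ to the cycle, and $F := f_{c(h)}^{\circ 2k} = g^{\circ 2}$ for the holomorphic first return of $f_{c(h)}^{\circ 2}$; by definition $\ind_{\mathcal{C}}(h) = \iota(F, z_0)$. Since the cycle has \emph{odd} period $k$, the map $g$ fixes $z_0$ and commutes with $g^{\circ 2} = F$, so $g$ conjugates $F$ to itself. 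To get that $\ind_{\mathcal{C}}$ is real-valued I would invoke the elementary fact that conjugating a holomorphic germ by an anti-holomorphic local diffeomorphism complex-conjugates its residue fixed point index: decomposing $g$ as (complex conjugation) $\circ$ (a holomorphic germ), the index is invariant under the holomorphic factor (it is a holomorphic conjugacy invariant, as recorded before Proposition~\ref{index goes to infinity}) while conjugation by $z \mapsto \overline{z}$ replaces $\iota$ by $\overline{\iota}$. Applying this with the anti-holomorphic $g$ fixing $z_0$ and satisfying $g F g^{-1} = F$ yields $\iota(F, z_0) = \overline{\iota(F, z_0)}$, hence $\ind_{\mathcal{C}}(h) \in \R$.

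For real-analyticity I would use that, by Theorem~\ref{parabolic arcs}, the parametrization $c : \R \to \mathcal{C}$ is real-analytic, and that along $\mathcal{C}$ the parabolic cycle persists, so $z_0(h)$ moves real-analytically (implicit function theorem for the persistent parabolic cycle, using that $f_c$ depends real-analytically on $c$). Writing $\ind_{\mathcal{C}}(h) = \frac{1}{2\pi i}\oint \frac{dz}{z - F_h(z)}$ over a small loop around $z_0(h)$ that can be kept fixed for $h$ in a neighborhood, the integrand varies real-analytically with $h$, so the integral is real-analytic; combined with reality this gives a real-analytic function $\R \to \R$.

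For the limit I would argue as follows, treating $h \to +\infty$ (the case $h \to -\infty$ being symmetric). As $h \to +\infty$ the parameter $c(h)$ converges to the parabolic cusp at the corresponding end of the arc (Definition~\ref{DefCusp}); near the cycle the cusp's return map has a triple fixed point (normal form $z \mapsto z + z^{3} + \cdots$, i.e. $q=2$), which for $c(h)$ on the arc splits into the simple parabolic point $z_0(h)$ (a double fixed point, $q=1$) and a repelling fixed point $z_1(h)$, both tending to the double parabolic point. The crucial observation is that $g$ permutes the fixed points of $F$ and fixes $z_0(h)$, hence also fixes $z_1(h)$; writing $g(z) = \mu\overline{z} + \cdots$ near $z_1$, the multiplier of $F = g^{\circ 2}$ at $z_1$ is $\lambda_1 = |\mu|^2 > 0$, so it is real and positive. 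Since $z_1(h)$ is repelling and merges with a multiplier-one point, $\lambda_1(h) \to 1^+$, whence $\iota(F_h, z_1(h)) = 1/(1 - \lambda_1(h)) \to -\infty$. Meanwhile the total index over a fixed loop enclosing both $z_0(h)$ and $z_1(h)$, namely $\ind_{\mathcal{C}}(h) + \iota(F_h, z_1(h))$, converges (continuity of $\frac{1}{2\pi i}\oint \frac{dz}{z - F(z)}$, no fixed point crossing the loop) to the finite index of the cusp's double parabolic point. Subtracting gives $\ind_{\mathcal{C}}(h) \to +\infty$.

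The part I expect to be the main obstacle is the reality and one-sided approach of the merging multiplier, i.e. proving $\lambda_1(h) \to 1^{+}$ with $\lambda_1 = |\mu|^2 > 1$. A priori a repelling fixed point of a holomorphic family approaching multiplier $1$ could do so along any direction in $\C$, and it is precisely the anti-holomorphic symmetry (that the commuting anti-holomorphic map $g$ fixes $z_1$) that forces the multiplier onto the positive real axis and onto the correct side. Care is also needed to confirm that $\mathcal{C}$ genuinely limits at cusps on both ends and that the cusp index is finite, for which I would rely on the local normal form $z \mapsto z + z^{3} + \cdots$ at the cusp together with the merging description preceding the proposition.
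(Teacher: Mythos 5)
Your proposal follows essentially the same route as the paper, which states the merger picture and defers the proof to \cite[Proposition~3.7]{HS}: reality of the index comes from the commuting anti-holomorphic return map, and the blow-up of the index comes from the merger of the simple parabolic point with a repelling fixed point at the cusp, comparing the divergent term $\iota(F_h,z_1(h))=1/(1-\lambda_1(h))\to-\infty$ with the convergent total index over a fixed loop. Two of your justifications, however, need repair, though neither affects the architecture of the argument.

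First, you cannot continue the parabolic cycle by the implicit function theorem: at a parabolic point the relevant linearization $\mathrm{id}-dF_h$ is singular precisely because the multiplier equals $1$ (and $\mathrm{id}-dg$ is singular as well, since the real-linear map $dg$ has eigenvalues $\pm 1$ there). Parabolic points are exactly the degenerate case that the IFT excludes. The persistence and real-analytic motion of the cycle along $\mathcal{C}$ come instead from the construction of the arc in Theorem~\ref{parabolic arcs}: the maps $f_{c(h)}$ are quasiconformal deformations of one another, with conjugacies depending real-analytically on $h$ (measurable Riemann mapping theorem with parameters) and carrying parabolic cycle to parabolic cycle; alternatively, your fixed-loop contour integral only requires that the double fixed point neither splits nor leaves the loop for nearby $h$, which again holds because every parameter of $\mathcal{C}$ is a simple parabolic of the same odd period. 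Second, the anti-holomorphic symmetry forces $\lambda_1=\vert\mu\vert^2$ to be real and positive, but it does not by itself place $\lambda_1$ on the repelling side of $1$: to exclude $\lambda_1<1$ you must note that $f_{c(h)}$ cannot carry an attracting cycle in addition to its parabolic one, since its unique critical orbit is already absorbed by the parabolic basin, while $\lambda_1=1$ is excluded because $z_1(h)$ is a simple fixed point of $F_h$ (the total multiplicity $3$ inside the loop is exhausted by the double point $z_0(h)$ plus $z_1(h)$). With these two repairs your argument is complete and coincides with the cited one.
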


Note that in the Mandelbrot set, bifurcation from one hyperbolic component to another occurs across a single point. The following theorem is one of the instances of the topological differences between the Mandelbrot set and the Tricorn \cite[Theorem~3.8, Corollary~3.9]{HS}, \cite[Lemma~2.12]{IM2} (see Figure~\ref{bif_pic}).  

\begin{figure}[ht!]
\captionsetup{width=0.96\linewidth}
\includegraphics[scale=0.34]{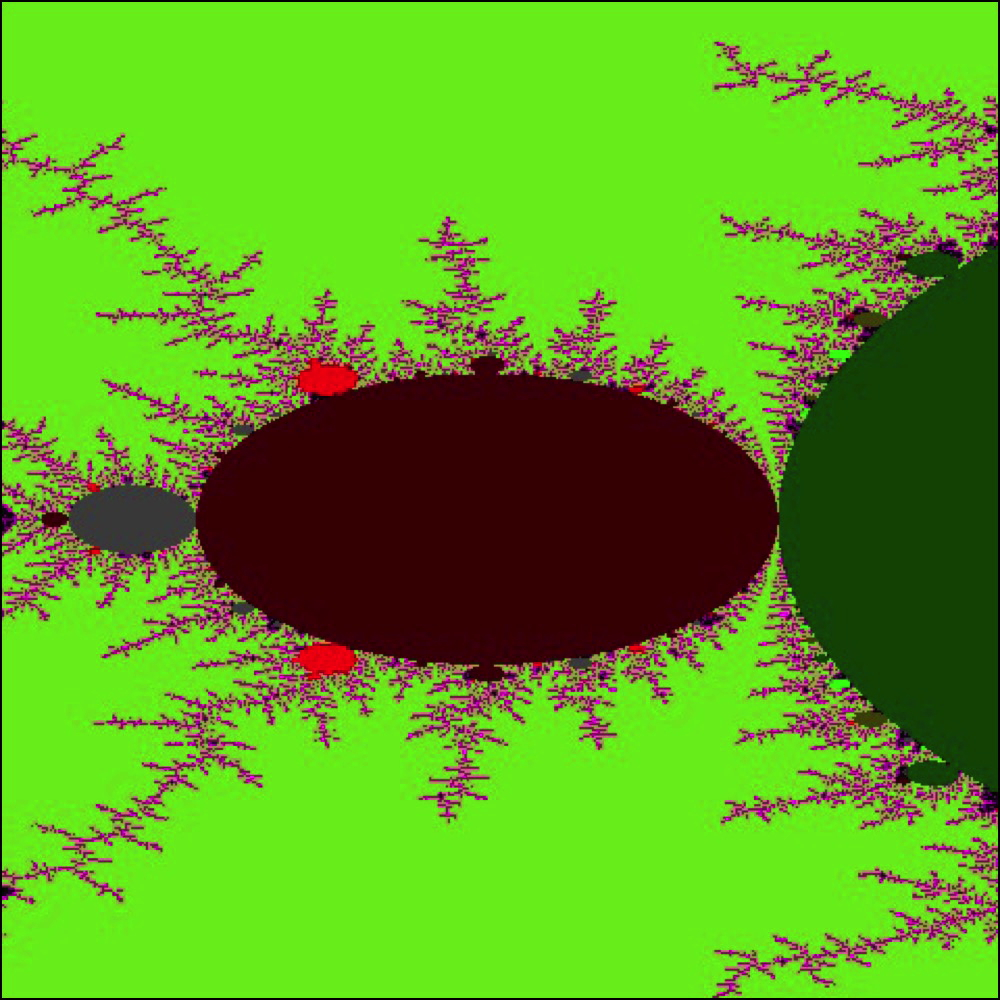} \includegraphics[scale=0.32]{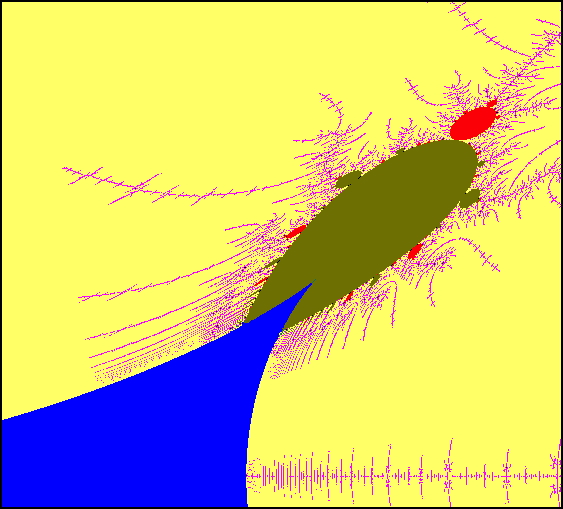} 
\caption{Left: A hyperbolic component of even period bifurcating from another hyperbolic component of even period across a point. Right: A hyperbolic component of even period bifurcating from a hyperbolic component of odd period across arcs.}
\label{bif_pic}
\end{figure}

\begin{theorem}[Bifurcations along arcs]\label{ThmBifArc}
Every parabolic arc of period $k$ intersects the boundary of a hyperbolic component of period $2k$ along an arc consisting of the set of parameters where the parabolic fixed point index is at least $1$. In particular, every parabolic arc has, at both ends, an interval of positive length at which bifurcation from a hyperbolic component of odd period $k$ to a hyperbolic component of period $2k$ occurs.
\end{theorem}

Here is a brief dynamical description of the nature of bifurcation across an odd period hyperbolic component. As a parameter approaches a parabolic arc from the interior of a hyperbolic component of odd period $k$, the corresponding attracting $k$-cycle tends to merge with a repelling $k$-cycle (lying on the boundary of the immediate basin of the attracting $k$-cycle). This produces the simple parabolic $k$-cycle for parameters lying on the parabolic arc. When such a parabolic parameter (lying on the parabolic arc) with fixed point index greater than $1$ (respectively, less than $1$) is slightly perturbed outside the $k$-periodic hyperbolic component, the parabolic $k$-cycle splits into an attracting (respectively, repelling) cycle of period $2k$.

Similarly, as a parameter approaches a parabolic cusp from the interior of a hyperbolic component of odd period $k$, the corresponding attracting $k$-cycle tends to merge with two distinct repelling $k$-cycles (both lying on the boundary of the immediate basin of the attracting $k$-cycle). This produces the double parabolic $k$-cycle for a parabolic cusp. When the parabolic cusp is slightly perturbed outside the $k$-periodic hyperbolic component, the double parabolic $k$-cycle splits into an attracting cycle of period $2k$ and a repelling cycle of period $k$.

To conclude this subsection, note that we have associated two important conformal invariants with odd period parabolic parameters; namely, the residue fixed point index of its parabolic cycle and the critical {\'E}calle height. There is no known explicit relation between these two invariants. However, some partial information is collected in the following proposition \cite[Corollary~2.21]{IM2}. 

We can assume without loss of generality that the set of parameters on $\mathscr{C}$ across which bifurcation from $H$ to a hyperbolic component $H'$ (of period $2k$) occurs is precisely $c[h_0,+\infty)$; i.e.\ $\mathscr{C}\cap \partial H'= c[h_0,+\infty)$.

\begin{proposition}\label{index_increasing_1}
The function
\begin{center}
$\begin{array}{rccc}
  \ind_{\mathscr{C}}: & [h_0,+\infty) & \to & [1,+\infty) \\
  & h & \mapsto & \ind_{\mathscr{C}}(f_{c(h)}^{\circ 2}).\\
   \end{array}$
  \end{center}
is strictly increasing, and hence a bijection. In particular, the bifurcating region $c[h_0,+\infty)$ can be parametrized by the fixed point index of the unique parabolic cycle.
\end{proposition}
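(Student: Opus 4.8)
The plan is to reduce the statement to a single derivative estimate and then extract that estimate from the local theory of the bifurcation. First note that both endpoint values are already available: by Theorem~\ref{ThmBifArc} the bifurcation locus $\mathcal{C}\cap\partial H'=c[h_0,+\infty)$ is exactly the set $\{h:\ind_{\mathcal{C}}(h)\ge 1\}$ at this end, so $\ind_{\mathcal{C}}(h)\ge 1$ throughout and $\ind_{\mathcal{C}}(h_0)=1$ by continuity (since $\ind_{\mathcal{C}}<1$ just below $h_0$), while Proposition~\ref{index goes to infinity} gives $\ind_{\mathcal{C}}(h)\to+\infty$ as $h\to+\infty$. Since $\ind_{\mathcal{C}}$ is real-analytic (again Proposition~\ref{index goes to infinity}), its derivative is continuous; hence it suffices to show $\ind_{\mathcal{C}}'(h)\ne 0$ for every $h\in(h_0,+\infty)$. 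Indeed, a continuous nonvanishing derivative has constant sign by the intermediate value theorem, and the sign must be positive because $\ind_{\mathcal{C}}(h_0)=1$ while $\ind_{\mathcal{C}}\to+\infty$; strict monotonicity together with the two endpoint values then makes $\ind_{\mathcal{C}}\colon[h_0,+\infty)\to[1,+\infty)$ a bijection.

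The key tool I would use to understand $\ind_{\mathcal{C}}$ near the arc is an identity relating it to the multiplier of the bifurcating even-period component $H'$. Fix $c\in H'$ and let $z_0$ be a point of the parabolic $k$-cycle of the boundary map $f_{c(h)}$. As $c$ enters $H'$, the double fixed point $z_0$ of $f_{c(h)}^{\circ 2k}$ splits into two simple fixed points of $f_c^{\circ 2k}$, namely the two points $q_j,q_{j+k}$ of the attracting $2k$-cycle that collapse onto $z_0$ (they collapse onto the same parabolic point because $f_c^{\circ k}$ shifts the cycle by $k$ and fixes each parabolic point). Under $f_c^{\circ 2}$ the $2k$-cycle splits into an even- and an odd-indexed $k$-cycle with complex conjugate multipliers; since $k$ is odd, $j$ and $j+k$ have opposite parities, so $q_j$ and $q_{j+k}$ lie in these two conjugate sub-cycles and carry multipliers $\lambda_c$ and $\overline{\lambda_c}$. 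Integrating $\frac{dz}{z-f_c^{\circ 2k}(z)}$ over a fixed small loop around $z_0$ and using $\iota(f,\hat{z})=1/(1-\lambda)$, I get for the total residue index inside the loop the exact identity $\frac{1}{1-\lambda_c}+\frac{1}{1-\overline{\lambda_c}}=2\re\frac{1}{1-\lambda_c}$, a quantity continuous in $c$ that converges to $\ind_{\mathcal{C}}(h)$ as $c\to c(h)$. Thus $\ind_{\mathcal{C}}(h)=\lim_{c\to c(h)}2\re\frac{1}{1-\lambda_c}$, and since $\re\frac{1}{1-\lambda}>\tfrac12$ on $\mathbb{D}$ this also re-derives $\ind_{\mathcal{C}}\ge 1$ on the arc. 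Crucially, the level sets of $\lambda\mapsto 2\re\frac{1}{1-\lambda}$ on $\mathbb{D}$ are precisely the horocycles internally tangent to $\partial\mathbb{D}$ at $\lambda=1$, so the index records which horocycle the multiplier approaches $1$ along as $c\to c(h)$.

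With this dictionary in place, strict monotonicity becomes a statement about the boundary behavior of the multiplier map. By Theorem~\ref{hyp_unif}(2), $\lambda_{H'}\colon H'\to\mathbb{D}$ is a real-analytic diffeomorphism, and the arc $c[h_0,+\infty)$ is a real-analytic boundary arc that $\lambda_{H'}$ crushes to the single point $\lambda=1$; the content of the proposition is that the companion horocyclic coordinate $\ind_{\mathcal{C}}$ resolves this collapse monotonically, sweeping out $[1,+\infty)$ exactly once. To prove $\ind_{\mathcal{C}}'\ne 0$ I would pass to the local normal form of the parabolic bifurcation at $c(h)$: writing the first-return map of $f_c^{\circ 2k}$ in a Fatou coordinate, one obtains an asymptotic expansion of $\lambda_c$ as $c$ enters $H'$ transversally to the arc, from which the approaching horocycle — and hence $\ind_{\mathcal{C}}(h)$ — is read off, and one checks that its variation as $c(h)$ moves along the arc is sign-definite.

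The hard part is exactly this last step, for two reasons. First, the identity only pins down $\ind_{\mathcal{C}}$ as a boundary limit, so a pointwise expansion at a single $c(h)$ does not suffice; one needs the parabolic-implosion estimate for $\lambda_c$ to be uniform and real-analytic in $h$ along the entire open arc, which means tying the multiplier expansion to the critical Ecalle height parametrization $c(\cdot)$ supplied by Theorem~\ref{parabolic arcs}. Second, monotonicity cannot be deduced from the conformal invariants alone: the index and the critical Ecalle height are a priori independent (there is no known relation between them, cf.\ the discussion preceding Proposition~\ref{index_increasing_1}), so two distinct heights could in principle share an index, and ruling this out is precisely what the horocyclic boundary analysis must accomplish. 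Once $\ind_{\mathcal{C}}'\ne 0$ on $(h_0,+\infty)$ is established, the reduction in the first paragraph finishes the proof.
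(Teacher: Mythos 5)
Your proposal is not a complete proof: its decisive step is announced but never carried out, and everything you do establish is already contained in the results quoted around the statement. (The paper itself gives no proof of Proposition~\ref{index_increasing_1}; it is imported from the literature as \cite[Corollary~2.10]{IM2}, and its analogue for $\mathcal{S}$, Proposition~\ref{index_increasing}, is likewise proved by reference to that source --- so the question is whether your argument stands on its own.) Your preparation is correct: the endpoint values $\ind_{\mathcal{C}}(h_0)=1$ and $\ind_{\mathcal{C}}(h)\to+\infty$, the identity $\ind_{\mathcal{C}}(h)=\lim_{H'\ni c\to c(h)}\left(\frac{1}{1-\lambda_c}+\frac{1}{1-\overline{\lambda_c}}\right)$ obtained from continuity of the total fixed point index (this is exactly the relation the paper itself uses later, in the proof of Proposition~\ref{uniform}), and the identification of the index level sets with horocycles at $\lambda=1$. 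But the entire content of the proposition is the injectivity of $h\mapsto\ind_{\mathcal{C}}(h)$ on $[h_0,+\infty)$: the endpoint values, real-analyticity and divergence are already supplied by Theorem~\ref{ThmBifArc} and Proposition~\ref{index goes to infinity}. For this core step you write that you ``would pass to the local normal form,'' obtain an asymptotic expansion of $\lambda_c$, and ``check that its variation as $c(h)$ moves along the arc is sign-definite'' --- and then you yourself label this ``the hard part'' and explain why it is difficult, without resolving it. That is a plan, not a proof.

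Two remarks on why the gap cannot be filled along the lines you indicate. First, your reduction to $\ind_{\mathcal{C}}'\neq 0$ overshoots: a strictly increasing real-analytic function may have isolated critical points, so non-vanishing of the derivative is strictly stronger than the proposition and is not implied by it; the correct target is injectivity. Second, injectivity is a global statement comparing distinct points of the arc, and no pointwise expansion at a single $c(h)$ can yield it --- as the paper emphasizes, there is no known formula tying the fixed point index to the critical Ecalle height, so there is no local expression whose ``sign-definite variation'' one could compute. What closes the gap is global information about the bifurcated component $H'$, on top of the dictionary you set up: since $\lambda_{H'}\colon H'\to\D$ is a real-analytic diffeomorphism (Theorem~\ref{hyp_unif}), the preimages in $H'$ of the horocycles (equivalently, of vertical lines under $c\mapsto 1/(1-\lambda_{H'}(c))$) are disjoint properly embedded curves; by your index identity, properness, connectedness of accumulation sets, and finiteness of the level sets of the real-analytic function $\ind_{\mathcal{C}}$, each such curve-end lands at an arc point with the prescribed index, and planarity forces these landing points to occur in monotone order along $\mathcal{C}$ --- which is what rules out two heights sharing an index. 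This global planar-topological input (in substance what \cite{IM2} extracts from the structure of $H'$ in Lemmas~2.8--2.9 before deducing Corollary~2.10) is unavoidable; note in particular that $\lambda_{H'}$ is not conformal, so one cannot shortcut it with Carath\'eodory/prime-end theory. None of this appears in your proposal.
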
 
Following \cite{MNS}, we classify parabolic arcs into two types.

\begin{definition}\label{DefRootArc} 
We call a parabolic arc a \emph{root arc} if, in the dynamics of any 
parameter on this arc, the parabolic orbit disconnects the Julia set. 
Otherwise, we call it a \emph{co-root arc}.
\end{definition}

\subsubsection{Orbit portraits} 

\begin{definition}
Let $f_c$ be a parabolic map. The \emph{characteristic parabolic point} of $f_c$ is the unique parabolic point on the boundary of the characteristic Fatou component of $f_c$ (i.e., the Fatou component containing the critical value).
\end{definition}

Orbit portraits were introduced by Goldberg and Milnor as a combinatorial tool to describe the patterns of all periodic dynamical rays landing on a periodic cycle of a complex quadratic polynomial \cite{Go, GM1, M2a}. The usefulness of orbit portraits stems from the fact that these combinatorial objects contain substantial information on the connection between the dynamical and the parameter planes of the maps under consideration. Orbit portraits for quadratic anti-polynomials were studied in \cite{Sa}. 

\begin{definition}\label{def_orbit_portrait_anti_quad}
For a cycle $\mathcal{O} = \lbrace z_1 , z_2 ,\cdots$, $z_p\rbrace$ of $f_c$, let $\mathcal{A}_i$ be the set of angles of dynamical rays landing at $z_i$. The collection $\mathcal{P} = \lbrace \mathcal{A}_1 , \mathcal{A}_2, \cdots, \mathcal{A}_p \rbrace$ is called the \emph{orbit portrait} associated with the orbit $\mathcal{O}$.
\end{definition}

\begin{theorem}\cite[Theorem~2.6]{Sa}\label{complete_anti-holomorphic}
Let $f_c$ be a quadratic anti-polynomial, and $\mathcal{O} = \lbrace z_1 , z_2 ,\cdots$, $z_p\rbrace$ be a periodic orbit such that at least one rational dynamical ray lands at some $z_j$, $j\in\{1,\cdots,p\}$. Then the associated orbit portrait (which we assume to be non-trivial; i.e., $\vert\mathcal{A}_i\vert\geq 2$) $\mathcal{P} = \lbrace \mathcal{A}_1 , \mathcal{A}_2, \cdots, \mathcal{A}_p \rbrace$ satisfies the following properties:
\begin{enumerate}
\item Each $\mathcal{A}_j$, $j\in\{1,\cdots,p\}$, is a finite non-empty subset of $\mathbb{Q}/\mathbb{Z}$.

\item For each $j\in\Z/p\Z$, the map $m_{-2}$ maps $\mathcal{A}_j$ bijectively onto $\mathcal{A}_{j+1}$, and reverses their cyclic order.

\item For each $i\neq j$, the sets $\mathcal{A}_i$ and $\mathcal{A}_j$ are unlinked.

\item Each $\theta \in \mathcal{A}_j$, $j\in\{1,\cdots,p\}$, is periodic under $m_{-2}$, and there are four possibilities for their periods: 
\begin{enumerate}

\item If $p$ is even, then all angles in $\mathcal{P}$ have the same period $rp$ for some $r\geq 1$.

\item If $p$ is odd, then one of the following three possibilities must be realized:
\begin{enumerate}
\item $\vert \mathcal{A}_j\vert = 2$, and both angles have period $p$.

\item $\vert \mathcal{A}_j\vert = 2$, and both angles have period $2p$.

\item $\vert \mathcal{A}_j\vert = 3$; one angle has period $p$, and the other two angles have period $2p$.
\end{enumerate}
\end{enumerate}
\end{enumerate}
\end{theorem}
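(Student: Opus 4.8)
The plan is to adapt the Goldberg--Milnor theory of orbit portraits \cite{GM1} to the orientation-reversing setting of $f_c(z)=\overline{z}^2+c$, the main new bookkeeping being the behaviour of cyclic orientation under the anti-holomorphic dynamics and a separate treatment according to the parity of $p$. First I would set up dynamical rays: for $c\in\mathcal{T}$ the B\"ottcher coordinate $\phi_c$ conjugates $f_c$ near $\infty$ to $w\mapsto\overline{w}^2$, so the dynamical ray $R_\theta$ satisfies $f_c(R_\theta)=R_{m_{-2}(\theta)}$ with $m_{-2}(\theta)=-2\theta$; the orientation reversal of $m_{-2}$ records exactly that $f_c$ is anti-holomorphic. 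I then invoke the anti-holomorphic analogue of the Douady--Hubbard landing theorem (every periodic rational ray lands at a repelling or parabolic periodic point, and conversely only finitely many rays, all periodic, land at a given repelling periodic point). Since $\mathcal{O}$ carries at least one rational ray and the $z_i$ form a cycle, this at once gives that each $\mathcal{A}_i$ is a finite nonempty subset of $\mathbb{Q}/\mathbb{Z}$ (Property~(1)) and that every $\theta\in\mathcal{A}_j$ is periodic under $m_{-2}$ (the first half of Property~(4)).

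Next I would establish Properties~(2) and (3). Because $\phi_c$ conjugates $f_c$ to $m_{-2}$ and $f_c(z_j)=z_{j+1}$, the map $f_c$ carries rays landing at $z_j$ to rays landing at $z_{j+1}$, so $m_{-2}(\mathcal{A}_j)\subseteq\mathcal{A}_{j+1}$. As $\mathcal{O}$ is repelling it avoids the critical point $0$, hence $f_c$ is a local homeomorphism at each $z_j$; this forces $m_{-2}\colon\mathcal{A}_j\to\mathcal{A}_{j+1}$ to be a bijection, and the local orientation reversal of $f_c$ makes it reverse the cyclic order of the rays, which is Property~(2). Property~(3) is immediate, since distinct dynamical rays are disjoint simple arcs and so the angle-sets landing at distinct points cannot link.

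The heart of the matter is Property~(4), which I treat by parity of $p$. The first-return map of the rays at $z_j$ is $g:=m_{-2}^{\circ p}$, and by Property~(2) one has $g(\mathcal{A}_j)=\mathcal{A}_j$. If $p$ is even, $g$ preserves cyclic order, hence acts on the cyclically ordered finite set $\mathcal{A}_j$ as a rotation; every $g$-orbit then has a common length $r$ and every angle of $\mathcal{P}$ has period exactly $rp$, giving~(4)(a). If $p$ is odd, $g$ \emph{reverses} cyclic order (an odd number of reversals). The crucial combinatorial observation is that an order-reversing automorphism of the cyclically ordered set $\mathbb{Z}/n$ must have the form $i\mapsto b-i$ and is therefore an \emph{involution}; consequently $g|_{\mathcal{A}_j}$ consists only of fixed points (the angles of period $p$) and $2$-cycles (the angles of period $2p$), so no period exceeding $2p$ occurs. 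Counting fixed points of $i\mapsto b-i$ on $\mathbb{Z}/n$ shows there is exactly one when $n=|\mathcal{A}_j|$ is odd and zero or two when $n$ is even.

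It remains to prove the bound $|\mathcal{A}_j|\le 3$, and this I expect to be the main obstacle and the genuinely anti-holomorphic part of the argument: the involution structure alone is insufficient, since in the even (holomorphic-type) case valences are unbounded. Here I would run the Goldberg--Milnor critical-value-sector argument at the characteristic point $z_1$, using that $f_c$ has \emph{degree two with a single simple critical point}. The $n$ rays of $\mathcal{A}_1$ cut a neighbourhood of $z_1$ into $n$ sectors, exactly one of which --- the characteristic sector --- contains the critical value, and the orientation-reversing return map $f_c^{\circ p}$ is a local homeomorphism on the remaining sectors while being two-to-one across the characteristic one. Tracking this degree-two constraint against the involutive permutation of the sectors should force at most two fixed rays (period $p$) together with a single $2$-cycle of period-$2p$ rays, consistent with the fact that an orientation-reversing degree-two boundary map has exactly three fixed points. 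Combined with the involution count above, this yields precisely the three configurations of~(4)(b): $|\mathcal{A}_j|=2$ with both angles of period $p$; $|\mathcal{A}_j|=2$ with both of period $2p$; and $|\mathcal{A}_j|=3$ with one angle of period $p$ and a $2$-cycle of period $2p$. I anticipate that the careful verification of the degree-two sector bookkeeping --- in particular ruling out $|\mathcal{A}_j|\ge 4$ --- will be the technically delicate step.
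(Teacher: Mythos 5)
First, a remark on the ground truth: the paper does not prove Theorem~\ref{complete_anti-holomorphic} at all; it imports it wholesale from \cite[Theorem~2.6]{Sa}. So your proposal must be measured against the proof in \cite{Sa}, which indeed proceeds exactly along the lines you set up: landing theory for rational rays, equivariance of rays under the anti-holomorphic dynamics, unlinking from disjointness, the rotation argument for even $p$, and the key observation that an order-reversing bijection of a finite cyclically ordered set has the form $i\mapsto b-i$ and is therefore an involution, so that for odd $p$ every angle has period $p$ or $2p$. All of that part of your proposal is correct (one small slip: the landing point of a rational ray may be \emph{parabolic} rather than repelling; the orbit still avoids $0$, since $0$ then lies in the parabolic basin, so your bijectivity argument survives).

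The genuine gap is the one you yourself flag: the valence bound $\vert\mathcal{A}_j\vert\le 3$ for odd $p$, i.e.\ the exclusion of configurations such as two $2$-cycles of rays (valence $4$, all angles of period $2p$), or two fixed rays together with a $2$-cycle (valence $4$), or anything larger. Nothing in the involution structure rules these out --- the involution count only says ``fixed rays plus $2$-cycles'' --- and this bound is precisely the part of the theorem that uses the global degree-two structure of $f_c$ (for $\overline{z}^d+c$ the corresponding bound is $d+1$). Your sketch of how to close it is not yet a proof and, as written, misplaces the degree-two behavior: the return map is two-to-one on the sector containing the \emph{critical point} $0$, whose image covers the characteristic sector (the smallest sector, containing the critical \emph{value}); it is not ``two-to-one across the characteristic one.'' Likewise, the heuristic that ``an orientation-reversing degree-two boundary map has exactly three fixed points'' applies to the first return map on the boundary of a periodic \emph{Fatou component}, whereas the theorem concerns arbitrary periodic orbits carrying rational rays --- e.g.\ repelling orbits of a Misiurewicz map, whose Julia set is a dendrite with no bounded Fatou components --- so it cannot be invoked here. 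What is actually needed, and what \cite{Sa} carries out, is the Goldberg--Milnor width/sector bookkeeping of \cite{GM1} adapted to $m_{-2}$: widths of sectors double under the dynamics unless the sector contains $0$, hence every cycle of sectors must pass through a critical sector; the smallest of all $p\vert\mathcal{A}_j\vert$ sectors is the characteristic sector and contains the critical value; and playing this counting off against the involution on the sectors at the characteristic point is what eliminates valence $\ge 4$. Until that counting is done, case (4)(b) of the theorem is not established.
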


\begin{definition}\label{def_orbit_portrait}
A finite collection $\mathcal{P} = \{\mathcal{A}_1,$ $\mathcal{A}_2,$ $\cdots,$ $\mathcal{A}_p \}$ of non-empty finite subsets of $\mathbb{Q}/\mathbb{Z}$ satisfying the conditions of Theorem~\ref{complete_anti-holomorphic} is called a \emph{formal orbit portrait} under the anti-doubling map $m_{-2}$ (in short, an $m_{-2}$-FOP).
\end{definition}

By \cite[Theorem~3.1]{Sa}, every formal orbit portrait is realized by some $f_c$.

\begin{theorem}[Realization of orbit portraits outside $\mathcal{T}$]\label{realization_orbit_portrait_outside}
Let $\mathcal{P} = \{\mathcal{A}_1,$ $\mathcal{A}_2,$ $\cdots,$ $\mathcal{A}_p \}$ be a formal orbit portrait under the anti-doubling map $m_{-2}$. Then there exists some $c\in\mathbb{C}\setminus\mathcal{T}$, such that $f_c$ has a repelling periodic orbit with associated orbit portrait $\mathcal{P}$.
\end{theorem}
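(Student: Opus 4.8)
The plan is to realize $\mathcal{P}$ in the dynamical plane of a parameter $c$ in the escape region, where the combinatorial structure is cleanest. For every $c\in\mathbb{C}\setminus\mathcal{T}$ the critical orbit escapes, so the Julia set $\mathcal{J}_c$ is a Cantor set and $f_c\vert_{\mathcal{J}_c}$ is topologically conjugate to the one-sided $2$-shift; in particular every periodic point of $f_c$ is repelling, and every dynamical ray at a periodic (under $m_{-2}$) angle whose forward orbit avoids the external angle $\theta_0$ of the critical value lands at a repelling periodic point. By Theorem~\ref{RealAnalUniformization} the map $\Phi$ lets me prescribe this external angle freely: for any $\theta_0$ and any $r>1$, the parameter $c=\Phi^{-1}(re^{2\pi i\theta_0})$ lies outside $\mathcal{T}$ and has its critical value on the dynamical ray $R_{\theta_0}$. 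The heart of the matter is that, for such $c$, two periodic rays $R_\alpha,R_\beta$ co-land if and only if $\alpha$ and $\beta$ have the same itinerary with respect to the partition of the circle by the two $m_{-2}$-preimages $\{-\theta_0/2,\ -\theta_0/2+1/2\}$ of $\theta_0$ (the angles of the two rays crashing into the critical point). Thus realizing $\mathcal{P}$ reduces to choosing $\theta_0$ so that the induced itinerary relation on the portrait angles is exactly the partition $\{\mathcal{A}_1,\dots,\mathcal{A}_p\}$.

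First I would extract from $\mathcal{P}$ its characteristic data: the characteristic arc $I$ of $\mathcal{P}$ (the shortest arc bounded by two portrait angles whose interior is disjoint from $\bigcup_i\mathcal{A}_i$), together with the full finite collection of portrait angles. Because the $\mathcal{A}_i$ are pairwise unlinked and each $\mathcal{A}_i$ is mapped bijectively by $m_{-2}$ onto $\mathcal{A}_{i+1}$ while reversing cyclic order (Theorem~\ref{complete_anti-holomorphic}(2),(3)), the forward orbit of every portrait angle stays inside the finite set $\bigcup_i\mathcal{A}_i$ and hence never enters the interior of $I$. I would then place $\theta_0$ in the open characteristic arc, so that exactly one of the partition points $-\theta_0/2,-\theta_0/2+1/2$ separates the two characteristic angles while the remaining portrait angles stay grouped. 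Tracking the orbits under $m_{-2}$ should then show that two portrait angles receive the same itinerary precisely when they lie in a common $\mathcal{A}_i$, since the only location where the orbit of angles sharing an $\mathcal{A}_i$ could be split by the partition is the characteristic arc, which is avoided by construction.

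Finally I would verify that the portrait actually carried by the resulting repelling orbit is exactly $\mathcal{P}$, neither coarser nor finer: the common period of the angles follows from their $m_{-2}$-periodicity via the conjugacy to the shift; the bijection $\mathcal{A}_i\to\mathcal{A}_{i+1}$ and its order-reversal are automatic from the action of $m_{-2}$; and the absence of spurious identifications follows from the fact that distinct itineraries land at distinct Cantor points. The main obstacle I anticipate is precisely this last bookkeeping under the orientation-reversing map $m_{-2}$: unlike the doubling map, $m_{-2}$ reverses cyclic order at each step, so the itinerary combinatorics and the correct placement of $\theta_0$ relative to $I$ must be argued with care to guarantee that every pair of angles in a common $\mathcal{A}_i$ stays on the same side of the partition throughout its orbit, while pairs in distinct $\mathcal{A}_i$ get separated. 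This is the step where the specific four-case period structure of Theorem~\ref{complete_anti-holomorphic}(4) must be invoked to rule out accidental co-landing.
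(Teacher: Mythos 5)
Your overall strategy is exactly the one behind the paper's proof: the paper establishes Theorem~\ref{realization_orbit_portrait_outside} by citing \cite[Theorem~3.1]{Sa}, and the argument there is the Milnor-style plan you describe --- put the parameter on the external ray of $\mathcal{T}$ at an angle $\theta_0$ in the interior of the characteristic arc, use the Cantor/shift structure of the Julia set so that co-landing of periodic rays is governed by itineraries relative to the two rays crashing at the critical point, and invoke the period structure of Theorem~\ref{complete_anti-holomorphic}(4) to see that the realized portrait is exactly $\mathcal{P}$ and not a finer one. So the route is the right one, and your identification of where exactness comes from is also correct.

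However, the justification you offer for the central combinatorial step has a genuine gap, and one of your assertions is false. The characteristic arc is, by definition, a complementary arc of a \emph{single} class $\mathcal{A}_{i_0}$, so the characteristic angles $t^{\pm}$ lie in a common class; for the construction to succeed they must therefore receive the \emph{same} itinerary (their rays must co-land). If, as you claim, ``exactly one of the partition points $-\theta_0/2,\,-\theta_0/2+1/2$ separates the two characteristic angles,'' then $t^-$ and $t^+$ would get different first symbols, land at different points, and the portrait you are trying to realize would already be destroyed. More importantly, the reason you give for constancy of itineraries on each class --- that portrait orbits avoid the interior of $\mathcal{I}_{\mathcal{P}}$ --- only shows that portrait angles avoid the two open arcs $J_1\ni -\theta_0/2$ and $J_2\ni -\theta_0/2+1/2$ that $m_{-2}$ maps onto the interior of $\mathcal{I}_{\mathcal{P}}$. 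It does \emph{not} exclude the possibility that a single class $\mathcal{A}_j$ has angles in both components of the complement of $J_1\cup J_2$, i.e.\ that $\mathcal{A}_j$ straddles the two partition points and is split at the very first symbol; your sentence locating the danger ``in the characteristic arc'' conflates $\mathcal{I}_{\mathcal{P}}$ with its two preimage arcs, which is where the partition points actually sit. Ruling out the straddling is the real content of the lemma, and it needs additional structure: for each $j$, exactly one complementary arc of $\mathcal{A}_j$ (its ``critical arc'') has length greater than $1/2$ and maps onto the whole circle, double-covering the complementary arc of $\mathcal{A}_{j+1}$ that contains $\mathcal{I}_{\mathcal{P}}$, while the remaining complementary arcs map homeomorphically onto arcs missing it; consequently \emph{both} $m_{-2}$-preimages of $\theta_0$ lie in that one complementary arc of $\mathcal{A}_j$, so $\mathcal{A}_j$ cannot separate them and lies in a single partition sector. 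This is the chain of lemmas in \cite[\S 2]{M2a}, adapted to $m_{-2}$ in \cite{Sa}, that your ``tracking the orbits'' step silently replaces; without it the induction on itineraries never gets started.
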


Among all the complementary arcs of the various $\mathcal{A}_j$, there is a unique one of minimum length. This shortest arc $\mathcal{I}_{\mathcal{P}}$ is called the \emph{characteristic arc} of the orbit portrait, and the two angles $\{t^{-},t^{+}\}$ at the ends of this arc are called its \emph{characteristic angles}.

The following theorem will play an important role later in the paper.

\begin{theorem}[Realization of orbit portraits at parabolic parameters]\label{realization_orbit_portrait_parabolic}
Let $\mathcal{P}= \{\mathcal{A}_1,$ $\mathcal{A}_2,$ $\cdots,$ $\mathcal{A}_p \}$ be a formal orbit portrait under the anti-doubling map $m_{-2}$ with characteristic angles $t^-$ and $t^+$.

1) Suppose that $p$ is odd, and $t^\pm$ have period $2p$. Then the parameter rays $\mathcal{R}_{t^-}$ and $\mathcal{R}_{t^+}$ accumulate on a common root parabolic arc $\mathscr{C}$ such that for every parameter $c\in\mathscr{C}$, $f_c$ has a parabolic cycle of period $p$ and the orbit portrait associated with the parabolic cycle of $f_c$ is $\mathcal{P}$. 

2) Suppose that $p$ is even. Then the parameter rays $\mathcal{R}_{t^-}$ and $\mathcal{R}_{t^+}$ land at a common parabolic parameter $c$ (whose parabolic cycle has period $p$) such that the orbit portrait associated with the parabolic cycle of $f_c$ is $\mathcal{P}$. 
\end{theorem}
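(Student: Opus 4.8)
The plan is to follow Douady's principle, \emph{plough in the dynamical plane and harvest in the parameter plane}, adapted to the anti-holomorphic setting: the combinatorial data $\mathcal{P}$ is first realized dynamically and then transported to the parameter plane by tracking its characteristic rays. I would begin in the escape locus. By Theorem~\ref{realization_orbit_portrait_outside} there is a parameter $c_0\in\mathbb{C}\setminus\mathcal{T}$ for which $f_{c_0}$ carries a repelling cycle with orbit portrait $\mathcal{P}$. Since the landing of a finite family of rational dynamical rays at a repelling cycle is stable under perturbation (the cycle and its landing rays move continuously, and the discrete datum $\mathcal{P}$ is locally constant), the set of parameters realizing $\mathcal{P}$ by a repelling cycle is open. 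The two characteristic parameter rays $\mathcal{R}_{t^-},\mathcal{R}_{t^+}$ then cut out a \emph{parameter wake} $W_{\mathcal{P}}$, and the first task is to verify that $\mathcal{P}$ persists throughout $W_{\mathcal{P}}\setminus\mathcal{T}$, realized by a repelling cycle whose characteristic dynamical rays (at angles $t^\pm$) co-land at the characteristic point, thereby separating the critical value from the rest of the orbit.

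Next I would harvest, by analyzing the behaviour of $c$ as it approaches $\mathcal{T}$ along or between the two characteristic rays. The accumulation set of $\mathcal{R}_{t^\pm}$ lies on $\partial\mathcal{T}$; using persistence of $\mathcal{P}$ across the wake together with the correspondence between the dynamical and parameter wakes, the cycle cannot remain repelling at these limiting parameters, so by the structure of anti-holomorphic neutral dynamics it must become parabolic there [Proposition~\ref{ThmIndiffBdyHyp}, Proposition~\ref{LemOddIndiffDyn}]. The orbit-separation property of the characteristic rays is preserved in the limit and pins the combinatorics of the limiting parabolic cycle to $\mathcal{P}$ and its period to $p$. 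Thus the accumulation set of $\mathcal{R}_{t^\pm}$ consists of parabolic parameters of period $p$ whose parabolic orbit has portrait $\mathcal{P}$.

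The even/odd dichotomy now takes over. When $p$ is even, the relevant first-return map on the characteristic Fatou component is holomorphic and the multiplier map uniformizes the corresponding hyperbolic component as a diffeomorphism onto $\mathbb{D}$ [Theorem~\ref{hyp_unif}(2)], so the root is a \emph{single} parabolic parameter; I would then conclude, as in Milnor's holomorphic orbit-portrait landing theorem, that both rays land at this common point, giving part~2. When $p$ is odd, Theorem~\ref{parabolic arcs} shows the boundary parabolic parameters occur in one-parameter \emph{arcs} of quasiconformally conjugate (hence combinatorially identical) but conformally distinct maps, parametrized by critical Ecalle height. Since the period-$2p$ characteristic angles force each $\mathcal{A}_j$ to have two elements with the parabolic orbit disconnecting the Julia set, the arc in question is a \emph{root} arc [Definition~\ref{DefRootArc}]; this is the content of part~1.

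The main obstacle is the odd-period case: showing that $\mathcal{R}_{t^-}$ and $\mathcal{R}_{t^+}$ do not land at a point but genuinely \emph{accumulate on the entire root arc}. Landing at a single point is obstructed precisely because the arc carries a nontrivial conformal invariant (the Ecalle height), so a limiting argument must exhibit every height---equivalently, every point of the arc---as an accumulation point, while simultaneously ruling out accumulation outside that single arc. This is where the lack of holomorphic parameter dependence bites hardest, and I expect to need the index asymptotics $\ind_{\mathcal{C}}(h)\to+\infty$ as $h\to\pm\infty$ [Proposition~\ref{index goes to infinity}] together with the bifurcation-along-arcs structure [Theorem~\ref{ThmBifArc}] to confine the accumulation set to one arc and to realize it as exactly that arc. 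Establishing this precise accumulation, rather than mere non-landing, is the crux of the argument.
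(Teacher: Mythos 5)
Your wake-based ``plough and harvest'' strategy is a genuinely different route from the paper's, which is essentially a reduction to cited results. For part 1) the paper observes that $\mathcal{A}_1=\{t^-,t^+\}$ by \cite[Lemma~2.9]{Sa} (so $t^+=(-2)^pt^-$), invokes \cite[Lemma~4.1]{IM1} to get common accumulation of $\mathcal{R}_{t^\pm}$ on a root parabolic arc, and then uses \cite[Lemma~4.8]{MNS} --- these two rays are the \emph{only} dynamical rays landing at the characteristic parabolic point --- to pin the portrait to $\mathcal{P}$. For part 2) it argues as in \cite[Lemma~4.1]{IM1} that the rays either accumulate on a common root arc or land at a common even-period parabolic parameter, kills the first alternative by parity ($p$ even would have to divide the odd period $k$ of the arc), and identifies the portrait by quoting \cite[Lemma~3.3]{Sa} and running the proof of \cite[Lemma~2.8]{M2a} verbatim. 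Your proposal instead reconstructs the wake argument underlying \cite[Lemma~4.1]{IM1} from scratch --- realization outside $\mathcal{T}$ via Theorem~\ref{realization_orbit_portrait_outside}, persistence across the wake, parabolicity of the accumulation points --- which is legitimate, and is in fact how the paper proves the analogous statement for $\mathcal{S}$ (Theorem~\ref{realization_orbit_portrait_parabolic_schwarz}). The trade-off is clear: the paper gets a short proof by leaning on the literature; your route is self-contained but must redo the stability and limiting arguments carefully.

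The genuine problem is your final paragraph. You read part 1) as requiring that the accumulation sets of $\mathcal{R}_{t^\pm}$ be the \emph{entire} arc $\cC$, and you make this the crux. That is not what the statement asserts and not what the paper proves. The clause ``for every parameter $c\in\cC$, the orbit portrait is $\mathcal{P}$'' does not come from exhibiting every point of $\cC$ as an accumulation point: it comes for free from the fact that combinatorics is constant along a parabolic arc (the arc is a single quasiconformal deformation class, with the deformation supported on the parabolic basin, so the angles of rays landing at the parabolic cycle do not move), once one knows that the rays accumulate somewhere on $\cC$ and that only $t^\pm$ land at the characteristic point there. In fact the precise accumulation set is left undetermined by this theorem --- Theorem~\ref{most rays wiggle} only guarantees it contains an arc of positive length, and whether it equals all of $\cC$ is addressed nowhere in the paper. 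Worse, the tools you earmark for this step, Proposition~\ref{index goes to infinity} and Theorem~\ref{ThmBifArc}, concern the internal geometry of the arc (fixed point index, bifurcation locus) and give no control on how external parameter rays approach it, so they could not deliver full-arc accumulation even if it were needed. What your sketch actually needs in that spot is: (i) weak rigidity of odd-period simple parabolics --- all simple parabolic parameters carrying the portrait $\mathcal{P}$ lie on one arc, parametrized by critical Ecalle height --- which, combined with connectedness of each ray's accumulation set, confines both rays to a single common root arc; and (ii) a proof that $t^\pm$ are the \emph{only} angles landing at the characteristic parabolic point (\cite[Lemma~4.8]{MNS}, or an orbit-separation argument), a step you gloss over when you say the limiting combinatorics is ``pinned''.
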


\begin{proof}
1) By \cite[Lemma~2.9]{Sa}, we have that $\mathcal{A}_1=\{t^-,t^+\}$, and hence $t^+=(-2)^pt^-$. It now follows from \cite[Lemma~4.1]{IM1} that the parameter rays $\mathcal{R}_{t^-}$ and $\mathcal{R}_{t^+}$ accumulate on a common root parabolic arc $\mathscr{C}$. Hence, in the dynamical plane of every $c\in\mathscr{C}$, the dynamical rays $R_c(t^-)$ and $R_c(t^+)$ land at the characteristic parabolic point. Finally, by \cite[Lemma~4.8]{MNS}, these are the only dynamical rays landing at the characteristic parabolic point of $f_c$ (for $c\in\mathscr{C}$). This proves that for every parameter $c\in\mathscr{C}$, the map $f_c$ has a parabolic cycle with associated orbit portrait $\mathcal{P}$. 

2) Arguing as in \cite[Lemma~4.1]{IM1}, we can conclude that $\mathcal{R}_{t^-}$ and $\mathcal{R}_{t^+}$ either accumulate on a common root arc $\mathscr{C}$ or land at a common parabolic parameter $c$ of even parabolic period.

We will first show that the former possibility cannot occur. For definiteness, we assume that $\{t^-,t^+\}\subset\mathcal{A}_1$. Let us suppose that $\mathcal{R}_{t^-}$ and $\mathcal{R}_{t^+}$ accumulate on a common root arc $\mathscr{C}$ of period $k$, and fix some $c'\in\mathscr{C}$. Then, the dynamical rays $R_{c'}(t^+)$ and $R_{c'}(t^-)$ land at the characteristic parabolic point of $f_{c'}$, which has odd period $k$. It follows that $t^+=(-2)^kt^-$, and both these angles $t^\pm$ have period $2k$. It is now easy to see that $p$ must divide $k$ (otherwise, $t^+$ would be contained in some $\mathcal{A}_{i}$ different from $\mathcal{A}_1$). But this is impossible as $p$ is even and $k$ is odd.

Therefore, the parameter rays $\mathcal{R}_{t^-}$ and $\mathcal{R}_{t^+}$ must land at a common parabolic parameter $c$ of even parabolic period. Then, the corresponding dynamical rays $R_c(t^+)$ and $R_c(t^-)$ land at the characteristic parabolic point of $f_c$, which has even period. We denote the actual orbit portrait associated with the parabolic cycle of $f_c$ by $\mathcal{P}'$. Since both the orbit portraits $\mathcal{P}$ and $\mathcal{P}'$ have even orbit period, it follows by \cite[Lemma~3.3]{Sa} that each of them is either primitive or satellite (compare \cite[Lemma~2.7]{M2a}). The proof of \cite[Lemma~2.8]{M2a} now applies verbatim to show that $\mathcal{P}=\mathcal{P}'$. This completes the proof.
\end{proof}

Let $H$ be a hyperbolic component of even period $k$ such that $H$ does not bifurcate from an odd period hyperbolic component. Let $\mathcal{A}_1$ be the set of angles of the dynamical rays landing at the dynamical root of $f_{c}$ (where $c\in H$ or $c$ is the root point of $H$). Then, the first return map of the dynamical root either fixes every angle in $\mathcal{A}_1$ and $\vert\mathcal{A}_1\vert=2$, or permutes the angles in $\mathcal{A}_1$ transitively. Moreover, the characteristic angles $t^-$ and $t^+$ of the orbit portrait $\mathcal{P}$ generated by $\mathcal{A}_1$ are precisely the two adjacent angles in $\mathcal{A}_1$ (with respect to circular order) that separate $0$ from $c$, and bound a sector of angular width less that $\frac12$. The root point of $H$ is the landing point of exactly two parameter rays at angles $t^-$ and $t^+$.

Let us now look at the connection between orbit portraits associated with parabolic parameters on the boundary of an odd period hyperbolic component $H$ and the angles of parameter rays accumulating on $\partial H$. Suppose that the period of $H$ is $k$ and its center is $c_0$. The first return map of the closure of the characteristic Fatou component of $c_0$ fixes exactly three points on its boundary. Only one of these fixed points disconnects the Julia set, and is the landing point of two distinct dynamical rays at $2k$-periodic angles. Let the set of the angles of these two rays be $S' = \{\alpha_1,\alpha_2 \}$. Then, $\alpha_2=(-2)^k\alpha_1$, and $S'$ is the set of characteristic angles of the corresponding orbit portrait. Each of the remaining two fixed points is the landing point of precisely one dynamical ray at a $k$-periodic angle; let the collection of the angles of these rays be $S = \{ \theta_1, \theta_2\}$. We can, possibly after renumbering, assume that $0 < \alpha_1 < \theta_1 < \theta_2 < \alpha_2$ and $\alpha_2 - \alpha_1 < \frac{1}{2}$. Then, these angles satisfy the following relation (see \cite[Lemma~3.5]{Sa}) 

\begin{equation}
(2^{k}+1)(\theta_1-\alpha_1)=(\alpha_2-\alpha_1)=(2^k+1)(\alpha_2-\theta_2).
\label{char_rays_relation_1}
\end{equation}

\begin{figure}[ht!]
\captionsetup{width=0.96\linewidth}
\includegraphics[scale=0.275]{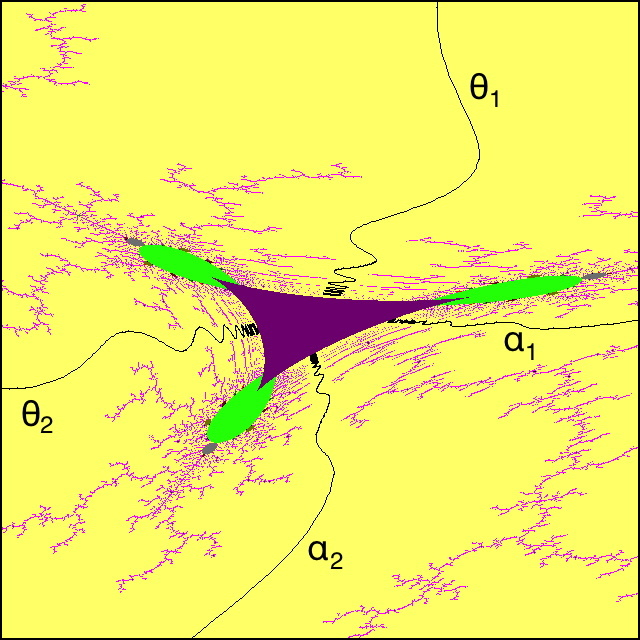} \includegraphics[scale=0.275]{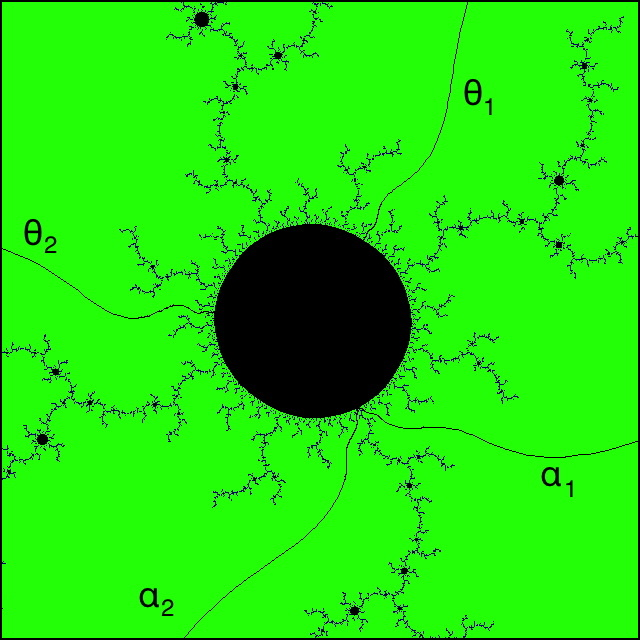} 
\caption{Left: Parameter rays accumulating on the boundary of a hyperbolic component of period $5$ of the Tricorn. Right: The corresponding dynamical rays landing on the boundary of the characteristic Fatou component in the dynamical plane of the center of the same hyperbolic component.}
\label{all_rays}
\end{figure}

\subsubsection{Boundaries of odd period hyperbolic components} By \cite[Theorem~1.2]{MNS}, $\partial H$ is a simple closed curve consisting of three parabolic arcs, and the same number of cusp points such that every arc has two cusp points at its ends. Exactly one of these three parabolic arcs (say, $\mathscr{C}_3$) is a root arc, and the parameter rays at angles $\alpha_1$ and $\alpha_2$ accumulate on this arc. The characteristic parabolic point in the dynamical plane of any parameter on this root arc is the landing point of precisely two dynamical rays at angles $\alpha_1$ and $\alpha_2$. The rest of the two parabolic arcs (say, $\mathscr{C}_1$ and $\mathscr{C}_2$) on $\partial H$ are co-root arcs. Each of these co-root arcs contains the accumulation set of exactly one parameter ray at an angle $\theta_i$, and the characteristic parabolic point in the dynamical plane of any parameter on this co-root arc is the landing point of precisely one dynamical ray at angle $\theta_i$ (compare Figure~\ref{all_rays}). 

At the parabolic cusp on $\partial H$ where $\mathscr{C}_1$ and $\mathscr{C}_2$ meet, the characteristic parabolic point is the landing point of exactly two dynamical rays at angles $\theta_1$ and $\theta_2$. The same is true at the center of the hyperbolic component of period $2k$ that bifurcates from $H$ across this parabolic cusp. Moreover, these angles are the characteristic angles of the corresponding orbit portrait.

On the other hand, at the parabolic cusp where $\mathscr{C}_1$ and $\mathscr{C}_3$ (respectively, $\mathscr{C}_2$ and $\mathscr{C}_3$) meet, the characteristic parabolic point is the landing point of precisely three dynamical rays at angles $\alpha_1$, $\alpha_2$ and $\theta_1$ (respectively, $\alpha_1$, $\alpha_2$ and $\theta_2$). As before, the same is true at the center of the hyperbolic component of period $2k$ that bifurcates from $H$ across this parabolic cusp. The characteristic angles of the corresponding orbit portrait are $\alpha_1$ and $\theta_1$ (respectively, $\theta_2$ and $\alpha_2$).

\begin{theorem}[Boundaries of odd period hyperbolic components]\label{Exactly d+1}
The boundary of every hyperbolic component of odd period of $\mathcal{T}$ is a topological triangle having parabolic cusps as vertices and parabolic arcs as sides.
\end{theorem}

\subsubsection{Misiurewicz parameters}\label{sec_misi_Tricorn}

A \emph{Misiurewicz} parameter of the Tricorn is a parameter $c$ such that the critical point $0$ is strictly pre-periodic. For a Misiurewicz parameter, the critical point eventually maps on a repelling cycle. By classification of Fatou components, the filled Julia set of such a map has empty interior. Moreover, the Julia set of a Misiurewicz parameter is locally connected \cite[Expos{\'e} III, Proposition~4, Theorem~1]{orsay}, and has measure zero \cite[Expos{\'e} V, Theorem 3]{orsay}. 

\begin{theorem}[Parameter rays landing at Misiurewicz parameters]\label{Tricorn_para_misi_ray} 
Every parameter ray of the Tricorn at a pre-periodic angle (under $m_{-2}$) lands at a Misiurewicz parameter such that in its dynamical plane, the corresponding dynamical ray lands at the critical value. Conversely, every Misiurewicz parameter $c$ of the Tricorn is the landing point of a finite (non-zero) number of parameter rays at pre-periodic angles (under $m_{-2}$) such that the angles of these parameter rays are exactly the external angles of the dynamical rays that land at the critical value $c$ in the dynamical plane of $f_c$. 
\end{theorem}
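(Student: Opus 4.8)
The plan is to prove the two directions of this correspondence separately, leveraging the classical machinery of the "plough in the dynamical plane, harvest in the parameter plane" principle adapted to the anti-holomorphic setting.

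The plan is to prove the two implications separately, following the template of the corresponding landing theorem for the Mandelbrot set (Douady--Hubbard, Schleicher) and adapting it to the anti-holomorphic setting in which the external dynamics is governed by the anti-doubling map $m_{-2}$. The essential engine in both directions is the tautological identity $\Phi(c)=\phi_c(c)$ furnished by Theorem~\ref{RealAnalUniformization}, together with the B\"ottcher conjugacy $\phi_c(f_c(z))=\overline{\phi_c(z)}^{\,2}$. The former places the critical value $c$ of $f_c$ on the dynamical ray $R_c(\theta)$ at exactly the radius at which the parameter $c$ sits on the parameter ray $\mathcal{R}_\theta$; the latter shows that external angles in the dynamical plane are advanced by $m_{-2}$, so that pre-periodicity of $\theta$ transfers directly between the two planes.

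For the forward direction, I would first observe that the accumulation set $\mathcal{A}(\theta)\subset\partial\mathcal{T}$ of $\mathcal{R}_\theta$ is nonempty, compact, and connected. Fixing any $c^\ast\in\mathcal{A}(\theta)$, the goal is to show that the dynamical ray $R_{c^\ast}(\theta)$ lands at the critical value $c^\ast$. The mechanism is stability of repelling pre-periodic ray landings: for pre-periodic $\theta$, on an open set of parameters the ray $R_c(\theta)$ lands at a repelling pre-periodic point $w(c)$ depending continuously on $c$ (by the standard persistence of repelling cycles and their prescribed landings under perturbation; here one has only real-analytic dependence, but the implicit function theorem still applies since the relevant multiplier has modulus $>1$). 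The tautological identity then forces $c^\ast$ to be a limit of points on $R_{c^\ast}(\theta)$, so this ray lands at $c^\ast$; the landing point being pre-periodic makes $c^\ast$ a Misiurewicz parameter, and $\theta$ appears among the angles of rays landing at the critical value.

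For the converse, let $c_0$ be a Misiurewicz parameter. Since $\mathcal{J}_{c_0}$ is locally connected, every dynamical ray lands, and the critical value $c_0$---being a pre-periodic point of $\mathcal{J}_{c_0}$---is the landing point of a finite, nonzero collection of rays at pre-periodic angles $\theta_1,\dots,\theta_n$. By the forward direction each $\mathcal{R}_{\theta_i}$ lands at some Misiurewicz parameter $c_i$ at which $R_{c_i}(\theta_i)$ lands at the critical value. It then remains to identify each $c_i$ with $c_0$, which is a combinatorial rigidity statement: a Misiurewicz parameter is pinned down by the set of (pre-periodic) angles of rays landing at its critical value, via the usual argument that a suitable iterate provides an expanding correspondence forcing uniqueness of the parameter realizing a given landing configuration.

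The step I expect to be the main obstacle is the passage from ``the accumulation set lies in the Misiurewicz locus'' to ``the accumulation set is a single point,'' and symmetrically the rigidity identification $c_i=c_0$. In the holomorphic case these rest on holomorphic motions and Yoccoz-type expansion estimates; in the anti-holomorphic world one must instead lean on local connectivity of $\mathcal{J}_{c_0}$ combined with the real-analytic (rather than holomorphic) parameter dependence of ray landings, taking care that $m_{-2}$ reverses cyclic order. Verifying the continuity and actual landing of the relevant repelling pre-periodic points under perturbation, and checking that no parabolic degeneration intervenes along $\mathcal{R}_\theta$ for pre-periodic $\theta$, are the technical points requiring the most care.
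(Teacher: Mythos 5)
Your proposal has the right architecture (stability of repelling pre-periodic landing points for the forward direction; local connectivity of $\mathcal{J}_{c_0}$ plus a rigidity statement for the converse), but the crux of the theorem --- the rigidity statement that a Misiurewicz parameter is uniquely determined by an angle of a dynamical ray landing at its critical value --- is never actually proved. You invoke ``the usual argument that a suitable iterate provides an expanding correspondence forcing uniqueness,'' but no such off-the-shelf argument exists in the anti-holomorphic setting: the standard holomorphic proofs (Douady--Hubbard/Schleicher, or Thurston/Poirier rigidity) all exploit holomorphic dependence, either in the parameter or in the dynamical variable, whereas here $f_c$ depends only real-analytically on $c$ and is itself anti-holomorphic. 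Note that this same uniqueness is also what you need to close your forward direction: your stability argument (at best) shows that every accumulation point $c^\ast$ of $\mathcal{R}_\theta$ is a Misiurewicz parameter at which $R_{c^\ast}(\theta)$ lands at the critical value, but to conclude that the connected accumulation set is a single point you must know there is \emph{at most one} such parameter --- exactly the missing rigidity. So, as written, neither direction is complete, and you yourself flag both of these steps as ``the main obstacle'' without resolving them.

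The paper closes this gap with a specific trick absent from your proposal: pass to the second iterate. The map $f_c^{\circ 2}(z) = (z^2+\overline{c})^2+c$ is a \emph{holomorphic} biquadratic polynomial, postcritically finite when $c$ is Misiurewicz. If $c$ and $c'$ are two Misiurewicz parameters whose dynamical rays at the same pre-periodic angle $\theta$ land at their respective critical values, then $f_c^{\circ 2}$ and $f_{c'}^{\circ 2}$ have a common critical portrait in the sense of Poirier, and Poirier's rigidity theorem yields $f_c^{\circ 2} = f_{c'}^{\circ 2}$, hence $c = c'$. This reduction to a holomorphic rigidity statement is the essential step; for the forward direction (landing of pre-periodic parameter rays) the paper simply cites the literature. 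To complete your proposal you should either reproduce this second-iterate/critical-portrait argument or supply some other genuinely anti-holomorphic rigidity proof; the appeal to an ``expanding correspondence'' does not substitute for it.
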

\begin{proof} 
A proof of the corresponding results for the Mandelbrot set and the necessary modifications required to adapt the proof in the anti-holomorphic setting can be found in \cite[Theorem~1.1 (pre-periodic case)]{S1a} and the remark thereafter. 

Alternatively, see \cite[Theorem~7.3]{GV} for the first part of the result (also compare \cite[Theorem~37.35]{L6}). For the converse, let $\mathcal{A}$ be the set of angles of dynamical rays landing at the critical value $c$ of a Misiurewicz polynomial $f_c$. Pick $\theta\in\mathcal{A}$. If $c'$ is the landing point of $\mathcal{R}_\theta$, then the dynamical ray $R_{c'}(\theta)$ lands at the critical value $c'$ of $f_{c'}$. But then, the holomorphic polynomials $f_{c}^{\circ 2}$ and $f_{c'}^{\circ 2}$ have a common critical portrait in the sense of \cite{Po}. It now follows by \cite[Theorem~1.1]{Po} that $f_{c}^{\circ 2}=f_{c'}^{\circ 2}$; i.e., $c=c'$. Therefore, for each $\theta\in\mathcal{A}$, the parameter ray $\mathcal{R}_\theta$ lands at the Misiurewicz parameter $c$. By the first part, no other parameter ray at a pre-periodic angle can land at $c$.
\end{proof}

Let $c_0$ be a Misiurewicz parameter, and $\mathcal{A}'$ be the set of angles of the dynamical rays of $f_{c_0}$ landing at the critical point $0$. The set of angles of the dynamical rays that land at the critical value $c_0$ is then given by $\mathcal{A}:=m_{-2}(\mathcal{A}')$. Moreover, $m_{-2}$ is two-to-one from $\mathcal{A}'$ onto $\mathcal{A}$. All other equivalence classes of $\lambda(f_{c_0})$ are mapped bijectively onto its image class by $m_{-2}$. Note also that all angles in $\mathcal{A}'$ are strictly pre-periodic. It is easy to see that the existence of a unique equivalence class (of $\lambda(f_{c_0})$) that maps two-to-one onto its image class under $m_{-2}$  characterizes the pre-periodic lamination of Misiurewicz maps. A formal rational lamination satisfying this condition is said to be of Misiurewicz type.

The next theorem shows that every formal rational lamination of Misiurewicz type is realized as the rational lamination of a unique Misiurewicz map $f_c$.

\begin{theorem}[Realization of rational laminations]\label{rat_lam_realized}
Every formal rational lamination of Misiurewicz type is realized as the rational lamination of a unique Misiurewicz map $f_c$ in $\mathcal{T}$.
\end{theorem}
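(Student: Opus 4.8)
The statement naturally splits into uniqueness and existence, and I would dispose of uniqueness first, since it is essentially already packaged in Theorem~\ref{Tricorn_para_misi_ray}. Suppose $f_c$ and $f_{c'}$ are Misiurewicz maps with the same rational lamination $\lambda$. Let $\mathcal{A}'$ be the unique equivalence class of $\lambda$ that maps two-to-one onto its image and set $\mathcal{A}=m_{-2}(\mathcal{A}')$; this $\mathcal{A}$ is the class of angles landing at the critical value. Fixing any $\theta\in\mathcal{A}$, the dynamical ray at angle $\theta$ lands at the critical value in both dynamical planes, so by Theorem~\ref{Tricorn_para_misi_ray} the parameter ray $\mathcal{R}_\theta$ lands both at $c$ and at $c'$. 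As a parameter ray has a unique landing point, $c=c'$. (Alternatively, one reruns the critical-portrait argument for $f_c^{\circ 2}$ and $f_{c'}^{\circ 2}$ via \cite{Po}, exactly as in the proof of Theorem~\ref{Tricorn_para_misi_ray}.)

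For existence I would first reduce to realizing the single class $\mathcal{A}$. The observation is that a formal rational lamination of Misiurewicz type is completely determined by its critical value class $\mathcal{A}$: since $\mathcal{A}'$ is the only two-to-one class and maps onto $\mathcal{A}$, a counting argument forces $\mathcal{A}'=m_{-2}^{-1}(\mathcal{A})$; the forward images $m_{-2}^{\,n}(\mathcal{A})$ are classes and are eventually periodic; and every remaining class arises by pulling back along $m_{-2}$, where the grouping of the two preimages of each (bijective) non-critical class is forced by the unlinkedness requirement of Theorem~\ref{complete_anti-holomorphic}(3). Hence it suffices to produce a Misiurewicz map whose set of angles landing at the critical value is exactly $\mathcal{A}$.

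To realize it, fix $\theta\in\mathcal{A}$, which is pre-periodic under $m_{-2}$, so by Theorem~\ref{Tricorn_para_misi_ray} the parameter ray $\mathcal{R}_\theta$ lands at a Misiurewicz parameter $c$ with $R_c(\theta)$ landing at the critical value. Let $\mathcal{A}_c$ be the actual class of angles landing at the critical value of $f_c$; then $\theta\in\mathcal{A}_c$ and $\lambda(f_c)$ is a Misiurewicz-type lamination determined by $\mathcal{A}_c$. It remains to prove $\mathcal{A}_c=\mathcal{A}$, and I would do this through the periodic skeleton: for $N$ large the angle $m_{-2}^{\,N}(\theta)$ is periodic and lies in a periodic class of each lamination, generating orbit portraits $\mathcal{P}$ (from $\lambda$) and $\mathcal{P}_c$ (from $\lambda(f_c)$); matching $\mathcal{P}$ with $\mathcal{P}_c$ through the realization and rigidity of orbit portraits (Theorems~\ref{complete_anti-holomorphic} and~\ref{realization_orbit_portrait_outside}) and then transporting the identification backward along $m_{-2}$ using the forced pullback structure above would give $\mathcal{A}_c=\mathcal{A}$, and hence $\lambda(f_c)=\lambda$.

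The essential difficulty is exactly this last matching step: showing that the \emph{entire} class $\mathcal{A}$, and not merely the chosen angle $\theta$, lands at the single critical value of $f_c$, with no extraneous identifications. Equivalently, one must show that the finitely many parameter rays $\{\mathcal{R}_\phi:\phi\in\mathcal{A}\}$ all land at the \emph{same} Misiurewicz parameter. I expect this to require a genuine combinatorial rigidity input: either Poirier's classification of critical portraits \cite{Po} applied to the holomorphic second iterates $f_c^{\circ 2}=(z^2+\overline{c})^2+c$ (with the extra verification that the realizing polynomial respects the anti-holomorphic symmetry, so that it actually lies in the family $\overline{z}^2+c$), or a direct ``plough in the dynamical plane, harvest in the parameter plane'' argument converting co-landing of the dynamical rays of $\mathcal{A}$ at the critical value into co-landing of the associated parameter rays. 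The anti-holomorphic features, namely the lack of holomorphic parameter dependence and the symmetric two-critical-orbit structure of $f_c^{\circ 2}$, are precisely what make this transfer delicate, so I regard this as the crux of the proof.
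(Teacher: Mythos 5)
Your uniqueness argument is correct and is essentially the paper's as well: it rests on Theorem~\ref{Tricorn_para_misi_ray} (equivalently, on Poirier's rigidity theorem \cite[Theorem~1.1]{Po}, which underlies it). The existence half, however, contains a genuine gap --- one you candidly flag yourself. After taking $c$ to be the landing point of $\mathcal{R}_\theta$, you must show $\mathcal{A}_c=\mathcal{A}$, and your ``periodic skeleton plus backward transport'' sketch cannot do this. The transport is circular: in a unicritical degree-two lamination, the way the $m_{-2}$-preimages of a class split into classes is governed by the critical class itself --- note that both candidate pairings of the four preimages of a two-element class are internally unlinked, so unlinkedness alone does not force the grouping --- and the critical classes of $\lambda$ and $\lambda(f_c)$ are exactly what you are trying to prove equal. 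Matching the eventually periodic portraits therefore says nothing about whether $\lambda(f_c)$ acquires extra identifications, or misses prescribed ones, at the strictly pre-periodic levels; knowing $\theta\in\mathcal{A}\cap\mathcal{A}_c$ alone does not give $\mathcal{A}=\mathcal{A}_c$.

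The paper closes precisely this gap with an ingredient absent from your proposal: Kiwi's realization theorem. Viewing $\lambda$ as a formal rational lamination for $m_4=m_{-2}^{\circ 2}$, \cite[Theorem~1.1]{kiwi} (with $d=4$) produces a degree-$4$ PCF polynomial $P$ whose rational lamination is exactly $\lambda$; since $\lambda$ has exactly three classes that map two-to-one under $m_4$, the polynomial $P$ has three simple critical points with two of them sharing a critical value, and \cite[Lemma~3.1]{Sa2} then forces $P(z)=(z^2+a)^2+b$ --- this is exactly the ``anti-holomorphic symmetry of the realizing polynomial'' you worried about. With $P$ in hand, the angle $\theta$ together with its first and second $m_{-2}$-preimages generates a critical portrait common to $P$ and $f_c^{\circ 2}$, and Poirier's theorem yields $P=f_c^{\circ 2}$, hence $\lambda(f_c)=\lambda(P)=\lambda$. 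So your instinct that Poirier applied to second iterates is the right rigidity tool is correct, but Poirier compares two given polynomials: without Kiwi's theorem there is no polynomial realizing $\lambda$ against which to compare $f_c^{\circ 2}$, and supplying that comparison object is the step your proposal leaves open.
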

\begin{proof}
Let $\lambda$ be a formal rational lamination of Misiurewicz type. As $\lambda$ is of Misiurewicz type, there exists a unique $\lambda$-class $\mathcal{A}'$ (consisting of strictly pre-periodic angles under $m_{-2}$) such that $m_{-2}$ maps $\mathcal{A}'$ two-to-one onto $\mathcal{A}:=m_{-2}(\mathcal{A}')$.

It is easy to see that $\lambda$ satisfies the properties of \cite[Theorem~1.1]{kiwi} with $d=4$, and hence, there exists a degree $4$ holomorphic polynomial $P$ with associated rational lamination $\lambda$. Moreover, there are exactly three $\lambda$-classes on which $m_4$ (i.e., multiplication by $4$ modulo one) acts in a two-to-one fashion. It follows that $P$ has three distinct simple critical points $\{\alpha_1,\alpha_2,\alpha_3\}$ such that $P(\alpha_1)=P(\alpha_2)$. By \cite[Lemma~3.1]{Sa2}, $P$ is a biquadratic polynomial; i.e., $P(z)=(z^2+a)^2+b$, for some $a,b\in\C$, $a\neq0$. Moreover, the critical points of $P$ are strictly pre-periodic. 

Let $\theta\in\mathcal{A}$, and $c\in\mathcal{T}$ be the landing point of $\mathcal{R}_\theta$. Then, the dynamical ray $R_c(\theta)$ lands at the critical value $c$ of $f_c$. It is now easy to see that the PCF holomorphic polynomials $P$ and $f_c^{\circ 2}$ have a common critical portrait in the sense of \cite{Po}. Once again, \cite[Theorem~1.1]{Po} implies that $P=f_c^{\circ 2}$; i.e., $a=\overline{c}$ and $b=c$. Therefore, $\lambda(f_c)$ is equal to the rational lamination of $P$; i.e., $\lambda(f_c)=\lambda$. The uniqueness statement follows by \cite[Theorem~1.1]{Po}.
\end{proof}

\subsubsection{Global topological structure}\label{sec_global_top}

There are various topological differences between the Mandelbrot set and the Tricorn. Here, we collect some of these important differences.

Note that the Mandelbrot set is conjectured to be locally connected. This is known in many cases; e.g. at most finitely renormalizable parameters with no non-repelling cycles \cite{H1}, parameters in embedded baby Mandelbrot sets satisfying the secondary limbs conditions \cite{L3}, etc. The following theorem, which is in stark contrast to the corresponding situation for $\mathcal{M}$, was first proved in \cite[Theorem~6.2]{HS} and improved in \cite[Theorem~1.2]{IM2} (see Figure~\ref{tricorn_pic}).

\begin{theorem}\label{Tricorn_non_lc}
The Tricorn is not path connected. Moreover, no non-real hyperbolic component of odd period can be connected to the principal hyperbolic component by a path.
\end{theorem}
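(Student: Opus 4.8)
The goal is to prove Theorem~\ref{Tricorn_non_lc}: the Tricorn is not path connected, and in fact no non-real hyperbolic component of odd period can be joined to the principal hyperbolic component by a path. The plan is to isolate a \emph{topological obstruction} living on the boundaries of odd-period hyperbolic components, and to show that this obstruction survives along any path. The key structural input is Theorem~\ref{Exactly d+1} together with Theorem~\ref{parabolic arcs} and Proposition~\ref{index goes to infinity}: the boundary $\partial H$ of an odd-period hyperbolic component is a topological triangle made of three parabolic arcs joined at cusps, and on each arc the fixed point index $\ind_{\mathcal{C}}$ is a real-analytic function tending to $+\infty$ at both ends. The essential phenomenon I want to exploit is that these arcs carry \emph{non-trivial quasiconformal conjugacy classes} (Remark~\ref{qc_class}), a feature absent on $\partial\mathcal{M}$; this is what makes path-connectivity fail.

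First I would set up the invariant. Along a parabolic arc $\mathcal{C}$, the critical Ecalle height $h$ gives a real-analytic parametrization $c(h)$ with conformally distinct dynamics (Theorem~\ref{parabolic arcs}), and the two invariants attached to an odd-period parabolic parameter are the critical Ecalle height and the residue fixed point index $\ind_{\mathcal{C}}(h)$. The idea is that a continuous path inside $\mathcal{T}$ approaching $\partial H$ must respect these invariants in a way that is incompatible with connecting $H$ to the principal component. Concretely, I would argue that any path from $H$ to the principal component would have to cross the boundary of some odd-period component; using the cusp/arc structure from Theorem~\ref{Exactly d+1}, and the fact that the bifurcating regions along arcs are parametrized by the index via Proposition~\ref{index_increasing_1}, one shows that the landing behavior of parameter rays forces the path to ``wiggle'' without limit. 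The mechanism is that near each cusp the index blows up (Proposition~\ref{index goes to infinity}), producing infinitely many oscillations of the boundary, so that the endpoint set of the path is trapped.

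The cleanest route, which I would follow, is the one from \cite{HS,IM2}: produce an explicit real-analytic function on a neighborhood of the relevant boundary pieces that is constant on the principal component's side but takes a different prescribed value forced by the index asymptotics near a non-real odd-period component, and show no continuous path can interpolate because it would have to pass through parameters where the index is simultaneously constrained to lie in $[1,+\infty)$ on bifurcating sub-arcs (Theorem~\ref{ThmBifArc}) yet must leave every such interval. I would combine this with the real-symmetry of $\mathcal{T}$: since $c\mapsto\overline{c}$ is a symmetry, a non-real component $H$ and its mirror image $\overline{H}$ are separated, and a path from $H$ to the (real) principal component could be reflected and concatenated to yield a loop enclosing a portion of the boundary on which the index invariant is non-constant, contradicting the wiggling.

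The main obstacle I anticipate is not the existence of the invariant but the \emph{global separation argument}: turning the local ``infinitely wiggly'' behavior near cusps (where $\ind_{\mathcal{C}}\to+\infty$) into a genuine obstruction to path-connectivity. Locally one sees decorations accumulating on the arc, but one must rule out the possibility that a clever path sneaks between the wiggles and reaches the principal component through parameter regions not controlled by a single arc. I expect this requires a careful accounting of which parameter rays accumulate on $\partial H$ (via the orbit-portrait data organized in Theorem~\ref{realization_orbit_portrait_parabolic} and the boundary description preceding Theorem~\ref{Exactly d+1}) to show that the accumulating rays genuinely separate a neighborhood of $H$ from the principal component, so that crossing is impossible. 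This separation step, rather than the construction of the index invariant, is where the real work lies, and it is precisely the content established in \cite[Theorem~6.2]{HS} and sharpened in \cite[Theorem~1.2]{IM2}, so I would invoke and adapt their topological separation lemmas to complete the argument.
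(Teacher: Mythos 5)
Your proposal, read as a proof, is circular at its decisive step, and the independent mechanism you sketch around that step is not one that can be completed. Note first what the paper itself does: it offers no proof of Theorem~\ref{Tricorn_non_lc} at all, recording it as a known result ``first proved in \cite[Theorem~6.2]{HS} and improved in \cite[Theorem~1.2]{IM2}.'' Your final paragraph concedes that the crucial part of your argument ``is precisely the content established in \cite[Theorem~6.2]{HS} and sharpened in \cite[Theorem~1.2]{IM2}'' and that you would ``invoke and adapt'' those results. But those two cited theorems \emph{are} the statement to be proved; invoking them is not a proof, it is the same citation the paper makes, wrapped in a sketch. So as a blind proof attempt this does not stand on its own.

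More substantively, the obstruction you try to build from the index is not the actual obstruction, and I do not see how to make it work. The divergence of $\ind_{\mathcal{C}}$ at the ends of a parabolic arc (Proposition~\ref{index goes to infinity}) and the index parametrization of the bifurcation locus (Theorem~\ref{ThmBifArc}, Proposition~\ref{index_increasing_1}) only constrain \emph{where period-doubling components sit on the arc}; they do not prevent a path from landing at an interior point of the non-bifurcating part of the root arc and passing into $H$ there. The ``wiggling'' is a \emph{consequence} of the theorem, not something one reads off from index asymptotics, and the reflect-and-concatenate loop you propose yields no contradiction by itself; likewise, the quasiconformal non-rigidity of arcs (Remark~\ref{qc_class}) explains why such pathologies are possible in the anti-holomorphic setting but is not itself an obstruction. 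The genuine proof in \cite{HS,IM2} runs differently: (i) the two parameter rays accumulating on the root arc, together with the arc, separate the plane, so a path from the principal component to $H$ would contain a sub-path landing at a \emph{single point} $c_0$ of the root arc from outside the wake (this separation step is the routine part, not, as you suggest, the real work); (ii) transferring this landing path into the repelling Ecalle cylinder at the characteristic parabolic point of $f_{c_0}$, via perturbed Fatou coordinates, shows that the germ of the parabolic Hubbard tree at that point is an invariant real-analytic arc, pinning down the Ecalle data; and (iii) --- the rigidity step entirely absent from your outline --- a pullback argument propagates this local anti-holomorphic symmetry to a global conformal conjugacy between $f_{c_0}$ and an anti-polynomial with real coefficients, forcing $c_0$, and hence $H$, onto the real axis or one of its rotates under the symmetry of $\mathcal{T}$, contradicting non-realness. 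Without (ii) and (iii) your argument has no way to rule out a path that simply crosses the root arc at some interior point, which is exactly the case the theorem must exclude.
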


It should be mentioned that unlike the Mandelbrot set, not every (external) parameter ray of the Tricorn lands at a single point \cite{IM1} (see Figure~\ref{all_rays}). 

\begin{theorem}[Non-landing parameter rays]\label{most rays wiggle}
The accumulation set of every parameter ray accumulating on the boundary of a hyperbolic component of \emph{odd} period (except period one) of $\mathcal{T}$ contains an arc of positive length. The fixed rays at angles $0$, $1/3$ and $2/3$ land on the boundary of the principal hyperbolic component.
\end{theorem}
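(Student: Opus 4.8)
The plan is to first reduce the statement to a claim about the accumulation of a single parameter ray on an individual parabolic arc, and then to analyze the delicate accumulation behavior via parabolic implosion. For the second sentence, I record at the outset that the fixed points of the anti-doubling map $m_{-2}\colon\theta\mapsto -2\theta$ are exactly $0,1/3,2/3$ (since $3\theta\equiv 0$), and that the period one component is atypical and admits an explicit description; its treatment is deferred to the end.

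First I would fix a hyperbolic component $H$ of odd period $k\neq 1$. By Theorem~\ref{Exactly d+1}, $\partial H$ is a topological triangle bounded by three parabolic arcs $\cC_1,\cC_2,\cC_3$ (with $\cC_3$ the root arc and $\cC_1,\cC_2$ the co-root arcs) and three cusp vertices. The parameter rays accumulating on $\partial H$ carry the $2k$-periodic characteristic angles $\alpha_1,\alpha_2$ and the $k$-periodic co-root angles $\theta_1,\theta_2$ described in the paragraphs preceding Theorem~\ref{Exactly d+1}. By Theorem~\ref{realization_orbit_portrait_parabolic}(1), $\mathcal{R}_{\alpha_1}$ and $\mathcal{R}_{\alpha_2}$ accumulate on the root arc $\cC_3$, and the same analysis shows $\mathcal{R}_{\theta_i}$ accumulates on the co-root arc $\cC_i$. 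Since the accumulation set $X$ of a (connected) parameter ray is compact and connected, and is contained in the closure of one arc $\cC_i$—itself homeomorphic to a closed interval—the set $X$ is a subarc, possibly degenerate. It therefore remains only to exclude the possibility that $X$ is a single point.

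The hard part is precisely this non-degeneracy, and I would establish it by parabolic implosion. I parametrize $\cC_i$ by its critical Ecalle height parametrization $c(h)$ from Theorem~\ref{parabolic arcs}, and for parameters $c$ on the ray near $\cC_i$ I introduce perturbed incoming and outgoing Fatou coordinates on the region where $f_c^{\circ k}$ is almost parabolic. The angle $\theta$ of the parameter ray is determined by the phase with which the critical value enters the outgoing Ecalle cylinder, whereas the limiting parameter $c(h)\in\cC_i$ is recorded by the critical Ecalle height $h$ in the incoming cylinder; the passage between the two is governed by the horn map. As $c\to\cC_i$ along $\mathcal{R}_\theta$, the number of iterates required for the critical orbit to traverse the almost-parabolic corridor tends to infinity, and the associated winding forces the incoming Ecalle height to sweep monotonically through an interval of positive length while the outgoing phase remains pinned at the fixed value $\theta$. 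Hence $\mathcal{R}_\theta$ accumulates on the family $\{c(h)\}$ for $h$ in an interval, i.e.\ on a subarc of $\cC_i$ of positive length. I expect the quantitative implosion estimate—uniform control of the perturbed Fatou coordinates as $c\to\cC_i$, together with the verification that the correspondence between escape phase and accumulated Ecalle heights is genuinely multivalued in the limit—to be the main obstacle; this is the technical heart, carried out in \cite{IM1}.

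Finally, for the principal component I would rely on the explicit description of the period one hyperbolic component (its boundary being a deltoid with three cusps). A direct computation with the real-analytic B\"ottcher uniformization $\Phi$ of Theorem~\ref{RealAnalUniformization} shows that the three fixed rays $\mathcal{R}_0,\mathcal{R}_{1/3},\mathcal{R}_{2/3}$ land at single points on the boundary of the principal component; here genuine landing (rather than wiggling) occurs because the period one component is atypical and its boundary behavior is not of the odd-period parabolic-arc type analyzed above. This completes the proof of Theorem~\ref{most rays wiggle}.
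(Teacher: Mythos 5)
The first thing to note is that the paper does not prove Theorem~\ref{most rays wiggle} at all: it appears in the background section on the Tricorn and is quoted verbatim from \cite{IM1}, so there is no internal argument to compare yours against; the relevant benchmark is the proof in \cite{IM1} itself. Measured against that, your scaffolding is correct and consistent with the surrounding framework of the paper: the reduction of the statement to accumulation on a single parabolic arc (via Theorem~\ref{Exactly d+1}, Theorem~\ref{realization_orbit_portrait_parabolic}, and the ray/arc correspondence described before Theorem~\ref{Exactly d+1}), the observation that a connected compact subset of an arc is a (possibly degenerate) subarc, and the treatment of the period-one case, where the angle-$0$ ray lies on the real axis by the symmetry of $\Phi$ from Theorem~\ref{RealAnalUniformization} and lands at $1/4$, with the rays at $1/3$, $2/3$ handled by the rotational symmetry $c\mapsto e^{2\pi i/3}c$.

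The gap is in the step you describe as ``the associated winding forces the incoming Ecalle height to sweep monotonically through an interval of positive length.'' This is not a consequence of the almost-parabolic corridor picture; it \emph{is} the theorem, restated. The transit phase does wind infinitely often as $c\to\cC_i$ along $\mathcal{R}_\theta$, but if the dynamical ray $R_c(\theta)$ approached the characteristic parabolic point at constant Ecalle height --- equivalently, if its projection to the repelling Ecalle cylinder were a round horizontal circle (necessarily the equator, by invariance under $\zeta\mapsto\overline{\zeta}+1/2$) --- then every limit along the winding phases would produce the \emph{same} limiting parameter, and the ray would land at a single point despite the winding. Ruling out this constant-height approach, and converting the height-span of the projected ray into the accumulation set on the arc via uniform control of perturbed Fatou coordinates, is exactly the technical heart of \cite{IM1}, and your proposal defers it wholesale to that reference (the qualifier ``monotonically'' is, moreover, unjustified). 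In fairness, since the paper under review handles this statement purely by citation, your reliance on \cite{IM1} for the same step puts your write-up on the same footing as the paper's own treatment; but as a standalone proof it is a sketch whose decisive lemma is assumed, not proved.
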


The next result, which was proved in \cite[\S 5]{IM1}, is also in contrast with the corresponding situation for $\mathcal{M}$. 

\begin{theorem}[Non-density of Misiurewicz parameters]\label{thm_misi_not_dense}
Misiurewicz parameters are not dense on the boundary of $\mathcal{T}$. Indeed, there are points on the boundaries of the period $1$ and period $3$ hyperbolic components of $\mathcal{T}$ that cannot be approximated by Misiurewicz parameters. 
\end{theorem}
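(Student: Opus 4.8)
The plan is to exhibit, on the boundaries of the period $1$ and period $3$ hyperbolic components, concrete parabolic parameters that possess an entire neighborhood free of Misiurewicz parameters; producing such points immediately establishes non-density. Let $H$ be the period $1$ or period $3$ hyperbolic component and let $k\in\{1,3\}$ be its period. By Theorem~\ref{Exactly d+1}, $\partial H$ is a topological triangle consisting of three parabolic arcs and three cusps, and by Definition~\ref{DefRootArc} exactly one of the arcs is a root arc while the other two are co-root arcs. All of these boundary parameters are parabolic of exact period $k$ (Proposition~\ref{LemOddIndiffDyn}), hence none of them is Misiurewicz; the task is therefore to find a parabolic parameter on one of these arcs that is not accumulated by Misiurewicz parameters lying elsewhere on $\partial\mathcal{T}$. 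I would work on a co-root arc $\mathcal{C}$, where, crucially, the parabolic orbit does not disconnect the Julia set and the characteristic parabolic point is the landing point of exactly one periodic dynamical ray, at some $k$-periodic angle $\theta$.

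First I would pin down the target parameter using the fixed point index. Parametrizing $\mathcal{C}$ by critical Ecalle height, the function $\ind_{\mathcal{C}}$ is real-analytic and real-valued with $\ind_{\mathcal{C}}(h)\to+\infty$ as $h\to\pm\infty$ (Proposition~\ref{index goes to infinity}). By Theorem~\ref{ThmBifArc} together with Proposition~\ref{index_increasing_1}, the locus $\{\ind_{\mathcal{C}}\ge 1\}$ is a union of two sub-arcs, one at each end of $\mathcal{C}$, and these are precisely the loci across which a hyperbolic component of period $2k$ bifurcates; consequently the complementary open middle sub-arc, on which $\ind_{\mathcal{C}}<1$, is non-empty. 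I would take $c^{*}$ to be an interior point of this middle sub-arc.

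The heart of the argument is the local structural claim that $c^{*}$ has a neighborhood $N$ with $N\cap\mathcal{T}\subset\overline{H}$. Granting this, the conclusion is immediate: $\overline{H}=H\cup\partial H$ contains no Misiurewicz parameter, since $H$ is hyperbolic and $\partial H$ is parabolic, so $c^{*}$ cannot be approximated by Misiurewicz parameters. A priori two mechanisms could place parameters of $\mathcal{T}\setminus\overline{H}$ near $c^{*}$, and both are excluded by our choices. A period $2k$ component bifurcating across $\mathcal{C}$ is ruled out at $c^{*}$ because $\ind_{\mathcal{C}}(c^{*})<1$ (Theorem~\ref{ThmBifArc}); and a secondary wake opening along $\mathcal{C}$ is ruled out because $\mathcal{C}$ is a co-root arc, so the characteristic parabolic point carries only the single dynamical ray at angle $\theta$ and there is no room for a wake to open.

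To convert this into a proof, and in particular to rule out Misiurewicz parameters accumulating at $c^{*}$ from the exterior side, I would invoke Theorem~\ref{Tricorn_para_misi_ray}: every Misiurewicz parameter is the landing point of pre-periodic parameter rays. A hypothetical sequence $c_n\to c^{*}$ of Misiurewicz parameters would thus be the landing points of pre-periodic parameter rays $\mathcal{R}_{\beta_n}$, and a perturbation analysis of the parabolic dynamics of $f_{c^{*}}$ should force $\beta_n\to\theta$ (as the characteristic parabolic Fatou component re-forms, the rays landing at the critical value $c_n$ are squeezed toward the one boundary ray at $\theta$); the rays $\mathcal{R}_{\beta_n}$ would then accumulate where $\mathcal{R}_{\theta}$ does, namely on $\overline{\mathcal{C}}$, which together with the co-root and index-below-one structure confines them away from the exterior side of $c^{*}$. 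The main obstacle is exactly this transverse control along a co-root arc: by Theorem~\ref{most rays wiggle} the accumulating periodic ray $\mathcal{R}_{\theta}$ itself wiggles onto a positive-length sub-arc of $\mathcal{C}$, so the exterior of $\mathcal{T}$ spirals onto $\mathcal{C}$, and one must show that this spiraling drags no pre-periodic ray landing point to the non-$H$ side of $c^{*}$. Making this rigorous requires tracking the dynamical rays of $f_{c_n}$ in the unstable, non-hyperbolic limit $c_n\to c^{*}$ and using the orbit-portrait constraints that forbid any additional ray from landing at a perturbation of the characteristic parabolic point; the period $1$ case is easier, since there the relevant fixed parameter rays $0,1/3,2/3$ land rather than wiggle (Theorem~\ref{most rays wiggle}), so the substantive difficulty lies in the period $3$ component.
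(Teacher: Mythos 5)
The statement you are proving is not actually proved in this paper at all: it is quoted from \cite[\S 5]{IM1}, so the only fair comparison is against that external argument, and judged on its own terms your proposal has a genuine gap at its central step. Everything reduces to your claim that a point $c^{*}$ in the interior of the non-bifurcating (index $<1$) part of a parabolic arc $\mathcal{C}\subset\partial H$ has a neighborhood $N$ with $N\cap\mathcal{T}\subset\overline{H}$ (this would indeed give both $c^{*}\in\partial\mathcal{T}$ and non-approximability by Misiurewicz parameters). You never establish it. The dichotomy you offer --- only a bifurcating period-$2k$ component or a ``secondary wake opening on $\mathcal{C}$'' could place points of $\mathcal{T}\setminus\overline{H}$ near $c^{*}$ --- is not justified by any of the quoted results: the real danger is that limbs/decorations of $\mathcal{T}$ accumulate along the arc from outside (for the Tricorn, limbs attach to $\partial H$ along \emph{sub-arcs} of the parabolic arcs, not at isolated points; compare $\overline{\mathcal{L}}\setminus\mathcal{L}$ in Definition~\ref{def_basilica_limb} and Figure~\ref{limb_pic}), and excluding this is precisely the content of the theorem. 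Your fallback via Theorem~\ref{Tricorn_para_misi_ray} has the same gap in different clothes: to get $\beta_n\to\theta$ from $c_n\to c^{*}$ you need to know that $\theta$ is the \emph{only} angle whose impression meets the middle of the arc (again the structural claim), and the step ``$\mathcal{R}_{\beta_n}$ then accumulates where $\mathcal{R}_\theta$ does'' invokes a continuity of ray accumulation sets that is false in general (at best impressions are upper semi-continuous); moreover, for period $3$ the accumulation set of $\mathcal{R}_\theta$ already contains a sub-arc of $\mathcal{C}$ through $c^{*}$ (Theorem~\ref{most rays wiggle}), so even $\beta_n\to\theta$ would not, by itself, keep the landing points of $\mathcal{R}_{\beta_n}$ away from $c^{*}$. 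You flag this step yourself (``making this rigorous requires\ldots''), but that flagged step \emph{is} the theorem; in \cite[\S 5]{IM1} it is carried out with tools absent from your outline (separation arguments built on the landing of the fixed rays at angles $0,1/3,2/3$ together with near-parabolic control of parameters approaching the arc).

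There are also secondary inaccuracies worth fixing. First, for the period-$1$ component the description ``one root arc and two co-root arcs'' is wrong: the only solutions of $4\theta=\theta$ are $0,1/3,2/3$, so there are no angles of exact period $2$ under $m_{-2}$, and all three arcs of the principal component are co-root arcs, each accumulated by a single fixed ray. Second, the non-emptiness of the middle sub-arc $\{\ind_{\mathcal{C}}<1\}$ does not follow formally from Theorem~\ref{ThmBifArc} and Proposition~\ref{index_increasing_1} (nothing quoted prevents the two end bifurcation loci from covering all of $\mathcal{C}$); for $k=1,3$ it must be verified separately, e.g.\ a direct computation gives parabolic index $1/2$ at $c=1/4$, the landing point of $\mathcal{R}_0$ --- which is exactly the witness the paper's remark after the theorem points to. Third, since cusps and bifurcating sub-arcs lie in the \emph{interior} of $\mathcal{T}$, the fact that your chosen $c^{*}$ lies on $\partial\mathcal{T}$ at all is not automatic; it is again subsumed in the unproven local structure claim.
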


Non-density of Misiurewicz parameters on $\partial\mathcal{T}$ can be clearly seen in Figure~\ref{limb_pic}(left); for instance, the landing point of the parameter ray at angle $0$ is not a limit point of Misiurewicz parameters.

Another salient difference between $\mathcal{M}$ and $\mathcal{T}$ is that the straightening map for ``baby Tricorns'' is always discontinuous \cite[Theorem~1.1]{IM2}. The discontinuity phenomena is related to non-local connectivity and existence of quasiconformally conjugate parameters on the boundary of the Tricorn.

\begin{theorem}[Discontinuity of straightening in the Tricorn]\label{Straightening_discontinuity_Tricorn}
Let $c_0$ be the center of a hyperbolic component $H$ of odd period (other than $1$) of $\mathcal{T}$, and $\mathcal{R}(c_0)$ be the corresponding $c_0$-renormalization locus (i.e., the baby Tricorn based at $H$). Then the straightening map $\chi_{c_0} : \mathcal{R}(c_0) \rightarrow \mathcal{T}$ is discontinuous at infinitely many parameters.
\end{theorem}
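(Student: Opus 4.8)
The plan is to localize the failure of continuity along parabolic arcs sitting on the boundaries of odd-period hyperbolic components inside $\mathcal{R}(c_0)$, and to drive the argument with two conformal invariants attached to such arcs: the \emph{critical Ecalle height} and the \emph{residue fixed point index}. Fix an odd-period ($\neq 1$) hyperbolic component $H \subset \mathcal{R}(c_0)$; by Theorem~\ref{Exactly d+1} its boundary is a topological triangle whose sides are parabolic arcs, and each such arc $\cC$ is parametrized by critical Ecalle height via Theorem~\ref{parabolic arcs}. Write $\cC' = \chi_{c_0}(\cC) \subset \partial\mathcal{T}$ for the image arc produced by the combinatorial straightening, with its own index function $\ind_{\cC'}$. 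The first step is to isolate what $\chi_{c_0}$ does and does not preserve on such arcs.

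On the one hand, $\chi_{c_0}(c)=c'$ is defined through a hybrid equivalence between the renormalization $\mathcal{R}f_c$ and $f_{c'}$, and a hybrid equivalence is conformal on the interior of the filled Julia set---hence on the parabolic basin, where the attracting Fatou coordinate $\psi^{\mathrm{att}}$ of Proposition~\ref{normalization of fatou} and the intrinsically normalized Ecalle height live. Thus $\chi_{c_0}$ \emph{preserves the critical Ecalle height}, and in the Ecalle-height parametrizations its restriction $\chi_{c_0}|_{\cC}\colon \cC \to \cC'$ is a real-analytic homeomorphism. On the other hand, by Theorem~\ref{parabolic arcs} and Remark~\ref{qc_class} the \emph{entire} arc $\cC$ is a single quasiconformal conjugacy class, while by Proposition~\ref{index goes to infinity} the index $\ind_{\cC}$ is non-constant along $\cC$ (indeed it tends to $+\infty$ at both ends). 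Consequently quasiconformal---in particular hybrid---conjugacy does \emph{not} preserve the residue fixed point index: this is the precise ``non-universality'' of the index that ultimately obstructs continuity.

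The second step converts the period-doubling combinatorics into rigid constraints and exhibits the incompatibility. By Theorem~\ref{ThmBifArc} and Proposition~\ref{index_increasing_1}, each end of $\cC$ carries an interval of parameters---cut off at the unique point where $\ind_{\cC}=1$---along which a hyperbolic component of period $2k$ bifurcates, and the same holds verbatim for $\cC'$ with $\ind_{\cC'}$. Since $\chi_{c_0}$ is a combinatorial bijection respecting period-doubling, it must match these bifurcating satellites and their roots. I would then fix a parabolic parameter $c_\ast\in\cC$ and approach it \emph{transversally}, along a sequence of parabolic parameters lying on the boundaries of the period-$2k$ (and higher, in the period-doubling cascade) satellites attached to $\cC$. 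The images of this sequence under $\chi_{c_0}$ lie on the corresponding satellites attached to $\cC'$, and the heart of the matter is to show, using that the index is preserved under neither the transverse limit nor the renormalization, that the fixed-point-index data of these straightened parameters cannot converge to the index of the straightened limit $\chi_{c_0}(c_\ast)$. Because the two index profiles $\ind_{\cC}$ and $\ind_{\cC'}$ descend from genuinely different dynamical families---a renormalized anti-quadratic-like restriction versus a top-level anti-polynomial---their shapes are not related by the Ecalle-height-preserving correspondence, and this mismatch makes $\chi_{c_0}$ discontinuous at $c_\ast$. Running the construction over the infinitely many odd-period components $H\subset\mathcal{R}(c_0)$ (the renormalization locus being a full combinatorial copy of $\mathcal{T}$) yields infinitely many such parameters.

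The main obstacle is the quantitative comparison of the invariants through renormalization. One must make precise how the critical Ecalle height and the residue index of $f_c$ are related to those of $\mathcal{R}f_c$---carefully tracking the anti-holomorphic normalization $\psi^{\mathrm{att}}(f_c^{\circ k}(z))=\overline{\psi^{\mathrm{att}}(z)}+1/2$ of Proposition~\ref{normalization of fatou} through the change of iterate---so that preservation of critical Ecalle height yields a usable, explicit description of $\chi_{c_0}|_{\cC}$, and then establish the non-universality of $\ind_{\cC}$ sharply enough to rule out any accidental compatibility. Proposition~\ref{index goes to infinity} supplies the qualitative blow-up of the index at both ends (so that each bifurcation threshold is well defined), but pinning the transverse discontinuity requires controlling the straightened images of the approaching satellite parameters, which is the delicate analytic point on which the argument rests; choosing $\cC$ on a non-real component (so that no reflection symmetry forces a spurious coincidence) is what makes the incompatibility visible.
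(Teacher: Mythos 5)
Your overall framework is the one this paper uses for the \emph{analogous} statement about $\cC(\mathcal{S})$ in Section~\ref{straightening_discont} (and which \cite{IM2} discusses in its \S 9): straightening preserves multipliers of attracting cycles (hybrid equivalences are conformal on the interior of the filled Julia set) and critical Ecalle height on parabolic arcs; approaching a bifurcation parameter $c(h)\in\cC\cap\partial H_1$ from inside the period-$2k$ satellite, the parabolic index is the limit of $\tfrac{1}{1-\lambda}+\tfrac{1}{1-\overline{\lambda}}$, so continuity at $c(h)$ would force $\ind_{\cC}(h)=\ind_{\cC'}(h)$; since this holds on an unbounded interval of heights and the index functions are real-analytic, continuity on the bifurcating sub-arc would force $\ind_{\cC}\equiv\ind_{\cC'}$. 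That is precisely the content of Proposition~\ref{uniform}. Up to this point your outline is sound, although you state the limit step upside down (the satellite indices \emph{do} converge; the contradiction must come from the two limits being unequal, not from a failure of convergence).

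The genuine gap is the final step: you never prove that $\ind_{\cC}$ and $\ind_{\cC'}$ actually differ. Your ``non-universality'' observation --- that a parabolic arc is a single quasiconformal class on which the index is non-constant --- only shows that the index is not \emph{automatically} preserved by qc conjugacy; it does not show that the particular hybrid conjugacies realizing $\chi_{c_0}$ fail to preserve it at any parameter. Ruling out exactly this ``accidental compatibility'' is the whole difficulty, and asserting that the two index profiles ``descend from genuinely different dynamical families'' (or choosing $\cC$ non-real) is not an argument. Note that even this paper cannot close the analogous gap for $\cC(\mathcal{S})$ abstractly: it resorts to a \emph{numerical} verification that the period-$3$ index functions disagree at critical Ecalle height $0$. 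For the Tricorn statement you are asked to prove, the actual proof in \cite{IM2} avoids the index route entirely: it uses that odd-period hyperbolic components of $\mathcal{T}$ on the three symmetry axes have landing umbilical cords, while non-real ones wiggle (Theorem~\ref{Tricorn_non_lc}, \cite{IM1,HS}); if $\chi_{c_0}$ were continuous near the relevant arcs it would (locally, using compactness of $\mathcal{R}(c_0)$) transport a landing path to a landing path for a non-real component of $\mathcal{T}$, a contradiction, and the infinitely many symmetric components give infinitely many points of discontinuity. So as written your proposal rests on an unproven analytic claim; you must either supply a proof that $\ind_{\cC}\not\equiv\ind_{\cC'}$ for infinitely many arcs (not available even in this paper) or switch to the umbilical-cord mechanism.
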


\subsubsection{The real basilica limb}\label{sec_basilica_limb}

Let us now define the real basilica limb of the Tricorn. Of course, one can give a more general definition of limbs, which can be found in \cite[\S 6]{MNS}. Let us denote the hyperbolic component of period one of $\mathcal{T}$ by $H_0$.

\begin{definition}\label{def_basilica_limb}
The connected component of $\left(\mathcal{T}\setminus\overline{H_0}\right)\cup\{-\frac{3}{4}\}$ intersecting the real line is called the \emph{real basilica limb} of the Tricorn, and is denoted by $\mathcal{L}$.
\end{definition}

\begin{figure}[ht!]
\captionsetup{width=0.96\linewidth}
\begin{tikzpicture}
\node[anchor=south west,inner sep=0] at (0,0) {\includegraphics[width=0.46\textwidth]{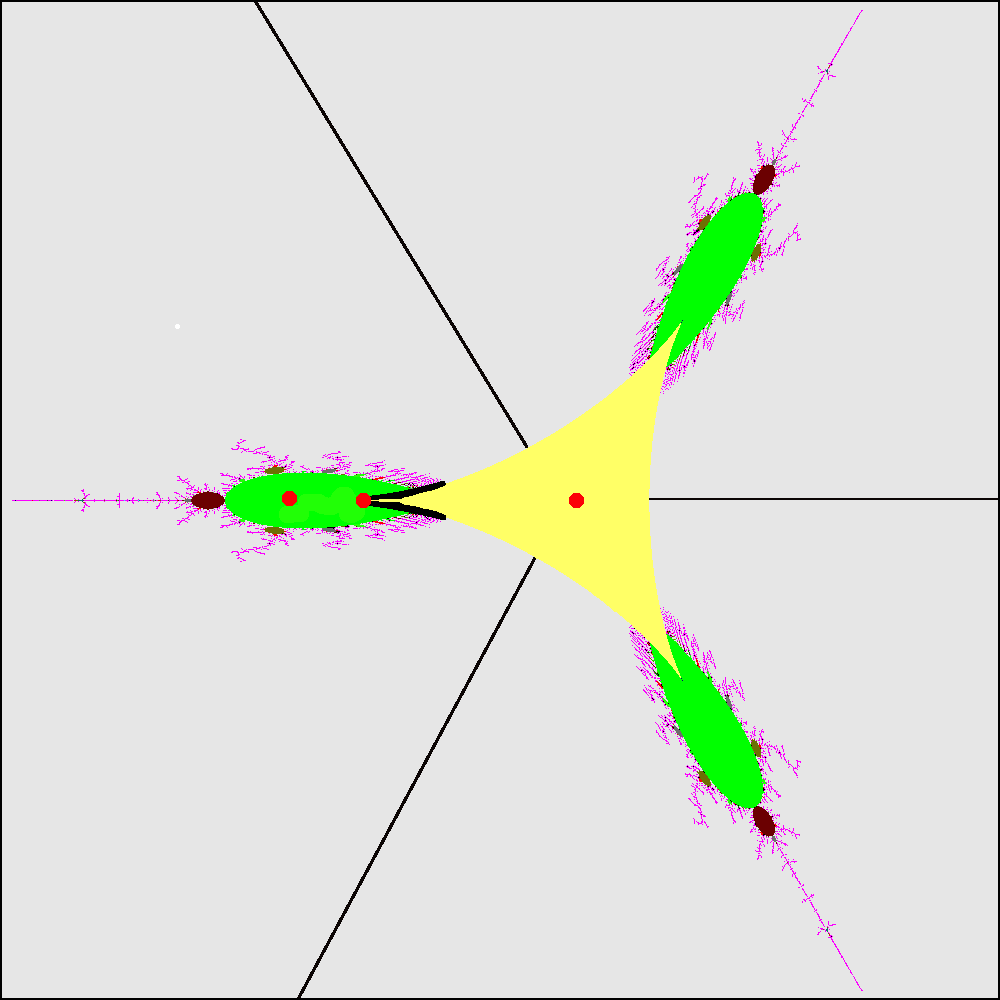}};
\node[anchor=south west,inner sep=0] at (6,0) {\includegraphics[width=0.48\textwidth]{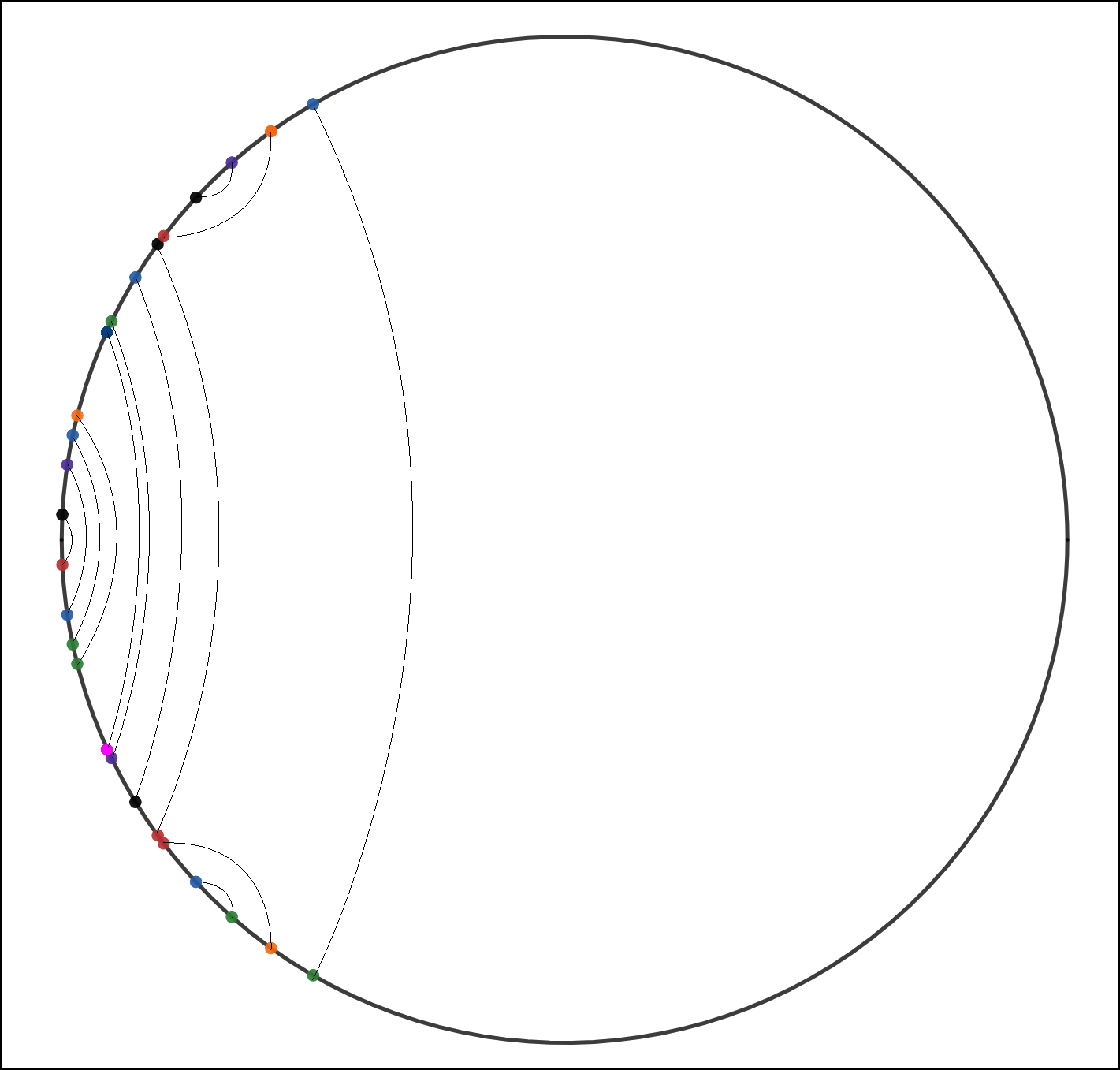}};
\node at (3.36,2.7) {\begin{small}$0$\end{small}};
\node at (2.5,0.4) {\begin{small}$\mathcal{R}_{\frac23}$\end{small}};
\node at (2.5,5) {\begin{small}$\mathcal{R}_{\frac13}$\end{small}};
\node at (5,2.7) {\begin{small}$\mathcal{R}_{0}$\end{small}};
\node at (3.35,3.66) {\begin{small}$\mathscr{C}_1$\end{small}};
\node at (3.35,2.16) {\begin{small}$\mathscr{C}_2$\end{small}};
\draw [->,line width=0.5pt] (0.6,1.84) to (1.64,2.88);
\node at (0.5,1.66) {\begin{small}$-1$\end{small}};
\draw [->,line width=0.5pt] (1.32,1.88) to (2.06,2.86);
\node at (1.2,1.64) {\begin{small}$-\frac34$\end{small}};
\end{tikzpicture}
\caption{Left: The real basilica limb of the Tricorn is shown. The sub-arcs of the parabolic arcs $\mathscr{C}_1, \mathscr{C}_2$ that are contained in $\overline{\mathcal{L}}$ are highlighted in black. These two sub-arcs constitute $\overline{\mathcal{L}}\setminus\mathcal{L}$. Right: A few identifications in the construction of the abstract basilica limb $\widetilde{\mathcal{L}}$.}
\label{limb_pic}
\end{figure}

$\mathcal{L}$ is precisely the set of parameters $c$ in $\mathcal{T}$ such that in the dynamical plane of $f_c$, the rays $R_c(1/3)$ and $R_c(2/3)$ land at a common point (i.e., $1/3\sim2/3$ in $\lambda(f_c)$ for all $c\in\mathcal{L}$). 

Recall that the parameter rays of the Tricorn at angles $1/3$ and $2/3$ are denoted by $\mathcal{R}_{1/3}$ and $\mathcal{R}_{2/3}$. Suppose that these two parameter rays land at the parabolic arcs $\mathscr{C}_1$ and $\mathscr{C}_2$ respectively (compare \cite[Lemma~3.1]{IM1}). The closure of these two parabolic arcs intersect at $-\frac{3}{4}$.

The real basilica limb $\mathcal{L}$ is not compact. There are limit points of $\mathcal{L}$ on the parabolic arcs $\mathscr{C}_1$ and $\mathscr{C}_2$ (of period one), but these points do not lie in $\mathcal{L}$. Moreover, $\overline{\mathcal{L}}\setminus\mathcal{L}$ is precisely the union of two sub-arcs of $\mathscr{C}_1$ and $\mathscr{C}_2$ (compare Figure~\ref{limb_pic}(left)).

\subsubsection{The abstract basilica limb}\label{abs_basi_limb_subsec} We conclude this subsection with the construction of a locally connected model of the real basilica limb of the Tricorn. Let $\gamma$ be the hyperbolic geodesic of $\D$ connecting $1/3$ and $2/3$ on $\partial\D\cong\R/\Z$. We denote the connected component of $\D\setminus\gamma$ not containing $0$ by $\D_2$. The locally connected model of $\mathcal{L}$ will be defined as the quotient of $\overline{\D}_2$ under a suitable equivalence relation. 

We will first construct an equivalence relation on $\partial\D\cap\partial\D_2$. We identify the angles of all rational parameter rays of $\mathcal{T}$ that land at a common (parabolic or Misiurewicz) parameter or accumulate on a common root parabolic arc of $\mathcal{L}$ (see Figure~\ref{limb_pic}(right)). We also identify $1/3$ and $2/3$. This defines an equivalence relation on $\Q/\Z\cap\partial\D_2$. We then consider the smallest closed equivalence relation on $\partial\D\cap\partial\D_2$ generated by the above relation. Take the hyperbolic convex hull of each of these equivalence classes in $\overline{\D}$. This yields a geodesic lamination of $\D_2$ (by hyperbolic geodesics of $\D$). Finally, consider the quotient of $\overline{\D}_2$ by collapsing each hyperbolic convex hull obtained above to a single point. The resulting continuum is called the \emph{abstract basilica limb} $\widetilde{\mathcal{L}}$ (see \cite[\S 9.4.2]{L6} for a general discussion on the construction of pinched disk models of planar continua). 

We will now give a description of $\widetilde{\mathcal{L}}$ as a quotient space of $\mathcal{L}$.

\begin{definition}
i) Two parameters $c$ and $c'$ in $\mathcal{L}$ are called \emph{combinatorially equivalent} if $f_{c}$ and $f_{c'}$ have the same rational lamination.

ii) The \emph{combinatorial class} $\mathrm{Comb}(c)$ of $c\in\mathcal{L}$ is defined as the set of all parameters in $\mathcal{L}$ that are combinatorially equivalent to $c$.

iii) A combinatorial class $\mathrm{Comb}(c)$ is called \emph{periodically repelling} if for every $c'\in\mathrm{Comb}(c)$, each periodic orbit (excluding $\infty$) of the anti-polynomial $f_{c'}$ is repelling.
\end{definition}

The following proposition gives a complete description of the non-repelling combinatorial classes of $\mathcal{L}$. 

\begin{proposition}[Classification of combinatorial classes]\label{comb_class_Tricorn}
Every combinatorial class $\mathrm{Comb}(c)$ of $\mathcal{L}$ is of one of the following four types.
\begin{enumerate}
\item $\mathrm{Comb}(c)$ consists of an even period hyperbolic component (that does not bifurcate from an odd period hyperbolic component), its root point, and the irrationally neutral parameters on its boundary,

\item $\mathrm{Comb}(c)$ consists of an even period hyperbolic component (that bifurcates from an odd period hyperbolic component), the unique parabolic cusp and the irrationally neutral parameters on its boundary,

\item $\mathrm{Comb}(c)$ consists of an odd period hyperbolic component and the parabolic arcs on its boundary,

\item $\mathrm{Comb}(c)$ is periodically repelling.
\end{enumerate}
\end{proposition}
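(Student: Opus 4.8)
The plan is to organize the proof around the non-repelling cycle of $f_c$. Since $f_c$ is unicritical, it carries at most one non-repelling cycle, so every $c\in\mathcal{L}$ is of exactly one of the following four (mutually exclusive) types: hyperbolic, parabolic, irrationally neutral, or one all of whose periodic orbits are repelling. If $c$ is of the last type then $\mathrm{Comb}(c)$ is of type~(4) essentially by definition, provided we also verify that no parameter of this class carries a non-repelling cycle; this disjointness will fall out of the analysis of the other three cases. So it suffices to treat parameters with a non-repelling cycle. In each such case I would first place $c$ in the closure $\overline{H}$ of a single hyperbolic component $H$: for hyperbolic $c$ this is immediate, while for parabolic or irrationally neutral $c$ it follows from Proposition~\ref{ThmIndiffBdyHyp}, and the orbit portrait of the unique non-repelling cycle pins down $H$ unambiguously. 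The proposition then reduces to computing, for each $H$, precisely which parameters on $\partial H$ carry the same rational lamination as the interior of $H$.

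The backbone of this computation is that $\lambda(f_c)$ is constant on $H$: every rational dynamical ray lands at a repelling (pre)periodic point, and the landing pattern is locally constant so long as no parabolic bifurcation is crossed; since $H$ is connected and contains no such bifurcation, the lamination is the same for all $c\in H$. I would then transfer this lamination to the boundary by two principles. First, at an irrationally neutral boundary parameter the rational lamination is governed by the repelling cycles, which persist from the interior, and is unaffected by the neutral cycle; hence such parameters lie in $\mathrm{Comb}(c)$. Second, at a boundary parameter where the attracting cycle of $H$ degenerates to a parabolic cycle with the same characteristic landing pattern, the lamination again does not change. By contrast, at the satellite roots of $\partial H$ (parabolic parameters at nonzero rational internal angles, which by Theorem~\ref{ThmEvenBif} are roots of hyperbolic components of higher even period) genuinely new identifications of rational rays appear, so these parameters belong to the bifurcating component's class and are excluded. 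The reverse inclusion, that \emph{no} parameter outside the stated set shares the lamination, rests on the rigidity fact that distinct hyperbolic components have distinct rational laminations, together with the uniqueness parts of the realization statements for parabolic and Misiurewicz data.

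For an even-period component $H$, the boundary is a topological circle carrying one distinguished parameter at internal angle $0$ (Theorem~\ref{hyp_unif}), the satellite roots at nonzero rational angles, and irrationally neutral parameters at irrational angles. Combining the two transfer principles then gives that $\mathrm{Comb}(c)$ consists of $H$, its distinguished angle-$0$ parameter, and its irrationally neutral boundary parameters. The split between types~(1) and~(2) is exactly the split for this distinguished point: if $H$ does not bifurcate from an odd-period component it is a simple even-period parabolic root (type~(1)), whereas if $H$ does bifurcate from one then, by Theorem~\ref{ThmBifArc}, it is the unique parabolic cusp at which $H$ meets its parent odd component (type~(2)); the shared arc of simple odd-period parabolics then belongs to the parent's class rather than to $\mathrm{Comb}(c)$.

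For an odd-period component $H$, I would invoke Theorem~\ref{Exactly d+1}: $\partial H$ is a triangle of three parabolic arcs and three cusps. On each open arc the lamination agrees with that of the interior --- on the two co-root arcs a single ray lands at the characteristic parabolic point, while on the root arc the characteristic angles $\alpha_1,\alpha_2$ land together, exactly as they do at the dynamical root of the center --- so all three open arcs lie in $\mathrm{Comb}(c)$, yielding type~(3). At each cusp, however, the double parabolic point is the merger of a simple parabolic point with a repelling point, so an additional rational ray lands there; the strictly finer lamination coincides with that of the center of the period-$2k$ component bifurcating across the cusp, placing each cusp in a type~(2) class. I expect the heart of the difficulty to lie precisely in this boundary bookkeeping: establishing that the rational lamination extends continuously across root points and open parabolic arcs while jumping exactly at satellite roots and cusps, and no further. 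This demands a careful stability analysis of ray landings through parabolic bifurcations, controlled by the orbit-portrait relations such as~\eqref{char_rays_relation_1} and the realization/uniqueness statement of Theorem~\ref{realization_orbit_portrait_parabolic}, certifying that no identifications are created or destroyed beyond those forced by the combinatorics.
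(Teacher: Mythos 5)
The paper never actually proves Proposition~\ref{comb_class_Tricorn}: it appears in the background survey of Section~\ref{sec_tricorn} as a known fact about the Tricorn, with the ingredients needed to justify it (Propositions~\ref{ThmIndiffBdyHyp} and~\ref{LemOddIndiffDyn}, Theorem~\ref{Exactly d+1}, and the catalogue in Subsection~\ref{sec_hyp_Tricorn} of exactly which dynamical rays land at roots, at points of parabolic arcs, and at cusps). Your plan is the natural reconstruction from these ingredients, and its architecture is sound: at most one non-repelling cycle, constancy of $\lambda(f_c)$ on a hyperbolic component, two-way persistence of identifications at repelling points (which puts the irrationally neutral boundary parameters in the class), semicontinuity of laminations under limits (identifications can be created but not destroyed), and a portrait-by-portrait comparison at parabolic boundary points. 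Your case-by-case conclusions all agree with the statement. (One small slip: for an irrationally neutral parameter the non-repelling cycle carries \emph{no} rational rays, so its ``orbit portrait'' pins down nothing; uniqueness of the ambient component is not needed for the argument and only follows a posteriori from distinctness of component laminations.)

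The one step whose justification would fail as written is your description of even-period boundaries. Theorem~\ref{hyp_unif} gives a real-analytic diffeomorphism $H\to\mathbb{D}$ respecting the multiplier, but no homeomorphic extension to $\overline{H}$, so it does not yield ``a topological circle carrying one distinguished parameter at internal angle $0$''. For a component $H$ bifurcating from an odd-period component this picture is genuinely false: by Theorem~\ref{ThmBifArc} together with Proposition~\ref{index_increasing_1}, the locus on $\partial H$ where the attracting multiplier tends to $1$ is an \emph{arc} --- two sub-arcs of odd-period parabolic arcs joined at the cusp --- not a point; this is precisely one of the Tricorn-versus-Mandelbrot pathologies the paper keeps emphasizing. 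In particular, Theorem~\ref{ThmBifArc}, which you cite to identify your distinguished parameter with the cusp, asserts nothing of the sort. The correct route to your (correct) conclusion is the orbit-portrait catalogue of Subsection~\ref{sec_hyp_Tricorn}: at a cusp, the rays landing at the characteristic parabolic point are exactly those landing at the dynamical root of the center of the period-$2k$ component bifurcating across that cusp, whereas at points of the open sub-arcs they are those of the odd-period parent; since the landing pattern at the characteristic point, pulled back under $m_{-2}$, generates the entire lamination (the Tricorn analogue of Proposition~\ref{char_angles_lamination}), the cusp --- and only the cusp --- shares the lamination of $H$, while the sub-arcs fall into the parent's class. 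With that replacement, and with the deferred stability bookkeeping carried out as you indicate, the plan goes through.
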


\begin{remark}\label{repelling_comb}
It is conjectured that every periodically repelling combinatorial class of $\mathcal{L}$ is a point. This is known in many cases; e.g. for all Misiurewicz parameters \cite{S4}, at most finitely renormalizable parameters with no non-repelling cycles \cite{H1}, parameters in embedded baby Mandelbrot sets satisfying the secondary limbs conditions \cite{L3}, etc. 
\end{remark}

The \emph{abstract basilica limb} $\widetilde{\mathcal{L}}$ is obtained from $\mathcal{L}$ by

\begin{enumerate}
\item identifying all points in each periodically repelling combinatorial class of $\mathcal{L}$,

\item identifying all points in the non-bifurcating sub-arc of every parabolic arc of $\mathcal{L}$, and

\item identifying all points in $(\overline{\mathcal{L}}\setminus\mathcal{L})\cup\lbrace-\frac{3}{4}\rbrace$.
\end{enumerate}

We refer the readers to \cite{NS,HS,MNS,IM1,IM2} for a more comprehensive account of the combinatorics and topology of the Tricorn.

\section{Ideal triangle group}\label{ideal_triangle}

We now briefly review the basic properties of the ideal triangle group and the associated reflection map $\rho$, which play an important role in our study of the ``escaping'' dynamics of Schwarz reflection maps.

Consider the open unit disk $\D$ in the complex plane, and the hyperbolic geodesics $\widetilde{C}_1$, $\widetilde{C}_2$, and $\widetilde{C}_3$ (in $\D$) connecting the third roots of unity. These geodesics bound a closed ideal triangle (in the topology of $\D$), which we call $\Pi$.
Reflections with respect to the circular arcs $\widetilde{C}_i$ are anti-conformal involutions (hence automorphisms) of $\D$, and we call them $\rho_1$, $\rho_2$, and $\rho_3$. The maps $\rho_1$, $\rho_2$, and $\rho_3$ generate a subgroup $\mathcal{G}$ of $\mathrm{Aut}(\D)$. The group $\mathcal{G}$ is called the \emph{ideal triangle group}. As an abstract group, it is given by the generators and relations 
$$
\langle\rho_1, \rho_2, \rho_3: \rho_1^2=\rho_2^2=\rho_3^2=\mathrm{id}\rangle.
$$ 
Note that $\Pi$ is a fundamental domain of $\mathcal{G}$. The tessellation of $\D$ arising from $\mathcal{G}$ will play an important role in this paper (see Figure~\ref{tessellation_pic}). 
We will denote the connected component of $\D\setminus \Pi$ containing $\Int{\rho_i(\Pi)}$ by $\D_i$. Note that $\D_1\cup \D_2\cup \D_3=\D\setminus\Pi$.

\begin{figure}[ht!]
\captionsetup{width=0.96\linewidth}
\begin{center}
\includegraphics[scale=0.06]{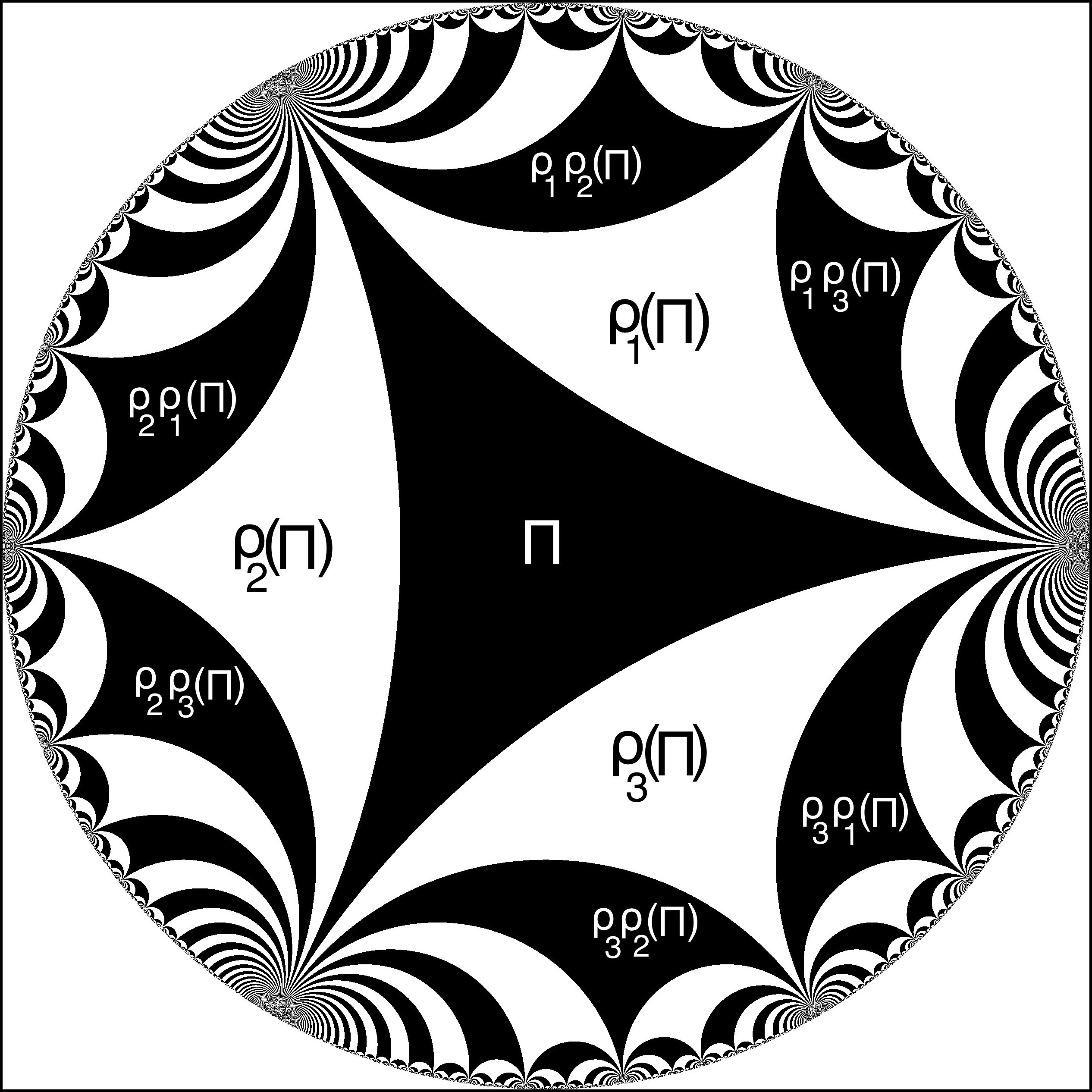} \includegraphics[scale=0.06]{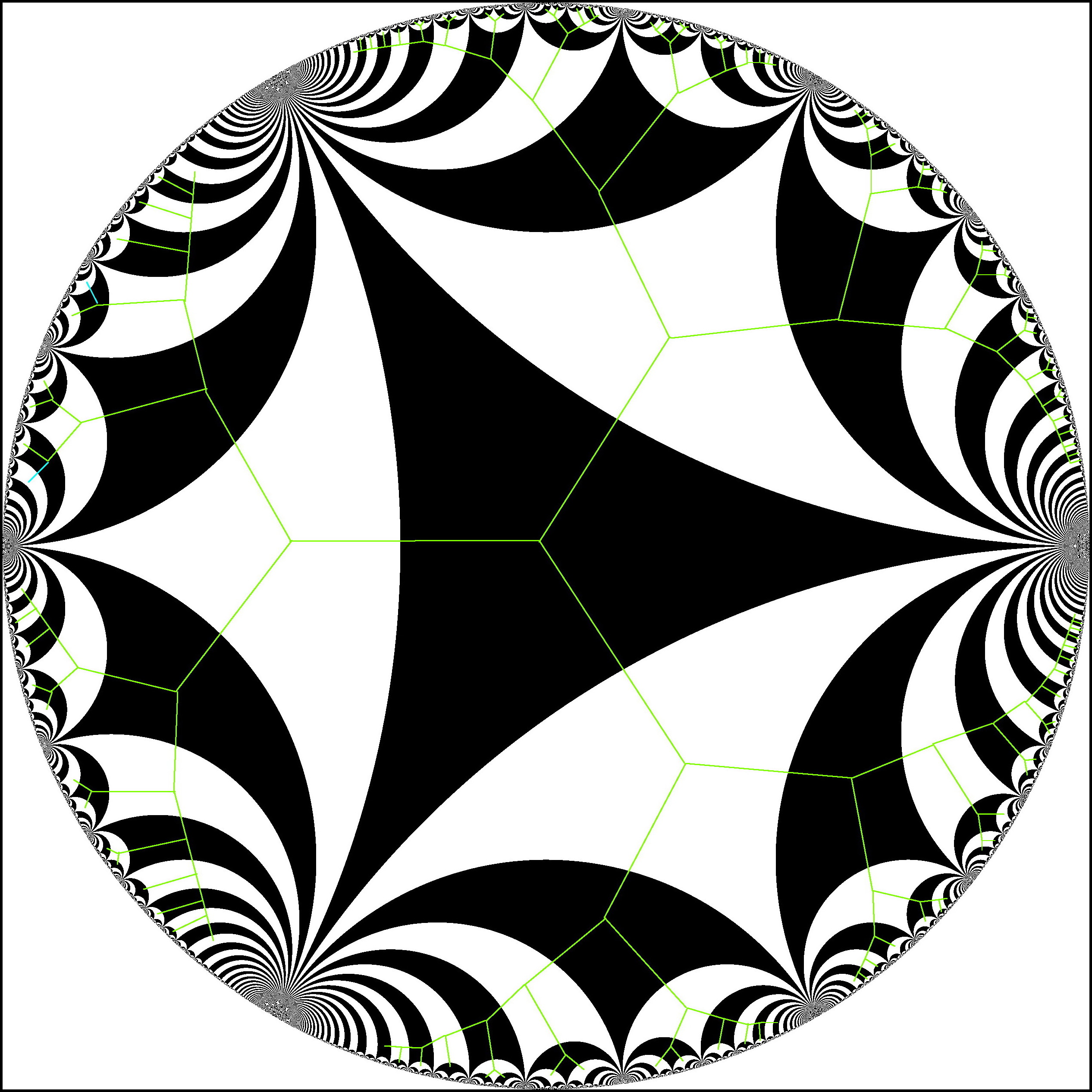}
\end{center}
\caption{Left: The $\mathcal{G}$-tessellation of $\D$ and the formation of a few initial tiles are shown. Right: The dual tree to the $\mathcal{G}$-tessellation of $\D$.}
\label{tessellation_pic}
\end{figure}

We now associate with the ideal triangle group $\mathcal{G}$ a piecewise reflection map 
$$
\rho:\overline{\D}\setminus\Int{\Pi}\to\overline{\D},\
z \mapsto \begin{array}{lll}
                    \rho_i(z) & \mbox{if}\ z\in \overline{\D}_i,\ i\in\{1,2,3\}.
                                          \end{array}
$$
Clearly, $\rho$ restricts to a $C^1$, expansive, orientation-reversing, double covering of $\mathbb{T}$. In fact, the orbit of any point in $\overline{\D}$ under $\mathcal{G}$ coincides with its grand orbit under the reflection map $\rho$. In other words, $\rho$ is \emph{orbit equivalent} to $\mathcal{G}$.

The map $\rho\vert_{\mathbb{T}}$ admits a Markov partition $\mathbb{T}=(\partial\D_1\cap\mathbb{T})\cup(\partial\D_2\cap\mathbb{T})\cup(\partial\D_3\cap\mathbb{T})$ with transition matrix $M=\mathbf{1} -\mathrm{Id}$, where $\mathbf{1}$ is the $3\times 3$ matrix with all entries equal to $1$, and $\mathrm{Id}$ is the $3\times 3$ identity matrix.

Let $W:=\lbrace 1,2,3\rbrace$. An element $(i_1,i_2,\cdots)\in W^{\mathbb{N}}$ is called $M$-admissible if $M_{i_k,i_{k+1}}=1$, for all $k\in\mathbb{N}$. We denote the set of all $M$-admissible words in $W^{\mathbb{N}}$ by $M^\infty$. One can similarly define $M$-admissibility of finite words. Expansivity of $\rho$ implies that there is a continuous surjection $Q:M^\infty\to \mathbb{T}$ that semi-conjugates the left-shift map to $\rho$.

\begin{definition}\label{def_tiles}
The images of the fundamental domain $\Pi$ under the elements of $\mathcal{G}$ are called \emph{tiles}. More precisely, for any $M$-admissible word $(i_1,\cdots,i_k)$, we define the tile $$T^{i_1,\cdots,i_k}:=\rho_{i_1}\circ\cdots\circ\rho_{i_k}(\Pi).$$
\end{definition}

It follows from the definition that $T^{i_1,\cdots,i_k}$ consists of all those $z\in\D$ such that $\rho^{\circ (n-1)}(z)\in\D_{i_n}$, for $n=1,\cdots,k$ and $\rho^{\circ k}(z)\in\Pi$. In other words, $$T^{i_1,\cdots,i_k}=\cap_{n=1}^{k}\rho^{-(n-1)}(\D_{i_n})\cap\rho^{-k}(\Pi).$$

Let us set $d_{\D}(0,\rho_{1}(0))=d_{\D}(0,\rho_{2}(0))=d_{\D}(0,\rho_{3}(0))=:\ell$. For any $M$-admissible sequence $(i_1,i_2,\cdots)$, let us consider the sequence $\{0,\rho_{i_1}(0),\rho_{i_1}\circ\rho_{i_2}(0),\cdots)$. Clearly, the hyperbolic distance (in $\D$) of any two consecutive points in this sequence is $\ell$. Connecting consecutive points of this sequence by hyperbolic geodesics of $\D$, we obtain a curve in $\D$ that lands at $Q((i_1, i_2,\cdots))\in\mathbb{T}$ (i.e., at the point on $\mathbb{T}$ whose $\rho$-itinerary with respect to the above Markov partition is $\left(i_1, i_2,\cdots\right)$).

\begin{definition}\label{rays_triangle}
The curve constructed above is called a $\mathcal{G}$-ray at angle $Q((i_1,i_2,\cdots)).$\footnote{Here, we identify $\mathbb{T}$ with $\R/\Z$.}
\end{definition}

We refer the readers to \cite[\S 2]{LLMM1} for a more detailed description of the symbolic dynamics of $\rho$.

The map $\rho$ is closely related to the anti-doubling map $m_{-2}:\theta\mapsto-2\theta$ on $\R/\Z$. In fact, there exists a homeomorphism $\mathcal{E}$ of the circle that conjugates $\rho$ to $\overline{z}^2$ (see \cite[\S 2]{LLMM1}). This topological conjugacy will be important in more ways than one in the rest of the paper.

\section{Quadrature domains and Schwarz reflections}\label{sec_quad_domain}

Although we will deal with explicit quadrature domains and Schwarz reflection maps in this paper, we would like to remind the readers the general definitions of these objects. For a more detailed exposition on quadrature domains and Schwarz reflection maps, we refer the readers to \cite{QD,LM} and the references therein.

Let $\iota$ denote the complex conjugation map.

\begin{definition}
Let $\Omega\subsetneq\widehat{\C}$ be a domain such that $\infty\notin\partial\Omega$ and $\Int{\overline{\Omega}}=\Omega$. A \emph{Schwarz function} of $\Omega$ is a meromorphic extension of $\iota\vert_{\partial\Omega}$ to all of $\Omega$. More precisely, a continuous function $S:\overline{\Omega}\to\widehat{\C}$ of $\Omega$ is called a Schwarz function of $\Omega$ if it satisfies the following two properties:
\begin{enumerate}
\item $S$ is meromorphic on $\Omega$,

\item $S=\iota$ on $\partial \Omega$.
\end{enumerate}
\end{definition}

It is easy to see from the definition that a Schwarz function of a domain (if it exists) is unique. 

\begin{definition}\label{quad_domain_def}
A domain $\Omega\subsetneq\widehat{\C}$ with $\infty\notin\partial\Omega$ and $\Int{\overline{\Omega}}=\Omega$ is called a \emph{quadrature domain} if $\Omega$ admits a Schwarz function.
\end{definition}

Therefore, for a quadrature domain $\Omega$, the map $\sigma:=\iota\circ S:\overline{\Omega}\to\widehat{\C}$ is an anti-meromorphic extension of the Schwarz reflection map with respect to $\partial \Omega$ (the reflection map fixes $\partial\Omega$ pointwise). We will call $\sigma$ the \emph{Schwarz reflection map of} $\Omega$.

The next proposition characterizes simply connected quadrature domains (see \cite[Theorem~1]{AS}).

\begin{proposition}[Simply connected quadrature domains]\label{simp_conn_quad}
A simply connected domain $\Omega\subsetneq\widehat{\C}$ with $\infty\notin\partial\Omega$ and $\Int{\overline{\Omega}}=\Omega$ is a quadrature domain if and only if the Riemann uniformization $f:\mathbb{D}\to\Omega$ extends to a rational map on $\widehat{\C}$. 

In this case, the Schwarz reflection map $\sigma$ of $\Omega$ is given by $f\circ(1/\overline{z})\circ(f\vert_{\D})^{-1}$. Moreover, if the degree of the rational map $f$ is $d$, then $\sigma:\sigma^{-1}(\Omega)\to\Omega$ is a (branched) covering of degree $(d-1)$, and $\sigma:\sigma^{-1}(\Int{\Omega^c})\to\Int{\Omega}^c$ is a (branched) covering of degree $d$.
\end{proposition}

\section{The circle-and-cardioid family}\label{schwarz_circle_cardioid}

In this section, we will recapitulate the definitions and various important facts about the family of Schwarz reflection maps with respect to the cardioid and a circumscribing circle which was introduced in \cite{LLMM1}.

Proposition~\ref{simp_conn_quad} immediately shows that the principal hyperbolic component $\heartsuit$ of the Mandelbrot set (also called the main cardioid) is a quadrature domain. Indeed, it admits a polynomial Riemann uniformization 
$$\phi:\D\to\heartsuit,\quad \mu\mapsto\mu/2-\mu^2/4.$$ The Riemann uniformization $\phi$ semi-conjugates the Schwarz reflection map $\sigma$ of $\heartsuit$ to the reflection map $1/\overline{z}$ of the unit disk. This yields an explicit description of $\sigma$.
 
 \begin{equation}
\sigma(\phi(\mu))=\phi(1/\overline{\mu})\quad \mathrm{i.e.,}\ \sigma\left(\frac{\mu}{2}-\frac{\mu^2}{4}\right)=\left(\frac{2\overline{\mu}-1}{4\overline{\mu}^2}\right)
\label{schwarz_cardioid}
\end{equation}
for each $\mu\in\overline{\D}$.

This allows us to study the basic mapping properties of the map $\sigma$, see \cite[\S 5.1]{LLMM1}. In particular, $\sigma$ has a unique critical point at $0$.

\subsection{The circle-and-cardioid family: dynamical and parameter planes}\label{C_and_C_subsec} 

We are now ready to describe the main object of this paper, namely the circle-and-cardioid family. For $a\in\C$, let $B(a,r_a)$ be the smallest disk containing $\heartsuit$ and centered at $a$; i.e., $\partial B(a,r_a)$ is the circumcircle to $\heartsuit$.
According to \cite[Proposition~5.9]{LLMM1}, we have the following dichotomy:
\begin{itemize}
\item for $a\in\left(-\infty,-1/12\right)$, the circumcircle $\partial B(a,r_a)$ touches $\partial\heartsuit$ at exactly two points, and
\item for any $a\in\C\setminus\left(-\infty,-1/12\right)$, the circumcircle $\partial B(a,r_a)$ touches $\partial\heartsuit$ at exactly one point.
\end{itemize}
In order to extract triangle group structure from our family of Schwarz reflection maps, we will restrict to parameters  $a\in\C\setminus\left(-\infty,-1/12\right)$.
Let $$\Omega_a := \heartsuit\cup\overline{B}(a,r_a)^c.$$ We now define our dynamical system $F_a:\overline{\Omega}_a\to\widehat{\C}$ as, 
$$
w \mapsto \left\{\begin{array}{ll}
                    \sigma(w) & \mbox{if}\ w\in\overline{\heartsuit}, \\
                    \sigma_a(w) & \mbox{if}\ w\in B(a,r_a)^c, 
                                          \end{array}\right. 
$$
where $\sigma$ is the Schwarz reflection of $\heartsuit$, and $\sigma_a$ is reflection with respect to the circle $\vert w-a\vert=r_a$. It follows from our previous discussion that $0$ is the only critical point of $F_a$. We will call this family of maps $\mathcal{S}$; i.e., $$\mathcal{S}:=\left\{F_a:\overline{\Omega}_a\to\widehat{\C}:a\in\C\setminus (-\infty,-1/12)\right\}.$$

Let $T_a:=\Omega_a^c= \overline{B}(a,r_a)\setminus\heartsuit$ (which we call the \emph{droplet}). Note that $\partial T_a$ has two singular points; namely $\alpha_a$ (a double point) and $\frac{1}{4}$ (a cusp). Both of them are fixed points of $F_a$. We define the \emph{fundamental tile} (or \emph{desingularized droplet}) of $F_a$ as $T_a^0:= T_a\setminus\{\alpha_a,\frac{1}{4}\}$. Then, the restriction $F_a:F_a^{-1}(T_a^0)\to T_a^0$ is a degree $3$ covering.

\begin{figure}[ht!]
\captionsetup{width=0.96\linewidth}
\centering
\includegraphics[scale=0.258]{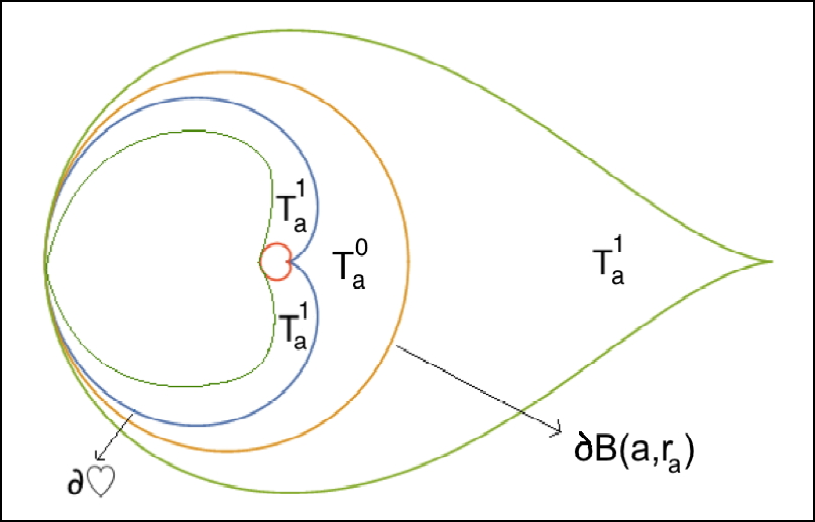} \includegraphics[scale=0.11]{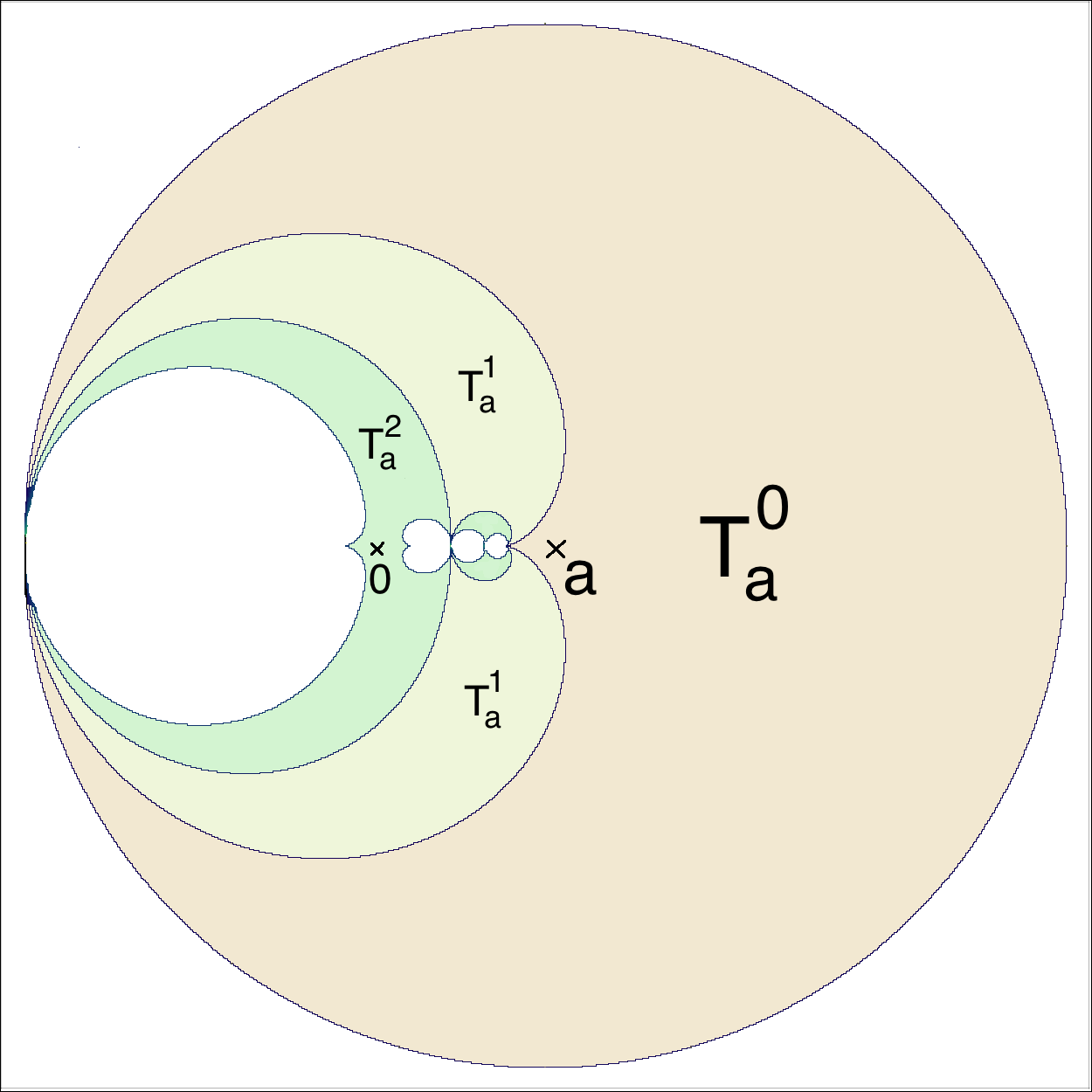}
\caption{Left: The tiles of rank $0$ and $1$ are labelled as $T_a^0$ and $T_a^1$ respectively. Right: Pictured are some of the initial tiles for a parameter $a$ such that the critical value $\infty$ escapes to the fundamental tile $T^0_a$ in one iterate; i.e., $F_a(\infty)=a\in T_a^0$. A tile of rank $2$ is ramified (i.e., it contains the critical point $0$) and hence disconnects the non-escaping set $K_a$. For such a parameter $a$, the external conjugacy $\psi_a$ extends conformally to all the tiles of the first rank (and all non-degenerate tiles of higher rank).}
\label{initial_tiles}
\end{figure}

\begin{proposition}\cite[Proposition~5.10]{LLMM1}\label{cardioid_circle_branched_cover}
For each $a\in\C\setminus(-\infty,-\frac{1}{12})$, the map $F_a: F_a^{-1}(\Omega_a)\to\Omega_a$ is a two-to-one branched covering, branched only at $0$.
\end{proposition}

\begin{definition}

\noindent\begin{enumerate}\upshape
\item For any $k~\geq~0$, the connected components of $F_a^{-k}(T_a^0)$ are called \emph{tiles} (of $F_a$) of rank $k$. The unique tile of rank $0$ is $T_a^0$.

\item The \emph{tiling set} $T_a^\infty$ of $F_a$ is defined as the set of points in $\overline{\Omega}_a$ that eventually escape to $T_a^0$; i.e., $T_a^\infty=\cup_{k=0}^\infty F_a^{-k}(T_a^0)$. Equivalently, the tiling set is the union of all tiles.

\item The \emph{non-escaping set} $K_a$ of $F_a$ is the complement $\widehat{\C}\setminus T_a^\infty$. Connected components of $\Int{K_a}$ are called \emph{Fatou components} of $F_a$. All iterates of $F_a$ are defined on $K_a$. 

\item The boundary of $T_a^\infty$ is called the \emph{limit set} of $F_a$, and is denoted by $\Gamma_a$.
\end{enumerate}
\end{definition}

The tiling set and the non-escaping set yield an invariant partition of the dynamical plane of $F_a$.

\begin{proposition}\cite[Proposition~5.19, Corollary~5.20]{LLMM1}\label{escape_connected}
For each $a\in\C\setminus(-\infty,-\frac{1}{12})$, the tiling set $T_a^\infty$ is an open connected set, and hence the non-escaping set $K_a$ is closed. Consequently, each Fatou component of $F_a$ is simply connected.
\end{proposition} 

Thanks to next lemma, one can perform quasiconformal deformations in the family $\mathcal{S}$.

\begin{lemma}\cite[Lemma~5.29]{LLMM1}\label{schwarz_qcdef}
Let $a\in\C\setminus (-\infty,-\frac{1}{12})$, and $\nu$ be an $F_a$-invariant Beltrami coefficient on $\widehat{\C}$, and $\Phi$ be a quasiconformal map integrating $\nu$ such that $\Phi$ fixes $0, \frac{1}{4},$ and $\infty$. Let $b=\Phi(a)$. Then, $b\in\C\setminus (-\infty,-\frac{1}{12})$, $\Phi(\Omega_a)=\Omega_b$, and $F_b=\Phi\circ F_a\circ\Phi^{-1}$ on $\Omega_b$.
\end{lemma}

The Fatou components of maps in $\mathcal{S}$ can be classified using classical quasiconformal deformation techniques and adapting the Fatou-Julia theory (for rational maps) for the current setting.

\begin{proposition}\cite[Proposition 5.30, Corollary 5.33]{LLMM1}\label{fatou_classification}
Let $a\in\C\setminus(-\infty,-\frac{1}{12})$. Then the following hold true.

1) Every Fatou component of $F_a$ is eventually periodic.

2) For $a\neq-\frac{1}{12}$, every periodic Fatou component of $F_a$ is either the (immediate) basin of attraction of an attracting/parabolic cycle, or a Siegel disk.

3) For $a=-\frac{1}{12}$, $F_a$ has a $2$-cycle of Fatou components such that every point in these components converges (under the dynamics) to the singular point $\alpha_a=-\frac34$ through a period two cycle of attracting petals (intersecting the real line). These are the only periodic Fatou components of $F_a$.
\end{proposition}

Let us also mention the relation between (closures of) various types of Fatou components and the post-critical set.

\begin{proposition}\cite[Proposition 5.32]{LLMM1}\label{fatou_critical}
1) If $F_a$ has an attracting or parabolic cycle, then the forward orbit of the critical point $0$ converges to this cycle.  

2) If $U$ is a Siegel disk of $F_a$, then $\partial U\subset\overline{\left\{F_a^{\circ n}(0)\right\}_{n\geq0}}$. Every Cremer point (i.e., an irrationally neutral, non-linearizable periodic point) of $F_a$ is also contained in $\overline{\left\{F_a^{\circ n}(0)\right\}_{n\geq0}}$.

3) For $a=-\frac{1}{12}$, the critical orbit of $F_a$ converges to the singular point $-\frac34$.
\end{proposition}

\begin{definition}\label{def_depth}
Let $a\in\C\setminus\left(-\infty,-1/12\right)$ be such that the critical point $0$ escapes to the fundamental tile $T^0_a$ under iterations of $F_a$. Then the smallest positive integer $n(a)$ such that $F_a^{\circ n(a)}(\infty)\in T_a^0$ is called the \emph{depth} of $a$.
\end{definition} 

The next result shows that the non-escaping set $K_a$ of $F_a$ is connected if and only if $K_a$ contains the unique critical point $0$. It also reveals the group structure in the tiling set of $F_a$.

\begin{proposition}\cite[Propositions~5.23, 5.38, 5.52]{LLMM1}\label{schwarz_group}
1) If the critical point of $F_a$ does not escape to the fundamental tile $T_a^0$, then $K_a$ is connected. Moreover, the conformal map $\psi_a$ from $T_a^0$ onto $\Pi$ extends to a biholomorphism between the tiling set $T_a^\infty$ and the unit disk $\D$. The extended map $\psi_a$ conjugates $F_a$ to the reflection map $\rho$. 

2) If the critical point of $F_a$ escapes to the fundamental tile, then the corresponding non-escaping set $K_a$ is a Cantor set. In this case, there exists a conformal map 
$$
\psi_a:\displaystyle\bigcup_{k=0}^{n(a)} F_a^{-k}(T_a^0)\to \displaystyle\bigcup_{k=0}^{n(a)} \rho^{-k}(\Pi)
$$ 
conjugating
$$F_a:\displaystyle\bigcup_{k=1}^{n(a)} F_a^{-k}(T_a^0)\to\displaystyle\bigcup_{k=0}^{n(a)-1} F_a^{-k}(T_a^0)\quad \mathrm{to}\quad \rho:\displaystyle\bigcup_{k=1}^{n(a)} \rho^{-k}(\Pi)\to\displaystyle\bigcup_{k=0}^{n(a)-1}\rho^{-k}(\Pi),$$ where $n(a)$ denotes the depth of $a$. 
\end{proposition}

This leads to the definition of the \emph{connectedness locus} of the family $\mathcal{S}$.

\begin{definition}\label{escape_locus_def}
The \emph{connectedness locus} of the family $\mathcal{S}$ is defined as $$\cC(\mathcal{S})=\{a\in\C\setminus(-\infty,-1/12): 0\notin T_a^\infty\}=\{a\in\C\setminus(-\infty,-1/12): K_a\ \textrm{is\ connected}\}.$$ The complement of the connectedness locus in the parameter space is called the \emph{escape locus}.
\end{definition}

\begin{proposition}\cite[Propositions~5.25,~5.26]{LLMM1}\label{conn_locus_in_cardioid}
$\cC(\mathcal{S})\subset \heartsuit\cup\{\frac{1}{4}\}$. Moreover, $\cC(\mathcal{S})$ is closed in $\C\setminus(-\infty,-\frac{1}{12})$.
\end{proposition}

\begin{figure}[ht!]
\captionsetup{width=0.96\linewidth}
\begin{tikzpicture}
\node[anchor=south west,inner sep=0] at (0,6) {\includegraphics[width=0.46\textwidth]{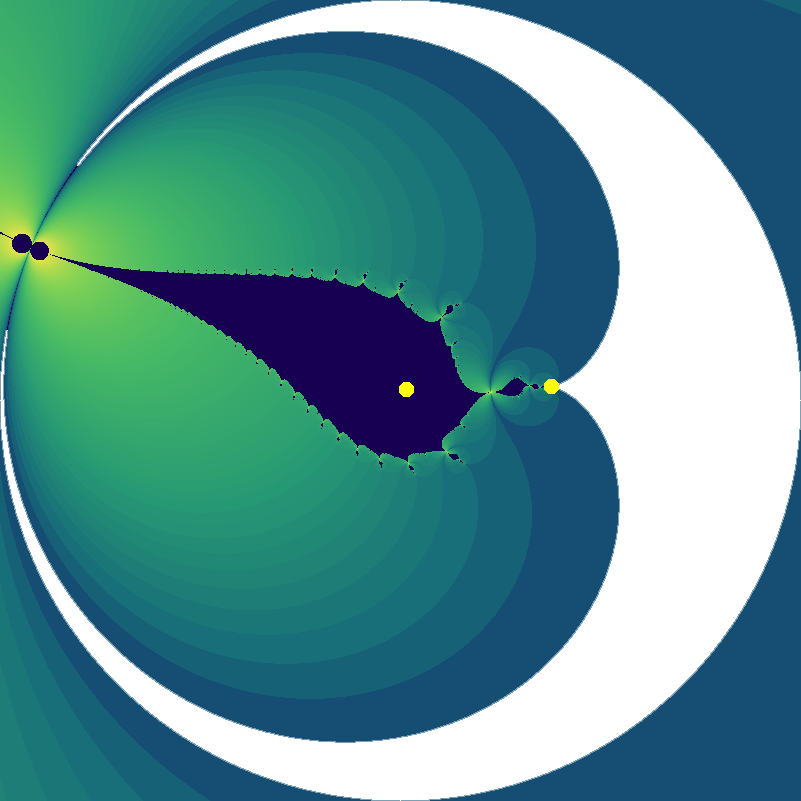}};
\node[anchor=south west,inner sep=0] at (6,6) {\includegraphics[width=0.46\textwidth]{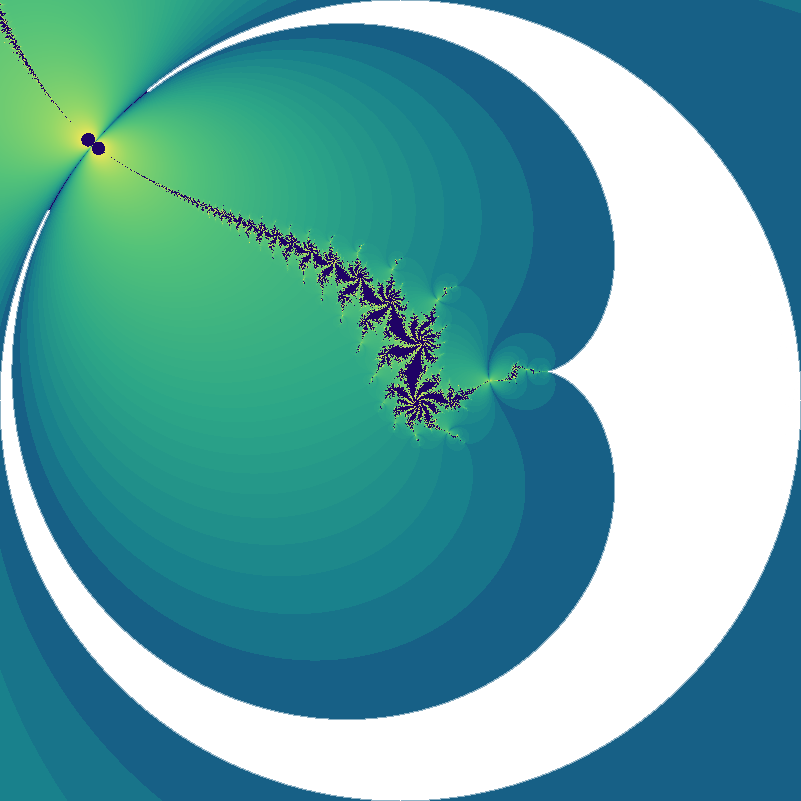}};
\node[anchor=south west,inner sep=0] at (0,0) {\includegraphics[width=0.46\textwidth]{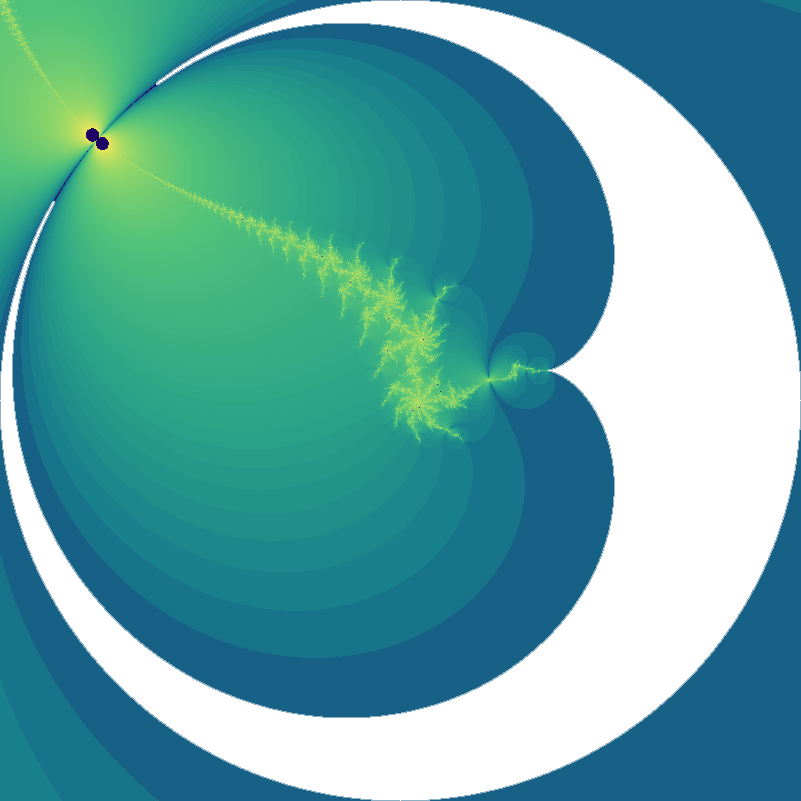}};
\node[anchor=south west,inner sep=0] at (6,0) {\includegraphics[width=0.45\textwidth]{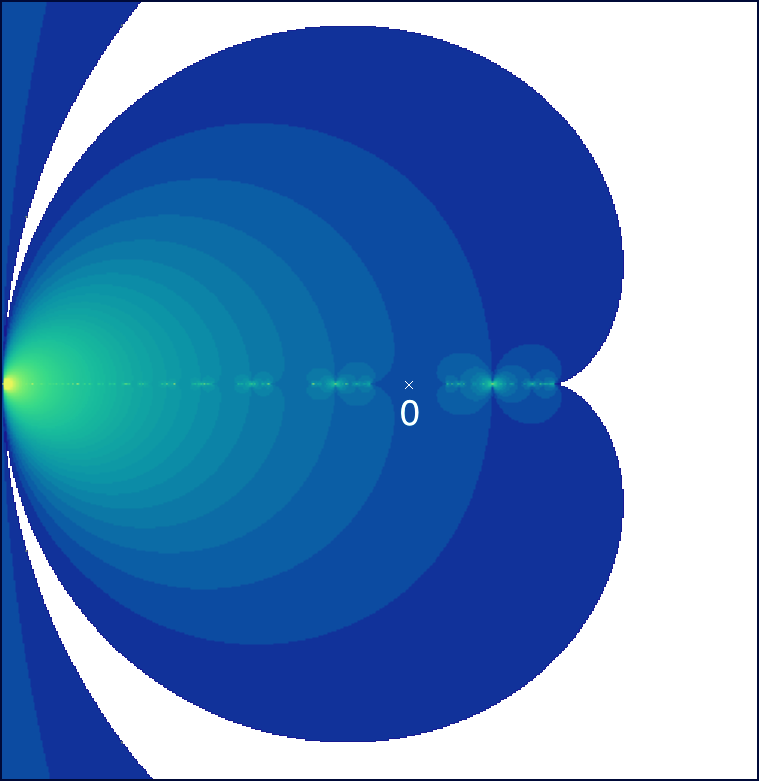}};
\node at (2.96,8.75) {\begin{small}\textcolor{white}{$0$}\end{small}};
\node at (4.4,9) {\begin{tiny}$1/4$\end{tiny}};
\node at (4.3,6.95) {\begin{small}$\partial\heartsuit$\end{small}};
\node at (5.16,11.5) {\begin{small}\textcolor{white}{$\partial B(a,r_a)$}\end{small}};
\node at (0.4,9.66) {\begin{small}$\alpha_a$\end{small}};
\end{tikzpicture}
\caption{Parts of the dynamical planes of various parameters are shown. The white regions denote the tile of rank $0$. The non-escaping sets in the top row have non-empty interior, while the one in the bottom left figure has empty interior. The bottom right figure shows a totally disconnected non-escaping set.}
\label{various_limit_sets}
\end{figure}

\begin{definition}\label{tile_code}
i) Let $a\in\cC(\mathcal{S})$. For any $M$-admissible word $(i_1,\cdots,i_k)$, the dynamical tile $T_a^{i_1,\cdots,i_k}$ is defined as $$T_a^{i_1,\cdots,i_k}:=\psi_a^{-1}(T^{i_1,\cdots,i_k}).$$ 

ii) Let $a\in\C\setminus\left(\left(-\infty,-\frac{1}{12}\right)\cup\cC(\mathcal{S})\right)$. For any $M$-admissible word $(i_1,\cdots,i_k)$, the dynamical tile $T_a^{i_1,\cdots,i_k}$ is defined as above, provided $T^{i_1,\cdots,i_k}$ is in the image of $\psi_a$.
\end{definition}

We can use the map $\psi_a$ to define dynamical rays for the maps $F_a$.

\begin{definition}\label{dyn_ray_schwarz}
The pre-image of a $\mathcal{G}$-ray at angle $\theta$ under the map $\psi_a$ is called a $\theta$-dynamical ray of $F_a$.
\end{definition}

We denote the set of (pre-)periodic points of $\rho:\mathbb{S}^1\to\mathbb{S}^1$ by $\mathrm{Per}(\rho)$ (see Section~\ref{ideal_triangle} for the definition of the piecewise reflection map $\rho$). The landing property of (pre-)periodic dynamical rays plays an important role in our study.

\begin{proposition}\cite[Proposition~5.44]{LLMM1}\label{per_rays_land}
Let $a\in\cC(\mathcal{S})$, and $\theta\in\mathrm{Per}(\rho)$. Then the following statements hold true.

1) The dynamical $\theta$-ray of $F_a$ lands on $\Gamma_a$. 

2) The $0$-ray of $F_a$ lands at $\frac{1}{4}$, while the $\frac{1}{3}$ and $\frac{2}{3}$-rays land at $\alpha_a$. No other ray lands at $\frac14$ or $\alpha_a$. The iterated pre-images of the $0, \frac13,$ and $\frac23$-rays land at the  iterated pre-images of $\frac14$ and $\alpha_a$.

3) Let $\theta\in\mathrm{Per}(\rho)\setminus\cup_{n\geq0}\rho^{-n}\left(\left\{0,\frac{1}{3},\frac{2}{3}\right\}\right)$. Then, the dynamical ray of $F_a$ at angle $\theta$ lands at a repelling or parabolic (pre-)periodic point on $\Gamma_a$. 
\end{proposition}

The following converse to the previous proposition is also important.

\begin{proposition}\cite[Proposition~5.45]{LLMM1}\label{rep_para_landing_point}
Let $a\in\cC(\mathcal{S})$. Then, every repelling and parabolic periodic point of $F_a$ is the landing point of finitely many (at least one) dynamical rays. Moreover, all these rays have the same period under $F_a^{\circ 2}$.
\end{proposition}

\begin{remark}
Unlike in the holomorphic situation, it is not true that all rays landing at a periodic point have the same period under $F_a$ (this is already false for quadratic anti-polynomials, see \cite[Theorems~2.6,~3.1]{Sa1}). 
\end{remark}

Let $a\in\cC(\mathcal{S})$. Proposition~\ref{per_rays_land} allows us to define a landing map $$L_a:\mathrm{Per}(\rho)\to \Gamma_a$$ that associates to every (pre-)periodic angle $\theta$ (under $\rho$) the landing point of the $\theta$-dynamical ray of $F_a$.

\begin{definition}\label{def_preper_lami}
For $a\in\cC(\mathcal{S})$, the \emph{pre-periodic lamination} of $F_a$ is defined as the equivalence relation on $\mathrm{Per}(\rho)\subset\R/\Z$ such that $\theta, \theta'\in\mathrm{Per}(\rho)$ are related if and only if $L_a(\theta)=L_a(\theta')$. We denote the pre-periodic lamination of $F_a$ by $\lambda(F_a)$.
\end{definition}

The next definition and the subsequent proposition relates pre-periodic laminations of $F_a$ to rational laminations of quadratic anti-polynomials (see Definition~\ref{def_rat_lami}). 

\begin{definition}[]\label{push_lami}
The \emph{push-forward} $\mathcal{E}_{\ast}(\lambda(F_a))$ of the pre-periodic lamination of $F_a$ is defined as the image of $\lambda(F_a)\subset\mathrm{Per}(\rho)\times\mathrm{Per}(\rho)$ under $\mathcal{E}\times\mathcal{E}$. Clearly, $\mathcal{E}_{\ast}(\lambda(F_a))$ is an equivalence relation on $\Q/\Z$. Similarly, the pullback $\mathcal{E}^{\ast}(\lambda(f_c))$ of the rational lamination of a quadratic anti-polynomial $f_c$ is defined as the pre-image of $\lambda(f_c)\subset\Q/\Z\times\Q/\Z$ under $\mathcal{E}\times\mathcal{E}$.
\end{definition}

\begin{proposition}\cite[Proposition 5.51]{LLMM1}\label{prop_preper_lami}
Let $a\in\cC(\mathcal{S})$, and $\lambda(F_a)$ be the pre-periodic lamination associated with $F_a$. Then, $\lambda(F_a)$ satisfies the following properties.
\begin{enumerate}
\item $\lambda(F_a)$ is closed in $\mathrm{Per}(\rho)\times\mathrm{Per}(\rho)$.

\item Each $\lambda(F_a)$-equivalence class $A$ is a finite subset of $\mathrm{Per}(\rho)$.

\item If $A$ is a  $\lambda(F_a)$-equivalence class, then $\rho(A)$ is also a $\lambda(F_a)$-equivalence class.

\item If $A$ is a  $\lambda(F_a)$-equivalence class, then $A\mapsto\rho(A)$ is consecutive reversing.

\item $\lambda(F_a)$-equivalence classes are pairwise unlinked.
\end{enumerate}

Consequently, the push-forward $\mathcal{E}_{\ast}(\lambda(F_a))$ is a formal rational lamination (under $m_{-2}$).
\end{proposition}

\subsection{Geometrically finite maps}\label{geom_fin_sec}

Recall that for any $a\in\C\setminus(-\infty,-\frac{1}{12})$, the map $F_a$ has two fixed points $\alpha_a$ and $\frac{1}{4}$ such that $F_a$ does not admit an anti-holomorphic extension in a neighborhood of these points. The dynamical behavior of $F_a$ near these fixed points were analyzed in \cite[Propositions~5.12,~5.13,~5.15]{LLMM1}. In what follows, we will refer to these fixed points as \emph{singular points}. On the other hand, the terms \emph{cycle/periodic orbit} will be reserved for periodic points of period greater than one (these are contained in $\Omega_a$).

The map $F_a$ is called hyperbolic (respectively, parabolic) if it has a (super-)attracting (respectively, parabolic) cycle. It is easy to see that a (super-)attracting cycle of $F_a$ belongs to the interior of $K_a$, and a parabolic cycle lies on the boundary of $K_a$ (e.g. by \cite[\S 5, Theorem~5.2]{M1new}). Moreover, a parabolic periodic point necessarily lies on the boundary of a Fatou component (i.e., a connected component of $\Int{K_a}$) that contains an attracting petal of the parabolic germ such that the forward orbit of every point in the component converges to the parabolic cycle through the attracting petals.

A \emph{Misiurewicz} parameter in the family $\mathcal{S}$ is a parameter $a$ such that the critical point $0$ is non-escaping and strictly pre-periodic under iterations of $F_a$.

\begin{figure}[ht!]
\captionsetup{width=0.96\linewidth}
\centering
\includegraphics[scale=0.36]{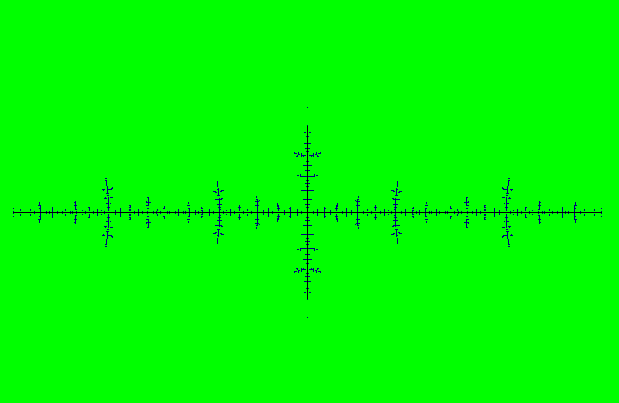}\ \includegraphics[scale=0.15]{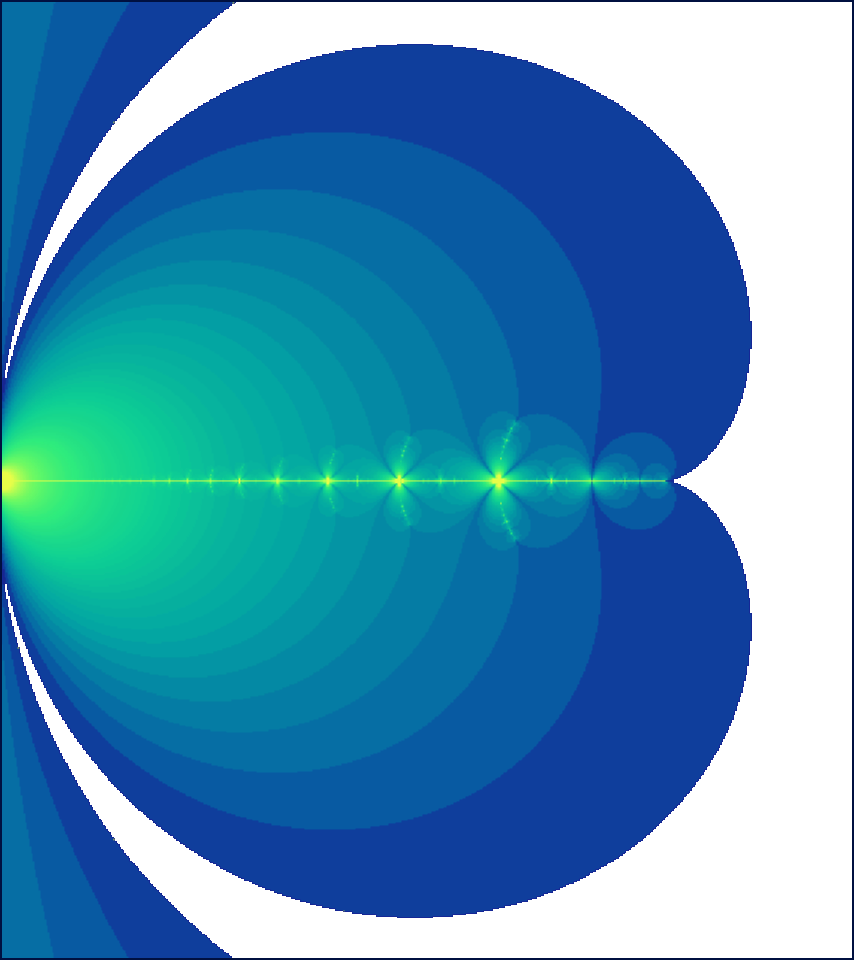}
\caption{Left: The dynamical plane of the Misiurewicz map $\overline{z}^2-1.5437\cdots$, for which the critical point $0$ lands at the $\alpha$-fixed point in $3$ iterates. Right: A part of the dynamical plane of the Misiurewicz map $F_a$ where $a=\frac{5}{36}$. The critical point $0$ of this map lands at the fixed point $\alpha_a$ in $3$ iterates.}
\label{misi_pic_1}
\end{figure}

The main properties of geometrically finite maps (i.e., hyperbolic, parabolic, and Misiurewicz maps) in the family $\mathcal{S}$ are summarized below.

\begin{theorem}\cite[Theorem~1.4, Propositions~6.6,~6.9]{LLMM1}\label{geom_finite_limit_set}
Let $F_a$ be geometrically finite. Then the following hold.
\begin{enumerate}

\item $a\in\cC(\mathcal{S})$.

\item The limit set $\Gamma_a$ of $F_a$ is locally connected. 

\item The area of the limit set $\Gamma_a$ is zero.

\item The iterated pre-images of the cardioid cusp are dense in the limit set $\Gamma_a$. Moreover, repelling periodic points of $F_a$ are dense in $\Gamma_a$.

\item If $a$ is a Misiurewicz parameter, then $\Int{K_a}=\emptyset$; i.e., $K_a=\Gamma_a$. Moreover, $\Gamma_a$ is a dendrite.
\end{enumerate}
\end{theorem}

\section{Tessellation of the escape locus}\label{unif_exterior_conn_locus}

In this section, we will construct a homeomorphism from the escape locus of the family $\mathcal{S}$ (see Definition~\ref{escape_locus_def}) to a suitable simply connected domain. This will yield a dynamically defined tessellation of the exterior of the connectedness locus (in the spirit of a ray-equipotential structure of escape loci of polynomials). The proof will be reminiscent of the proof of connectedness of the Mandelbrot set, but lack of holomorphic parameter dependence will add some complexity to the situation. 

In order to prove the main result of this subsection, we need an auxiliary lemma. Note that for $a\in\left(-\infty,-1/12\right)$, the disk $B(a,r_a)$ touches $\heartsuit$ at exactly two points; i.e., $\Int{(\overline{B}(a,r_a)\setminus\heartsuit)}$ consists of two connected components \cite[Proposition~5.9]{LLMM1}. Exactly one of these two connected components is disjoint from the positive real axis, and we denote the closure of this component by $K_a^{-}$. Note that $K_a^-\cap\R_{-}=\left[q_a,-\frac{3}{4}\right]$, for some $q_a(<0)\in\partial B(a,r_a)$ (compare Figure~\ref{real_slit}). As before, we denote the Schwarz reflection map with respect to $\partial\heartsuit$ by $\sigma$ and reflection with respect to $\partial B(a,r_a)$ by $\sigma_a$. Using these maps, we now define $F_a$ (for $a\in\left(-\infty,-1/12\right)$) on $\left[-\infty,q_a\right]\cup\left[-\frac{3}{4},0\right]$ as 

\begin{figure}[ht!]
\captionsetup{width=0.96\linewidth}
\centering
\includegraphics[scale=0.2]{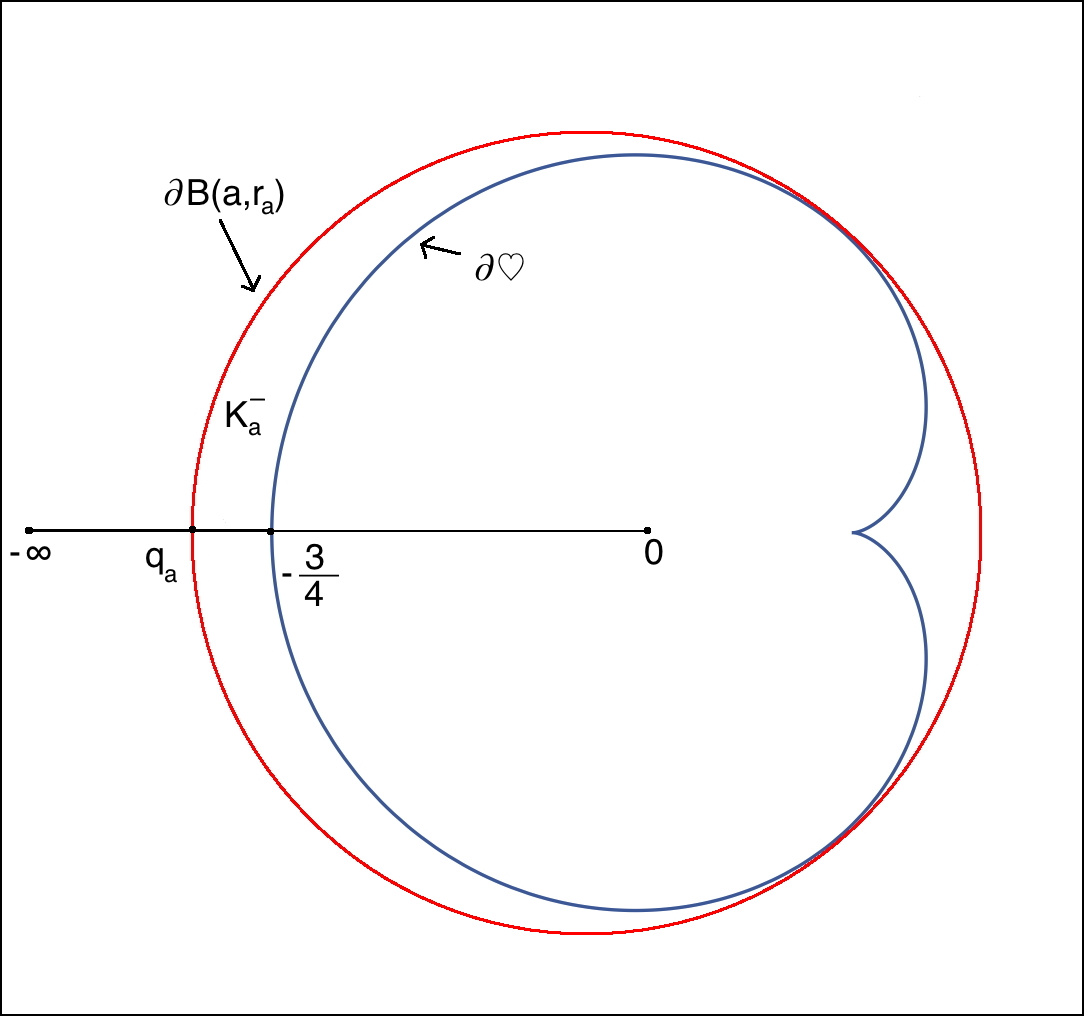}
\caption{For each $a\in\left(-\infty,-1/12\right)$, the disk $B(a,r_a)$ touches $\heartsuit$ at exactly two points. The closure $K_a^-$ of one of the two complementary components of $B(a,r_a)^c\cup\overline{\heartsuit}$ intersects the negative real axis at the points $q_a$ and $-\frac{3}{4}$.}
\label{real_slit}
\end{figure}

$$
w \mapsto \left\{\begin{array}{ll}
                    \sigma(w) & \mbox{if}\ w\in\left[-\frac{3}{4},0\right], \\
                    \sigma_a(w) & \mbox{if}\ w\in \left[-\infty,q_a\right]. 
                                         \end{array}\right. 
$$

\begin{lemma}\label{slit_negative}
For all $a\in\left(-\infty,-1/12\right)$, we have that $F_a^{\circ n}(\infty)\in\R_{-}$ whenever $F_a^{\circ n}(\infty)$ is defined.
\end{lemma}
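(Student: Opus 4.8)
The plan is to reduce everything to one-dimensional dynamics on the extended real line by exploiting symmetry. Since $a$ is real, both the cardioid $\heartsuit$ and the circle $\partial B(a,r_a)$ are symmetric about $\R$, so the reflections $\sigma$ and $\sigma_a$ commute with the complex conjugation $\iota$ and preserve $\hat\R$. Hence $F_a$ maps real points to real points, and the statement becomes a claim about the forward orbit of $\infty$ under a real map. I would interpret $\R_-$ as the closure of the negative real axis in $\hat\C$, i.e.\ $\{x\leq 0\}\cup\{\infty\}$, so that the endpoints $0$ and $\infty$ (which the orbit may pass through, since $\sigma(0)=\infty$ and $F_a(\infty)=\sigma_a(\infty)=a$) are included. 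The base case is immediate: $F_a(\infty)=\sigma_a(\infty)=a$, which lies in $\R_-$ because $a<-1/12<0$.

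The heart of the argument is to compute the images of the two pieces of the domain and verify that each lands inside $\R_-$. For the cardioid reflection on $\left[-\frac34,0\right]$, I would use the real restriction $\sigma(\phi(\lambda))=\phi(1/\lambda)$ of \eqref{schwarz_cardioid}: the interior real branch $\lambda=1-\sqrt{1-4w}$ carries $\left[-\frac34,0\right]$ onto $[-1,0]$, whence $1/\lambda\in(-\infty,-1]$, and since $\phi(t)=\frac t2-\frac{t^2}4$ is increasing on $(-\infty,1]$ one gets $\sigma\left(\left[-\frac34,0\right]\right)=\left[-\infty,-\frac34\right]\subset\R_-$. For the circle reflection on $[-\infty,q_a]$, I would first record that $q_a=a-r_a$ is the leftmost intersection of $\partial B(a,r_a)$ with $\R$; then from $\sigma_a(w)=a+r_a^2/(w-a)$ one checks that $\sigma_a$ maps $[-\infty,q_a]$ monotonically onto $[q_a,a]\subset\R_-$. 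Thus both pieces of $F_a$ send the domain $[-\infty,q_a]\cup\left[-\frac34,0\right]$ into $\R_-$.

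With these two image computations in hand, the lemma follows by a straightforward induction: if $F_a^{\circ n}(\infty)$ is defined, then $F_a^{\circ(n-1)}(\infty)$ must lie in the domain $[-\infty,q_a]\cup\left[-\frac34,0\right]$, and applying the appropriate branch ($\sigma$ or $\sigma_a$) produces a point of $\R_-$ by the previous step. The only genuinely delicate point is this second step — confirming that neither reflection throws a point onto the \emph{positive} real axis, i.e.\ that the images are exactly $\left[-\infty,-\frac34\right]$ and $[q_a,a]$ and not intervals straying into $\R_+$. This is where the explicit formula \eqref{schwarz_cardioid}, the monotonicity of $\phi$, and the identification $q_a=a-r_a$ do the real work; once the signs are pinned down, the remainder is a formal induction on $\hat\R$.
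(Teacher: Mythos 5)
Your proof is correct and follows essentially the same route as the paper: the paper's proof consists precisely of the two image computations $F_a\left(\left[-\frac{3}{4},0\right]\right)=\left[-\infty,-\frac{3}{4}\right]\subset\R_{-}$ and $F_a\left(\left[-\infty,q_a\right]\right)=\left[q_a,a\right]\subset\R_{-}$, which you verify in more detail (via the parametrization $\sigma(\phi(\lambda))=\phi(1/\overline{\lambda})$, the identification $q_a=a-r_a$, and the explicit circle reflection) before concluding by the same induction. The extra care you take in pinning down the signs and monotonicity is exactly what the paper leaves as a ``simple observation,'' so there is no substantive difference.
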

\begin{proof}
This follows from the simple observation that $F_a(\left[-\frac{3}{4},0\right])=\left[-\infty,-\frac{3}{4}\right]\subset\R_{-}$ and $F_a(\left[-\infty,q_a\right])=\left[q_a,a\right]\subset\R_{-}$.
\end{proof}

Note that for all $a\in\C\setminus\left(\left(-\infty,-1/12\right)\cup\cC(\mathcal{S})\right)$, the critical value $\infty$ is contained in the domain of the conjugacy $\psi_a$. We will now show that the conformal position of the critical value $\infty$ yields the desired uniformization of the escape locus. Recall that the connected component of $\D\setminus\Pi$ containing $\Int{\rho_2(\Pi)}$ is denoted by $\D_2$ (where $\Pi$ is the ideal triangle in $\D$ with vertices at $1,\omega,$ and $\omega^2$; and $\rho_2$ is reflection in the side of $\Pi$ connecting $\omega$ and $\omega^2$).

\begin{proof}[Proof of Theorem~\ref{thm_unif_exterior_conn_locus}]
First note that by Proposition~\ref{conn_locus_in_cardioid}, the set $\left(-\infty,-1/12\right)~\cup~\cC(\mathcal{S})$ is closed in the plane. So the complement of this set is open in $\C$.

Since $\infty$ lies outside $\overline{B}(a,r_a)$, it follows that $\pmb{\Psi}(a):=\psi_a(\infty)\in\D_2$ for each $a\in\C\setminus\left(\left(-\infty,-1/12\right)\cup\cC(\mathcal{S})\right)$. More precisely, $\pmb{\Psi}(a)\in T^{k_1,\cdots,k_{n(a)}}$ with $k_1=2$.

Note also that as $a$ runs over $\C\setminus\left((-\infty,-1/12)\cup\cC(\mathcal{S})\right)$, the fundamental tile $T_a^0$ changes continuously, and hence so does the conformal map $\psi_a$ restricted to $\displaystyle\cup_{k=0}^{n(a)} F_a^{-k}(T_a^0)\ni\infty$. It follows that $\pmb{\Psi}(a)=\psi_a(\infty)$ depends continuously on $a$.

Our plan is to show that $\pmb{\Psi}$ is proper and injective (cf. \cite[\S 3]{Na1}). Since $\pmb{\Psi}$ is continuous, the Invariance of Domain Theorem would then imply that $\pmb{\Psi}$ is an open map, and hence a homeomorphism onto its image (cf. \cite[Theorem~2B.3]{Hat02}). Finally, properness of $\pmb{\Psi}:\C\setminus((-\infty,-1/12)\cup\cC(\mathcal{S}))\to\D_2$ would force the image of $\pmb{\Psi}$ to be the entire simply connected domain $\D_2$.

\begin{lemma}\label{prop_proper}
The map $\pmb{\Psi}$ is proper.
\end{lemma}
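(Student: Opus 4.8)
The plan is to prove properness of $\mathbf{\Psi}:\C\setminus((-\infty,-1/12)\cup\cC(\mathcal{S}))\to\D_2$ by showing that whenever a sequence $a_n$ in the escape locus leaves every compact subset of the escape locus, the images $\mathbf{\Psi}(a_n)=\psi_{a_n}(\infty)$ leave every compact subset of $\D_2$, i.e. accumulate on $\partial\D_2$. A sequence $a_n$ can escape compact subsets of the open set $\C\setminus((-\infty,-1/12)\cup\cC(\mathcal{S}))$ in one of two ways: either $a_n\to\infty$, or $a_n$ accumulates on the boundary $(-\infty,-1/12)\cup\cC(\mathcal{S})$. I would treat these two cases separately, and the main point is that in both cases the depth behaves in a controlled way forcing $\psi_{a_n}(\infty)$ toward $\partial\D_2$.

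\textbf{Case 1: $a_n\to\infty$.} Here I would argue that the critical value $\infty$ escapes at the smallest possible depth. Geometrically, as $a\to\infty$ the circumcircle $\partial B(a,r_a)$ recedes, so $\infty$ lies deep inside the exterior disk $\overline B(a,r_a)^c$, which is the rank-one tile $F_a^{-1}(T_a^0)$ attached to $T_a^0$ across the double point $\alpha_a$; thus $n(a_n)=1$ for large $n$. Since $\psi_{a_n}$ maps this rank-one tile conformally onto $\rho_2(\Pi)\subset\D_2$ (the tile labelled $T^2$, recording $k_1=2$), I must show $\psi_{a_n}(\infty)$ tends to $\partial\D_2$ inside that tile. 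This follows because $\infty$ approaches the parabolic fixed point of $\sigma_{a_n}$ on the limiting circle, equivalently because the conformal position of $\infty$ within the fundamental tile $T_{a_n}^0$ (as seen through the first return to $T_a^0$) degenerates to a boundary point of $\Pi$; propagating through the conjugacy, $\psi_{a_n}(\infty)$ approaches the image of that boundary arc, which lies on $\partial\D_2$.

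\textbf{Case 2: $a_n\to a^\ast\in(-\infty,-1/12)\cup\cC(\mathcal{S})$.} If $a^\ast\in\cC(\mathcal{S})$, then the critical orbit of $F_{a_n}$ escapes more and more slowly as $n\to\infty$, so the depth $n(a_n)\to\infty$ (otherwise a subsequence would have bounded depth, giving by continuity an escaping critical value at $a^\ast$, contradicting $a^\ast\in\cC(\mathcal{S})$). Since $\mathbf{\Psi}(a_n)\in T^{k_1,\dots,k_{n(a_n)}}$ lies in a tile of rank $n(a_n)$ of the tessellation of $\D_2$ by preimages of $\Pi$ under $\rho$, and since such tiles shrink toward $\partial\D_2$ as their rank grows (the tessellation accumulates only on the circle), $\mathbf{\Psi}(a_n)\to\partial\D_2$. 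If instead $a^\ast\in(-\infty,-1/12)$, I would invoke Lemma~\ref{slit_negative}: the critical orbit stays on $\R_-$ for the limiting (slit) dynamics, so again the depth blows up along $a_n$ (the critical value cannot land in $T_{a^\ast}^0$ in bounded time, since $T_{a^\ast}$ has degenerated by acquiring a second tangency), and the same tile-shrinking argument applies.

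The main obstacle, and the step requiring the most care, is Case 2 near the slit $(-\infty,-1/12)$, where the droplet $T_a$ itself degenerates (the circumcircle acquires a second point of tangency with $\heartsuit$) so that the fundamental tile and the conformal maps $\psi_a$ do not vary continuously up to $a^\ast$. The purpose of Lemma~\ref{slit_negative} is precisely to control the limiting dynamics on the real axis and show the critical orbit cannot prematurely reach the fundamental tile; I would combine this with a compactness/normal-families argument to rule out bounded depth and hence force $\mathbf{\Psi}(a_n)\to\partial\D_2$. A secondary technical point, needed in both cases, is the fact that the tiles $T^{k_1,\dots,k_m}$ in $\D_2$ have diameters tending to $0$ uniformly as $m\to\infty$, which follows from the fact that $\rho$ (conjugate to $m_{-2}$ via $\mathcal{E}$) is expanding away from its parabolic fixed points and that the tessellation is a genuine exhaustion of $\D_2$ accumulating only on $\partial\D$.
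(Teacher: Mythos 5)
Your trichotomy of degenerations ($a_n\to\infty$, $a_n\to\cC(\mathcal{S})$, $a_n$ to the slit) matches the paper's, and your Case 2a is essentially the paper's second case run in the contrapositive; but the other two cases have genuine gaps, and the slit case rests on a false premise. You claim that as $a_n\to a^*\in(-\infty,-1/12)$ the depth $n(a_n)$ blows up because "$T_{a^*}$ has degenerated." It has not: at the slit the droplet is merely pinched into two components, and the critical orbit can land in it in bounded time. Concretely, $F_a(\infty)=\sigma_a(\infty)=a$, the center of the circumcircle, so for every parameter with $\re(a)<-3/4$ one has $a\in\Int{T_a^0}$ and the depth is identically $1$ --- in particular for all non-real $a$ near the portion $(-\infty,-3/4)$ of the slit. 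Since the rank stays bounded there, your tile-shrinking argument gives nothing. The paper's Lemma~\ref{slit_negative} is used for an entirely different purpose than the one you assign it: not to force the depth to blow up, but to locate the first landing point, showing $\re(F_{a_k}^{\circ n(a_k)}(\infty))<-3/4$ and $\im(F_{a_k}^{\circ n(a_k)}(\infty))\approx 0$, i.e. the landing point lies in the component of the droplet on the far side of the narrow channel created by the incipient second tangency. The paper then takes a reference point $x\in\mathrm{hull}(\heartsuit)\setminus\heartsuit$ on the near side and shows that the hyperbolic geodesic in $U_k=\Int{(T_{a_k}^0\cup F_{a_k}^{-1}(T_{a_k}^0))}$ joining $x$ to the landing point must thread the narrow channel, so its length tends to $\infty$; by conformal invariance the image $\psi_{a_k}(F_{a_k}^{\circ n(a_k)}(\infty))$ is pushed to the ideal vertices $\omega,\omega^2$ of $\Pi$, and pulling back, $\mathbf{\Psi}(a_k)$ converges to $\partial\D\cap\partial\D_2$. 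None of this geometry appears in your proposal, and without it the bounded-depth regime near the slit is untreated.

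Your Case 1 is also incomplete: since the depth equals $1$ there, everything hinges on the assertion that the conformal position of $\infty$ in the rank-one tile degenerates to $\partial\D_2$, and your stated reason --- that $\infty$ approaches "the parabolic fixed point of $\sigma_{a_n}$" --- is not meaningful (a circle reflection fixes its entire circle pointwise), nor does spherical proximity of a point to the boundary of a domain imply degeneration of its conformal position in general. The paper proves this step: $a_k=F_{a_k}(\infty)$ and $\infty$ are spherically close to each other while spherically bounded away from $\partial U_k$, hence hyperbolically close in $U_k$; since their $\psi_{a_k}$-images lie in the two tiles $\Pi$ and $\rho_2(\Pi)$ separated by the geodesic side $\widetilde{C}_2$, either they drift to the ideal vertices of $\Pi$ or both must accumulate on $\widetilde{C}_2$, and in either case $\mathbf{\Psi}(a_k)\to\partial\D_2$. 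Finally, a smaller hole in Case 2a: bounded depth plus continuity only places the limiting critical value in the \emph{closed} droplet $T_{a^*}$, which could be one of the singular points $\tfrac14$ or $\alpha_{a^*}$; such a parameter lies in $\cC(\mathcal{S})$, so your contradiction does not arrive. The paper sidesteps this by assuming the images converge to an interior point $u\in\D_2$, so that all relevant compact sets stay away from the ideal vertices of the tiles.
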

\begin{proof}
We will consider three different cases. 

Let us first choose a sequence $\{a_k\}_k\subset\C\setminus((-\infty,-1/12)\cup\cC(\mathcal{S}))$ such that $\vert a_k\vert\to\infty$. We can assume that each $a_k$ lies outside $\overline{\heartsuit}$; i.e., $a_k\in\Int{T_a^0}$ and hence $\infty$ lies in some tile of first generation. Now consider the domain $U_k:=\Int{(T_{a_k}^0\cup F_{a_k}^{-1}(T_{a_k}^0))}$, which is mapped biholomorphically onto $\Int{(\Pi\cup\rho^{-1}(\Pi))}$ by $\psi_{a_k}$. Since $U_k$ contains $a_k$ and $\infty$, it follows that $\psi_{a_k}(a_k)$ and $\psi_{a_k}(\infty)$ are contained in $\Int{(\Pi\cup\rho^{-1}(\Pi))}$ for each $k$. Moreover, the assumption $\vert a_k\vert\to\infty$ implies that $a_k$ is uniformly bounded away from $\partial\heartsuit\setminus\{\alpha_a,\frac{1}{4}\}$ in the hyperbolic metric of $U_k$ for all $k$. Hence, $\psi_{a_k}(a_k)$ is uniformly bounded away from $\widetilde{C}_1\cup\widetilde{C}_3$ in the hyperbolic metric of $\Int{(\Pi\cup\rho^{-1}(\Pi))}$ for all $k$. If $\psi_{a_k}(a_k)$ converges to $\partial\Pi\cap\partial\D$, then $\psi_{a_k}(\infty)$ converges to $\partial\D_2$ (as $\rho_2(\psi_{a_k}(a_k))=\psi_{a_k}(\infty)$). Otherwise, $\psi_{a_k}(a_k)$ and $\psi_{a_k}(\infty)$ are bounded away from the boundary of $\Int{(\Pi\cup\rho^{-1}(\Pi))}$. Since the spherical distance between $a_k$ and $\infty$ tends to $0$ as $k$ increases, it follows that the hyperbolic distance of $\psi_{a_k}(a_k)$ and $\psi_{a_k}(\infty)$ (with respect to the hyperbolic metric of $\Int{(\Pi\cup\rho^{-1}(\Pi))}$) must also converge to $0$. But this implies that both the sequences $\{\psi_{a_k}(a_k)\}_k$ and $\{\psi_{a_k}(\infty)\}_k$  accumulate on $\widetilde{C}_2$. In either case, we conclude that $\{\pmb{\Psi}(a_k)\}_k$ tends to the boundary of $\D_2$.

Now let $\{a_k\}_k\subset\C\setminus((-\infty,-1/12)\cup\cC(\mathcal{S}))$ be a sequence accumulating on $\cC(\mathcal{S})$. Suppose that $\{\pmb{\Psi}(a_k)\}_k$ converges to some $u\in\D_2$. Then, $\{\psi_{a_k}(\infty)\}_k$ is contained in a compact subset $\mathcal{K}$ of $\D_2$. After passing to a subsequence, we can assume that $\mathcal{K}$ is contained in a single tile of $\D$ (arising from $\mathcal{G}$). But this implies that each $a_k$ has a common depth $n_0$, and $\psi_{a_k}(F_{a_k}^{\circ n_0}(\infty))$ is contained in the compact set $\rho^{\circ n_0}(\mathcal{K})\subset\Pi$ for each $k$. Note that the map $F_a$, the fundamental tile $T_a^0$ as well as (the continuous extension of) the conformal map $\psi_a:T_a^0\to\Pi$ change continuously with the parameter as $a$ runs over $\C\setminus\left(-\infty,-1/12\right)$. Therefore, for every accumulation point $a_0$ of $\{a_k\}_k$, the point $F_{a_0}^{\circ n_0}(\infty)$ belongs to the compact set $\psi_{a_0}^{-1}(\rho^{\circ n_0}(\mathcal{K}))$. In particular, the critical value of $F_{a_0}$ lies in the tiling set $T_{a_0}^\infty$. This contradicts the assumption that $\{a_k\}_k$ accumulates on $\cC(\mathcal{S})$, and proves that $\{\pmb{\Psi}(a_k)\}_k$ must accumulate on the boundary of $\D_2$.
\begin{figure}[ht!]
\captionsetup{width=0.96\linewidth}
\centering
\includegraphics[scale=0.25]{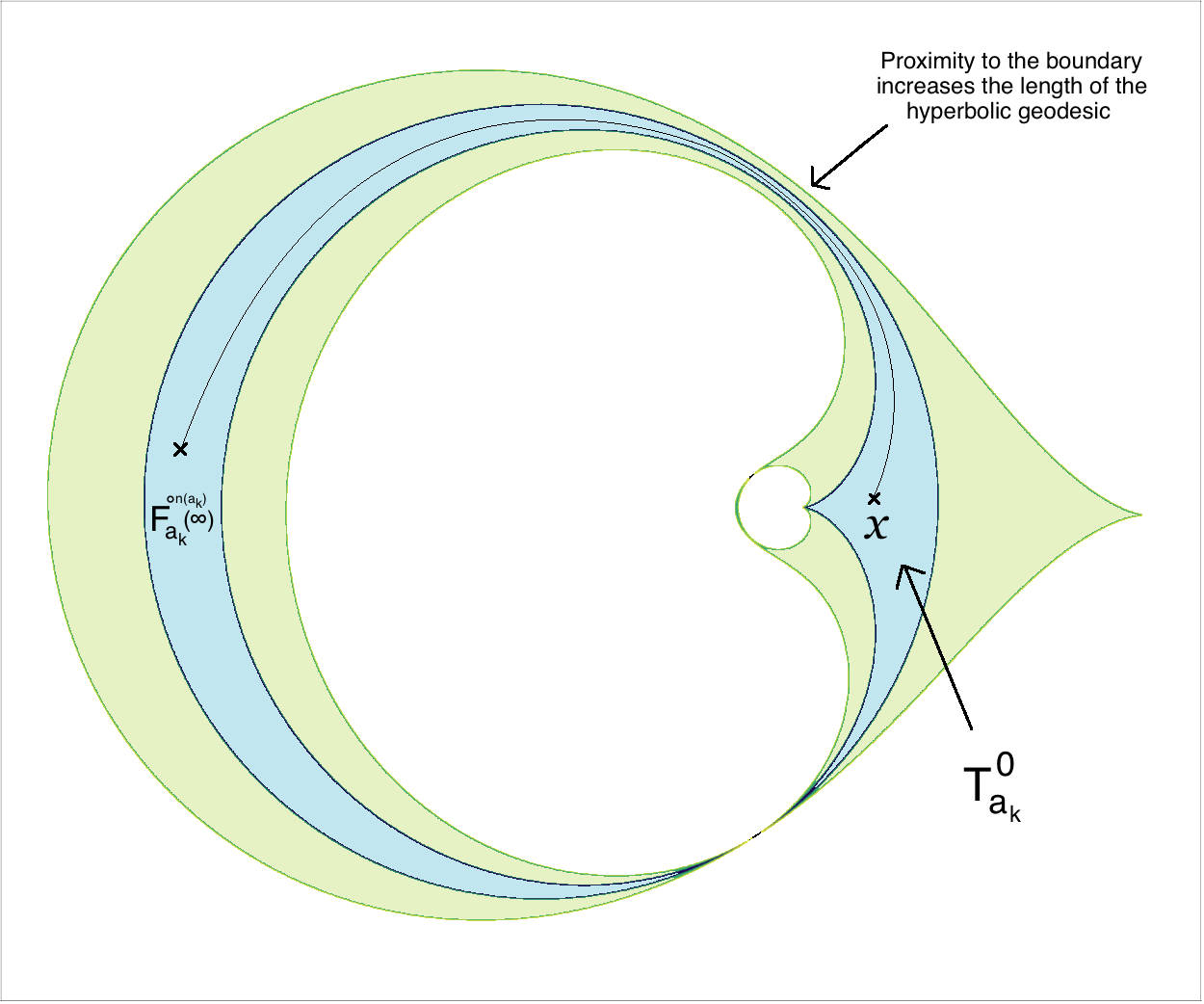}
\caption{For a non-real parameter $a_k$ sufficiently close to $\left(-\infty,-1/12\right)$, the fundamental tile $T_{a_k}^0$ of $F_{a_k}$ has a very narrow channel. Hence, a part of the hyperbolic geodesic of $U_k$ (the shaded domain in the figure) connecting $x$ and $F_{a_k}^{\circ n(a_k)}(\infty)$ lies close to the boundary of $U_k$. As a consequence, the hyperbolic length of this geodesic is very large.}
\label{long_geodesic}
\end{figure}

Finally, let $\{a_k\}_k$ be a sequence of parameters converging to some parameter $a_0$ in $\left(-\infty,-1/12\right)$. Recall that $n(a_k)$ is the smallest positive integer such that $F_{a_k}^{\circ n(a_k)}(\infty)\in T_{a_k}^0$. For $k$ sufficiently large, $F_{a_k}$ is a small perturbation of $F_{a_0}$. Hence by Lemma~\ref{slit_negative}, we have that $\mathrm{Re}(F_{a_k}^{\circ n(a_k)}(\infty))<-\frac{3}{4}$ and $\mathrm{Im}(F_{a_k}^{\circ n(a_k)}(\infty))\approx 0$ (compare Figure~\ref{long_geodesic}). On the other hand, we can choose a point $x\in\mathrm{hull}(\heartsuit)\setminus\heartsuit$ (where `hull' stands for convex hull) such that $x\in\Int{T_{a_k}^0}$, and $\psi_{a_k}(x)$ is contained in a fixed compact subset of $\Int{\Pi}$ for all $k$. We now consider the domain $U_k:=~\Int{(T_{a_k}^0\cup F_{a_k}^{-1}(T_{a_k}^0))}$, which contains both $F_{a_k}^{\circ n(a_k)}(\infty)$ and $x$. Observe that a part of the hyperbolic geodesic (in $U_k$) connecting $x$ and $F_{a_k}^{\circ n(a_k)}(\infty)$ passes through a very thin channel (whose thickness decreases as $k$ increases and gets pinched in the limit), and lies extremely close to the boundary of $U_k$ (compare Figure~\ref{long_geodesic}). Therefore, the hyperbolic distance (in $U_k$) between $x$ and $F_{a_k}^{\circ n(a_k)}(\infty)$ goes to $+\infty$ as $k$ tends to $+\infty$. Since $\psi_{a_k}$ is a conformal isomorphism between $U_k$ and $\Int{(\Pi\cup\rho^{-1}(\Pi))}$, it follows that  the hyperbolic distance (in $\Int{(\Pi\cup\rho^{-1}(\Pi))}$) between $\psi_{a_k}(F_{a_k}^{\circ n(a_k)}(\infty))$ and $\psi_{a_k}(x)$ goes to $+\infty$ as $k$ tends to $+\infty$. Consequently, $\psi_{a_k}(F_{a_k}^{\circ n(a_k)}(\infty))$ converges to the boundary $\partial (\Pi\cup\rho^{-1}(\Pi))$ as $k$ tends to $+\infty$. But $\psi_{a_k}(F_{a_k}^{\circ n(a_k)}(\infty))$ is contained in $\Pi$. Therefore, $\psi_{a_k}(F_{a_k}^{\circ n(a_k)}(\infty))$ must converge to one of the (non-trivial) third roots of unity as $k$ tends to $+\infty$. In fact, Lemma~\ref{slit_negative} implies that for $k$ sufficiently large, each $\psi_{a_k}(F_{a_k}^{\circ n}(\infty))$ ($1\leq n\leq n(a_k)$) is close to one of the non-trivial third roots of unity, and hence the same is true for $\pmb{\Psi}(a_k)=\psi_{a_k}(\infty)$. It follows that $\pmb{\Psi}(a_k)$ converges to one of the non-trivial third roots of unity (which lies on $\partial\D\cap\partial\D_2$) as $k$ tends to $+\infty$. 

This completes the proof of the fact that $\pmb{\Psi}$ is a proper map.
\end{proof}

\begin{lemma}\label{prop_loc_inv}
The map $\pmb{\Psi}$ is injective.
\end{lemma}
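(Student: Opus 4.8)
The plan is to deduce local invertibility of $\mathbf{\Psi}$ from \emph{injectivity} together with Brouwer's invariance of domain: since both the escape locus $\C\setminus((-\infty,-1/12)\cup\cC(\mathcal{S}))$ and the target $\D_2$ are open subsets of $\C\cong\R^2$, any continuous injective map between them is automatically open, hence a local homeomorphism. Combined with Lemma~\ref{prop_proper}, this exhibits $\mathbf{\Psi}$ as a proper local homeomorphism, hence a covering map onto the simply connected domain $\D_2$, as claimed in the outline above. So the whole task reduces to showing that $\mathbf{\Psi}$ is injective. (Morally, this should hold because $F_a(\infty)=\sigma_a(\infty)=a$, so $\psi_a(a)=\rho(\mathbf{\Psi}(a))$; that is, the conformal position of the escaping critical value, read off through the $a$-dependent uniformization, ought to pin down $a$.)

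To prove injectivity I would run a pullback/rigidity argument. Suppose $a_1$ and $a_2$ lie in the escape locus with $\mathbf{\Psi}(a_1)=\mathbf{\Psi}(a_2)=u$. Since the depth is determined by the rank of the tile of $\D$ containing $u$, the two parameters share a common depth $n$, and $\psi_{a_1},\psi_{a_2}$ are defined and conformal on the corresponding unions of tiles up to rank $n$. The composition $h:=\psi_{a_2}^{-1}\circ\psi_{a_1}$ is then a conformal isomorphism of these tile-unions that conjugates $F_{a_1}$ to $F_{a_2}$ and fixes the critical value $\infty$ (because $\psi_{a_1}(\infty)=\psi_{a_2}(\infty)=u$) as well as the cusp $\frac14$. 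By the normalization of the uniformizing maps, each $\psi_{a_i}$ carries the cardioid part of $\partial T_{a_i}^0$ onto a fixed side of $\Pi$, so $h$ maps the cardioid arcs of $\partial T_{a_1}^0$ onto those of $\partial T_{a_2}^0$. On the tiles adjacent to the cardioid the reflection is the map $\sigma$, which is \emph{independent} of the parameter; hence $h$ commutes with $\sigma$ there, and the Schwarz reflection principle allows me to continue $h$ conformally across the cardioid arc into $\heartsuit$ via $\sigma\circ h\circ\sigma$.

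It then remains to conclude that $h=\mathrm{id}$. The extended map is a conformal germ along an arc of $\partial\heartsuit$ that commutes with the cardioid reflection $\sigma$ and fixes the cusp $\frac14$ and the critical value $\infty$; the rigidity of this configuration (no nontrivial conformal symmetry of the cardioid both fixes these marked points and commutes with $\sigma$) forces $h$ to be the identity near the cardioid, and the identity theorem then propagates $h=\mathrm{id}$ over its entire connected domain. Consequently $T_{a_1}^0=h(T_{a_1}^0)=T_{a_2}^0$, so the two circumcircles coincide and $a_1=a_2$. This yields injectivity, and invariance of domain finishes the proof.

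The hard part will be the rigidity step at the singular boundary points. Near the cusp $\frac14$ the reflection $\sigma$ is parabolic-like (in the $\phi$-coordinate the cusp is the critical point $\lambda=1$ of $\phi$, where $\sigma$ corresponds to $\lambda\mapsto 1/\overline{\lambda}$), so ruling out nontrivial conformal germs commuting with $\sigma$ demands a careful analysis of the boundary behavior of conformal maps near the cusp and near the double point $\alpha_a$ — exactly the cusp/double-point analysis underpinning the rigidity theorems of the later sections. A secondary technical point is that $n(a)$ is only \emph{locally} constant, namely on the open set of parameters whose image $u$ avoids the tile boundaries; for parameters mapping to tile boundaries (where the depth jumps) one must either run the same comparison on the corresponding lower-rank tiles or pass to a limit, so as to retain injectivity across those loci as well.
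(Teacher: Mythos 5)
Your overall strategy is genuinely different from the paper's. The paper proves local invertibility \emph{directly}, by quasiconformal surgery: for $u$ near $\mathbf{\Psi}(a_0)$ it deforms the conformal structure on the tiling set so as to move the conformal position of the critical value to $u$ (with a separate thickening/thinning surgery, using Warschawski-type asymptotics of Riemann maps near the cusps, when $\mathbf{\Psi}(a_0)$ lies on a tile boundary), solves the Beltrami equation, and thereby produces a continuous local inverse branch $u\mapsto a(u)$; no injectivity statement is ever invoked. Your reduction --- continuity plus \emph{global} injectivity plus invariance of domain, combined with Lemma~\ref{prop_proper} --- is logically sound, and if injectivity were available it would in fact yield all of Theorem~\ref{thm_unif_exterior_conn_locus} at once (the image would be open and closed in $\D_2$). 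So everything hinges on your injectivity argument.

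That argument has a genuine gap at the rigidity step, and the gap is not where you locate it (delicate boundary behavior at the cusp): the local rigidity claim is simply false. A conformal germ along an arc of $\partial\heartsuit$ that preserves $\partial\heartsuit$, fixes $\frac14$, and commutes with $\sigma$ need not be the identity. Indeed, for any M\"obius automorphism $M$ of $\D$ with $M(1)=1$, the map $h=\phi\circ M\circ\phi^{-1}$ (where $\phi(\lambda)=\lambda/2-\lambda^2/4$ is the Riemann map of $\heartsuit$) preserves $\partial\heartsuit$, fixes $\frac14=\phi(1)$, and its Schwarz reflection extension commutes with $\sigma$, because $M$ commutes with $\lambda\mapsto 1/\overline{\lambda}$; more generally \emph{every} conformal germ preserving $\partial\D$ and fixing $1$ produces such an $h$, so the space of these ``symmetries'' is infinite-dimensional. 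The marked point $\infty$ cannot repair this locally: it is far from the cardioid, and the identity theorem propagates equality of two given maps, not the property of ``being the identity'' --- you must know $h=\mathrm{id}$ somewhere before you can propagate it. What actually pins $h=\psi_{a_2}^{-1}\circ\psi_{a_1}$ down is global, not local, information: one must lift $h$ through the critical point to tiles of \emph{all} ranks (possible since $h$ fixes the critical value $\infty$), show the extension is continuous up to the Cantor set $K_{a_1}$, and prove that $K_{a_1}$ is conformally removable, so that the resulting sphere homeomorphism, conformal off $K_{a_1}$ and fixing $0$, $\frac14$, $\infty$, is the identity. This is a pullback-plus-removability argument of the same kind as Propositions~\ref{rigidity_center} and~\ref{rigidity_misi}, but for escape-locus parameters, where the needed ingredients (an analogue of Lemma~\ref{qc_conjugacy_alpha} for disconnected $K_a$, and removability or measure zero of the Cantor non-escaping set --- note that measure zero alone does not imply removability for mere homeomorphisms) are not established in the paper. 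This machinery is precisely what the paper's surgery proof is designed to avoid, and as written your proof does not go through without it.
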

\begin{proof}
Suppose that there exist parameters $a_1, a_2\in\C\setminus\left((-\infty,-1/12)\cup\cC(\mathcal{S})\right)$ such that $\pmb{\Psi}(a_1)=\pmb{\Psi}(a_2)$. This implies that, in particular, they have the same depth; i.e., $n(a_1)=n(a_2)$. We denote the common depth by $n_0$.

According to Proposition~\ref{schwarz_group}, for $i\in\{1,2\}$, there exist conformal maps 
$$
\psi_{a_i}:\displaystyle\bigcup_{k=0}^{n_0} F_{a_i}^{-k}(T_{a_i}^0)\longrightarrow \displaystyle\bigcup_{k=0}^{n_0} \rho^{-k}(\Pi)
$$ 
conjugating
$$
F_{a_i}:\displaystyle\bigcup_{k=1}^{n_0} F_{a_i}^{-k}(T_{a_i}^0)\longrightarrow\displaystyle\bigcup_{k=0}^{n_0-1} F_{a_i}^{-k}(T_{a_i}^0)\quad \mathrm{to}\quad \rho:\displaystyle\bigcup_{k=1}^{n_0} \rho^{-k}(\Pi)\longrightarrow\displaystyle\bigcup_{k=0}^{n_0-1}\rho^{-k}(\Pi).
$$ 
We define a conformal homeomorphism 
$$
\psi_{a_1}^{a_2}:=\psi_{a_2}^{-1}\circ\psi_{a_1}:\displaystyle\bigcup_{k=0}^{n_0} F_{a_1}^{-k}(T_{a_1}^0)\longrightarrow \bigcup_{k=0}^{n_0} F_{a_2}^{-k}(T_{a_2}^0)
$$
that conjugates $F_{a_1}$ to $F_{a_2}$. Note that $\psi_{a_1}^{a_2}$ extends continuously to the cusp $\frac14$ of $\partial T^0_{a_1}$, and sends it to the cusp $\frac14$ of $\partial T^0_{a_2}$. Moreover, the assumption that $\psi_{a_1}(\infty)=\psi_{a_2}(\infty)$ implies that $\psi_{a_1}^{a_2}$ maps the unique critical value $\infty$ of $F_{a_1}$ (with associated simple critical point at $0$) to that of $F_{a_2}$. Hence, the map $\psi_{a_1}^{a_2}$ can be lifted via the maps $F_{a_1}, F_{a_2}$ to a conformal homeomorphism from $\displaystyle\bigcup_{k=0}^{n_0+1} F_{a_1}^{-k}(T_{a_1}^0)$ onto $\displaystyle\bigcup_{k=0}^{n_0+1} F_{a_2}^{-k}(T_{a_2}^0)$ such that it maps $\frac14$ to $\frac14$. Due to the equivariance property of $\psi_{a_1}^{a_2}$ (with respect to $F_{a_1}, F_{a_2}$) and the normalization of the lift, this lifted map agrees with $\psi_{a_1}^{a_2}$ on their common domain of definition. Abusing notation, we denote this extension of $\psi_{a_1}^{a_2}$ by
$$
\psi_{a_1}^{a_2}:\displaystyle\bigcup_{k=0}^{n_0+1} F_{a_1}^{-k}(T_{a_1}^0)\longrightarrow \bigcup_{k=0}^{n_0+1} F_{a_2}^{-k}(T_{a_2}^0).
$$
We also note that $\psi_{a_1}^{a_2}$ sends the unique critical point $0$ of $F_{a_1}$ to the unique critical point $0$ of $F_{a_2}$.

By \cite[Proposition~5.9]{LLMM1}, the curves $\partial\heartsuit$ and $\partial B(a_i,r_{a_i})$ have a simple tangency at $\alpha_{a_i}$, for $i\in\{1,2\}$. Moreover, $\frac14$ is a $(3,2)$-cusp on $\partial\heartsuit$.
Hence, Lemmas~\ref{asymp_lin_1_lem} and~\ref{asymp_lin_2_lem} and conformality of $F_{a_i}$ at the iterated preimages of $\alpha_{a_i},\frac14$ imply that the conformal map $\psi_{a_1}^{a_2}$ is asymptotically linear near the singular points $\alpha_{a_1}, \frac14$ and at their iterated preimages under $F_{a_1}$. The same results also show that $\psi_{a_1}^{a_2}$ extends as a quasiconformal homeomorphism $\mathfrak{g}_1$ of $\widehat{\C}$ (cf. Lemma~\ref{qc_conjugacy_alpha}). By construction, $\mathfrak{g}_1$ conjugates $F_{a_1}$ to $F_{a_2}$ on $\displaystyle\bigcup_{k=1}^{n_0+1} F_{a_1}^{-k}(T_{a_1}^0)$, and is conformal on the interior of $\displaystyle\bigcup_{k=0}^{n_0+1} F_{a_1}^{-k}(T_{a_1}^0)$.

With the quasiconformal homeomorphism $\mathfrak{g}_1$ at our disposal, a standard pullback argument as in Proposition~\ref{rigidity_center} (cf. \cite[Theorem~5.1]{LMM1}) can be employed to construct a sequence of $K$-quasiconformal maps $\{\mathfrak{g}_r\}_{r\geq 1}$ of $\widehat{\C}$ such that for all $r\geq 1$,
\begin{enumerate}
\item\label{lift_cond} $F_{a_2}\circ\mathfrak{g}_{r+1}= \mathfrak{g}_{r}\circ F_{a_1}$ on $\widehat{\C}\setminus\Int{T^0_{a_1}}$,

\item\label{conf_cond} $\mathfrak{g}_r$ is conformal on the interior of $\displaystyle\bigcup_{k=0}^{n_0+r} F_{a_1}^{-k}(T^0_{a_1})$,

\item\label{matching_cond} $\mathfrak{g}_{r+1}=\mathfrak{g}_{r}$ on $\displaystyle\bigcup_{k=0}^{n_0+r} F_{a_1}^{-k}(T^0_{a_1})$, and

\item $\mathfrak{g}_r(0)=0,\ \mathfrak{g}_r(\infty)=\infty,\ \mathfrak{g}_r(\frac14)=\frac14$. 
\end{enumerate}
By compactness of the family of $K$-quasiconformal homeomorphisms and Properties~\eqref{lift_cond},~\eqref{matching_cond} of the sequence $\{\mathfrak{g}_r\}$, there exists a quasiconformal homeomorphism $\mathfrak{g}_\infty$ of $\widehat{\C}$ that conjugates $F_{a_1}$ to $F_{a_2}$ on their tiling sets. Since the non-escaping sets $K_{a_1}, K_{a_2}$ are Cantor sets (see Proposition~\ref{schwarz_group}), it follows (by continuity of $\mathfrak{g}_\infty$) that the conjugacy relation holds on the entire domains of definition of $F_{a_1}, F_{a_2}$. Also, Condition~\eqref{conf_cond} implies that $\mathfrak{g}_\infty$ is conformal on the tiling set $T_{a_1}^\infty$. 
 
Moreover, the proof of Theorem~\ref{geom_finite_limit_set} (part 3) applies mutatis mutandis to Schwarz reflection maps in the escape locus $\C\setminus\left((-\infty,-1/12)\cup\cC(\mathcal{S})\right)$ and shows that the non-escaping sets $K_{a_1}, K_{a_2}$ have zero area.
It now follows by Weyl's lemma that $\mathfrak{g}_\infty$ is a M{\"o}bius map of $\widehat{\C}$. Finally, since the M{\"o}bius conjugacy $\mathfrak{g}_\infty$ fixes $0$, $\frac14$, and $\infty$, it must be the identity map. Hence, $F_{a_1}$ and $F_{a_2}$ are conjugate via the identity map; i.e., $a_1=a_2$. 
\end{proof}

The theorem now readily follows from the previous two lemmas.
\end{proof}

As an immediate application of Theorem~\ref{thm_unif_exterior_conn_locus}, we can define parameter tiles that yield a tessellation of the escape locus (see Figure~\ref{conn_locus_tessellation}).

\begin{definition}\label{para_tiles}
For an $M$-admissible word $(i_1,\cdots,i_k)$ with $i_1=2$, the \emph{parameter tile} $\mathfrak{T}^{i_1,\cdots,i_k}$ is defined as $$\mathfrak{T}^{i_1,\cdots,i_k}:=\pmb{\Psi}^{-1}(T^{i_1,\cdots,i_k}).$$
\end{definition}

Finally, we define external parameter rays of $\mathcal{S}$ via the map $\pmb{\Psi}$. 

\begin{definition}\label{para_ray_schwarz}
The pre-image of a $\mathcal{G}$-ray at angle $\theta$ (where $\theta\in(1/3,2/3)$) under the map $\pmb{\Psi}$ is called a \emph{$\theta$-parameter ray} of $\mathcal{S}$.
\end{definition}

\begin{remark}
It follows from the proof of properness of $\pmb{\Psi}$ that every parameter ray of $\mathcal{S}$ accumulates on $\cC(\mathcal{S})$; i.e., none of them accumulates on the slit $(-\infty,-1/12)$.
\end{remark}

The set of all parameter rays of $\mathcal{S}$ form a binary tree (compare Figure~\ref{tessellation_pic}). Note that if $a$ lies on a parameter ray at angle $\theta$, then in the dynamical plane of $F_a$, the critical value $\infty$ lies on a dynamical ray at angle $\theta$. This duality will play an important role in the rest of the paper.

\section{Hyperbolic components}\label{hyperbolic}

We now discuss the structure of hyperbolic parameters in $\cC(\mathcal{S})$.

Since $F_a$ depends real-analytically on $a$, a straightforward application of the Implicit Function Theorem shows that attracting periodic points can be locally continued as real-analytic functions of $a$. Hence, the set of hyperbolic parameters form an open set. A connected component of the set of all hyperbolic parameters is called a \emph{hyperbolic component}. It is easy to see that every hyperbolic component $H$ has an associated positive integer $n$ such that each parameter in $H$ has an attracting cycle of period $n$. We refer to such a component as a hyperbolic component of period $n$.

A \emph{center} of a hyperbolic component is a parameter $a$ for which $F_a$ has a super-attracting periodic cycle; i.e., the unique critical point $0$ is periodic. 

If $F_a$ has an attracting periodic cycle, then the critical point $0$ of $F_a$ is attracted by the attracting cycle (see Proposition~\ref{fatou_critical}). Moreover, we can associate a dynamically defined conformal invariant to every hyperbolic map $F_a$; namely multiplier if the attracting cycle (of $F_a$) has even period, and Koenigs ratio if the attracting cycle (of $F_a$) has odd period (see Subsection~\ref{anti_poly_dyn_subsubsec} for the corresponding definitions for anti-polynomials). 

The hyperbolic components in $\cC(\mathcal{S})$ are parametrized by the Blaschke product spaces $\mathcal{B}^{\pm}$, which model the first return map of the dynamics to the connected component of $\textrm{int}(K_a)$ containing $0$. The following theorem describes the topology and dynamical uniformizations of hyperbolic components in $\cC(\mathcal{S})$.

\begin{theorem}[Dynamical uniformization of hyperbolic components]\label{unif_hyp_schwarz}
Let $H$ be a hyperbolic component in $\cC(\mathcal{S})$.
\begin{enumerate}
\item If $H$ is of odd period, then there exists a homeomorphism $\widetilde{\eta}_H:H\to\mathcal{B}^-$ that respects the Koenigs ratio of the attracting cycle. In particular, the Koenigs ratio map is a real-analytic $3$-fold branched covering from $H$ onto the open unit disk, ramified only over the origin.

\item If $H$ is of even period, then there exists a homeomorphism $\widetilde{\eta}_H:H\to\mathcal{B}^+$ that respects the multiplier of the attracting cycle. In particular, the multiplier map is a real-analytic diffeomorphism from $H$ onto the open unit disk.
\end{enumerate}
In both cases, $H$ is simply connected and has a unique center.
\end{theorem}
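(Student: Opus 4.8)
The plan is to mirror the Nakane--Schleicher uniformization of hyperbolic components of the Tricorn (Theorem~\ref{hyp_unif}), transporting it to the partially defined Schwarz reflection dynamics by means of the quasiconformal surgery machinery already developed in this section, in particular \cite[Lemma~6.24]{LLMM1}. First I would define the map $\widetilde{\eta}_H$. For $F_a$ in a hyperbolic component $H$ of period $n$, the critical point $0$ is attracted to the unique attracting cycle \cite[Proposition~6.26]{LLMM1}; let $U_a$ be the Fatou component containing $0$, so that the first return map $F_a^{\circ n}\vert_{U_a}$ is a degree $2$ proper self-map of $U_a$, which is anti-holomorphic when $n$ is odd and holomorphic when $n$ is even. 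Uniformizing $U_a$ by a Riemann map sending the attracting periodic point to $0$ and the distinguished boundary fixed point (the cut point/dynamical root in the odd case, the unique boundary fixed point in the even case) to $1$, one obtains an element of $\mathcal{B}^-$ (odd period) or $\mathcal{B}^+$ (even period). Setting $\widetilde{\eta}_H(a)$ to be this Blaschke product gives a well-defined map which respects the Koenigs ratio (resp.\ multiplier), since the Riemann map is a conjugacy and both quantities are conformal invariants.

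Continuity of $\widetilde{\eta}_H$ follows from the real-analytic dependence of $F_a$ on $a$, the continuous motion of $U_a$ together with its distinguished boundary fixed points, and the continuity of the normalized Riemann maps and the associated linearizing data. The two substantive points are local invertibility and properness, which I would establish exactly as in Lemmas~\ref{prop_loc_inv} and~\ref{prop_proper}. For local invertibility, given $a_0\in H$ with model $B_0=\widetilde{\eta}_H(a_0)$ and any nearby $B\in\mathcal{B}^\pm$, I would pick a quasiconformal homeomorphism of $\D$ conjugating $B_0$ to $B$, pull it back to $U_{a_0}$ through the Riemann map, and spread the resulting Beltrami form equivariantly over the whole grand orbit of $U_{a_0}$ under $F_{a_0}$ (setting it to $0$ on the complement) to produce an $F_{a_0}$-invariant Beltrami form on $\hat{\C}$. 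Solving it via the measurable Riemann mapping theorem and invoking \cite[Lemma~6.24]{LLMM1} yields a map $F_{a(B)}$ in the family $\mathcal{S}$ whose model is $B$, hence a continuous local inverse branch of $\widetilde{\eta}_H$.

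For properness, I would argue that if $a_k\to\partial H$ then $\widetilde{\eta}_H(a_k)\to\partial\mathcal{B}^\pm$. Since the attracting cycle continues real-analytically across $\partial H$ with continuously varying multiplier, and its modulus is $<1$ throughout $H$ but cannot remain $<1$ at a boundary point (else that point would lie in the open hyperbolic set $H$), the multiplier modulus tends to $1$, so the Blaschke model leaves every compact subset of $\mathcal{B}^\pm$; boundedness of $\cC(\mathcal{S})\supset H$ and the tessellation of Theorem~\ref{thm_unif_exterior_conn_locus} rule out escape of $a_k$ to the slit or to $\infty$ while the model stays compact. Granting these, $\widetilde{\eta}_H$ is a proper local homeomorphism from the connected set $H$ onto the simply connected space $\mathcal{B}^\pm$ (Lemma~\ref{Blaschke}), hence a covering map and therefore a homeomorphism. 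Simple connectivity of $H$ is then inherited from $\mathcal{B}^\pm\cong\D$. Because the Koenigs ratio (resp.\ multiplier) of $F_a$ depends real-analytically on $a$, composing the homeomorphism $\widetilde{\eta}_H$ with the maps of Lemma~\ref{Blaschke} shows the Koenigs ratio map is a real-analytic $3$-fold branched covering $H\to\D$ ramified only over $0$ (odd period) and the multiplier map is a real-analytic diffeomorphism $H\to\D$ (even period). In both cases $0\in\D$ has a single preimage, yielding the unique center.

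I expect the main obstacle to be the local-invertibility/surgery step, precisely because $F_a$ is defined only on $\overline{\Omega}_a$ rather than on the whole sphere: one must check that the deformed map is again realizable as a genuine member $F_{a(B)}$ of $\mathcal{S}$, that the equivariant Beltrami form is supported compatibly with the droplet structure, and that the normalization is preserved under solving; this is what \cite[Lemma~6.24]{LLMM1} is designed to supply, but its applicability here needs to be verified carefully. A secondary subtlety, specific to the odd-period case, is the continuous and canonical choice of the dynamical root point used to normalize the Riemann map of $U_a$, which rests on the cut-point structure of the limit set $\Gamma_a$ and must be invoked to single out the correct boundary fixed point.
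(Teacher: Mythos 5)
Your overall scheme (define $\widetilde{\eta}_H$ via a normalized Riemann map of the critical Fatou component, then obtain the homeomorphism by quasiconformal surgery together with \cite[Lemma~6.24]{LLMM1}, in parallel with Theorem~\ref{hyp_unif}) is the same as the paper's, but your local invertibility step has a genuine gap: it fails precisely at the center of $H$. You propose to realize a nearby model $B\in\mathcal{B}^{\pm}$ by choosing a quasiconformal homeomorphism of $\D$ conjugating $B_0=\widetilde{\eta}_H(a_0)$ to $B$ and spreading the resulting Beltrami form over the grand orbit of $U_{a_0}$. When $a_0$ is a center, $B_0=B_{0,1}^{\pm}$ is superattracting, and no such conjugacy exists --- not even a local topological one --- since a superattracting fixed point has local degree $2$ while every nearby model has an attracting fixed point of local degree $1$. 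A deformation of complex structure only moves $F_{a_0}$ within its quasiconformal conjugacy class, so it can never change the conformal type of the first return map from superattracting to attracting or back. Consequently you have only shown that $\widetilde{\eta}_H$ is a local homeomorphism away from $\widetilde{\eta}_H^{-1}(B_{0,1}^{\pm})$, and your properness-plus-covering argument then yields only a covering of the punctured disk $\mathcal{B}^{\pm}\setminus\{B_{0,1}^{\pm}\}$, which may have any finite degree. You can conclude neither injectivity, nor simple connectivity of $H$, nor existence and uniqueness of the center; note that for odd period the Koenigs ratio map $H\to\D$ really is $3$-to-$1$, so degree control cannot come for free from soft arguments.

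The paper (following \cite[Theorem~5.6, Theorem~5.9]{NS}) instead performs a cut-and-paste surgery rather than a deformation: for an \emph{arbitrary} $B\in\mathcal{B}^{\pm}$ --- no conjugacy to $B_0$ required, superattracting $B$ allowed --- one replaces the first return map of $F_a$ on a subdomain of the critical Fatou component by a copy of $B$, interpolates quasiconformally in an annulus, and builds an invariant almost complex structure for the resulting quasiregular map $G_a$. Since $G_a$ is not a quasiconformal deformation of $F_a$, \cite[Lemma~6.24]{LLMM1} cannot be invoked verbatim; the ``important detail'' in the paper is that $G_a$ still sends $\overline{B}(a,r_a)^c$ into $B(a,r_a)$ and is univalent over $\heartsuit$, which is what permits adapting the proof of that lemma to realize $G_a$ as some $F_b\in\mathcal{S}$. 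This produces an inverse $\mathcal{B}^{\pm}\to H$ at every point, including the center, and is the step your proposal is missing. (A secondary point: even at non-center parameters, the existence of a marking-preserving quasiconformal conjugacy between two nearby Blaschke products is itself a construction --- linearizing coordinates near the fixed points plus interpolation through fundamental annuli --- not something one can simply ``pick''.)
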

\begin{proof}
See \cite[Theorem~5.6, Theorem~5.9]{NS} for a proof of the corresponding facts for quadratic anti-polynomials. It is straightforward to adapt the proof in our case. The main idea is to change the conformal dynamics of the first return map of a periodic Fatou component. More precisely, one can glue any Blaschke product belonging to the family $\mathcal{B}^{\pm}$ in the connected component of $\textrm{int}(K_a)$ containing $0$ by quasiconformal surgery. This gives the required homeomorphism between $H$ and $\mathcal{B}^{\pm}$.

However, there is an important detail here. Since the original dynamics $F_a$ is modified only in a part of the connected component of $\textrm{int}(K_a)$ containing $0$ (this is precisely where an iterate of $F_a$ is replaced by a Blaschke product), the resulting quasiregular modification $G_a$ shares some of the mapping properties of $F_a$. In particular, $G_a$ sends $\overline{B}(a,r_a)^c$ to $B(a,r_a)$ and maps $G_a^{-1}(\heartsuit)$ to $\heartsuit$ as a univalent map. Hence, we can adapt the proof of Lemma~\ref{schwarz_qcdef} to show that $G_a$ is quasiconformally conjugate to some map $F_b$ in our family $\mathcal{S}$.  
\end{proof}

Using Relation~\eqref{schwarz_cardioid}, it is easy to compute the centers of some low period hyperbolic components of $\cC(\mathcal{S})$.

\textbf{E}xamples. i) Since $F_a(0)=\infty$, there is no super-attracting map in $\mathcal{S}$ with a fixed critical point. Hence, there is no hyperbolic component of period $1$ in $\cC(\mathcal{S})$. 

\begin{figure}[ht!]
\captionsetup{width=0.96\linewidth}
\centering
\includegraphics[scale=0.41]{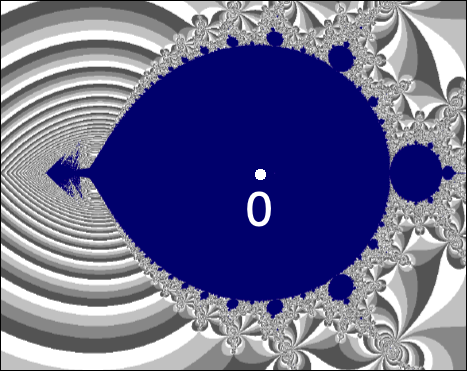} \includegraphics[scale=0.175]{schwarz_basilica.png}
\caption{Left: The hyperbolic component of period two (in blue) with its center $0$ marked. Right: The part of the non-escaping set of $F_0$ inside the cardioid (in dark blue) with the critical point $0$ marked.}
\label{schwarz_basilica_2}
\end{figure}

ii) The unique parameter with a super-attracting $2$-cycle in $\cC(\mathcal{S})$ is $a=0$. Indeed, the critical orbit of the map $F_0$ is given by $0\leftrightarrow\infty$. It is not hard to see that the pullbacks of the leaf joining $1/3$ and $2/3$ (under $\rho$) generate the pre-periodic lamination of the corresponding non-escaping set $K_0$. Hence, its pre-periodic lamination is homeomorphic to the rational lamination of the Basilica anti-polynomial $\overline{z}^2-1$. 

In particular, $0$ is the center of the unique hyperbolic component of period two of $\cC(\mathcal{S})$ (see Figure~\ref{schwarz_basilica_2}).

iii) The center of the unique period $3$ hyperbolic component is $\frac{3}{16}$ (see Figure~\ref{airplane_dynamical} and Figure~\ref{hyp_three}). Its critical orbit is given by $0\mapsto\infty\mapsto3/16\mapsto0$.

\begin{figure}[ht!]
\captionsetup{width=0.96\linewidth}
\centering
\includegraphics[scale=0.194]{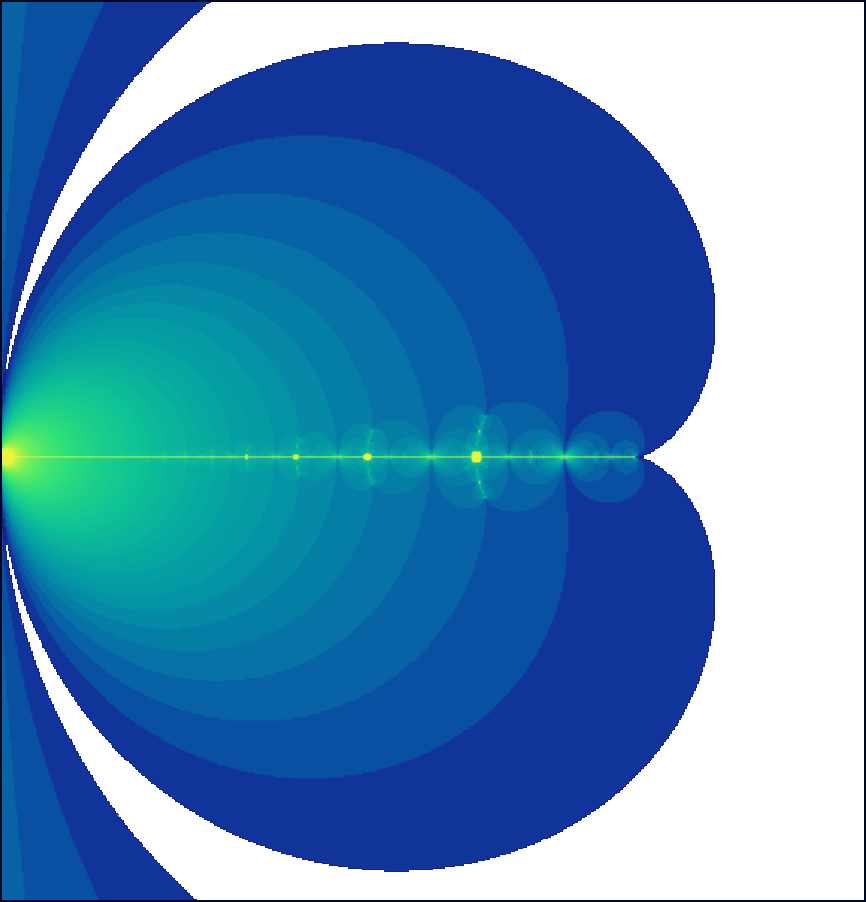} \includegraphics[scale=0.22]{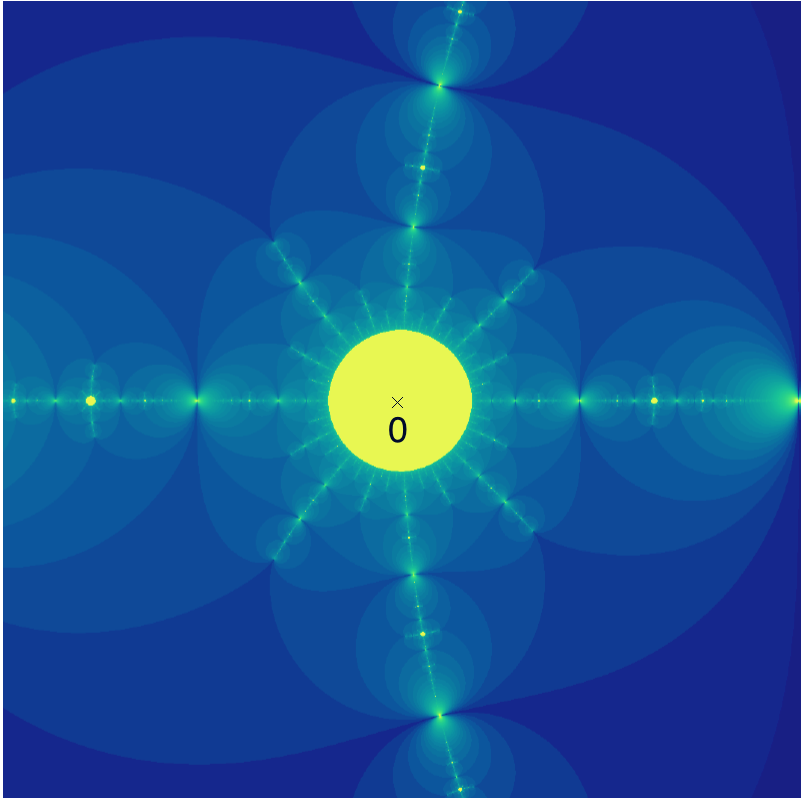}
\caption{Left: A part of the non-escaping set of $a=3/16$. The corresponding map has a super-attracting $3$-cycle. Right: A blow-up of the same non-escaping set around the Fatou component containing the critical point $0$.}
\label{airplane_dynamical}
\end{figure}

Similarly, the centers of the two period $4$ hyperbolic components are $\frac{2}{9}$ (primitive) and $\frac{\sqrt{52}-5}{36}$ (satellite).

\begin{figure}[ht!]
\captionsetup{width=0.96\linewidth}
\centering
\includegraphics[scale=0.185]{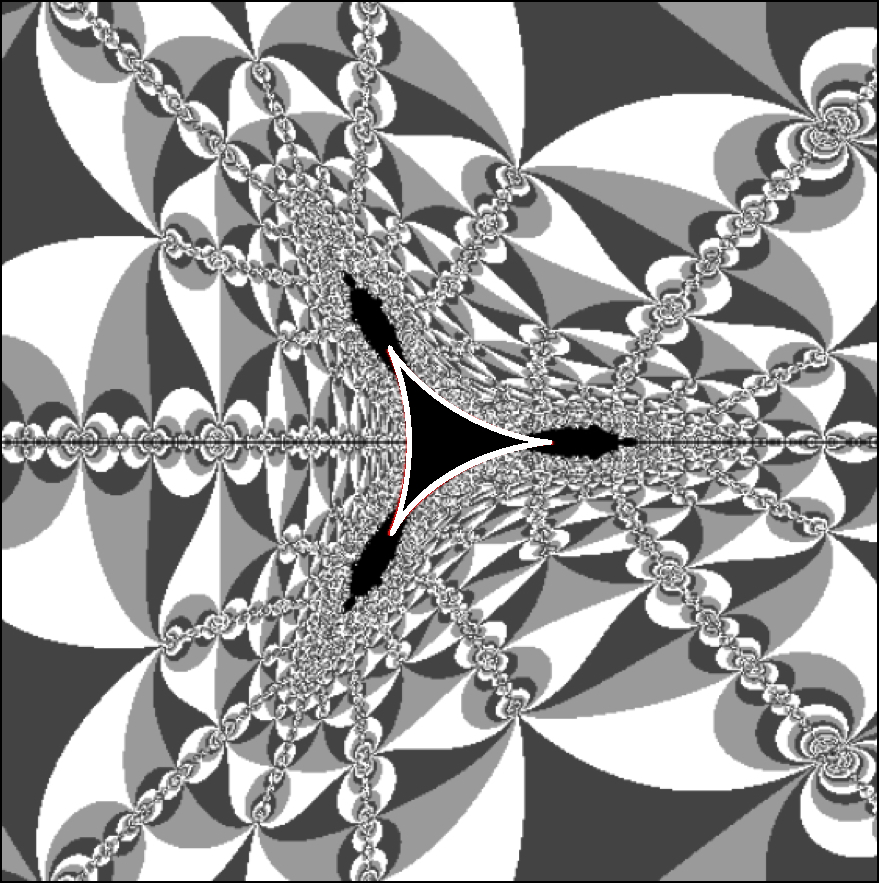} \includegraphics[scale=0.18]{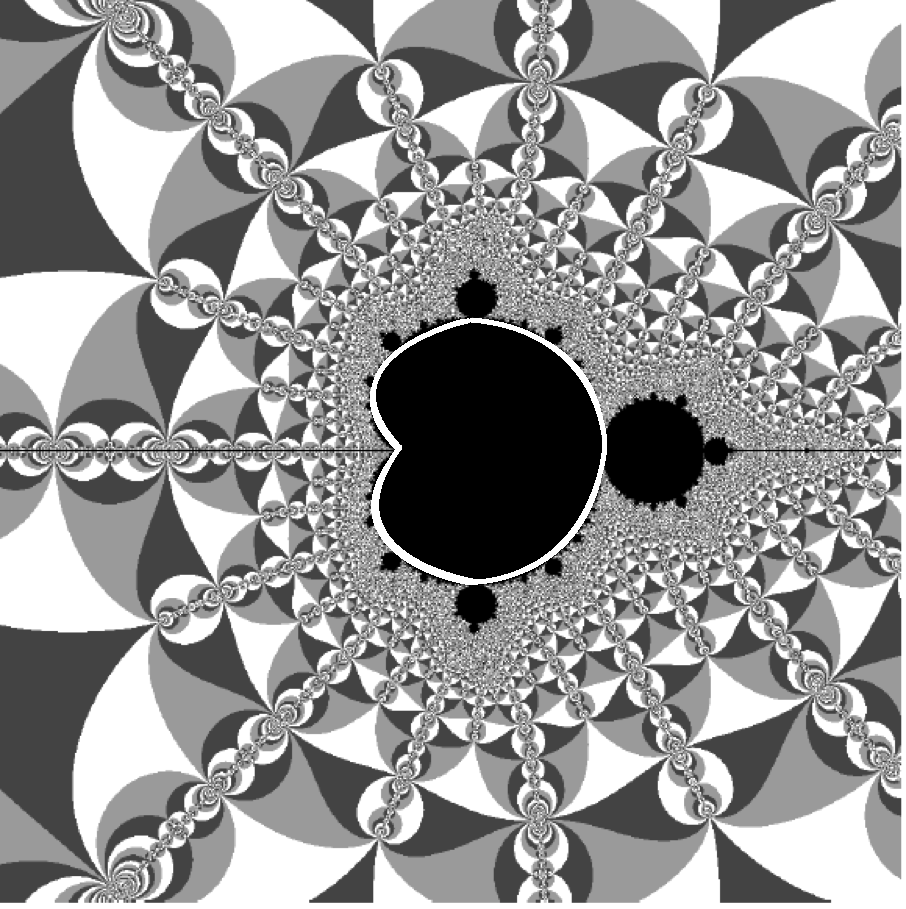}
\caption{Left: The region enclosed by the deltoid-shaped curve is the unique hyperbolic component of period $3$ of $\cC(\mathcal{S})$ with its center at $3/16$. There are bifurcations to period $6$ hyperbolic components at the ends of each parabolic arc. Right: The region enclosed by the cardioid-shaped curve is a hyperbolic component of period $4$ centered at $2/9$.}
\label{hyp_three}
\end{figure}

We will conclude this subsection with a brief description of neutral parameters and boundaries of hyperbolic components of  \emph{even} period of $\cC(\mathcal{S})$. The following proposition states that every neutral (in particular, parabolic) parameter lies on the boundary of a hyperbolic component of the same period.

\begin{proposition}[Neutral parameters on boundary]\label{ThmIndiffBdyHyp_schwarz} 
If $F_{a_0}$ has a neutral periodic point of period $k$, then every neighborhood of $a_0$ contains parameters with attracting periodic points of period $k$, so the parameter $a_0$ is on the  boundary of a hyperbolic component of period $k$ of $\cC(\mathcal{S})$. 
\end{proposition}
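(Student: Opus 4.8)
The plan is to follow the strategy of the Tricorn analogue (Proposition~\ref{ThmIndiffBdyHyp}), but to replace the holomorphic-parameter arguments available for anti-polynomials — where openness of the multiplier is immediate from the open mapping theorem — with the quasiconformal surgery technique already exploited in this paper (Theorem~\ref{unif_hyp_schwarz} and Lemma~\ref{prop_loc_inv}). The statement reduces to producing, in every neighborhood of $a_0$, a parameter carrying an attracting cycle of period $k$: indeed, once we know attracting period-$k$ parameters accumulate at $a_0$, and that $a_0$ itself is not hyperbolic (its cycle is merely neutral), it follows that $a_0$ lies on the boundary of a hyperbolic component of period $k$. I would first record that $a_0\in\cC(\mathcal{S})$, since the presence of a neutral cycle forces the critical orbit of $F_{a_0}$ to remain bounded (by the classification of Fatou components in \cite{LLMM1}, the single critical point lies in the immediate basin/rotation domain of the cycle), so $a_0$ is not in the escape locus and the hyperbolic component we produce lies in $\cC(\mathcal{S})$.

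Concretely, since $k$ is even the first-return map $F_{a_0}^{\circ k}$ is holomorphic near the cycle, so the cycle has a genuine complex multiplier $\lambda_0$ with $|\lambda_0|=1$, and its invariant Fatou component carries a degree-$2$ proper holomorphic first-return map modeled (as in Theorem~\ref{unif_hyp_schwarz}) by an element of $\mathcal{B}^+$. The surgery then mirrors the hyperbolic uniformization: I would glue into this characteristic Fatou component an \emph{attracting} Blaschke product $B_{a',\lambda'}^+\in\mathcal{B}^+$ with $|\lambda'|<1$ close to $\lambda_0$, obtaining a quasiregular map $G$ that agrees with $F_{a_0}$ away from a neighborhood of the cycle and that captures the critical point in a genuine attracting basin. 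Pulling back the standard structure through the model, spreading it by the dynamics of $G$, and setting it standard on the new basin yields a $G$-invariant Beltrami form; solving it via the measurable Riemann mapping theorem and conjugating as in \cite[Lemma~6.24]{LLMM1} produces a genuine member $F_{a(\lambda')}\in\mathcal{S}$ quasiconformally conjugate to $G$. Because the straightening is conformal on the glued (standard) basin, the multiplier $\lambda'$ is preserved, so $F_{a(\lambda')}$ has an \emph{attracting} $k$-cycle; and since the deformation tends to the identity as $\lambda'\to\lambda_0$, continuity of the construction (exactly as in Lemma~\ref{prop_loc_inv}) gives $a(\lambda')\to a_0$, supplying the required nearby attracting parameters. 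Choosing $\lambda'$ with $|\lambda'|>1$ analogously (or perturbing so that all period-$k$ cycles become repelling) yields the repelling parameters for the ``moreover''.

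The expected main obstacle is performing the gluing at a cycle that is only \emph{neutral}, where there is no pre-existing attracting Fatou component carrying the surgery. When $\lambda_0$ is a root of unity (the parabolic case, including $\lambda_0=1$, where one must first pass to the second iterate and continue the now-multiple periodic point with care), I would carry out the gluing inside the parabolic basin, opening the parabolic point by transplanting attracting Blaschke dynamics across the Fatou coordinates of the attracting and repelling petals; the effect of this parabolic-opening on the multiplier is the delicate, controlled part of the argument. When $\lambda_0$ is irrationally indifferent the surgery must be performed on the Siegel disk (if present), and the Cremer case has no invariant Fatou component and would be treated by a limiting/perturbation argument. Throughout, one must verify — just as flagged in the proof of Theorem~\ref{unif_hyp_schwarz} — that the quasiregular modification $G$ retains the structural features needed to invoke \cite[Lemma~6.24]{LLMM1}, namely that it still maps the exterior disk into its complementary disk and is univalent on the relevant preimage of $\heartsuit$, so that the straightened map genuinely lies in the family $\mathcal{S}$. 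It is precisely the absence of any ambient holomorphic parameter dependence for $\mathcal{S}$ that forces this hands-on surgical route in place of the one-line open-mapping argument.
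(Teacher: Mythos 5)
Your route is genuinely different from the paper's, and it has a gap at exactly the step that carries all the difficulty. The paper's own proof is a transfer argument: it cites \cite[Theorem~2.1]{MNS} and observes that the proof there uses only local perturbation theory of anti-holomorphic germs near neutral cycles (for instance, that an odd-period neutral cycle is automatically parabolic with second-iterate multiplier $+1$) together with real-analytic dependence of the family on the parameter, both of which hold for $\mathcal{S}$. Your surgery argument instead hinges on the claim that $a(\lambda')\to a_0$ as $\lambda'\to\lambda_0$, ``exactly as in Lemma~\ref{prop_loc_inv}''. That analogy fails. In Lemma~\ref{prop_loc_inv} the deformation is implemented by quasiconformal maps $\phi_u$ that tend to the identity as $u\to u_0$, so the Beltrami coefficients tend to $0$ in $L^\infty$ and the measurable Riemann mapping theorem with parameters forces the straightened parameters to converge. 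In your surgery nothing of the sort happens: a neutral germ is not quasiconformally conjugate to an attracting germ, so replacing the neutral first-return dynamics by $B_{a',\lambda'}^{+}$ with $\vert\lambda'\vert<1$ is a deformation whose dilatation does not tend to $0$ as $\lambda'\to\lambda_0$; the Beltrami data do not converge to zero and the construction does not tend to the identity. Even granting a uniform dilatation bound, you would only get precompactness of $\{a(\lambda')\}$, and a subsequential limit would be \emph{some} parameter with a neutral cycle of multiplier $\lambda_0$ --- but such parameters are badly non-unique in this family: an entire parabolic arc consists of parameters with multiplier $+1$ whose dynamics are quasiconformally conjugate but conformally distinct (Theorem~\ref{parabolic_arcs_schwarz}), and pinning down a single parameter requires extra conformal data such as critical Ecalle height (Proposition~\ref{rigidity_para_odd}). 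So you cannot identify the limit with $a_0$. Producing attracting parameters \emph{somewhere} is easy; producing them arbitrarily close to the \emph{given} neutral parameter $a_0$ is the entire content of the proposition, and surgery plus the measurable Riemann mapping theorem provides no such localization.

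The cases you defer are not peripheral either. For a Siegel cycle the first return map of the rotation domain is univalent, not a degree-$2$ proper map, so it is not modeled by any element of $\mathcal{B}^{+}$ (every member of $\mathcal{B}^{\pm}$ has an attracting fixed point at $0$, as noted before Lemma~\ref{Blaschke}); gluing attracting Blaschke dynamics there is a macroscopic alteration of the map, not a perturbation. For a Cremer cycle there is no invariant Fatou component at all, and ``a limiting/perturbation argument'' is not an argument --- this is precisely the case where your surgery locus does not exist. (Also, your case analysis opens with ``since $k$ is even,'' but odd $k$ is allowed and is in fact the structurally simpler case, since there the neutral cycle is forced to be parabolic by the local anti-holomorphic computation.) If you want a self-contained proof rather than the paper's citation of \cite[Theorem~2.1]{MNS}, the argument to reconstruct is that local one --- implicit-function-theorem continuation of the cycle and perturbation/index analysis of parabolic splitting in the real-analytic family --- not a gluing surgery, which cannot be made to converge to $a_0$.
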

\begin{proof}
See \cite[Theorem~2.1]{MNS} for a proof in the Tricorn family. Since the proof given there only uses local dynamical properties of anti-holomorphic maps near neutral periodic points, it applies to the family $\mathcal{S}$ as well.
\end{proof}

Using Theorem~\ref{unif_hyp_schwarz}, one can define internal rays of hyperbolic components of $\cC(\mathcal{S})$. If $H$ is a hyperbolic component of even period, then the proof of \cite[Lemma~2.19]{IM2} can be adapted to show that all internal rays of $H$ at rational angles land (note that by Proposition~\ref{rigidity_para_even}, the accumulation set of such a ray is necessarily a finite set). If $H$ does not bifurcate from a hyperbolic component of odd period, then the landing point of the internal ray at angle $0$ is a parabolic parameter with an even-periodic parabolic cycle. This parameter is called the \emph{root} of $H$.

The next theorem describes the bifurcation structure of even period hyperbolic components of $\cC(\mathcal{S})$. Once again, its proof in the Tricorn family is given in \cite[Theorem~1.1]{MNS}, which can be easily adapted for our setting.

\begin{theorem}[Bifurcations from even period hyperbolic components]\label{ThmEvenBif_schwarz} 
If $F_a$ has a $2k$-periodic cycle with multiplier 
$e^{2\pi ip/q}$ with $\mathrm{gcd}(p,q)=1$, then the parameter $a$ sits on the boundary of a hyperbolic component of 
period $2kq$ (and is the root thereof) of $\cC(\mathcal{S})$. 
\end{theorem}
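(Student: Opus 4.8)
The plan is to reduce the statement to the purely holomorphic theory of parabolic bifurcation, exactly as in the Tricorn (Theorem~\ref{ThmEvenBif}), and to supply the two nonlocal inputs — existence of the bifurcating component and the behaviour of its multiplier — from Theorem~\ref{unif_hyp_schwarz} and the quasiconformal surgery of \cite[Lemma~6.24]{LLMM1}. Since $F_a$ is anti-holomorphic, its second iterate is holomorphic, and hence so is the germ $g_a := F_a^{\circ 2k}$ near the characteristic periodic point. A $2k$-cycle of $F_a$ is thus a fixed point $z_0$ of $g_a$ whose holomorphic multiplier is $\lambda_0 := e^{2\pi i p/q}$, a primitive $q$-th root of unity; we may assume $q\geq 2$, the case $q=1$ (multiplier $1$) being precisely the definition of the root of the period-$2k$ component. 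First I would record that, by Proposition~\ref{ThmIndiffBdyHyp_schwarz}, $a$ lies on the boundary of the period-$2k$ hyperbolic component $H$, and that the real-analytic diffeomorphism multiplier map of Theorem~\ref{unif_hyp_schwarz}(2) (with its boundary extension) identifies $a$ as the unique boundary parameter of $H$ with multiplier $\lambda_0$. This is the only point at which the global parameter structure of $\mathcal{S}$ enters; everything that follows is local in the dynamical plane and governed entirely by the holomorphic germ $g_a$.

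Next I would run the standard parabolic-bifurcation analysis for $g_a$. Because $\lambda_0$ is a primitive $q$-th root of unity, the iterate $g_a^{\circ q}$ has a parabolic fixed point at $z_0$ of multiplier $1$ and local form $w \mapsto w + b\, w^{q+1} + \cdots$, with $q$ attracting and $q$ repelling petals permuted cyclically by $g_a$; in particular $z_0$ is a fixed point of $g_a^{\circ q}$ of multiplicity $q+1$. Perturbing off $a$, this degenerate fixed point splits into the persistent fixed point $z(a')$ of $g_{a'}$ (continuing $z_0$) together with a single $q$-cycle of $g_{a'}$, that is, a $2kq$-cycle of $F_{a'}$. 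The transversality of this splitting is a property of the holomorphic germ, identical to the holomorphic (Mandelbrot) case, and shows that $a'$ can be chosen so that the new $q$-cycle is attracting; this places $a'$ in a period-$2kq$ hyperbolic component $H'$ of $\cC(\mathcal{S})$ having $a$ on its boundary. That such an attracting configuration is realized by a genuine member of $\mathcal{S}$, rather than merely by a quasiregular model, is guaranteed by the surgery realization argument already invoked in the proof of Theorem~\ref{unif_hyp_schwarz}, namely \cite[Lemma~6.24]{LLMM1}.

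For the rootedness claim I would track the multiplier $\mu(a')$ of the bifurcated $2kq$-cycle, equivalently the holomorphic multiplier of the $q$-cycle of $g_{a'}$. The residue fixed-point index machinery recalled before Proposition~\ref{index goes to infinity}, applied to the holomorphic map $g_{a'}^{\circ q}$, shows that $\mu$ extends continuously to $a$ with value $1$, so that $a$ is a root parameter of $H'$; and the same local transversality forces $H$ and $H'$ to be attached along the \emph{single} point $a$, in contrast to the arc-bifurcations of Theorem~\ref{ThmBifArc}, which are specific to odd-period components whose return map is anti-holomorphic. This reproduces the even-period picture of Theorem~\ref{ThmEvenBif}.

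The main obstacle, and the only genuine difference from the Mandelbrot set, is the failure of holomorphic dependence of $F_a$ on the parameter: one cannot invoke holomorphic motions, nor the holomorphicity in $a$ of the satellite multiplier. I would circumvent this exactly as in \cite[Theorem~1.1]{MNS}: the entire bifurcation is a statement about the holomorphic germ $g_a$ in the dynamical plane, and the only parameter-space facts used are (i) that the period-$2k$ multiplier map is a real-analytic diffeomorphism onto $\D$ with a controlled boundary extension, which is Theorem~\ref{unif_hyp_schwarz}(2), and (ii) that the quasiregular models produced by the local surgery are conjugate to genuine members of $\mathcal{S}$, which is \cite[Lemma~6.24]{LLMM1}. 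With these substitutions the MNS argument transplants, yielding the theorem.
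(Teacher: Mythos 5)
Your proposal is correct at the level of the paper's own treatment and takes essentially the same route: the paper's entire proof of this theorem is the observation that \cite[Theorem~1.1]{MNS} ``can be easily adapted for our setting,'' and the adaptation you describe --- local holomorphic bifurcation analysis of the second iterate, with the family-specific inputs supplied by Proposition~\ref{ThmIndiffBdyHyp_schwarz}, Theorem~\ref{unif_hyp_schwarz}, and the realization lemma \cite[Lemma~6.24]{LLMM1} --- is exactly the substitution pattern the paper uses for its other transplants from \cite{MNS} (e.g., Theorems~\ref{parabolic_arcs_schwarz} and~\ref{ThmBifArc_schwarz}). The one blemish is your claim that transversality of the splitting ``is a property of the holomorphic germ'': it is in fact a property of how the family moves through the germ, which is precisely where the merely real-analytic parameter dependence matters, but since you explicitly identify this as the main obstacle and defer that step to the MNS argument, just as the paper does, it does not change the verdict.
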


\section{Combinatorial rigidity of geometrically finite maps}\label{comb_rigidity_orbit_portraits}

\subsection{Combinatorics of dynamical rays: orbit portraits}\label{comb_orbit_portrait}

We now delve into a combinatorial study of hyperbolic and parabolic maps in $\mathcal{S}$. The following landing property of dynamical rays for hyperbolic and parabolic maps in $\mathcal{S}$ follows from the preparation in Section~\ref{schwarz_circle_cardioid}.

\begin{proposition}[Landing/bifurcation of dynamical rays]\label{rays_land_schwarz}
1) Let $a\in\cC(\mathcal{S})$ and $F_a$ be a hyperbolic or parabolic map. Then, every $M$-admissible sequence of tiles $\{T_a^{i_1},T_a^{i_1,i_2},\cdots\}$ shrinks to a point on $\Gamma_a$. In particular, every dynamical ray of $F_a$ lands at some point of $\Gamma_a$, and conversely.

2) If $a\notin\cC(\mathcal{S})$, then every dynamical ray of $F_a$ either bifurcates or lands at some point on $K_a$. 
\end{proposition}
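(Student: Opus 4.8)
The plan is to reduce both parts to a single \emph{tile-shrinking} statement and then transport the conclusion through the uniformization $\psi_a$. For part (1), the claim that every dynamical ray lands and that every point of $\Gamma_a$ is a landing point is equivalent, via Carath\'eodory's theorem applied to the conformal isomorphism $\psi_a^{-1}:\D\to T_a^\infty$, to local connectivity of $\Gamma_a$; and local connectivity follows once we show that every nested $M$-admissible sequence $\overline{T_a^{i_1}}\supset\overline{T_a^{i_1,i_2}}\supset\cdots$ has spherical diameter tending to $0$ (with the surjectivity of the boundary extension giving the ``conversely''). Thus the entire problem is an expansion estimate for the inverse branches of $F_a$ near $\Gamma_a$.

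First I would set up expansion away from the distinguished parabolic points. For a hyperbolic map the forward orbit of the critical point is attracted to the attracting cycle (see \cite[Proposition~6.26]{LLMM1}), so the closure of the post-critical set stays in a compact subset of $\Int{K_a}$ disjoint from $\Gamma_a$; the hyperbolic (or orbifold) metric of a forward-invariant neighborhood of that set makes $F_a$ uniformly expanding on the part of $\Gamma_a$ bounded away from the three fixed points of $\rho$, so tile diameters there decay geometrically. The genuinely delicate feature, and the main obstacle, is that the external map $\rho$ is \emph{parabolic} at the three cube roots of unity, which correspond under $\psi_a^{-1}$ to the cusp $\tfrac14$ and the double point $\alpha_a$ of the droplet (the double point accounting for two of the three vertices). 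Near these persistent parabolic fixed points of $F_a$ the expansion degenerates, and I would carry out a local petal analysis, using Fatou-coordinate/normal-form estimates together with the explicit cardioid-cusp geometry and the circle tangency, to show that the tiles clustering at $\tfrac14$ and $\alpha_a$ still shrink, at a polynomial rate. Combining the geometric decay outside a neighborhood of these points with the polynomial decay inside --- a standard geometrically finite bootstrap --- forces every admissible nested sequence to shrink. For a parabolic parameter the argument is identical, except that the parabolic cycle on $\Gamma_a$ contributes further parabolic points, handled by the same petal estimates; the boundary regularity of the relevant (pre-)periodic Fatou components needed here is exactly what the topological preparation of \cite[\S 7]{LLMM1} provides.

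For part (2), where $a\notin\cC(\mathcal{S})$ and $K_a$ is a Cantor set, I would track a dynamical ray by pulling it back under $F_a$. At each backward step the relevant tile either is non-ramified, so the ray continues uniquely, or it contains the critical point $0$, in which case the ray splits --- this is precisely \emph{bifurcation}. The dichotomy is therefore clean: either the ray meets a ramified tile at some finite rank, or it admits a well-defined nested sequence of non-ramified tiles of all ranks. In the latter case the non-escaping set behaves as a hyperbolic repeller away from the persistent parabolic points $\tfrac14,\alpha_a$, so the same expansion-plus-petal estimate as in part (1) makes the nested tiles shrink to a single point of $\Gamma_a\subset K_a$, and the ray lands on $K_a$. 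I expect the bookkeeping here to be lighter than in part (1), the only recurring difficulty again being the parabolic control near the cusp and the double point.
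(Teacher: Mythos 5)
Your part (1) follows the same skeleton as the paper's proof: reduce landing and the converse to the tile-shrinking statement, then pass through Carath\'eodory's theorem applied to the uniformization $\psi_a$ of the (simply connected, since $a\in\cC(\mathcal{S})$) tiling set. The difference is that the paper does not re-prove tile-shrinking at all; that is precisely the content of \cite[Proposition~7.4]{LLMM1}, so the printed proof is a two-line citation. Your expansion-plus-petal sketch is a plausible reconstruction of how such a result is established (it is the standard geometrically finite argument, with the cusp $\tfrac14$ and the double point $\alpha_a$ playing the role of persistent parabolic points), but you should be aware that all the genuine analytic content of the proposition lives in that cited result, and your sketch asserts rather than proves the degenerate-expansion estimates near the singular points. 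Where your route genuinely diverges is part (2). You run the expansion machinery a second time, which forces you to claim that $K_a$ is a hyperbolic repeller away from $\tfrac14$ and $\alpha_a$ for escape parameters --- a claim that again requires parabolic-petal control at fixed points lying in $K_a$, now with no prequel statement to lean on. The paper avoids all of this with a purely topological observation: if a ray does not bifurcate, its accumulation set is a connected subset of $K_a$, and by \cite[Proposition~6.40]{LLMM1} the set $K_a$ is a Cantor set when $a\notin\cC(\mathcal{S})$; a connected subset of a totally disconnected compact set is a single point, so the ray lands. This soft argument costs nothing dynamically, whereas your version, though correct in spirit, replaces a one-line observation with estimates that would take real work to justify.
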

\begin{proof}
1) This follows from Theorem~\ref{geom_finite_limit_set} and Carath{\'e}odory's theorem on boundary extension of (inverses of) Riemann maps.

2) This follows from the fact that the accumulation set of a ray is connected and that the limit set of a parameter $a\notin\cC(\mathcal{S})$ is a Cantor set (Proposition~\ref{schwarz_group}).

\end{proof}

Clearly, if the dynamical ray of $F_a$ at angle $\theta$ lands at a point $w\in \Gamma_a$, then the dynamical ray at angle $\rho(\theta)$ lands at the point $F_a(w)$.

\subsubsection{Hubbard tree, characteristic angles, and lamination}

\begin{definition}\label{char_para_point}
For a hyperbolic or parabolic map $F_a$, the Fatou component containing the critical value $\infty$ is called the \emph{characteristic Fatou component} of $F_a$. If $F_a$ is parabolic, the parabolic periodic point on the boundary of the characteristic Fatou component is called the \emph{characteristic parabolic point} of $F_a$.
\end{definition}

Suppose that the period of the characteristic Fatou component $U_a$ of a hyperbolic (respectively, parabolic) map $F_a$ is $k$. Then, the sequence of iterates $\{F_a^{\circ kn}\}_n$ forms a normal family on $U_a$. Hence, the $F_a^{\circ k}$-orbit of every point in $U_a$ converges to the attracting (respectively, parabolic) periodic point in $U_a$ (respectively, on $\partial U_a$).

The characteristic Fatou component and the fixed points (of the first return map of the component) on its boundary will be vital in the rest of the section.

\begin{definition}\label{def_root_schwarz}
Let $F_a$ be a hyperbolic (respectively, parabolic) map, and $U_a$ be the characteristic Fatou component. Let $w$ be a boundary point of $U_a$ such that the first return map of $U_a$ fixes $w$. Then we call $w$ a \emph{dynamical root} of $F_a$ if it is a cut-point of $K_a$; otherwise, we call it a \emph{dynamical co-root}.
\end{definition}

We now proceed to define Hubbard trees for super-attracting and parabolic maps.

For a super-attracting map $F_a$, the critical Fatou component $U$ (which has period $k$, say) admits a Riemann map that conjugates the first return map $F_a^{\circ k}$ of $U$ to the map $\overline{z}^2$ on $\mathbb{D}$. Pre-images of radial lines in $\mathbb{D}$ under this Riemann map are called \emph{geodesic rays} in $U$. Pulling the geodesic rays in $U$ back by iterates of $F_a$, we obtain geodesic rays in all the Fatou components of $F_a$. Since the non-escaping set $K_a$ of a hyperbolic map $F_a$ is a locally connected full continuum (see Theorem~\ref{geom_finite_limit_set}), it follows that there exists a unique arc in $K_a$ connecting any two points (of $K_a$) such that the intersection of the arc with every Fatou component is contained in the union of two geodesic rays (compare \cite[Expos{\'e}~II, \S 6, Proposition 6]{orsay}). Such arcs are called \emph{allowable}. The union of the allowable arcs connecting the post-critical set of $F_a$ is a tree, and we call it the \emph{Hubbard tree} of the super-attracting map $F_a$.
 
For a parabolic map $F_a$, one can similarly define a tree in $K_a$ which connects the post-critical set and the parabolic cycle. Note that the tree obtained this way is not uniquely defined in the Fatou components, but they are all homotopic relative to the limit set. Such a tree is called a Hubbard tree of the parabolic map $F_a$ (also compare \cite[Definition~5.4]{HS}).

In either case, the Hubbard tree is an $F_a$-invariant (up to homotopy relative to $\Gamma_a$ in the parabolic case) finite tree in $K_a$ that connects the post-critical orbit (and the parabolic cycle if $F_a$ is parabolic). We denote this tree by $\mathcal{H}_a$. Following the arguments of \cite[Expos{\'e}~IV, \S 4, Proposition~4]{orsay} (in the super-attracting case) or \cite[Lemma~3.5]{S1a} (in the parabolic case), it is easy to see that the critical value $\infty$ is an endpoint of the tree, and every branch points of the tree is a pre-periodic repelling point.

One can now adapt the proof of \cite[Lemma~3.4, Corollary~4.2]{NS1} to deduce the following.

\begin{proposition}\label{prop_root_schwarz}
Let $F_a$ be a super-attracting or parabolic map. Then, every dynamical co-root of $F_a$ has the same exact period as its Fatou component. It is the landing point of exactly one dynamic ray, and this ray has the same exact period as the component.

The characteristic Fatou component $U_a$ of $F_a$ has exactly one dynamical root. If the period of $U_a$ is even, then $U_a$ has no co-root; if the period is odd, it has exactly two co-roots. Moreover, if the period of $U_a$ is an odd integer $k$, then its dynamical root is the landing point of exactly two rays at $2k$-periodic angles.
\end{proposition}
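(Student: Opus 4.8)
The plan is to reduce everything to the boundary dynamics of the first return map of a periodic Fatou component, exactly as in the anti-polynomial setting, and to feed in the orbit-portrait combinatorics of the external map $\rho$ (conjugate to $m_{-2}$ via $\mathcal{E}$). First I would fix a periodic Fatou component $V$ of exact period $k$ in the (super-attracting or parabolic) cycle and examine $F_a^{\circ k}\vert_V$. Since $F_a$ is anti-holomorphic and the critical orbit meets $V$ exactly once, $F_a^{\circ k}\vert_V$ is a proper degree-$2$ self-map of $V$, anti-holomorphic for odd $k$ and holomorphic for even $k$. Choosing a Riemann map of $V$ (which extends continuously to $\partial V$ by local connectivity of $K_a$, see \cite[\S 7]{LLMM1}) conjugates it to a degree-$2$ Blaschke product $B^-_{a,\lambda}$ (odd $k$) or $B^+_{a,\lambda}$ (even $k$), as in Theorem~\ref{unif_hyp_schwarz}; at a center this is just $\overline{z}^2$ or $z^2$. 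Its boundary restriction is a real-analytic circle map of degree $-2$ (respectively $+2$), which therefore has exactly three (respectively one) fixed points. Thus $F_a^{\circ k}$ has three fixed points on $\partial U_a$ when $k$ is odd and one when $k$ is even.

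Next I would establish the co-root statement. Let $w$ be a co-root of $V$, i.e. a boundary fixed point of $F_a^{\circ k}$ that is not a cut point of $K_a$. By Proposition~\ref{rays_land_schwarz} each point of $\Gamma_a$ is the landing point of at least one dynamical ray, and if two or more rays landed at $w$ it would disconnect $K_a$; hence exactly one ray $R_\theta$ lands at $w$. Being the unique ray at a fixed point, $R_\theta$ is invariant under $F_a^{\circ k}$, so $\theta$ is $\rho$-periodic of period dividing $k$. To see the period is exactly $k$, suppose $F_a^{\circ k'}(w)=w$ for a proper divisor $k'$ of $k$. Then $w\in\partial V\cap\partial F_a^{\circ k'}(V)$, and $F_a^{\circ k'}(V)\neq V$ because $V$ has exact period $k$; a common boundary point of two distinct Fatou components of the full continuum $K_a$ is necessarily a cut point, contradicting that $w$ is a co-root. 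Hence $w$ and its unique landing ray $R_\theta$ both have exact period $k$.

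Finally I would determine the root/co-root count for the characteristic component $U_a$. As recalled just before the statement, the critical value $\infty$ is an endpoint of the Hubbard tree $\mathcal{H}_a$, so the part of $\mathcal{H}_a$ inside $U_a$ is a single arc from $\infty$ to one boundary point $w_0$; since $\mathcal{H}_a$, $U_a$ and $\infty$ are all invariant under $F_a^{\circ k}$, the point $w_0$ is fixed by the first return map, and it is a cut point because $\mathcal{H}_a$ continues past it — so $w_0$ is a dynamical root. Conversely, any fixed cut point on $\partial U_a$ must lie on $\mathcal{H}_a\cap\partial U_a=\{w_0\}$, so $w_0$ is the only root. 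For even $k$ the single boundary fixed point is $w_0$, leaving no co-roots; for odd $k$ the three boundary fixed points give exactly two co-roots. The ray count at the odd-period root then comes from applying the anti-holomorphic orbit-portrait dichotomy (Theorem~\ref{complete_anti-holomorphic}, case $p=k$ odd) to the period-$k$ cycle of $w_0$: since $w_0$ is a cut point whose access sector to $\infty$ is bounded by two rays that the orientation-reversing map $F_a^{\circ k}$ swaps, its angle set is $\{t^-,t^+\}$ with $t^+=\rho^{\circ k}(t^-)$, so both angles are $2k$-periodic, as claimed.

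The hard part will be twofold. First, the precise ray count at the root — ruling out the alternative portrait with three landing rays and pinning the angles as $2k$-periodic — requires importing the $m_{-2}$ orbit-portrait combinatorics through the conjugacy $\mathcal{E}$ and adapting \cite[Lemma~3.4, Corollary~4.2]{NS1} to the anti-holomorphic degree-$2$ boundary dynamics of $F_a^{\circ k}$. Second, the identification $\mathcal{H}_a\cap\partial U_a=\{w_0\}$ must be justified carefully from the endpoint property of $\infty$ together with local connectivity of $K_a$, and in the parabolic case one must additionally track the distinguished parabolic boundary fixed point (which plays the role of $w_0$); here I would either argue directly with the parabolic Blaschke model or pass to nearby hyperbolic parameters using Proposition~\ref{ThmIndiffBdyHyp_schwarz} and invoke continuity of the combinatorial data.
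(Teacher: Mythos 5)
Your overall skeleton is the one the paper intends: its entire proof is the instruction to adapt the anti-polynomial arguments of \cite{NS1}, and your ingredients (Blaschke model for the first return map, boundary fixed-point count, Hubbard tree to locate the root, orbit portraits for the ray count) are exactly that adaptation. Your co-root argument is correct as stated. But two steps in the hyperbolic part are weaker than you present them. First, a degree $+2$ real-analytic circle covering does \emph{not} automatically have exactly one fixed point (only the count with multiplicity is $1$); you need the Blaschke structure — an interior attracting fixed point in the (super-)attracting case, and in the even-period parabolic case the observation that a boundary parabolic point of a degree-$2$ Blaschke product whose petal lies inside the disk has multiplicity three, leaving no other boundary fixed point. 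Second, root uniqueness via ``any fixed cut point of $\partial U_a$ lies on $\mathcal{H}_a$'' is not automatic: a limb of $K_a$ at such a cut point need not contain a post-critical point a priori, so this step genuinely requires the Goldberg--Milnor sector/orbit-portrait analysis (which the paper imports through Theorem~\ref{complete_antiholomorphic_schwarz} and Proposition~\ref{orbit_portraits_preserved}), not just the tree; the same machinery is what rules out the three-ray portrait at an odd-period root, which you correctly flag as a hard point but do not resolve.

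The genuine error is in your plan for the parabolic case. You assert that the characteristic parabolic point ``plays the role of $w_0$,'' i.e.\ of the dynamical root. This is false on co-root arcs: there the characteristic parabolic point is the landing point of exactly one dynamical ray and is \emph{not} a cut point — it is a co-root, and the dynamical root of $U_a$ is a different, repelling, boundary fixed point (this is precisely the root-arc/co-root-arc dichotomy of Definition~\ref{DefRootArc}, reproduced for $\mathcal{S}$ in Subsection~\ref{para_orbit_portrait}). Relatedly, for a parabolic map the Hubbard tree by definition also connects the parabolic cycle, so $\mathcal{H}_a\cap\partial U_a$ contains both the parabolic point and the exit point towards the rest of the post-critical set, and your identification $\mathcal{H}_a\cap\partial U_a=\{w_0\}$ breaks down. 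Finally, the fallback of perturbing into a hyperbolic component via Proposition~\ref{ThmIndiffBdyHyp_schwarz} and invoking ``continuity of the combinatorial data'' does not close the gap: landing of rays at a point that becomes parabolic in the limit is not obtained by perturbation (repelling landing points persist, but the parabolic point is formed by an attracting point merging with a repelling one, and landing \emph{at} it needs a separate parabolic argument), and it risks circularity, since the Section~\ref{odd_period_parabolic} analysis of parabolic parameters and arcs is itself built on Proposition~\ref{prop_root_schwarz}.
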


The angles of the two adjacent rays landing at the dynamical root of $F_a$ (bounding a sector of angular width less that $\frac12$) and separating the critical point $0$ from the critical value $\infty$ are called the \emph{characteristic angles} of $F_a$. The hyperbolic geodesic in $\mathbb{D}$ terminating at these two angles is referred to as the \emph{characteristic geodesic}.

The characteristic angles of a hyperbolic or parabolic map $F_a$ will play a crucial role in the combinatorial study of the connectedness locus $\cC(\mathcal{S})$. In particular, we will show that that the pre-periodic lamination of a hyperbolic or parabolic map $F_a$ (which yields a topological model of the map) can be recovered from its characteristic angles. 

We start with a preliminary statement.

\begin{lemma}\label{basin_dense_tree}
Let $F_a$ be a super-attracting map, and $\mathcal{H}_a$ the Hubbard tree of $F_a$. Then, $\Int{K_a}\cap\mathcal{H}_a$ is dense in $\mathcal{H}_a$.
\end{lemma}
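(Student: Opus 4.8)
The plan is to argue by contradiction, reducing the density statement to a combinatorial expansion estimate carried out on the circle. First I would record the purely topological reduction. Since $\mathcal{H}_a\subseteq K_a$ and $K_a=\Int{K_a}\sqcup\Gamma_a$ (as $\Gamma_a=\partial K_a$ and $K_a$ is closed), the set $\Int{K_a}\cap\mathcal{H}_a$ fails to be dense in $\mathcal{H}_a$ if and only if some nonempty open subset of the tree is contained in $\Gamma_a$. As nonempty open subsets of a finite tree always contain nondegenerate subarcs, this happens precisely when $\mathcal{H}_a$ contains a nondegenerate subarc $J\subseteq\Gamma_a$. So it suffices to rule out such an arc.

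Assume then that $J\subseteq\mathcal{H}_a\cap\Gamma_a$ is a nondegenerate arc. Two structural facts about the super-attracting map $F_a$ drive the argument: (a) the Hubbard tree is forward invariant, $F_a(\mathcal{H}_a)\subseteq\mathcal{H}_a$, being the regulated hull of the finite, forward-invariant critical cycle; and (b) the unique critical point $0$ lies in the interior of its Fatou component, so $0\notin\Gamma_a$. Since $\Gamma_a$ is forward invariant and avoids $0$, each $F_a^{\circ n}$ is a local homeomorphism along $\Gamma_a$, hence $F_a^{\circ n}\vert_J$ is injective; combined with (a), $F_a^{\circ n}(J)$ is an \emph{embedded} arc contained in $\mathcal{H}_a\cap\Gamma_a$ for every $n$.

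Next I would transport the problem to the circle. As $F_a$ is hyperbolic, $K_a$ is locally connected and all dynamical rays land (Proposition~\ref{rays_land_schwarz}), so the boundary extension of $\psi_a^{-1}$ gives a continuous surjection $\pi:\mathbb{T}\to\Gamma_a$ with $F_a\circ\pi=\pi\circ\rho$; moreover $\pi$ has finite fibers (each point of $\Gamma_a$ receives only finitely many rays), and $\mathcal{E}$ conjugates $\rho$ to the uniformly expanding map $m_{-2}$. Because a closed map with finite fibers does not lower dimension, $\pi^{-1}(J)$ is $1$-dimensional and hence contains a nondegenerate arc $\widetilde{J}\subseteq\mathbb{T}$. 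Uniform expansion of $m_{-2}$ (transported by $\mathcal{E}$) forces $\rho^{\circ n}(\widetilde{J})=\mathbb{T}$ for all large $n$. Applying the semiconjugacy, $F_a^{\circ n}(J)\supseteq F_a^{\circ n}(\pi(\widetilde{J}))=\pi(\rho^{\circ n}(\widetilde{J}))=\pi(\mathbb{T})=\Gamma_a$, whence $F_a^{\circ n}(J)=\Gamma_a$. But $F_a^{\circ n}(J)$ is an embedded arc, i.e.\ homeomorphic to an interval, whereas $\Gamma_a$ contains branch points of $\mathcal{H}_a$ (pre-periodic repelling points disconnecting $\Gamma_a$ into at least three pieces) and so is not an arc. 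This contradiction shows no such $J$ exists, which proves the density.

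The step I expect to be the main obstacle is the transport: one must ensure that $\pi^{-1}(J)$ genuinely contains a circle arc rather than merely an uncountable (possibly Cantor-like) set, and the dimension-theoretic argument via finiteness of the fibers of $\pi$ is the cleanest way to secure this — it rests on verifying that at most finitely many rays land at each point of $\Gamma_a$ for these geometrically finite maps. I would also flag why I avoid the classical metric-expansion route (exhibiting a conformal metric in which $F_a$ expands and deriving unbounded length growth of $F_a^{\circ n}(J)$ inside the finite tree): that approach is complicated here by the parabolic fixed points $\frac14$ and $\alpha_a$ intrinsic to the family $\mathcal{S}$, near which no uniform expansion holds. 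The combinatorial route is preferable precisely because $m_{-2}$ is uniformly expanding and therefore insensitive to these parabolic points.
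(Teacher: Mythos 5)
Your reduction (rule out a nondegenerate arc $J\subseteq\mathcal{H}_a\cap\Gamma_a$), the invariance of the tree, and the expansion-on-the-circle endgame are fine as far as they go; the proof breaks at the transport step, and it breaks irreparably. The assertion ``a closed map with finite fibers does not lower dimension, hence $\pi^{-1}(J)$ is $1$-dimensional'' is false. Hurewicz's theorems go the other way: a closed surjection whose fibers have cardinality at most $k+1$ satisfies $\dim(\mathrm{image})\leq\dim(\mathrm{domain})+k$, so a $2$-to-$1$ closed map can perfectly well send a $0$-dimensional set onto a $1$-dimensional one. The Cantor function is the standard counterexample: a continuous (hence closed, by compactness) at-most-$2$-to-$1$ surjection from the Cantor set onto $[0,1]$. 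So finiteness of the fibers of $\pi$ gives no lower bound on $\dim\pi^{-1}(J)$, and no arc $\widetilde{J}$ is produced. Worse, the phenomenon you need to exclude genuinely occurs in the closest dynamical analogues: for $z^2+i$, whose Julia set is a dendrite, the Hubbard tree is a finite tree made of arcs lying inside the Julia set, yet the set of external angles of rays landing on that tree contains no interval of the circle --- if it did, expansion of the doubling map (your argument verbatim) would force every external ray to land on the tree, so the Julia set would be contained in a finite tree, contradicting the fact that its branch points (iterated preimages of the $\alpha$-fixed point) are infinite in number. Thus the angle set there is a compact nowhere dense set mapping finite-to-one onto a union of arcs: ``arc downstairs'' simply does not lift to ``arc upstairs'', and any correct proof of such a lifting statement in the present setting would already have to contain a proof of the lemma itself.

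By contrast, the paper proves the lemma by exactly the metric-expansion route you chose to avoid, and handling the parabolic/singular points is the actual content of its argument. It first proves a weaker claim: $\Int{K_a}\cap\mathcal{H}_a$ together with the iterated preimages of $\alpha_a$ is dense in $\mathcal{H}_a$. This uses a hyperbolic metric on the components of $\hat{\C}\setminus(T_a\cup\mathbf{0})$ (where $\mathbf{0}$ is the critical cycle) and the estimate that the dynamics expands distances along the tree by a definite factor \emph{provided} the points stay a definite distance away from $\mathbf{0}$, $\frac14$ and $\alpha_a$; an arc of the tree whose forward orbit avoids the basin and the preimages of $\alpha_a$ can be shown to satisfy this hypothesis infinitely often, so its iterates would have unbounded hyperbolic length while remaining in a compact part of the tree --- a contradiction. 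A separate monotonicity argument then handles $\alpha_a$ itself: if a small arc $I_0\ni\alpha_a$ avoided $\Int{K_a}$, the arcs $I_n=F_a^{\circ 2n}(I_0)$ would increase strictly and their endpoints would converge to an attracting fixed point of $F_a^{\circ 2}$, which is impossible (except for $a=0$, treated directly). So the difficulty you flagged --- no uniform expansion near the parabolic points --- is real, but it is overcome by this two-step structure rather than by lifting to the circle; the circle picture cannot detect whether an arc of $\Gamma_a$ is ``thick'' in angles. (Two smaller repairs your write-up would need anyway: the closing contradiction should not invoke branch points of $\mathcal{H}_a$, since the tree may have none, e.g.\ for $a=0$ --- use instead that $\Gamma_a$ separates the plane, being the boundary of the tiling set and of the Fatou components, so it cannot lie in a finite tree; and ``locally injective implies injective'' for paths into a tree is true but requires a short argument of its own.)
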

\begin{proof}
Let us denote the super-attracting cycle of $F_a$ by $\mathbf{0}$.

\noindent\textbf{Claim:} The union of $\Int{K_a}\cap\mathcal{H}_a$ and the iterated pre-images of $\alpha_a$ on $\mathcal{H}_a$ is dense on $\mathcal{H}_a$.

\noindent\textbf{Proof of claim:} If it were false, then there would be an arc $\gamma_0\in\mathcal{H}_a$ such that none of the images $F_a^{\circ n}(\gamma_0)$ contains $\alpha_a$ or intersects the immediate basin of the critical point $0$. 

We consider the hyperbolic metric on each of the two connected components of $$V:=\widehat{\C}\setminus\left(T_a\cup\mathbf{0}\right),$$ and define the corresponding ``hyperbolic metric" on each connected component $J$ of $\mathcal{H}_a\setminus\{\mathbf{0},\alpha_a\}$ as follows. For $x,y\in J$, a smooth path $\gamma$ connecting $x$ to $y$ in $V$ is called \emph{admissible} if it can be retracted to $\left[x,y\right]\subset\mathcal{H}_a$ in $V$ relative to the end-points (clearly, $\gamma$ must be contained in a connected component of $V$). Then we let $$\displaystyle d_{\mathrm{hyp}}(x,y):=\inf_\gamma \ell_{\mathrm{hyp}}(\gamma),$$ where $\gamma$ runs over all admissible paths connecting $x$ to $y$, and $\ell_{\mathrm{hyp}}$ denotes hyperbolic length in $V$.

Note that if the images $x'=F_a(x)$ and $y'=F_a(y)$ also lie in the same component $J'$ of $\mathcal{H}_a\setminus\{\mathbf{0},\alpha_a\}$, then any admissible path $\gamma'$ connecting $x'$ to $y'$ lifts by $F_a$ to an admissible path $\gamma$ connecting $x$ to $y$. Moreover, $\gamma$ is contained in $\widehat{\C}\setminus\overline{E_a^1}\subset V$. It follows that $\ell_{\mathrm{hyp}}(\gamma')>\ell_{\mathrm{hyp}}(\gamma)$. Hence, 
\begin{equation}
d_{\mathrm{hyp}}(x',y')\geq c\cdot d_{\mathrm{hyp}}(x,y),
\label{expansion}
\end{equation}
with $c=c(\epsilon)>1$ provided $x'$ and $y'$ stay $\epsilon$-away from $\mathbf{0}$, $\frac14$, and $\alpha_a$.
 
Note that by our assumption on $\gamma_0$, all iterates of $\gamma_0$ stay $\epsilon$-away from $\mathbf{0}$. Moreover, no iterate of $\gamma_0$ contains $\alpha_a$. Since $\alpha_a$ repels nearby points in $K_a$, it follows that infinitely many iterates of $\gamma_0$ stay $\epsilon$-away from $\alpha_a$. Finally, as $\gamma_0$ is contained in the Hubbard tree, all iterates of $\gamma_0$ are bounded $\epsilon$-away from $\frac14$ as well. It now follows by Relation~\eqref{expansion} that $d_{\mathrm{hyp}}(F_a^{\circ n}(x),F_a^{\circ n}(y))\to\infty$, for $x,y\in\gamma_0$. But this is impossible as infinitely many iterates of $\gamma_0$ are uniformly bounded away from $\partial V$. This completes the proof of the claim.
\bigskip
 
To complete the proof of the lemma, it now suffices to argue that $\alpha_a$ is in the closure of $\Int{K_a}\cap\mathcal{H}_a$. We will assume the contrary, and arrive at a contradiction. To this end, let us choose a sufficiently small arc $I_0\subset\mathcal{H}_a$ containing $\alpha_a$ and not intersecting $\Int{K_a}$, and define $I_{n}:=F_a^{\circ 2n}\left(I_0\right)$. Since $\alpha_a$ repels nearby points in $K_a$ and $F_a^{\circ 2n}$ ($n\geq1$) is injective on $I_0$, the family $\{I_n\}_n$ is a strictly increasing sequence of arcs in $\mathcal{H}_a$. In particular, the (right) end-points of $I_n$ form a strictly monotone sequence in the compact set $\mathcal{H}_a$, and hence must converge to an attracting fixed point of $F_a^{\circ 2}$. This is clearly impossible unless $a=0$ (which is the only map with a super-attracting $2$-cycle). Moreover, for $a=0$, the above argument shows that the arcs $I_n$ come arbitrarily close to $\mathbf{0}$; i.e., $I_n\cap\Int{K_a}\neq\emptyset$ for $n$ large. As $\Int{K_a}$ is completely invariant, it follows that $I_0\cap\Int{K_a}\neq\emptyset$ as well.
\end{proof}

Recall that the piecewise reflection map $\rho$ was defined in Section~\ref{ideal_triangle}.

\begin{proposition}[Characteristic angles determine lamination]\label{char_angles_lamination}
Let $a$ be a super-attracting parameter. Then, the iterated pre-images of the characteristic geodesic under $\rho$ are pairwise disjoint, and their closure in $\Q/\Z$ is the pre-periodic lamination $\lambda(F_a)$.
\end{proposition}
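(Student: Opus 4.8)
The plan is to prove the two assertions separately, deducing the disjointness from the geometry of genuine co-landing rays rather than from a purely combinatorial circle computation. Throughout I use that a super-attracting $F_a$ is hyperbolic, so $K_a$ is a locally connected full continuum and every rational dynamical ray lands (Proposition~\ref{rays_land_schwarz}), and that by Proposition~\ref{prop_root_schwarz} the dynamical root $w_0$ of the characteristic Fatou component $U_a$ is the landing point of exactly the two rays at the characteristic angles $t^{\pm}$, so the characteristic geodesic $\gamma$ is the chord $\{t^-,t^+\}$. Since $w_0$ is fixed by the first return map of $U_a$, it is $\rho$-periodic, and hence $t^{\pm}$ are periodic under $\rho$; consequently the forward orbit of $\gamma$ is contained in its grand orbit, so the iterated pre-images of $\gamma$ already contain the root leaves of every component in the cycle of $U_a$.

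First I would show that every iterated pre-image of $\gamma$ is a genuine leaf of $\lambda(F_a)$ and that these leaves are pairwise disjoint. Equivariance of ray landing, $F_a(R_a(\theta))=R_a(\rho(\theta))$, shows that if $s,s'$ are chosen so that $\rho^{\circ n}$ carries $\{s,s'\}$ to $\{t^-,t^+\}$ along a common branch of $F_a^{-n}$, then $R_a(s),R_a(s')$ co-land at the corresponding pre-image of $w_0$; the only subtlety is the pullback through the critical Fatou component, where $F_a$ has local degree two, but there the relevant pullback of $\gamma$ is the root leaf of the critical component and the co-landing persists. Thus each pre-image of $\gamma$ lies in $\lambda(F_a)$, and since $\lambda(F_a)$ is closed, the closure of the family of pre-images is contained in $\lambda(F_a)$. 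Disjointness is then forced by planarity: two genuine leaves landing at distinct points cannot cross, since a crossing would interleave their four angles and the two rays of one leaf together with their common landing point would separate the two rays of the other, contradicting that the latter co-land; and two pre-image leaves landing at a common point are edges of the convex hull of the (unlinked) angle set at that point, hence again non-crossing. This last point uses that the periodic portrait of $w_0$ is unlinked, which on the anti-polynomial side is Theorem~\ref{complete_anti-holomorphic}(3).

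The substantial part is the reverse inclusion $\lambda(F_a)\subseteq\overline{\bigcup_{n\geq 0}\rho^{-n}(\gamma)}$, and this is the step I expect to be the main obstacle. Because $F_a$ is unicritical and $K_a$ is connected, the classification of Fatou components from \cite{LLMM1} shows that every bounded Fatou component of $F_a$ is eventually mapped onto $U_a$; hence the root leaf of every Fatou component is an iterated pre-image of the root leaf $\gamma$ of $U_a$ (co-roots contribute only degenerate single-ray leaves, by Proposition~\ref{prop_root_schwarz}, and may be discarded). It therefore suffices to prove that $\lambda(F_a)$ is generated, as a closed equivalence relation, by the root leaves of the Fatou components. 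Here I would use local connectivity of $K_a$ together with Lemma~\ref{basin_dense_tree}: the critical value is an endpoint and every branch point of the Hubbard tree $\mathcal{H}_a$ is a pre-periodic repelling cut point, and since $\Int{K_a}\cap\mathcal{H}_a$ is dense in $\mathcal{H}_a$, Fatou components accumulate on both sides of every cut point of $K_a$. Consequently any non-trivial identification $\{\theta,\theta'\}\in\lambda(F_a)$, realized by rays co-landing at a cut point $p$, is either the root leaf of a component abutting $p$ or a limit of such root leaves across the components accumulating at $p$; in either case $\{\theta,\theta'\}$ lies in the closure of the pre-images of $\gamma$.

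The delicate point in this last step is to rule out ``extra'' identifications at buried cut points that are not approximated by Fatou-component roots; this is exactly what the density of $\Int{K_a}\cap\mathcal{H}_a$ along the tree, combined with local connectivity (so that the boundary extension of $\psi_a$ has connected fibers corresponding precisely to cut points), is designed to exclude. Combining the two inclusions yields $\lambda(F_a)=\overline{\bigcup_{n\geq 0}\rho^{-n}(\gamma)}$, and the disjointness established above completes the proof.
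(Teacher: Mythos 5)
Your first half (each iterated pre-image of the characteristic geodesic is a genuine leaf of $\lambda(F_a)$, and genuine leaves cannot cross) is fine, and matches what the paper treats as implicit. The gap is in the reverse inclusion, at precisely the step you flag as delicate. You assert that, because $\Int{K_a}\cap\mathcal{H}_a$ is dense in $\mathcal{H}_a$ (Lemma~\ref{basin_dense_tree}), ``Fatou components accumulate on both sides of every cut point of $K_a$.'' This is a non sequitur: Lemma~\ref{basin_dense_tree} is a statement about the Hubbard tree only --- a finite topological tree connecting the post-critical set --- and it says nothing about cut points of $K_a$ lying off the tree, which is most of them (for instance, deep iterated pre-images of the dynamical root sitting in decorations far from $\mathcal{H}_a$). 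For such a cut point $p$, carrying a leaf $L$ that bounds a minimal sector $S$, nothing you have cited produces Fatou components inside $S$ accumulating at $p$, let alone components whose root leaves converge to $L$ rather than to some other limit inside that sector. (Even for cut points on the tree, ``on both sides'' is not the right statement: at a branch point one needs the approximating components to lie in the particular sector $S$ bounded by $L$.)

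What is missing is the dynamical reduction that the paper performs \emph{before} invoking Lemma~\ref{basin_dense_tree}: using expansivity of $\rho\vert_{\mathbb{T}}$ and local connectivity of $\Gamma_a$, every cut point of $\Gamma_a$ eventually falls on the spine $\mathfrak{S}_a$ (the arc in $K_a$ joining $\frac14$ to $\sigma_a^{-1}(\frac14)$), and every point of the spine eventually falls on the Hubbard tree; this is the adaptation of \cite[Proposition~24.13]{L6}. Since the lamination is invariant under pullback along branches of $F_a^{-1}$, this reduces the whole problem to leaves whose landing point lies on $\mathcal{H}_a$, and only there does the density of basin components along the tree apply. The paper then runs exactly your dichotomy (root leaf of an abutting component, or limit of root leaves of components $U_{-k}\subset S$ shrinking to the landing point, as in \cite[Theorem~25.42]{L6}), using minimality of $S$ and continuity of the Carath{\'e}odory extension to identify the limit leaf with $L$. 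Alternatively, you could replace the reduction by a global density statement --- the attracting basin accumulates, within every sector, at every point of $\Gamma_a$ --- but that is a separate fact requiring its own proof and does not follow from the tree statement you cite. As written, your argument establishes the reverse inclusion only for leaves landing on $\mathcal{H}_a$.
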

\begin{proof}
Let $\mathfrak{S}_a$ be an arc in $K_a$ connecting $\frac14$ to $\sigma_a^{-1}(\frac14)$ (it is defined up to homotopy in $\Int{K_a}$). We call $\mathfrak{S}_a$ the \emph{spine} of $F_a$. 

Since $\rho\vert_{\mathbb{T}}$ is expansive and $\Gamma_a$ is locally connected, the arguments of \cite[Proposition~24.15]{L6} can be easily adapted for our setting to show that every cut-point of $\Gamma_a$ eventually falls on the spine of $F_a$. Moreover, every point on the spine eventually falls on the Hubbard tree $\mathcal{H}_a$ of $F_a$.

Therefore, we only need to show that every leaf of the lamination $\lambda(F_a)$ consisting of a pair of rays landing at a cut-point on the Hubbard tree can be approximated by iterated pre-images of the characteristic leaf. We will proceed as in \cite[Theorem~25.58]{L6}.

We denote the critical Fatou component by $U_0$. Let us consider a leaf $L$ of the lamination $\lambda(F_a)$ comprising a pair of dynamical rays landing at the same cut point $w\in\mathcal{H}_a$ and bounding a minimal sector $S$ (so there are no other rays in this sector landing at $w$). By Proposition~\ref{basin_dense_tree}, $w$ can be approximated by components $U_{-k}$ of the basin $\Int{K_a}$ such that $F^{\circ n_k}(U_{-k})=U_0$. There are two possibilities:

1) $w\in\partial U_{-k}$ for some component $U_{-k}\subset S$. Iterating forward, we can assume that $w\in\partial U_0$. But since $0$ is not a branch point of $\mathcal{H}_a$, the immediate basin $U_0$ intersects the Hubbard tree $\mathcal{H}_a$ at most at two points. Moreover, one of these two points $p$ is fixed under the first return map of $\overline{U}_0$, and the other an iterated pre-image of $p$. Hence it is sufficient to consider $w=p$. But then $L$ itself is the characteristic leaf.

2) There is a sequence of components $U_{-k}\subset S$ (such that $n_k\to\infty$) converging to $w$. Let $L_{-k}$ be the corresponding pre-image of the characteristic leaf (landing at iterated pre-image of $p$ on $\partial U_{-k}$). As $k\to\infty$, these curves converge to some curve $L_{-\infty}$ comprising two rays in $\overline{S}$ landing at $w$. But $L$ is the only such curve, so $L_{-\infty}=L$.

In either case, $L$ is approximated by pre-images of the characteristic leaf.
\end{proof}

\subsubsection{Orbit portraits}\label{orb_port_schwarz_subsubsec}

Following Definition~\ref{def_orbit_portrait_anti_quad}, we now introduce the notion of orbit portraits for the Schwarz reflection maps $F_a$.

For a cycle $\mathcal{O} = \{w_1,$ $w_2 ,$ $\cdots,$ $w_p\}$ of $F_a$, let $\mathcal{A}_i$ be the set of angles of dynamical rays landing at $w_i$. The collection $\mathcal{P} = \lbrace \mathcal{A}_1 , \mathcal{A}_2, \cdots, \mathcal{A}_p \rbrace$ is called the \emph{orbit portrait} associated with the periodic orbit $\mathcal{O}$ of $F_a$.

\begin{theorem}\label{complete_antiholomorphic_schwarz}
Let $\mathcal{O} = \lbrace z_1 , z_2 ,\cdots$, $z_p\rbrace$ be a periodic orbit of $F_a$ such that at least one (pre-)periodic dynamical ray (a ray at an angle $\theta\in\mathrm{Per}(\rho)$) lands at some $z_j$, $j\in\{1,\cdots,p\}$. Then the associated orbit portrait $\mathcal{P} = \lbrace \mathcal{A}_1 , \mathcal{A}_2, \cdots, \mathcal{A}_p \rbrace$ (which we assume to be non-trivial; i.e., $\vert\mathcal{A}_i\vert\geq 2$) satisfies the following properties.
\begin{enumerate}
\item Each $\mathcal{A}_j$, $j\in\{1,\cdots,p\}$, is a finite non-empty subset of $\mathrm{Per}(\rho)$.

\item For each $j\in\Z/p\Z$, the map $\rho$ maps $\mathcal{A}_j$ bijectively onto $\mathcal{A}_{j+1}$, and reverses their cyclic order.

\item For each $i\neq j$, the sets $\mathcal{A}_i$ and $\mathcal{A}_j$ are unlinked.

\item Each $\theta \in \mathcal{A}_j$, $j\in\{1,\cdots,p\}$, is periodic under $\rho$, and there are four possibilities for their periods: 
\begin{enumerate}

\item If $p$ is even, then all angles in $\mathcal{P}$ have the same period $rp$ for some $r\geq 1$.

\item If $p$ is odd, then one of the following three conditions holds:
\begin{enumerate}
\item $\vert \mathcal{A}_j\vert = 2$, and both angles have period $p$.

\item $\vert \mathcal{A}_j\vert = 2$, and both angles have period $2p$.

\item $\vert \mathcal{A}_j\vert = 3$; one angle has period $p$, and the other two angles have period $2p$.
\end{enumerate}
\end{enumerate}
\end{enumerate}
\end{theorem}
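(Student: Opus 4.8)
The plan is to transport the orbit-portrait theorem for quadratic anti-polynomials (Theorem~\ref{complete_anti-holomorphic}) to the C\&C family, exploiting the fact that the external dynamics of $F_a$ is governed by $\rho$, which is topologically conjugate to $m_{-2}$ via the circle homeomorphism $\mathcal{E}$. The two dynamical inputs I would isolate at the outset are: (i) the landing correspondence noted after Proposition~\ref{rays_land_schwarz}, namely that the dynamical ray of $F_a$ at angle $\theta$ lands at $w$ if and only if the ray at angle $\rho(\theta)$ lands at $F_a(w)$; and (ii) the fact that $\rho$ is a degree-two, orientation-\emph{reversing} covering of $\mathbb{T}$, so that $\mathcal{E}$ carries all the purely combinatorial features of $m_{-2}$ (its degree, its orientation reversal, and its two-to-one branching) over to $\rho$. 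Every assertion in the theorem is a statement about finite subsets of $\mathrm{Per}(\rho)$ and their interaction with $\rho$; since $\mathcal{E}$ maps $\mathrm{Per}(\rho)$ bijectively onto $\mathrm{Per}(m_{-2})$ preserving cyclic order, the properties of finiteness, periodicity, unlinkedness, bijectivity, and cyclic-order reversal are all preserved. Thus it suffices to reproduce Sarkar's arguments with $m_{-2}$ replaced by $\rho$.

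For the easy items: property (1) follows because the $w_j$ are periodic and rays landing at a periodic point are themselves $\rho$-periodic by (i), while only finitely many rays can land at a single point; hence each $\mathcal{A}_j\subset\mathrm{Per}(\rho)$ is finite. Property (2) splits into three sub-claims. That $\rho$ maps $\mathcal{A}_j$ into $\mathcal{A}_{j+1}$ is immediate from (i); surjectivity comes from lifting the rays landing at $w_{j+1}$ through the covering $\rho$; and injectivity is the one genuinely dynamical point: if two distinct angles of $\mathcal{A}_j$ had the same $\rho$-image, they would be the two $\rho$-preimages of a single angle, and the two corresponding rays would force $w_j$ to be the critical point. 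This is impossible here, since every $w_j$ lies on $\Gamma_a$ whereas the critical point $0$ satisfies $F_a(0)=\infty$ and is never a periodic point on the limit set (it lies in $\Int K_a$ in the super-attracting case and is at most strictly pre-periodic otherwise), so $w_j\neq 0$. The orientation reversal of $\rho$ then yields the cyclic-order reversal. Property (3) is pure planarity: the rays landing at $w_i$ and at $w_j\neq w_i$ are pairwise disjoint simple arcs, so their angle sets cannot link on $\mathbb{T}$.

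The heart of the matter, and the step I expect to be the main obstacle, is property (4), the period count. Here I would analyze the first-return bijection $\rho^{\circ p}\colon\mathcal{A}_j\to\mathcal{A}_j$ obtained by iterating (2). When $p$ is even, $\rho^{\circ p}$ preserves cyclic order and hence acts as a rotation of the finite ordered set $\mathcal{A}_j$; one then shows, exactly as in the holomorphic theory, that this rotation is transitive, forcing all angles of $\mathcal{P}$ to share a common $\rho$-period $rp$. When $p$ is odd, $\rho^{\circ p}$ \emph{reverses} cyclic order, and an orientation-reversing bijection of a finite cyclically ordered set can fix at most two points while pairing the rest into $2$-cycles; combining this with the unlinkedness of (3) and the two-to-one branching of $\rho$ yields the bound $\vert\mathcal{A}_j\vert\le 3$ together with exactly the three listed configurations (two fixed angles of period $p$; two swapped angles of period $2p$; or one fixed angle of period $p$ with a swapped pair of period $2p$). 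This dichotomy is precisely the content of Sarkar's analysis for $m_{-2}$, and since $\mathcal{E}$ intertwines $\rho$ with $m_{-2}$ while respecting all the relevant order and branching data, that argument carries over verbatim. The only care needed is to reconfirm, as in the injectivity step of (2), that none of the $w_j$ equals the critical point, so that the branching of $\rho$ along the orbit behaves exactly as the branching of $m_{-2}$ does in the polynomial case.
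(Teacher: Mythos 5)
Your proposal is correct and takes essentially the same route as the paper: the paper's proof consists of the single observation that, since $F_a$ is a unicritical anti-holomorphic map whose action on the angles of landing rays is the orientation-reversing double covering $\rho$ (with no attracting periodic point on the circle), the proof of \cite[Theorem~2.6]{Sa} carries over verbatim. The transfer argument you spell out --- the landing correspondence, the non-criticality of the $w_j$ giving injectivity of $\rho$ on each $\mathcal{A}_j$, unlinkedness from planarity, and the orientation-preserving/reversing dichotomy for the return map $\rho^{\circ p}$ according to the parity of $p$ --- is exactly the content of that adaptation.
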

\begin{proof}
Since $F_a$ is a unicritical anti-holomorphic map whose action on the angles of (landing) dynamical rays is given by $\rho:\R/\Z\to\R/\Z$, which is an orientation-reversing double covering with no attracting periodic point, the proof of \cite[Theorem~2.6]{Sa} carries over verbatim to the present setting (also compare \cite[Lemma~2.3]{M2a}).
\end{proof}

The following definition is analogous to that of formal orbit portraits under the anti-doubling map $m_{-2}$ (see Definition~\ref{def_orbit_portrait}).

\begin{definition}\label{def_orbit_portrait_schwarz}
A finite collection $\mathcal{P} = \{\mathcal{A}_1,$ $\mathcal{A}_2,$ $\cdots,$ $\mathcal{A}_p \}$ of non-empty finite subsets of $\mathrm{Per}(\rho)$ satisfying the conditions of Theorem~\ref{complete_antiholomorphic_schwarz} is called a \emph{formal orbit portrait} under $\rho$ (in short, a $\rho$-FOP).
\end{definition}

Recall from Section~\ref{ideal_triangle} that the map $\mathcal{E}:\R/\Z\to\R/\Z$ is a topological conjugacy between the reflection map $\rho$ (which models the action of $F_a$ on its external dynamical rays) and the anti-doubling map $m_{-2}$ (which models the action of quadratic anti-polynomials on its external dynamical rays). Let $\mathcal{P} = \{\mathcal{A}_1,$ $\mathcal{A}_2,$ $\cdots,$ $\mathcal{A}_p \}$ be a $\rho$-FOP. We define the \emph{push-forward} of $\mathcal{P}$ under $\mathcal{E}$ by $$\mathcal{E}_{\ast}(\mathcal{P}):= \{\mathcal{E}(\mathcal{A}_1), \mathcal{E}(\mathcal{A}_2), \cdots, \mathcal{E}(\mathcal{A}_p)\}.$$ Similarly, we can define the \emph{pull-back} $\mathcal{E}^{\ast}(\mathcal{P})$ of an $m_{-2}$-FOP $\mathcal{P}$ under $\mathcal{E}$.

The following proposition is a consequence of the fact that the maps $\rho$ and $m_{-2}$ are topologically conjugate via $\mathcal{E}$. It is our first step in establishing a combinatorial bijection between the centers of $\cC(\mathcal{S})$ and those of $\mathcal{L}$.

\begin{proposition}[$\mathcal{E}$ preserves orbit portraits]\label{orbit_portraits_preserved}
If $\mathcal{P}$ is a $\rho$-FOP, then $\mathcal{E}_{\ast}(\mathcal{P})$ is an $m_{-2}$-FOP. Conversely, if $\mathcal{P}$ is an $m_{-2}$-FOP, then $\mathcal{E}^{\ast}(\mathcal{P})$ is a $\rho$-FOP.
\end{proposition}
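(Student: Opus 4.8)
The plan is to treat this as a transfer of combinatorial data along the conjugacy $\mathcal{E}$, verifying that each of the four defining clauses of a formal orbit portrait (Definitions~\ref{def_orbit_portrait} and~\ref{def_orbit_portrait_schwarz}, via Theorems~\ref{complete_anti-holomorphic} and~\ref{complete_antiholomorphic_schwarz}) is invariant under $\mathcal{E}$. Recall from \cite[\S 3]{LLMM1} that $\mathcal{E}\colon\R/\Z\to\R/\Z$ is a circle homeomorphism with $\rho\circ\mathcal{E}=\mathcal{E}\circ m_{-2}$, whence $\rho^{\circ n}\circ\mathcal{E}=\mathcal{E}\circ m_{-2}^{\circ n}$ for all $n$. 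I would first isolate two elementary consequences that drive everything: since $\mathcal{E}$ is injective, $\rho^{\circ n}(\mathcal{E}(\theta))=\mathcal{E}(\theta)$ holds if and only if $m_{-2}^{\circ n}(\theta)=\theta$, so $\mathcal{E}$ carries $m_{-2}$-periodic points of exact period $n$ to $\rho$-periodic points of the same exact period, and in particular $\mathcal{E}(\mathrm{Per}(m_{-2}))=\mathrm{Per}(\rho)$; and for any subset $S$ one has $\rho(\mathcal{E}(S))=\mathcal{E}(m_{-2}(S))$.

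Writing $\mathcal{A}_j':=\mathcal{E}(\mathcal{A}_j)$ for an $m_{-2}$-FOP $\mathcal{P}=\{\mathcal{A}_1,\dots,\mathcal{A}_p\}$, I would then dispatch the clauses that are immediate from these two facts. Clause (1) follows because, by clauses (1) and (4) for $\mathcal{P}$, each $\mathcal{A}_j$ is a finite set of $m_{-2}$-periodic angles, so each $\mathcal{A}_j'$ is a finite non-empty subset of $\mathrm{Per}(\rho)$. Clause (2) splits into a set-theoretic and an order-theoretic part: the second consequence together with $m_{-2}(\mathcal{A}_j)=\mathcal{A}_{j+1}$ gives $\rho(\mathcal{A}_j')=\mathcal{A}_{j+1}'$, and $\rho|_{\mathcal{A}_j'}=\mathcal{E}\circ(m_{-2}|_{\mathcal{A}_j})\circ\mathcal{E}^{-1}$ is a bijection since its three factors are. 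Clause (4) transfers verbatim because $\mathcal{E}$ preserves $p$, preserves each cardinality $|\mathcal{A}_j'|=|\mathcal{A}_j|$, and preserves exact periods by the first consequence; the parity case-split and the period list are then identical.

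The two points that genuinely use that $\mathcal{E}$ is a \emph{homeomorphism} of the circle are the order-reversal half of clause (2) and the unlinking clause (3). For (3) I would note that linkedness of two finite subsets depends only on the cyclic arrangement of their union, which any circle homeomorphism preserves up to equivalence, so $\mathcal{A}_i,\mathcal{A}_j$ unlinked forces $\mathcal{A}_i',\mathcal{A}_j'$ unlinked. For the order-reversal in (2), the clean statement I would prove is that the property of \emph{reversing cyclic order} is invariant under conjugation by an arbitrary circle homeomorphism: assigning to an order-preserving (resp.\ order-reversing) bijection of finite subsets the type $+1$ (resp.\ $-1$), this type is multiplicative under composition, so if $h$ is a circle homeomorphism and $f\colon S\to T$ reverses cyclic order, then the type of $h\circ f\circ h^{-1}$ equals $(\text{type } h)^2\cdot(\text{type } f)=\text{type } f$, i.e.\ $h\circ f\circ h^{-1}$ again reverses order. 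Applying this with $h=\mathcal{E}$ and $f=m_{-2}|_{\mathcal{A}_j}$ shows $\rho|_{\mathcal{A}_j'}$ reverses cyclic order, completing clause (2). Notably this argument is insensitive to whether $\mathcal{E}$ preserves or reverses orientation, which is exactly what makes the order-reversal survive.

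This establishes that $\mathcal{E}_*(\mathcal{P})$ is a $\rho$-FOP. For the converse, $\mathcal{E}^{-1}$ is again a circle homeomorphism and satisfies $m_{-2}\circ\mathcal{E}^{-1}=\mathcal{E}^{-1}\circ\rho$, so the identical four-clause check applies to show $\mathcal{E}^{\ast}(\mathcal{P})$ is an $m_{-2}$-FOP whenever $\mathcal{P}$ is a $\rho$-FOP. I expect the only genuinely delicate step—and hence the main, if mild, obstacle—to be the order-reversal invariance in clause (2); once it is phrased as invariance of the orientation-reversing property under conjugation, the rest is routine bookkeeping against the two parallel theorems.
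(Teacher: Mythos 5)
Your proposal is correct and follows essentially the same route as the paper, which offers no detailed argument at all: it simply records the proposition as ``a consequence of the fact that the maps $m_{-2}$ and $\rho$ are topologically conjugate via $\mathcal{E}$.'' Your clause-by-clause verification---including the observation that order-reversal is preserved under conjugation by \emph{any} circle homeomorphism since the orientation types multiply as $(\text{type } h)^2\cdot(\text{type } f)=\text{type } f$---is a sound and complete elaboration of exactly that one-line justification.
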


Proposition~\ref{orbit_portraits_preserved} allows us to transfer combinatorial/topological results about $m_{-2}$-FOPs to corresponding results for $\rho$-FOPs. In particular, among all the complementary arcs of the various $\mathcal{A}_j$ of a $\rho$-FOP $\mathcal{P}$, there is a unique one of minimum length. This shortest arc $\mathcal{I}_{\mathcal{P}}$ is called the \emph{characteristic arc} of $\mathcal{P}$, and the two angles $\{t^{-},t^{+}\}$ at the ends of this arc are called its \emph{characteristic angles} (compare \cite[Lemma~3.2]{Sa}). We can assume, without loss of generality, that $0<t^+-t^-<\frac12$.

\subsection{Rigidity theorems}\label{rigidity_hyp_para}

This subsection will be devoted to some combinatorial rigidity results which show that PCF parameters and even-type parabolic parameters of $\cC(\mathcal{S})$ are completely determined by their combinatorics (orbit portraits associated with dynamical root, or laminations).

We start with some preliminary results. Let $a_1,a_2\in\cC(\mathcal{S})\setminus\{-1/12\}$. We will denote the pre-image of $\alpha_{a_i}$ (under $\sigma$) that lies in $\heartsuit$ by $\alpha_{a_i}'$. Recall that by Proposition~\ref{schwarz_group}, $\psi_{a_1}^{a_2}:=\psi_{a_2}^{-1}\circ\psi_{a_1}:T_{a_1}^\infty\to T_{a_2}^\infty$ is a conformal isomorphism. It restricts to a conformal isomorphism between $E_{a_1}^1$ and $E_{a_2}^1$, where $E_{a_i}^1$ is the union of the tiles of rank $0$ and $1$. It extends to a homeomorphism between $\overline{E_{a_1}^1}$ and $\overline{E_{a_2}^1}$ mapping $\frac{1}{4}$, $\alpha_{a_1}$, $\sigma_{a_1}^{-1}(\frac{1}{4})$, and $\alpha_{a_1}'$ to $\frac{1}{4}$, $\alpha_{a_2}$, $\sigma_{a_2}^{-1}(\frac{1}{4})$, and $\alpha_{a_2}'$ respectively (see Figure \ref{psi_extension}).

\begin{figure}[ht!]
\captionsetup{width=0.96\linewidth}
\centering
\includegraphics[scale=0.2]{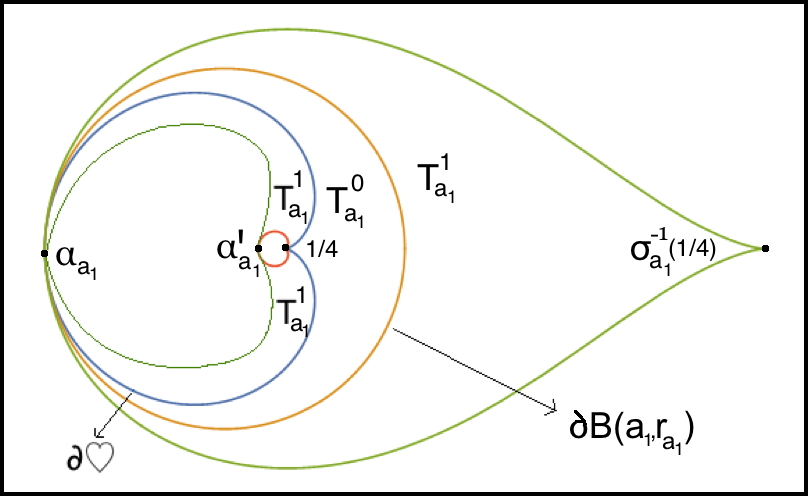} \includegraphics[scale=0.1]{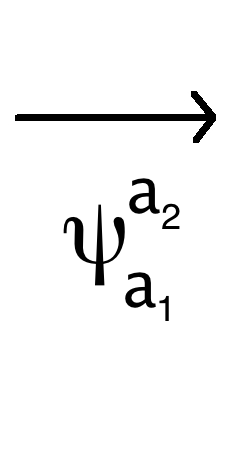} \includegraphics[scale=0.2]{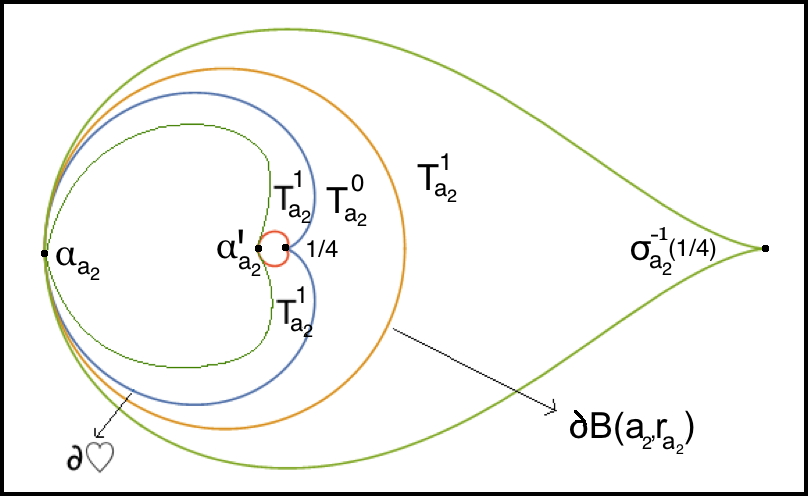}
\caption{$E_{a_i}^1$ is the union of the tiles of rank $1$ and $2$; i.e., $E_{a_i}^1=T_{a_i}^0\cup T_{a_i}^1$. The map $\psi_{a_1}^{a_2}$ induces a homeomorphism between $\overline{E_{a_1}^1}$ and $\overline{E_{a_2}^1}$ mapping $\frac{1}{4}$, $\alpha_{a_1}$, $\sigma_{a_1}^{-1}(\frac{1}{4})$, and $\alpha_{a_1}'$ to $\frac{1}{4}$, $\alpha_{a_2}$, $\sigma_{a_2}^{-1}(\frac{1}{4})$, and $\alpha_{a_2}'$ respectively.}
\label{psi_extension}
\end{figure}

\begin{lemma}\label{qc_conjugacy_alpha}
\noindent\begin{enumerate}
\item $\psi_{a_1}^{a_2}: \overline{E_{a_1}^1}\to\overline{E_{a_2}^1}$ is asymptotically linear near $\frac{1}{4}$, $\alpha_{a_1}$, $\sigma_{a_1}^{-1}(\frac{1}{4})$, and $\alpha_{a_1}'$.

\item $\psi_{a_1}^{a_2}: \overline{E_{a_1}^1}\to\overline{E_{a_2}^1}$ has a quasiconformal extension $\widehat{\psi}$ to $\widehat{\C}$.
\end{enumerate}
\end{lemma}
\begin{proof}
1) In \cite[Proposition~5.38]{LLMM1}, $\psi_{a_i}$ was first defined as (the homeomorphic extension of) a conformal isomorphism between $T_{a_i}$ and $\Pi$. Hence, $\psi_{a_1}^{a_2}: T_{a_1}\to T_{a_2}$ is a conformal isomorphism mapping $\frac{1}{4}$ and $\alpha_{a_1}$ to $\frac{1}{4}$ and $\alpha_{a_2}$ respectively. Moreover, on the tiles of first rank, $\psi_{a_1}^{a_2}$ is equal to $F_{a_2}^{-1}\circ(\psi_{a_1}^{a_2})\vert_{T_{a_1}}\circ F_{a_1}$ (choosing suitable inverse branches of $F_{a_2}$).

Since $a_1,a_2\in\cC(\mathcal{S})\setminus\{-1/12\}$, it follows from \cite[Proposition~5.12,~5.13]{LLMM1} that the asymptotic developments of $F_{a_i}$ near $\frac{1}{4}$ and $\alpha_{a_i}$ are comparable. Moreover, $F_{a_i}$ is anti-conformal near $\sigma_{a_i}^{-1}(\frac{1}{4})$ and $\alpha_{a_i}'$. Therefore, to prove the lemma, it suffices to show that $\psi_{a_1}^{a_2}: T_{a_1}\to T_{a_2}$ is asymptotically linear near $\alpha_{a_1}$ and $\frac{1}{4}$. 
But this follows from Lemmas~\ref{asymp_lin_1_lem} and~\ref{asymp_lin_2_lem} as $\frac14$ is a conformal cusp of type $(3,2)$ on $\partial T_{a_i}$ and $\alpha_{a_i}$ is a regular double point on $\partial T_{a_i}$, $i\in\{1,2\}$.

2) As $\psi_{a_1}^{a_2}$ is asymptotically linear at the cusps $\frac{1}{4}, \sigma_{a_1}^{-1}(\frac{1}{4})$, and at the double points $\alpha_{a_1}, \alpha_{a_1}'$, the proofs of Lemmas~\ref{asymp_lin_1_lem} and~\ref{asymp_lin_2_lem} show that the conformal map $\psi_{a_1}^{a_2}: \overline{E_{a_1}^1}\to\overline{E_{a_2}^1}$ admits quasiconformal extensions to neighborhoods of these singular points. Since the rest of $\partial E_{a_i}^1$ ($i\in\{1,2\}$) is a union of non-singular real-analytic arcs, the map $\psi_{a_1}^{a_2}$ extends conformally in a neighborhood of these points (by the Schwarz reflection principle). It follows that $\psi_{a_1}^{a_2}$ has a quasiconformal extension $\widehat{\psi}$ to $\widehat{\C}$. 
\end{proof}


\begin{remark}
For $a_1,a_2\in\cC(\mathcal{S})\setminus\{-1/12\}$, the map $\psi_{a_1}^{a_2}:E_{a_1}^1\to E_{a_2}^1$ does not necessarily have an analytic extension in a neighborhood of $\alpha_{a_i}$. In fact, the existence of such an analytic extension would imply that the pairs of curve germs $(\partial\heartsuit,\partial B(a_1,r_{a_1}))_{\alpha_{a_1}}$ and $(\partial\heartsuit,\partial B(a_2,r_{a_2}))_{\alpha_{a_2}}$ are conformally equivalent. A description of conformal equivalence classes of pairs of real-analytic smooth curves $(\gamma_1,\gamma_2)$ (with associated Schwarz reflection maps $\sigma_1, \sigma_2$) touching at the origin is given in \cite{Nak} in terms of conformal conjugacy classes of the parabolic germ $\sigma_1\circ\sigma_2$ (also see \cite{Vor1}).
\end{remark}

We are now in a position to prove combinatorial rigidity of super-attracting and even-type parabolic parameters of $\mathcal{S}$.

\begin{proposition}[Rigidity of super-attracting maps]\label{rigidity_center}
Let $a_1$ and $a_2$ be two super-attracting parameters such that their dynamical roots have the same associated orbit portrait. Then, $a_1=a_2$.
\end{proposition}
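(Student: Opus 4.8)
The plan is to promote the external conjugacy $\psi_{a_1}^{a_2}:T_{a_1}^\infty\to T_{a_2}^\infty$ between the tiling sets to a \emph{global conformal} conjugacy between $F_{a_1}$ and $F_{a_2}$ by a pullback argument, and then to read off $a_1=a_2$ from the rigidity of the common cardioid reflection. The first step is to pass from orbit portraits to laminations: since the dynamical roots of $a_1$ and $a_2$ carry the same orbit portrait, the two maps have identical characteristic angles $\{t^-,t^+\}$, hence identical characteristic geodesics, and therefore by Proposition~\ref{char_angles_lamination} their pre-periodic laminations coincide, $\lambda(F_{a_1})=\lambda(F_{a_2})$. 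Through $\psi_{a_1}^{a_2}$ this matches the tiles, the landing pattern of dynamical rays (Proposition~\ref{rays_land_schwarz}), and the Fatou components of $F_{a_1}$ with those of $F_{a_2}$ in a dynamically consistent way.

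Next I would set up the pullback. The starting point is the global quasiconformal map $\hat{\psi}$ of Lemma~\ref{qc_conjugacy_alpha}, which is conformal on $\Int E_{a_1}^1$, conjugates $F_{a_1}$ to $F_{a_2}$ on $E_{a_1}^1$, and — thanks to the asymptotic linearity of Lemma~\ref{riemann_asymp} — has bounded dilatation despite the cusp at $\frac14$ and the double point at $\alpha_{a_1}$. Since $\psi_{a_1}^{a_2}$ is already a conformal conjugacy on the entire tiling set, and since $a_i$ being super-attracting forces the first-return map of the critical Fatou component to be conformally conjugate to $\overline{z}^2$ (a canonical B\"ottcher model with no conformal modulus), one can lift $\hat{\psi}$ repeatedly through the dynamics of $F_{a_1}$ and $F_{a_2}$. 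The coincidence of laminations guarantees that the successive lifts glue consistently across tiles and Fatou components; lifting by local anti-conformal branches of the dynamics preserves the dilatation away from the critical Fatou components, while the B\"ottcher coordinate keeps each lift conformal on the Fatou components themselves. This produces a sequence $\hat{\psi}_n$ of $K$-quasiconformal conjugating homeomorphisms with $K$ independent of $n$.

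Finally I would pass to the limit and force conformality. By the uniform dilatation bound the family $\{\hat{\psi}_n\}$ is normal; after normalizing at $\frac14$, $\alpha_{a_1}$, $0$ and $\infty$, a subsequential limit $\Phi$ is a $K$-quasiconformal homeomorphism with $\Phi\circ F_{a_1}=F_{a_2}\circ\Phi$. By construction $\Phi$ is conformal on the tiling set $T_{a_1}^\infty$ and on $\Int K_{a_1}$, so its Beltrami coefficient is supported on the limit set $\Gamma_{a_1}$; as $\Gamma_{a_1}$ has zero area and is conformally removable for a geometrically finite map, $\Phi$ is in fact M\"obius. Now $\Phi$ conjugates the cardioid reflection $\sigma$ (common to $F_{a_1}$ and $F_{a_2}$) to itself and fixes the cusp $\frac14$; since $\heartsuit$ admits no nontrivial M\"obius symmetry of $\sigma$ (its uniformization $\phi(\lambda)=\lambda/2-\lambda^2/4$ conjugates $\sigma$ to $1/\overline{\lambda}$, and the only disk rotation fixing $1$ is the identity), we conclude $\Phi=\mathrm{id}$. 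Then $\sigma_{a_1}=\Phi\circ\sigma_{a_1}\circ\Phi^{-1}=\sigma_{a_2}$, so the two circumscribing circles coincide and $a_1=a_2$.

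I expect the crux to be the convergence of the pullback with controlled dilatation together with the promotion of the limit $\Phi$ to a conformal map. The three ingredients that make this work are that lifting preserves dilatation away from the critical Fatou components, that the super-attracting B\"ottcher coordinate removes any modulus on them, and that the geometrically finite limit set $\Gamma_{a_1}$ is conformally removable. The delicate analytic behaviour at the singular points $\frac14$ and $\alpha_{a_1}$, which would otherwise be the main obstruction, has already been absorbed into Lemmas~\ref{riemann_asymp} and \ref{qc_conjugacy_alpha}.
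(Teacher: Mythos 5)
Your overall strategy is the paper's strategy: promote $\psi_{a_1}^{a_2}$ to a global $K$-quasiconformal conjugacy by a pullback argument seeded by the extension of Lemma~\ref{qc_conjugacy_alpha}, conclude conformality because the dilatation is supported on a measure-zero set, and finish with M\"obius rigidity. But there is a genuine gap in the middle step: you never arrange for the initial map of the pullback to agree with a B\"ottcher conjugacy on a neighborhood $U$ of the super-attracting cycle, and merely remarking that both first-return maps are conformally conjugate to $\overline{z}^2$ does not substitute for this. Conformality of a lift on a region is inherited from conformality of the previous map on the \emph{image} of that region: if $z$ lies in a Fatou component, the dilatation of your $n$-th lift at $z$ equals the dilatation of $\hat{\psi}$ at $F_{a_1}^{\circ n}(z)$, a point near the super-attracting cycle where $\hat{\psi}$ is only quasiconformal. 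So your limit $\Phi$ (if it exists) is conformal on the tiling set but genuinely quasiconformal on $\Int{K_{a_1}}$, a set of positive measure; its Beltrami coefficient is therefore \emph{not} supported on $\Gamma_{a_1}$, and the removability/Weyl's lemma step collapses. Two further problems stem from the same omission: (i) the lift is only well defined if the initial map sends the critical value $\infty$ of $F_{a_1}$ to the critical value $\infty$ of $F_{a_2}$ --- since $\infty\in K_{a_1}$ for a super-attracting parameter, this is not guaranteed by $\hat{\psi}$, which is an arbitrary qc extension off $\overline{E_{a_1}^1}$; and (ii) normality of $\{\hat{\psi}_n\}$ only yields subsequential limits, and a subsequential limit of maps satisfying $F_{a_2}\circ\hat{\psi}_{n+1}=\hat{\psi}_n\circ F_{a_1}$ need not satisfy $F_{a_2}\circ\Phi=\Phi\circ F_{a_1}$ unless consecutive lifts converge to the same limit.

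The paper fixes all three issues at once: it defines the seed $\xi_0$ to agree with $\psi_{a_1}^{a_2}$ on $E_{a_1}^1$ \emph{and} with the B\"ottcher conjugacies on a neighborhood $U$ of the critical cycle (the orbit-portrait hypothesis enters here, through the Hubbard tree, to ensure the two pieces are combinatorially compatible and can be interpolated into one $K$-qc sphere map via Lemma~\ref{qc_conjugacy_alpha}). Then $\xi_0(\infty)=\infty$, so lifts exist; each lift is conformal on the growing union of tiles and of preimages of $U$, so the limit is conformal on $T_{a_1}^\infty\cup\Int{K_{a_1}}$ with dilatation supported only on the measure-zero set $\Gamma_{a_1}$; and $\xi_0,\xi_1$ agree on $U\cup E_{a_1}^1$, hence are homotopic relative to the critical cycle and the singular points, which is what makes the iterated lifts actually converge (as in \cite[Lemma~38.6]{L6}) rather than merely subconverge. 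Your closing M\"obius argument (a disk automorphism commuting with $1/\overline{\lambda}$ and fixing $0$ and the cusp is the identity) is correct, though more roundabout than the paper's observation that the limit map fixes $0$, $\infty$, and $\frac{1}{4}$ and is therefore the identity.
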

\begin{proof}
The orbit portrait associated with the dynamical root of a super-attracting parameter completely determines the Hubbard tree of the maps, so the restriction of the maps on their respective Hubbard trees are topologically conjugate. Moreover, there exist B{\"o}ttcher maps conjugating $F_{a_1}$ to $F_{a_2}$ in a neighborhood of the super-attracting cycle. 
 
Let us define a $K$-qc map $\xi_0$ of the sphere that agrees with $\psi_{a_1}^{a_2}$ on $E_{a_1}^1$, and with the B{\"o}ttcher conjugacies in a neighborhood $U$ of the critical cycle. The existence of such a map is guaranteed by Lemma~\ref{qc_conjugacy_alpha}. Note that $\psi_{a_1}^{a_2}$ conjugates $F_{a_1}:\partial E_{a_1}^1\to\partial T_{a_1}^0$ to $F_{a_2}:\partial E_{a_2}^1\to\partial T_{a_2}^0$ (both of which are double coverings). Moreover, $F_{a_i}:\widehat{\C}\setminus\Int{E_{a_i}^1}\to\widehat{\C}\setminus\Int{T_{a_i}^0}$ is a two-to-one branched covering branched at $0$. Since $\xi_0$ sends the critical value of $F_{a_1}$ to that of $F_{a_2}$, we can lift $\xi_0$ via $F_{a_1}$ and $F_{a_2}$ to obtain $K$-qc homeomorphism from $\Int{(\widehat{\C}\setminus E_{a_1}^1)}$ to $\Int{(\widehat{\C}\setminus E_{a_2}^1)}$ that matches continuously with $\xi_0$ on $\partial E_{a_1}^1$. By quasiconformal removability of analytic arcs, we obtain a $K$-qc map $\xi_1$ of the sphere. The lift $\xi_1$ becomes unique once we require $\xi_1(\infty)=\infty$. Moreover, $\xi_0$ and $\xi_1$ agree on $U\cup E_{a_1}^1$, so they are homotopic relative to the union of the super-attracting cycle and the singular points. 

By iterating this lifting procedure and arguing as in \cite[Lemma 38.6]{L6}, we obtain a global $K$-qc map $\xi$ that conjugates $F_{a_1}$ to $F_{a_2}$. Moreover, $\xi$ agrees with $\psi_{a_1}^{a_2}$ on $T_{a_1}^\infty$ and is conformal on $\Int{K_{a_1}}$. Since $\Gamma_{a_1}$ has measure zero (see Theorem~\ref{geom_finite_limit_set}), it follows that $\xi$ is conformal on the sphere. Note that since $\xi$ fixes $0$, $\infty$ and $\frac{1}{4}$, it must be the identity map. Therefore, $\mathrm{id}(F_{a_1}^{\circ 2}(0))=F_{a_2}^{\circ 2}(0)$; i.e., $a_1=a_2$.
\end{proof}

An essentially similar argument combined with the description of hyperbolic components of $\cC(\mathcal{S})$ given in Section~\ref{hyperbolic} yields the following rigidity result for all hyperbolic maps.

\begin{proposition}[Rigidity of hyperbolic maps]\label{rigidity_hyp_prop}
Let $a_1$ and $a_2$ be two hyperbolic parameters such that their dynamical roots have the same associated orbit portrait. Moreover, suppose that the first return map of their characteristic Fatou components are conformally conjugate. Then, $a_1=a_2$.
\end{proposition}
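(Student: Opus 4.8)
The plan is to mimic the pullback argument of Proposition~\ref{rigidity_center}, using the hypothesis on the first return maps as a substitute for the B\"ottcher conjugacy that was available in the super-attracting case.

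First I would record a combinatorial reduction. Since the dynamical rays landing at the dynamical root of a hyperbolic map persist under perturbation inside a hyperbolic component, the orbit portrait of the dynamical root is constant on each hyperbolic component and coincides with that of its center. Hence, if $a_1\in H_1$ and $a_2\in H_2$ share this orbit portrait, then the centers of $H_1$ and $H_2$ are super-attracting parameters with the same associated orbit portrait; by Proposition~\ref{rigidity_center} these centers coincide, so $H_1=H_2=:H$. In particular, the orbit portrait pins down the combinatorics of $F_{a_1}$ and $F_{a_2}$ (Hubbard tree, labelling of Fatou components, characteristic angles), so the two maps are combinatorially equivalent. If $H$ has even period, the proof finishes at once: by Theorem~\ref{unif_hyp_schwarz} the multiplier map is a real-analytic diffeomorphism of $H$ onto $\D$, and conformal conjugacy of the first return maps forces equal multipliers, whence $a_1=a_2$. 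The same shortcut works for odd period through the homeomorphism $\widetilde{\eta}_H:H\to\mathcal{B}^-$, provided one checks that a conformal conjugacy of the first return maps must carry the dynamical root to the dynamical root (the unique fixed cut point) and hence identifies the two normalized Blaschke models.

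For a self-contained argument parallel to Proposition~\ref{rigidity_center} I would instead run the pullback. The hypothesized conformal conjugacy between the first return maps of the characteristic Fatou components, spread forward by the dynamics around the attracting cycle, yields a conformal conjugacy on (a neighborhood of) the immediate basin that sends attracting cycle to attracting cycle, dynamical root to dynamical root, and critical value to critical value; this is the exact analogue of the B\"ottcher conjugacy. Using Lemma~\ref{qc_conjugacy_alpha} I would assemble a global $K$-quasiconformal map $\xi_0$ of $\hat{\C}$ agreeing with $\psi_{a_1}^{a_2}$ on $E_{a_1}^1$ and with the above conjugacy near the basin, with $\xi_0$ sending the critical value of $F_{a_1}$ to that of $F_{a_2}$. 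Exactly as before, $\xi_0$ lifts through the degree-two branched coverings $F_{a_i}\colon\hat{\C}\setminus\Int{E_{a_i}^1}\to\hat{\C}\setminus\Int{T_{a_i}^0}$ to a $K$-qc map $\xi_1$ matching $\xi_0$ on $\partial E_{a_1}^1$ and normalized by $\xi_1(\infty)=\infty$; iterating this and invoking \cite[Lemma~38.6]{L6} produces a global $K$-qc conjugacy $\xi$ between $F_{a_1}$ and $F_{a_2}$ that agrees with $\psi_{a_1}^{a_2}$ on the tiling set and is conformal on $\Int{K_{a_1}}$. Since $\Gamma_{a_1}$ has measure zero \cite[Proposition~7.3]{LLMM1}, $\xi$ is conformal on the whole sphere, hence a M\"obius map; being normalized to fix $0$, $\infty$ and $\tfrac14$, it is the identity, and therefore $a_1=a_2$.

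The main obstacle, and the only place where the argument genuinely departs from the super-attracting case, is the construction of the seed conjugacy near the attracting basin. Unlike B\"ottcher coordinates, which are canonical and automatically dynamically compatible, here one only has an abstract conformal conjugacy of first return maps, and one must (i) verify that it respects the marked critical value and the dynamical root so that it glues continuously with $\psi_{a_1}^{a_2}$ across $\partial E_{a_1}^1$ and remains consistent under the pullback, and (ii) interpolate quasiconformally in the region lying between $E_{a_1}^1$ and the basin. Both are controlled by the combinatorial identification coming from the common orbit portrait together with the Blaschke-model uniformization of Theorem~\ref{unif_hyp_schwarz}; once the seed is in place, the convergence of the $\xi_n$ and the measure-zero argument proceed verbatim as in Proposition~\ref{rigidity_center}.
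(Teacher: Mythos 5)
Your overall architecture coincides with the paper's: its proof of this proposition is literally ``an essentially similar argument'' to Proposition~\ref{rigidity_center} ``combined with the description of hyperbolic components'' of Section~\ref{hyperbolic}, i.e.\ the pullback argument seeded by an internal conjugacy together with Theorem~\ref{unif_hyp_schwarz}. Both of your routes are expansions of exactly this, and your even-period argument is complete as written (there the return map has a unique boundary fixed point and the multiplier is an intrinsic conjugacy invariant, so no marking issue arises).

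The genuine problem is the odd-period item you defer: the ``check'' that a conformal conjugacy of the first return maps must carry dynamical root to dynamical root. This cannot be checked from the stated hypotheses; it has to be \emph{assumed}. The root is singled out only by global data --- it is a cut point of $K_a$, the landing point of two dynamical rays --- and this is invisible to the dynamics of $F_a^{\circ k}$ on $\overline{U}_a$, which is all an abstract conjugacy between return maps sees. Concretely, the Koenigs ratio of a degree two anti-Blaschke product does not depend on which of its three boundary fixed points is marked, so the fiber of the $3$-to-$1$ Koenigs ratio map of $H$ over any nonzero value consists of three \emph{distinct} parameters whose images under $\widetilde{\eta}_H$ are the three markings of one and the same unmarked Blaschke product (the three markings are distinct elements of $\mathcal{B}^-$ because only $\overline{z}^2$ admits a nontrivial rotational symmetry). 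These three parameters lie in the same component, hence have identical orbit portraits, and their first return maps are conformally conjugate in the unmarked sense --- yet they are different. So under the unmarked reading the proposition is false, and no combinatorial input, in particular not the ``combinatorial identification coming from the common orbit portrait'' that you invoke, can manufacture the missing marking. The correct reading, and the one under which the proposition is applied in Corollary~\ref{hyp_para_bijec_cor}, is that the conjugacy respects the dynamical roots, equivalently that $\widetilde{\eta}_{H}(a_1)=\widetilde{\eta}_{H}(a_2)$. With that reading your deferred check is vacuous and both your shortcut and your pullback go through; note that the same marking compatibility is what makes the pullback converge at all, since the boundary values of the internal seed must agree with those of $\psi_{a_1}^{a_2}$, which sends the landing point of each ray of $F_{a_1}$ to the landing point of the ray of $F_{a_2}$ with the same angle.
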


Our next result states that parabolic parameters of even-type can also be recovered from their combinatorics. 

\begin{proposition}[Rigidity of parabolic maps]\label{rigidity_para_even}
Let $a_1$ and $a_2$ be two parabolic parameters such that their characteristic Fatou components have even period. If their dynamical roots have the same associated orbit portrait, then $a_1=a_2$.
\end{proposition}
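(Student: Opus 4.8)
The plan is to run the same ``Pullback Argument'' used in Proposition~\ref{rigidity_center} and Proposition~\ref{rigidity_hyp_prop}, replacing the B\"ottcher/Koenigs coordinates near the critical cycle by \emph{parabolic Fatou coordinates} on the parabolic basin. The crucial structural point, which distinguishes the even-period case from the odd-period one, is that since the characteristic Fatou component has even period $k$, its first return map $F_{a_i}^{\circ k}$ is holomorphic; hence the local dynamics near the parabolic cycle carries no intrinsic Ecalle-height modulus (the Ecalle height is a genuine invariant only for the anti-holomorphic odd-period germs of Theorem~\ref{parabolic arcs}), so one should expect the orbit portrait of the dynamical root to pin down the parameter outright.

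First I would record that the orbit portrait associated with the dynamical root of an even-type parabolic map determines its pre-periodic lamination $\lambda(F_{a_i})$, and hence the topological conjugacy class of $F_{a_i}$ on its Hubbard tree and limit set. This is the parabolic analogue of Proposition~\ref{char_angles_lamination}: one argues, via expansivity of $\rho$ on $\mathbb{T}$ and local connectivity of $\Gamma_{a_i}$, that every cut point of $\Gamma_{a_i}$ eventually lands on the spine and then on $\mathcal{H}_{a_i}$, and that each leaf of $\lambda(F_{a_i})$ is approximated by iterated $\rho$-pullbacks of the characteristic leaf read off from the given orbit portrait. Next I would assemble an initial quasiconformal conjugacy $\xi_0$. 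On $\overline{E_{a_1}^1}$ I set $\xi_0=\psi_{a_1}^{a_2}$, which by Lemma~\ref{qc_conjugacy_alpha} extends quasiconformally to $\hat{\C}$. On the immediate parabolic basin I would build a quasiconformal conjugacy from $F_{a_1}^{\circ k}$ to $F_{a_2}^{\circ k}$ directly in Fatou coordinates: choosing attracting Fatou coordinates $\psi^{\mathrm{att}}_i$ for $F_{a_i}^{\circ k}$ (conjugating the return map to $z\mapsto z+1$ on a right half plane) and interpolating between them on the quotient Ecalle cylinders produces a quasiconformal map of the petals commuting with the return dynamics, which I then spread by the dynamics over the whole basin and its iterated preimages in $\Int K_{a_1}$. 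Splicing this with $\psi_{a_1}^{a_2}$ along $\Gamma_{a_1}$ and matching at the critical value yields the desired $K$-quasiconformal $\xi_0$ that agrees with $\psi_{a_1}^{a_2}$ on the tiling set and respects the two critical values.

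Finally I would lift: since $\xi_0$ carries the critical value of $F_{a_1}$ to that of $F_{a_2}$ and conjugates the boundary double coverings $F_{a_i}\colon\partial E_{a_i}^1\to\partial T_{a_i}^0$, the map lifts through $F_{a_1},F_{a_2}$ to a $K$-quasiconformal $\xi_1$ homotopic to $\xi_0$ rel the marked parabolic cycle and the singular points; iterating as in \cite[Lemma~38.6]{L6} produces a global $K$-quasiconformal conjugacy $\xi$ that equals $\psi_{a_1}^{a_2}$ on $T_{a_1}^\infty$ and is conformal on $\Int K_{a_1}$. Because $\Gamma_{a_1}$ has measure zero (\cite[Proposition~7.3]{LLMM1}), $\xi$ is conformal on all of $\hat{\C}$, hence M\"obius, and since it fixes $0,\infty,\tfrac14$ it must be the identity; therefore $a_1=a_2$.

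The main obstacle I anticipate lies in the parabolic-basin step. Unlike the B\"ottcher conjugacy of the super-attracting case, the Fatou-coordinate conjugacy is only quasiconformal and must be arranged simultaneously to commute with the slow parabolic return dynamics and to glue consistently with $\psi_{a_1}^{a_2}$ where the basin meets the limit set; one must also check that the uniform dilatation bound survives the infinitely many pullbacks (so that the iterated lifts form a precompact family and converge) despite the non-hyperbolic, parabolic rate of contraction along the critical orbit. Verifying that the resulting $\xi$ is genuinely continuous across $\Gamma_{a_1}$ --- so that quasiconformal removability of the measure-zero, locally connected limit set applies --- is the delicate final point.
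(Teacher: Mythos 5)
Your overall skeleton is the right one --- Lemma~\ref{qc_conjugacy_alpha} for the tiling-set part of $\xi_0$, the pullback/lifting iteration as in Proposition~\ref{rigidity_center}, measure zero of $\Gamma_{a_1}$, and the normalization at $0,\infty,\tfrac14$ --- but there is a genuine gap at the heart of the argument: your conjugacy on the parabolic basin is only \emph{quasiconformal}, and the endgame needs it to be \emph{conformal}. You build the basin map by ``interpolating'' between the two attracting Fatou coordinates on the Ecalle cylinders, which produces a qc conjugacy with (in general) nonzero dilatation. Then in your final paragraph you assert that the limit map $\xi$ ``is conformal on $\Int K_{a_1}$'' --- this contradicts your own construction. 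With a merely qc basin conjugacy, the Beltrami coefficient of $\xi$ is supported on the basin, a set of \emph{positive} measure, so the step ``$\Gamma_{a_1}$ has measure zero, hence $\xi$ is M\"obius'' collapses. And a quasiconformal conjugacy by itself can never force $a_1=a_2$: the odd-period parabolic arcs of Theorem~\ref{parabolic_arcs_schwarz} consist precisely of quasiconformally conjugate, combinatorially identical, but distinct parameters. Your opening paragraph correctly identifies the structural reason the even-period case is rigid (no Ecalle-height modulus), but your construction never cashes it in.

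The fix is exactly the point the paper exploits. Because the characteristic Fatou component has even period $k$, the first return map $F_{a_i}^{\circ k}$ is \emph{holomorphic}, so its attracting Fatou coordinate is unique up to addition of a \emph{complex} constant (not merely a real one, as in the anti-holomorphic odd-period case). Hence you may normalize the two Fatou coordinates so that the critical values $\infty$ of $F_{a_1}$ and $F_{a_2}$ have the same image; then $(\psi^{\mathrm{att}}_2)^{-1}\circ\psi^{\mathrm{att}}_1$ is already a \emph{conformal} conjugacy between the petals taking $\infty$ to $\infty$ --- no interpolation is needed or wanted. Spreading this conformal conjugacy by the dynamics to petals in all periodic Fatou components so that its domain $U$ contains the whole post-critical set, one takes $\xi_0$ to agree with $\psi_{a_1}^{a_2}$ on $E_{a_1}^1$ and with this conjugacy on $U$ (with an arbitrary qc interpolation elsewhere; there is no need to splice ``along $\Gamma_{a_1}$,'' which would beg the question --- agreement on the full tiling set and full basin emerges only in the limit of the pullback iteration). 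Each lift is then homotopic to the previous one rel the post-critical set and the singular points, the dilatation does not increase under lifting, and the limit $\xi$ is conformal on $T_{a_1}^\infty\cup\Int K_{a_1}$, i.e.\ off the measure-zero set $\Gamma_{a_1}$; only then does your final M\"obius argument apply and yield $a_1=a_2$.
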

\begin{proof}
Note that since the parabolic cycles of $F_{a_1}$ and $F_{a_2}$ have the same associated orbit portraits, their parabolic cycles have a common period $k$ and a common combinatorial rotation number when $k$ is even (see \cite[Definition~2.12]{M2a} for the definition of  combinatorial rotation number). Choose an attracting petal containing the critical value $\infty$ and an attracting Fatou coordinate (in the characteristic Fatou component of $F_{a_i}$) that conjugates the first return map of the petal to the translation $\zeta\mapsto\zeta+1$. Since Fatou coordinates are unique up to addition of a complex constant, we can arrange so that the critical values of $F_{a_1}$ and $F_{a_2}$ have the same image under the Fatou coordinates. Hence, the Fatou coordinates induce a conformal conjugacy between the first return maps of the petals that sends $\infty$ to $\infty$. Using $F_{a_1}$ and $F_{a_2}$, we now spread this conjugacy to suitable attracting petals in all the periodic Fatou components such that the domain of the conjugacy (which we denote by $U$) contains the entire post-critical set of $F_{a_1}$. 

We now construct a $K$-qc map $\xi_0$ of the sphere that agrees with $\psi_{a_1}^{a_2}$ on $E_{a_1}^1$, and with the conjugacy on $U$ constructed in the previous paragraph. Then $\xi_0$ lifts to a $K$-qc map $\xi_1$ of the sphere (normalized so that $\xi_1(\infty)=\infty$) agreeing with $\xi_0$ on $E_{a_1}^1\cup U$. In particular, $\xi_1$ is homotopic to $\xi_0$ relative to the union of the post-critical set and the singular points. The rest of the proof is analogous to that of Proposition~\ref{rigidity_center}.  
\end{proof}

\begin{remark}\label{odd_non_rigid}
We will see in Theorem~\ref{parabolic_arcs_schwarz} that parabolic parameters with odd-periodic characteristic Fatou components are not combinatorially rigid; i.e., they admit quasiconformal deformations that preserve combinatorics, and hence cannot be uniquely determined by their parabolic orbit portraits. However, we will prove a slightly weaker rigidity statement for such maps in Proposition~\ref{rigidity_para_odd}.
\end{remark}

We now prove a combinatorial rigidity principle for Misiurewicz parameters $\mathcal{S}$ to the effect that a Misiurewicz parameter is completely determined by its pre-periodic lamination. Since the limit set of a Misiurewicz map $F_a$ is a dendrite (see Theorem~\ref{geom_finite_limit_set}), there exists a unique arc (in $\Gamma_a$) connecting any two points of $\Gamma_a$. The union of such arcs connecting the post-critical set of $F_a$ is a tree, and we call it the \emph{Hubbard tree} of a Misiurewicz map $F_a$.

\begin{proposition}[Rigidity of Misiurewicz parameters]\label{rigidity_misi}
Let $a_1$ and $a_2$ be Misiurewicz parameters with the same pre-periodic lamination. Then, $a_1=a_2$.
\end{proposition}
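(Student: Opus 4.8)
The plan is to mimic the pullback argument of Proposition~\ref{rigidity_center}, with the (super-)attracting structure replaced by the combinatorial data of the strictly pre-periodic critical orbit. Since $a_1$ and $a_2$ share the same pre-periodic lamination, the external conjugacy $\psi_{a_1}^{a_2}:T_{a_1}^\infty\to T_{a_2}^\infty$ conjugates $F_{a_1}$ to $F_{a_2}$ and, because both limit sets are locally connected dendrites (\cite[\S 7.2]{LLMM1}), extends continuously to a homeomorphism of $\Gamma_{a_1}=K_{a_1}$ onto $\Gamma_{a_2}=K_{a_2}$ conjugating the two maps on their limit sets and carrying the Hubbard tree of $F_{a_1}$ to that of $F_{a_2}$. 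In particular it matches the finite post-critical orbits and the repelling cycles on which they land; note that equality of the laminations forces the orbit portraits at these repelling cycles to coincide, so the conjugacy is combinatorially forced there.

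First I would produce an initial $K$-quasiconformal map $\xi_0$ of $\hat{\C}$. Using Lemma~\ref{qc_conjugacy_alpha}, I take $\xi_0=\psi_{a_1}^{a_2}$ on $\overline{E_{a_1}^1}$ (where it is conformal on the interior) and extend it quasiconformally to the sphere. Unlike in the super-attracting and parabolic cases, there is no B\"ottcher chart or Fatou coordinate available, since the post-critical orbit lands on a repelling cycle whose multipliers need not agree. Instead, near each point of the finite post-critical orbit I would glue in a local quasiconformal conjugacy: around the repelling cycle, Koenigs linearization reduces the first-return maps of $F_{a_1}$ and $F_{a_2}$ to the linear repellers $z\mapsto\lambda_1 z$ and $z\mapsto\lambda_2 z$, which are quasiconformally conjugate by an explicit map in logarithmic coordinates, and this local conjugacy can be spread along the finite critical orbit. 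I would normalize $\xi_0$ so that it realizes the topological conjugacy on $K_{a_1}$, fixes $0,\infty,\tfrac14$, and in particular sends the critical value $\infty$ of $F_{a_1}$ to that of $F_{a_2}$.

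Next I would run the pullback. Because $\xi_0$ matches critical values and $F_{a_i}:\hat{\C}\setminus\Int E_{a_i}^1\to\hat{\C}\setminus\Int T_{a_i}^0$ is a two-to-one branched covering branched only at $0$, the map $\xi_0$ lifts (after requiring $\xi_1(\infty)=\infty$) to a $K$-quasiconformal $\xi_1$ with $F_{a_2}\circ\xi_1=\xi_0\circ F_{a_1}$ on $\hat{\C}\setminus\Int E_{a_1}^1$; since lifting by holomorphic maps preserves dilatation, every $\xi_n$ obtained by iterating this procedure is $K$-quasiconformal with the same $K$. The maps $\xi_n$ agree with $\psi_{a_1}^{a_2}$ on the growing union $\bigcup_{k=0}^{n}F_{a_1}^{-k}(T_{a_1}^0)$ of tiles and, by construction, with the fixed local conjugacy on a neighborhood of the post-critical set, so consecutive $\xi_n$ are homotopic rel the union of the post-critical orbit and the singular points. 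Arguing as in \cite[Lemma~38.6]{L6}, the sequence $\{\xi_n\}$ converges to a global $K$-quasiconformal homeomorphism $\xi$ conjugating $F_{a_1}$ to $F_{a_2}$ and agreeing with $\psi_{a_1}^{a_2}$ on all of $T_{a_1}^\infty$.

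Finally, since $\xi$ coincides with the conformal map $\psi_{a_1}^{a_2}$ on the tiling set, its Beltrami coefficient vanishes off the limit set $\Gamma_{a_1}$; as $\Gamma_{a_1}$ has zero area (\cite[\S 7.2]{LLMM1}, cf.\ \cite[Proposition~7.3]{LLMM1}), $\xi$ is conformal on $\hat{\C}$, hence a M\"obius map. The normalization $\xi(0)=0$, $\xi(\infty)=\infty$, $\xi(\tfrac14)=\tfrac14$ forces $\xi=\mathrm{id}$, and then $a_1=F_{a_1}^{\circ 2}(0)=F_{a_2}^{\circ 2}(0)=a_2$. The main obstacle is the construction of $\xi_0$ near the repelling post-critical cycle: without matching multipliers one cannot use a conformal local model, so one must insert a quasiconformal local conjugacy and verify that the pullback scheme, now anchored by combinatorics rather than by an attracting or parabolic basin, stabilizes on the tiling set and produces a genuine (not merely subsequential) conjugacy --- this is exactly where equality of the laminations, forcing agreement of the orbit portraits at the repelling cycle, is indispensable.
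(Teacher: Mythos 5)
Your proposal is correct and follows essentially the same route as the paper's proof: an initial $K$-quasiconformal map built from $\psi_{a_1}^{a_2}$ via Lemma~\ref{qc_conjugacy_alpha} together with local quasiconformal conjugacies near the post-critical set, iterated lifting in the style of Proposition~\ref{rigidity_center}, and the conclusion via zero area of $\Gamma_{a_1}$ plus the normalization at $0$, $\infty$, $\tfrac14$. The only difference is that you spell out (via Koenigs linearization of the repelling germs) the construction of the local quasiconformal conjugacies that the paper simply asserts to exist, which is a welcome detail rather than a deviation.
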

\begin{proof}
Since $F_{a_1}$ and $F_{a_2}$ have the same pre-periodic lamination, the restriction of the maps on their Hubbard trees are topologically conjugate. In particular, their critical orbits have the same pre-period and period. Moreover, there exist quasiconformal maps defined on a neighborhood of the post-critical set of $F_{a_1}$ conjugating $F_{a_1}$ to $F_{a_2}$. Let us now construct a $K$-qc map $\xi_0$ of the sphere that agrees with $\psi_{a_1}^{a_2}=\psi_{a_2}^{-1}\circ\psi_{a_1}$ on $E_{a_1}^1$, and with the quasiconformal conjugacies in a neighborhood of the post-critical set of $F_{a_1}$. This is possible due to Lemma~\ref{qc_conjugacy_alpha}. Then $\xi_0$ lifts to a $K$-qc map $\xi_1$ of the sphere (normalized so that $\xi_1(\infty)=\infty$) agreeing with $\xi_0$ on the union of $E_{a_1}^1$ and some neighborhood of the post-critical set of $F_{a_1}$. In particular, $\xi_1$ is homotopic to $\xi_0$ relative to the union of the post-critical set and the singular points. 

By iterating this lifting procedure and arguing as in Proposition~\ref{rigidity_center}, we obtain a global $K$-qc map $\xi$ that conjugates $F_{a_1}$ to $F_{a_2}$. Moreover, $\xi$ agrees with $\psi_{a_1}^{a_2}$ on $T_{a_1}^\infty$. Since $\Int{K_{a_1}}=\emptyset$, and $\Gamma_{a_1}$ has measure zero (see Theorem~\ref{geom_finite_limit_set}), it follows that $\xi$ is conformal on the sphere. As $\xi$ fixes $0$, $\infty$ and $\frac{1}{4}$, it must be the identity map. Therefore, $\mathrm{id}(F_{a_1}(\infty))=F_{a_2}(\infty)$; i.e., $a_1=a_2$.
\end{proof}

\section{Parameter rays}\label{odd_period_parabolic}

In this section, we discuss landing/accumulation properties of parameter rays of $\mathcal{S}$ at (pre-)periodic angles. 

In particular, we explore the connection between parabolic (respectively, Misiurewicz) parameters of $\cC(\mathcal{S})$ and parameter rays at $\rho$-periodic (respectively, pre-periodic) angles. This, on the one hand, leads to a complete description of the boundaries of odd period hyperbolic components of $\cC(\mathcal{S})$, and on the other hand, prepares the ground for the proofs of the main theorems of this paper.

\subsection{Odd period parabolics, and period-doubling bifurcations}\label{odd_para_doubling} 

We begin with a preliminary discussion of the boundaries of odd period hyperbolic components and period-doubling bifurcations associated with them. Let us first note that the boundaries of odd period hyperbolic components of $\cC(\mathcal{S})$ consist only of parabolic parameters.

\begin{proposition}[Neutral dynamics of odd period]\label{hyp_odd_parabolic}  
The boundary of a hyperbolic component of odd period $k$ of $\cC(\mathcal{S})$ consists 
entirely of parameters having a parabolic orbit of exact period $k$. In suitable 
local conformal coordinates, the $2k$-th iterate of such a map has the form 
$z\mapsto z+z^{q+1}+\ldots$ with $q\in\{1,2\}$. 
\end{proposition}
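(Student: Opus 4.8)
The plan is to reduce the statement to a purely local analysis of an anti-holomorphic return map near a neutral fixed point, exactly as in the Tricorn case \cite[Lemma~2.5]{MNS}, \cite{HS}, and to observe that the only family-specific inputs (unicriticality of $F_a$ and the classification of Fatou components) are already available from \cite[\S 6]{LLMM1}. First I would invoke Proposition~\ref{ThmIndiffBdyHyp_schwarz} together with continuity of periodic points to see that every $a_0\in\partial H$ carries a neutral cycle of period $k$; since the $k$-cycles attracting inside $H$ stay distinct up to the boundary (the eigenvalue obstruction to period-dropping is the same as in the anti-polynomial setting), this cycle has exact period $k$. For $k$ odd, the first return map $g:=F_{a_0}^{\circ k}$ to a cycle point $z_0$ is anti-holomorphic, so writing its multiplier as $\lambda=\partial g/\partial\overline{z}\,(z_0)$, the second iterate $g^{\circ 2}=F_{a_0}^{\circ 2k}$ is holomorphic with $(g^{\circ 2})'(z_0)=|\lambda|^2$. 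Neutrality forces $|\lambda|=1$, hence $(g^{\circ 2})'(z_0)=+1$: the cycle is parabolic. Because the first return map of the characteristic Fatou component is a genuine degree-$2$ proper map, $g^{\circ 2}$ is not locally the identity, so in a suitable local coordinate it takes the form $z\mapsto z+z^{q+1}+\cdots$ with a well-defined $q\geq 1$.

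The substantive point, and the one I expect to be the main obstacle, is the bound $q\leq 2$. I would argue via the Leau--Fatou flower for the holomorphic germ $g^{\circ 2}$: it has $q$ attracting petals, arranged cyclically and equally spaced, together with $q$ interlaced repelling petals. Since $g$ commutes with $g^{\circ 2}$ and its linear part $z\mapsto\lambda\overline{z}$ is orientation-reversing, $g$ permutes the $q$ attracting petals as an orientation-reversing \emph{involution}, i.e.\ a reflection of the cyclically ordered directions, whose axis meets the circle of directions in two antipodal points. The number of $g$-orbits among these $q$ petals equals the number of cycles of immediate parabolic basin components of $F_{a_0}$, and each such cycle must absorb the forward orbit of a critical point of $F_{a_0}$. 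Here the family-specific input enters: $F_{a_0}$ is \emph{unicritical} with its single critical point $0$ lying in the parabolic basin (by the classification of Fatou components), so there is at most one cycle of immediate basins, whence at most one $g$-orbit of attracting petals.

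Translating the reflection combinatorics, I would split into parities. If $q$ is odd, a reflection fixes at most one attracting direction, so the petals decompose as (one fixed petal) $+$ $(q-1)/2$ swapped pairs, giving $1+(q-1)/2$ orbits; forcing this to be $1$ yields $q=1$ (the fixed petal has period $k$, the simple/Ecalle-height case). If $q$ is even, the configuration with two fixed attracting petals would produce at least two orbits and is therefore excluded, while the remaining configuration gives $q/2$ swapped pairs and hence $q/2$ orbits; forcing this to be $1$ yields $q=2$ (one pair of period-$2k$ petals, the cusp case). In all cases $q\in\{1,2\}$, which completes the proof. The only care needed beyond the local flower analysis is to confirm, using \cite[\S 6]{LLMM1}, that Fatou's theorem on critical points in immediate parabolic basins applies to $F_{a_0}$ and that the critical orbit indeed lands in the parabolic basin; granting this, the argument is local and transfers verbatim from \cite{MNS,HS}.
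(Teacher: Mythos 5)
Your proposal is correct and takes essentially the same route as the paper: the paper's proof of Proposition~\ref{hyp_odd_parabolic} simply cites \cite[Lemma~2.5]{MNS} and notes that the same purely local proof applies to $\mathcal{S}$, and what you wrote is precisely that proof spelled out --- the anti-holomorphic multiplier argument forcing $(F_{a_0}^{\circ 2k})'(z_0)=+1$, followed by the orientation-reversing (reflection) action of $F_{a_0}^{\circ k}$ on the $q$ attracting petals combined with unicriticality to get $q\in\{1,2\}$. The family-specific inputs you flag (unicriticality of $F_a$ and the fact that every cycle of attracting petals absorbs the forward orbit of the critical point $0$) are exactly the ones the paper draws from \cite[\S 6]{LLMM1}, so your verification that the argument transfers is the content of the paper's one-line proof.
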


\begin{proof} 
See \cite[Lemma~2.5]{MNS} for a proof in the case of unicritical anti-polynomials. The same proof applies to the family $\mathcal{S}$.
\end{proof}

This leads to the following classification of parabolic points of odd period.

\begin{definition}\label{DefCusp_schwarz}
A parameter $a$ will be called a {\em parabolic cusp} if it has a parabolic 
periodic point of odd period such that $q=2$ in the previous proposition. Otherwise, it is called a \emph{simple} parabolic parameter.
\end{definition}

Suppose that $a$ is a parabolic cusp with a $k$-periodic parabolic cycle. Since every cycle of attracting petals of $F_a$ attracts the forward orbit of $0$, it follows that the period of the characteristic Fatou component of $F_a$ is $2k$. By Theorem~\ref{complete_antiholomorphic_schwarz}, the angles of the dynamical rays landing at the $k$-periodic parabolic cycle of $F_a$ have angles $k$ or $2k$. Therefore, there are finitely many possibilities for the orbit portrait associated with the $k$-periodic parabolic cycle of $F_a$.
Hence, Proposition~\ref{rigidity_para_even} implies that there are only finitely parabolic cusps of a given period $k$ in $\cC(\mathcal{S})$.

Let us now fix a hyperbolic component $H$ of odd period $k$, and let $a\in H$. Note that the first return map $F_a^{\circ k}$ of a $k$-periodic Fatou component of $F_a$ has precisely three fixed points (necessarily repelling) on the boundary of the component. As $a$ tends to a simple parabolic parameter on the boundary $\partial H$, the unique attracting periodic point of this Fatou component tends to merge with one of these three repelling periodic points. Similarly, as $a$ tends to a parabolic cusp on the boundary $\partial H$, the unique attracting periodic point of this Fatou component and two of the three boundary repelling periodic points merge together. 

Now let $a$ be a simple parabolic parameter of odd (parabolic) period $k$. The holomorphic first return map $F_a^{\circ 2k}$ of any attracting petal of $F_a$ is conformally conjugate to translation by $+1$ in a right half-plane (see \cite[\S 10]{M1new}). The conjugating map is called an attracting \emph{Fatou coordinate}. Thus the quotient of the petal by the dynamics $F_a^{\circ 2k}$ is isomorphic to a bi-infinite cylinder, called the \emph{attracting {\'E}calle cylinder}. Note that Fatou coordinates are uniquely determined up to addition by a complex constant. 

Since $F_a^{\circ k}$ commutes with $F_a^{\circ 2k}$, it follows that $F_a^{\circ k}$ induces an anti-holomorphic involution of the attracting {\'E}calle cylinder $\C/\Z$. Such a map must fix a horizontal round circle of $\C/\Z$. By using one real additive degree of freedom of the Fatou coordinate, we can assume that this invariant circle is $\R/\Z$. This special Fatou coordinate clearly conjugates the first anti-holomorphic return map $F_a^{\circ k}$ of the attracting petal to the map $\zeta\mapsto\overline{\zeta}+\frac{1}{2}$ (compare Proposition~\ref{normalization of fatou}). This coordinate is unique up to addition of a real constant. The pre-image of the real line (which is invariant under $\zeta\mapsto\overline{\zeta}+\frac{1}{2}$) under this Fatou coordinate is called the \emph{attracting equator}. By construction, the attracting equator is invariant under the dynamics $F_a^{\circ k}$.

The imaginary part of the image of the critical value $\infty$ (whose forward orbit converges to the parabolic cycle, by Proposition~\ref{fatou_critical}) under this special Fatou coordinate is called the \emph{critical {\'E}calle height} of $F_a$ (since this Fatou coordinate is unique up to addition of a real constant, the real part of the image of $\infty$ under this coordinate is not well-defined). It is easy to see that critical {\'E}calle height is a conformal conjugacy invariant of simple parabolic parameters of odd period. One can change the critical {\'E}calle height of simple parabolic parameters by a quasiconformal deformation argument to obtain real-analytic arcs of parabolic parameters on the boundaries of odd period hyperbolic components. 

\begin{theorem}[Parabolic arcs]\label{parabolic_arcs_schwarz}
Let $\widetilde{a}$ be a simple parabolic parameter of odd period. Then $\widetilde{a}$ is on a parabolic arc in the  following sense: there  exists a real-analytic arc of simple parabolic parameters $a(h)$ (for $h\in\mathbb{R}$) with quasiconformally equivalent but conformally distinct dynamics of which $\widetilde{a}$ is an interior point, and the {\'E}calle height of the critical value of $F_{a(h)}$ is $h$. This arc is called a \emph{parabolic arc}.
\end{theorem}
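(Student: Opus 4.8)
The plan is to follow the strategy used in the anti-polynomial setting (Theorem~\ref{parabolic arcs}), performing a quasiconformal deformation supported on the parabolic basin that alters only the critical Ecalle height, and then straightening the resulting map back into the family $\mathcal{S}$ via \cite[Lemma~6.24]{LLMM1}.

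First, I would fix the special attracting Fatou coordinate $\psi^{\mathrm{att}}$ of $F_{\widetilde a}$ (normalized so that the equator projects to $\R/\Z$ and $F_{\widetilde a}^{\circ k}$ is conjugated to $\zeta\mapsto\overline{\zeta}+\frac12$), and let $\widetilde h=\im(\psi^{\mathrm{att}}(\infty))$ be the critical Ecalle height of $F_{\widetilde a}$. For each $h\in\R$, consider the affine map $A_h$ of the attracting Ecalle cylinder $\C/\Z$ that preserves the angular coordinate, commutes with the glide reflection $\zeta\mapsto\overline{\zeta}+\frac12$ (equivalently, fixes the equator), and stretches the vertical coordinate uniformly so that the horizontal circle at height $\widetilde h$ is sent to the circle at height $h$. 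Pulling $A_h$ back by $\psi^{\mathrm{att}}$ gives a quasiconformal homeomorphism of the attracting petal commuting with the first anti-holomorphic return map $F_{\widetilde a}^{\circ k}$, hence an $F_{\widetilde a}$-invariant Beltrami coefficient on the petal. Spreading it by the (anti-holomorphic) dynamics $F_{\widetilde a}$ over the entire parabolic basin, and setting it equal to $0$ on the complement (the tiling set together with the limit set $\Gamma_{\widetilde a}$, which has measure zero by \cite[Proposition~7.3]{LLMM1}), produces an $F_{\widetilde a}$-invariant Beltrami form $\mu_h$ on $\hat{\C}$ whose dilatation is bounded uniformly in $h$ on compact intervals.

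Next I would solve $\mu_h$ using the measurable Riemann mapping theorem with parameters to obtain a real-analytic family of quasiconformal homeomorphisms $\Phi_h$ of $\hat{\C}$, normalized to fix $0$, $\infty$, and $\frac14$. The conjugate $G_h:=\Phi_h\circ F_{\widetilde a}\circ\Phi_h^{-1}$ is then a quasiregular map that is anti-conformal for the standard structure; because $\mu_h$ is $F_{\widetilde a}$-invariant and the deformation is supported away from the droplet and its two singular points, $G_h$ retains the mapping scheme of a Schwarz reflection (it sends the exterior disk into the inner disk univalently and maps its cardioid part as the cardioid Schwarz reflection). Consequently \cite[Lemma~6.24]{LLMM1} applies and shows that $G_h$ is conformally conjugate to a genuine member $F_{a(h)}$ of $\mathcal{S}$; this identifies the deformed map with a parameter $a(h)$, and real-analytic dependence of $\mu_h$ on $h$ together with the parametrized measurable Riemann mapping theorem yields real-analyticity of $h\mapsto a(h)$.

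Finally, since $\Phi_h$ conjugates $F_{\widetilde a}$ to $F_{a(h)}$, the parabolic cycle, its single-petal (simple) structure, and the odd period $k$ are all preserved, so each $a(h)$ is a simple parabolic parameter of odd period $k$; by construction the critical value lands at Ecalle height $h$, whence $a(\widetilde h)=\widetilde a$ and $\widetilde a$ is an interior point of the arc. Because critical Ecalle height is a conformal conjugacy invariant of such maps, distinct values of $h$ yield conformally distinct (yet quasiconformally equivalent) dynamics, giving the desired real-analytic arc. The main obstacle is the straightening step: one must verify carefully that the quasiconformal conjugate $G_h$ genuinely respects the circle-and-cardioid reflection structure so that \cite[Lemma~6.24]{LLMM1} can be invoked — this is precisely where the anti-holomorphic deformation framework, rather than a purely holomorphic one, has to be handled with care.
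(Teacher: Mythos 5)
Your overall strategy is exactly the paper's: run the Mukherjee--Nakane--Schleicher deformation (Theorem~\ref{parabolic arcs}, i.e.\ \cite[Theorem~3.2]{MNS}) in the attracting Ecalle cylinder, and use \cite[Lemma~6.24]{LLMM1} to keep the deformed maps inside $\mathcal{S}$. However, the specific deformation you chose fails. A uniform vertical stretch $A_h(x+iy)=x+i\lambda y$ of $\C/\Z$ commuting with the glide reflection $\zeta\mapsto\overline{\zeta}+\tfrac12$ scales Ecalle heights by the \emph{positive} factor $\lambda$, so it sends the critical value at height $\widetilde h$ to height $\lambda\widetilde h$. Hence it can only realize target heights $h$ of the same sign as $\widetilde h$, and when $\widetilde h=0$ it realizes nothing except $h=0$. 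Your construction therefore produces at best a half-arc, never an arc parametrized by all $h\in\R$ with $\widetilde a$ in its interior, and it degenerates completely at critical Ecalle height zero --- exactly the middle of the arc. (The parenthetical ``equivalently, fixes the equator'' is also false: commuting with the glide reflection is not equivalent to fixing the equator; see the maps below, which commute with it but move the equator.)

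The missing idea --- and the actual content of \cite[Theorem~3.2]{MNS} --- is that the deforming homeomorphism must \emph{not} preserve the horizontal foliation. One uses a vertical shear $\phi_h(x+iy)=x+i\left(y+(h-\widetilde h)\,w(x)\right)$, where $w$ is smooth, $w\!\left(x+\tfrac12\right)=-w(x)$, and $w(x_0)=1$ at the angular coordinate $x_0$ of the critical value. The anti-periodicity of $w$ is precisely the condition for $\phi_h$ to commute with $\zeta\mapsto\overline{\zeta}+\tfrac12$, and $\phi_h$ moves the critical value to height $h$ for \emph{every} $h\in\R$, with Beltrami coefficient depending real-analytically on $h$. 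With this replacement, the rest of your argument (spreading the invariant Beltrami form over the parabolic basin, extending by zero, solving with parameters) goes through. One further correction at the straightening step: after solving $\mu_h$ with an arbitrary normalization, the conjugate $G_h$ is a Schwarz reflection of the \emph{image} quasidisks, which are no longer the standard cardioid and circumscribing circle, so $G_h$ is not literally of the required form; the correct move is to apply \cite[Lemma~6.24]{LLMM1} directly to the $F_{\widetilde a}$-invariant Beltrami form $\mu_h$, which produces a solution that conjugates $F_{\widetilde a}$ to a genuine member $F_{a(h)}$ of $\mathcal{S}$.
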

\begin{proof} 
See \cite[Theorem~3.2]{MNS} for a proof in the case of unicritical anti-polynomials. One essentially uses the same deformation in the attracting petals, and Lemma~\ref{schwarz_qcdef} guarantees that the quasiconformal deformations of $F_{\widetilde{a}}$ also lie in the family $\mathcal{S}$.
\end{proof}

The deformation of complex structure giving rise to parabolic arcs is supported on the basin of attraction of the parabolic cycle. Hence, the quasiconformal conjugacy (constructed in Theorem~\ref{parabolic_arcs_schwarz}) between any two maps on the same parabolic arc is conformal on the tiling set. It follows that along a parabolic arc, the angles of the dynamical rays landing at the parabolic cycle remain constant. In other words, the orbit portrait associated with the parabolic cycle remains constant on a parabolic arc, so simple parabolic parameters of odd period of $\cC(\mathcal{S})$ are not combinatorially rigid. However, the next proposition shows that they can be uniquely determined by a combination of combinatorial and analytic data.

\begin{proposition}[Weak rigidity of odd period parabolics]\label{rigidity_para_odd}
Let $a_1$ and $a_2$ be two simple parabolic parameters of odd period such that  their parabolic cycles have the same orbit portrait. Then, they lie on the same parabolic arc. Moreover, if they have equal critical {\'E}calle height, then $a_1=a_2$.
\end{proposition}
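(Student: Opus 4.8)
The plan is to follow the pullback argument used in Proposition~\ref{rigidity_center} and Proposition~\ref{rigidity_para_even}, but with the crucial modification dictated by the anti-holomorphic first return map. I would first prove the second (stronger) assertion — that equal orbit portrait together with equal critical Ecalle height forces $a_1=a_2$ — and then deduce the ``same arc'' statement from it.

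For the rigidity statement, since the parabolic cycles share an orbit portrait, they have a common period $k$ and combinatorial rotation number, and the holomorphic return map $F_{a_i}^{\circ 2k}$ on an attracting petal is conjugate to $\zeta\mapsto\zeta+1$. I would choose the normalized attracting Fatou coordinates in the characteristic Fatou components, conjugating the anti-holomorphic return $F_{a_i}^{\circ k}$ to $\zeta\mapsto\overline{\zeta}+\frac12$ so that the equator is $\R/\Z$. Unlike the even-period situation, here the Fatou coordinate is unique only up to a \emph{real} additive constant, so the imaginary part of the image of the critical value $\infty$ — i.e. the critical Ecalle height — is a genuine invariant and cannot be adjusted. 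This is precisely where the hypothesis enters: because the two critical Ecalle heights coincide, the images of $\infty$ under the two Fatou coordinates have equal imaginary parts, and the remaining real degree of freedom lets me match the real parts as well. The Fatou coordinates then induce a conformal conjugacy between the first return maps of the petals carrying $\infty$ to $\infty$, which I spread by the dynamics to a conformal conjugacy on a neighborhood $U$ of the entire post-critical set.

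Next I would glue: using Lemma~\ref{qc_conjugacy_alpha}, construct a $K$-quasiconformal map $\xi_0$ of the sphere agreeing with $\psi_{a_1}^{a_2}$ on $E_{a_1}^1$ and with the spread conjugacy on $U$, and then run the lifting procedure of Proposition~\ref{rigidity_center}: $\xi_0$ lifts through $F_{a_1}$ and $F_{a_2}$ to a $K$-qc map $\xi_1$ (normalized by $\xi_1(\infty)=\infty$), homotopic to $\xi_0$ relative to the post-critical set and the singular points, and iterating yields a global $K$-qc conjugacy $\xi$ between $F_{a_1}$ and $F_{a_2}$. This $\xi$ is holomorphic on the parabolic basin (being built from Fatou coordinates there) and equals $\psi_{a_1}^{a_2}$ on the tiling set, so it is conformal off the limit set; since the limit set of a geometrically finite map has measure zero (cf.~\cite[Proposition~7.3]{LLMM1}), $\xi$ is conformal on all of $\hat{\C}$. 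Normalized to fix $0$, $\infty$ and $\frac14$, it must be the identity, whence $a_1=a_2$.

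Finally, for the ``same arc'' claim I would argue as follows. Given only that $a_1$ and $a_2$ share an orbit portrait, apply the quasiconformal deformation of Theorem~\ref{parabolic_arcs_schwarz} to $a_1$ to produce a parameter $a_1'$ on the parabolic arc through $a_1$ whose critical Ecalle height equals that of $a_2$; this deformation is supported on the parabolic basin and hence preserves the orbit portrait. Now $a_1'$ and $a_2$ have the same orbit portrait and the same critical Ecalle height, so the rigidity statement just proved gives $a_1'=a_2$, and therefore $a_2$ lies on the parabolic arc through $a_1$. The main obstacle throughout is the reduced (real rather than complex) freedom in normalizing the Fatou coordinate: this is exactly what makes combinatorics alone insufficient and forces the Ecalle-height hypothesis into the matching step for the critical value.
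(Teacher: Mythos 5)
Your proof is correct, and its rigidity half (equal orbit portrait plus equal critical Ecalle height forces $a_1=a_2$) is essentially the paper's own argument: normalized Fatou coordinates conjugating $F_{a_i}^{\circ k}$ to $\zeta\mapsto\overline{\zeta}+\tfrac12$, unique up to a real translation, so that equal heights let you match the images of the critical values; spreading the resulting conformal petal conjugacy over the post-critical set; gluing with $\psi_{a_1}^{a_2}$ via Lemma~\ref{qc_conjugacy_alpha}; and running the pullback of Proposition~\ref{rigidity_center} to get a conjugacy that is conformal off a measure-zero limit set, hence the identity. Where you genuinely diverge is the ``same arc'' half, and your route is the leaner one. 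The paper proves that part \emph{first}: it builds a global quasiconformal (in general non-conformal) conjugacy $\xi$ between $F_{a_1}$ and $F_{a_2}$ by inserting the height-changing deformation of \cite{MNS} into the petal conjugacy, then composes with the arc deformation $\hat{\xi}$ from $a_1$ to the height-$h_2$ parameter $a_2'$ on its arc; the two dilatations agree on the basin, so $\hat{\xi}\circ\xi^{-1}$ is conformal on the basin and on the tiling set, hence the identity, giving $a_2=a_2'$. You instead prove rigidity first and deduce the arc statement by sliding $a_1$ along its arc (Theorem~\ref{parabolic_arcs_schwarz}) to the parameter $a_1'$ of critical Ecalle height $h_2$ and applying rigidity to the pair $a_1',a_2$; this is legitimate because the paper establishes, just before the proposition, that the orbit portrait of the parabolic cycle is constant along a parabolic arc, and there is no circularity since the rigidity half never uses the arc statement. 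Your ordering avoids ever constructing the non-conformal conjugacy $\xi$, so the pullback machinery is only needed in its conformal-on-the-basin form; what the paper's ordering buys in exchange is the explicit intermediate fact that any two parameters with the same odd-period parabolic orbit portrait are quasiconformally conjugate (the non-rigidity phenomenon of Remark~\ref{odd_non_rigid}), which your argument does not exhibit directly.
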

\begin{proof}
Suppose that the critical {\'E}calle heights of $F_{a_1}$ and $F_{a_2}$ be $h_1$ and $h_2$ respectively. Using the deformation constructed in \cite[Theorem~3.2]{MNS}, we obtain a quasiconformal conjugacy $\phi^{a_2}_{a_1}$ between the first return maps of $F_{a_i}$ on the attracting petals such that the conjugacy preserves the post-critical set. Let us denote the union of attracting petals (containing the post-critical set) where the conjugacy is defined by $U$. Note that if $h_1\neq h_2$, this conjugacy is not conformal. 

We now construct a $K$-qc map $\xi_0$ of the sphere that agrees with $\psi_{a_1}^{a_2}$ on $E_{a_1}^1$, and with the conjugacy $\phi_{a_1}^{a_2}$ constructed above on $U$ (the existence of such a map is guaranteed by Lemma~\ref{qc_conjugacy_alpha}). Then, $\xi_0$ lifts to a $K$-qc map $\xi_1$ of the sphere that agrees with $\xi_0$ on $E_{a_1}^1\cup U$. In particular, $\xi_1$ is homotopic to $\xi_0$ relative to the post-critical set and the irregular fixed points. One can now proceed as in Proposition~\ref{rigidity_center} to conclude that iterating this lifting procedure yields a quasiconformal homeomorphism $\xi$ of the sphere that agrees with $\psi_{a_1}^{a_2}$ on $T_{a_1}^\infty$, and conjugates $F_{a_1}$ to $F_{a_2}$.

Let us assume that the parameter $a_1$ lies on the parabolic arc $\mathscr{C}$. We denote the critical {\'E}calle height $h_2$ parameter on $\mathscr{C}$ by $a_2'$.  By Theorem~\ref{parabolic_arcs_schwarz}, there exists a quasiconformal conjugacy $\widehat{\xi}$ between $F_{a_1}$ and $F_{a_2'}$ such that $\widehat{\xi}$ is conformal on $T_{a_1}^\infty$. It is now easy to see that $\widehat{\xi}\circ\xi^{-1}$ is a quasiconformal conjugacy between $F_{a_2}$ and $F_{a_2'}$ such that $\widehat{\xi}\circ\xi^{-1}$ agrees with $\psi_{a_2}^{a_2'}:=\psi_{a_2'}^{-1}\circ\psi_{a_2}$ on the tiling set and is conformal on the parabolic basin. Since parabolic limit sets have measure zero (see Theorem~\ref{geom_finite_limit_set}), it follows that $\widehat{\xi}\circ\xi^{-1}$ is a conformal conjugacy between $F_{a_2}$ and $F_{a_2'}$. Moreover, $\widehat{\xi}\circ\xi^{-1}$ fixes $0$, $\infty$, and $\frac14$. Hence, it is the identity map implying that $a_2=F_{a_2}^{\circ 2}(0)=F_{a_2'}^{\circ 2}(0)=a_2'$. Thus, $a_1$ and $a_2$ lie on the same parabolic arc $\mathscr{C}$, proving the first part of the proposition.

Let us now assume that $h_1=h_2$. In this case, the conjugacy $\phi^{a_2}_{a_1}$ between the first return maps of $F_{a_i}$ on the attracting petals (such that it preserves the post-critical set) can be chosen to be conformal. Indeed, both the maps have a simple parabolic cycle of common odd period $k$. Choose an attracting petal containing the critical value $\infty$ and an attracting Fatou coordinate (in the characteristic Fatou component of $F_{a_i}$) that conjugates the first return map of the petal to the glide reflection $\zeta\mapsto\overline{\zeta}+1/2$. These Fatou coordinates are unique up to addition of a real constant. Since $F_{a_1}$ and $F_{a_2}$ have equal critical {\'E}calle height, we can arrange so that the critical values of $F_{a_1}$ and $F_{a_2}$ have the same image under the Fatou coordinates. Hence, the Fatou coordinates induce a conformal conjugacy $\phi^{a_2}_{a_1}$ between the first return maps of the petals that sends $\infty$ to $\infty$. Using $F_{a_1}$ and $F_{a_2}$, we now spread this conjugacy to suitable attracting petals in all the periodic Fatou components such that the domain of the conjugacy contains the entire post-critical set of $F_{a_1}$. 

We can now argue as in the first part of the proposition to obtain a quasiconformal conjugacy $\xi$ between $F_{a_1}$ to $F_{a_2}$ that agrees with $\psi_{a_1}^{a_2}$ on $T_{a_1}^\infty$, and is conformal on the parabolic basin. But this implies that $\xi$ is the identity map, and hence $a_1=a_2$.
\end{proof}

\begin{corollary}\label{para_arcs_no_intersect}
Two distinct parabolic arcs do not intersect.
\end{corollary}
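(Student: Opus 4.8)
The plan is to deduce this directly from the weak rigidity statement for odd period parabolics (Proposition~\ref{rigidity_para_odd}), together with the fact recorded in the discussion preceding that proposition that the orbit portrait of the parabolic cycle is constant along any parabolic arc. I would argue by contradiction: suppose two \emph{distinct} parabolic arcs $\mathcal{C}_1$ and $\mathcal{C}_2$ share a common parameter $a_0$.

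First I would observe that, since $a_0$ lies on each arc and the orbit portrait of the parabolic cycle is constant along a parabolic arc, the two arcs carry the \emph{same} orbit portrait, namely the orbit portrait $\mathcal{P}$ associated with the parabolic cycle of $F_{a_0}$. This is the only place where the intersection hypothesis is used. In particular both arcs consist of simple parabolic parameters of odd period realizing $\mathcal{P}$.

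Next I would invoke the critical Ecalle height parametrization from Theorem~\ref{parabolic_arcs_schwarz}: each $\mathcal{C}_i$ is a real-analytic arc $h\mapsto a_i(h)$ ($h\in\mathbb{R}$) for which $h$ is exactly the critical Ecalle height of $F_{a_i(h)}$. Fixing an arbitrary $h\in\mathbb{R}$, the parameters $a_1(h)$ and $a_2(h)$ are simple parabolic parameters of odd period whose parabolic cycles have the same orbit portrait $\mathcal{P}$ and whose critical Ecalle heights both equal $h$. The second (equal critical Ecalle height) clause of Proposition~\ref{rigidity_para_odd} then forces $a_1(h)=a_2(h)$. Since $h$ is arbitrary, this yields $\mathcal{C}_1=\mathcal{C}_2$, contradicting the assumption that the arcs are distinct; hence no two distinct arcs can meet.

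I do not anticipate a genuine obstacle here, since the substantive analytic work---the pullback/quasiconformal surgery argument establishing uniqueness from the combined combinatorial and analytic data---is already packaged inside Proposition~\ref{rigidity_para_odd}. The only point requiring a little care is the bookkeeping: one must confirm that a common point of the two arcs is itself a \emph{simple} parabolic parameter of odd period, so that both the constancy of the orbit portrait along arcs and the rigidity proposition genuinely apply. This holds because parabolic arcs consist precisely of such parameters (the cusp points, being of type $q=2$ in Proposition~\ref{hyp_odd_parabolic}, occur only as omitted limit points at the ends of arcs and are therefore not counted as points of the arcs themselves).
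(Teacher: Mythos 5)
Your proposal is correct and is essentially the paper's own argument: the paper states this corollary without proof as an immediate consequence of Proposition~\ref{rigidity_para_odd}, and your deduction---constancy of the parabolic orbit portrait along each arc, followed by a pointwise application of the equal-critical-Ecalle-height rigidity clause to the two Ecalle height parametrizations---is exactly the intended reasoning, spelled out. Your closing remark about cusps is also the right bookkeeping point, since parabolic arcs consist only of simple parabolic parameters, with cusps occurring only as limit points at their ends.
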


Let us fix a parabolic arc $\mathscr{C}$, and its critical {\'E}calle height parametrization $a:\R\to\mathscr{C}$. Since $\cC(\mathcal{S})$ is bounded, $\mathscr{C}$ accumulates on both ends of $\R$; i.e., $\overline{\mathscr{C}}$ is a  compact connected set in $\C$. It is easy to see that any accumulation point of $\mathscr{C}$ (as the critical {\'E}calle height goes to $\pm\infty$) is a parabolic cusp of the same period (compare \cite[Lemma~3.3]{MNS}). Since there are only finitely many parabolic cusps of a given period, $\mathscr{C}$ limits at parabolic cusp points on both ends. Note also that in the dynamical plane of a parabolic cusp, the double parabolic points are formed by the merger of a simple parabolic point with a repelling point. 

\begin{proposition}[Fixed point index on parabolic arcs]\label{index_to_infinity_schwarz}
Along any parabolic arc of odd period, the holomorphic fixed point index of the parabolic cycle is a real valued real-analytic function that tends to $+\infty$ at both ends.
\end{proposition}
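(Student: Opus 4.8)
The plan is to transfer the proof of the corresponding statement for the Tricorn, namely Proposition~\ref{index goes to infinity} (i.e.\ \cite[Proposition~3.7]{HS}), to the family $\mathcal{S}$. The point is that every ingredient of that argument is \emph{local} near the parabolic and cusp points, and near its parabolic cycle $F_a$ is anti-holomorphic; so the local theory of anti-holomorphic parabolic germs applies verbatim. Fix a parabolic arc $\cC$ of odd period $k$ with critical Ecalle height parametrization $a:\R\to\cC$ (Theorem~\ref{parabolic_arcs_schwarz}). For $h\in\R$ let $z_{a(h)}$ be the characteristic parabolic point, a fixed point of the holomorphic iterate $F_{a(h)}^{\circ 2k}$ with multiplier $+1$, and set $\ind_{\cC}(h):=\iota(F_{a(h)}^{\circ 2k},z_{a(h)})$, which is the fixed point index of the parabolic cycle (equivalently, the index of $F_{a(h)}^{\circ 2}$ at the period-$k$ parabolic orbit).

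First I would establish that $\ind_{\cC}$ is real-valued and real-analytic. Real-valuedness is the reflection-symmetry phenomenon special to anti-holomorphic dynamics: the anti-holomorphic return map $F_{a(h)}^{\circ k}$ fixes $z_{a(h)}$ and conjugates $F_{a(h)}^{\circ 2k}$ to itself reversing orientation, so the relevant coefficient of the local normal form $w\mapsto w+w^2+\alpha w^3+\cdots$ computing the index is invariant under complex conjugation, forcing $\ind_{\cC}(h)\in\R$. Real-analyticity follows from the real-analyticity of $h\mapsto a(h)$ (Theorem~\ref{parabolic_arcs_schwarz}) together with real-analytic dependence of the parabolic point and of the normal-form coefficients on the parameter; since the index is a contour integral of a real-analytically varying family of holomorphic functions over a fixed loop, it depends real-analytically on $h$.

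Next I would analyze the two ends. By the discussion preceding the proposition, $\cC$ limits at a parabolic cusp of period $k$ as $h\to+\infty$ (and similarly as $h\to-\infty$), and in the dynamical plane of the cusp the double parabolic point is the merger of the simple parabolic point $z_{a(h)}$ with a repelling periodic point $w_{a(h)}$. The engine of the proof is conservation of the total holomorphic fixed point index: integrating the index density over a fixed small circle enclosing exactly these two points, the sum $\iota(F_{a(h)}^{\circ 2k},z_{a(h)})+\iota(F_{a(h)}^{\circ 2k},w_{a(h)})$ varies continuously and converges as $h\to+\infty$ to the (finite) index of the double parabolic point of the cusp; in particular it stays bounded. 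Now $w_{a(h)}$ is one of the boundary fixed points of the anti-holomorphic return map $F_{a(h)}^{\circ k}$, so exactly as in the computation of Subsection~\ref{sec_hyp_Tricorn} its multiplier $\lambda_h=(F_{a(h)}^{\circ 2k})'(w_{a(h)})$ is real and positive; being repelling and merging with a point of multiplier $+1$, continuity gives $\lambda_h\to 1^{+}$. Hence $\iota(F_{a(h)}^{\circ 2k},w_{a(h)})=1/(1-\lambda_h)\to-\infty$, and since the sum is bounded we conclude $\ind_{\cC}(h)\to+\infty$. The same argument at the other end yields the limit as $h\to-\infty$.

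The main obstacle is the precise description of the collision at the cusp: showing that the unfolding of the double point consists of exactly one simple parabolic point together with one repelling point, that the latter is genuinely the boundary fixed point $w_{a(h)}$ whose multiplier is real (so that $1/(1-\lambda_h)\to-\infty$ rather than $+\infty$), and that the limiting cusp index is finite so that boundedness of the sum can be invoked. All of this is governed by the Ecalle-cylinder geometry at the cusp, and I would import the quantitative local analysis of \cite[\S 3]{HS} directly, using that $F_a$ is anti-holomorphic near the parabolic cycle and that the critical Ecalle height parametrization realizes precisely the symmetric one-parameter unfolding along which that analysis is carried out. Everything else (real-analytic parameter dependence and the accumulation of $\cC$ at cusps) is already available in the family $\mathcal{S}$, so the transfer is soft once the local collision picture is in place.
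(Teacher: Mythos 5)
Your proposal is correct and follows essentially the same route as the paper: the paper's proof likewise transfers \cite[Proposition~3.7]{HS}, observing that as the parameter approaches a cusp the simple parabolic cycle merges with a repelling cycle whose (real) holomorphic fixed point index diverges to $-\infty$, while the sum of the two indices converges to the finite index of the double parabolic cycle, forcing the parabolic index to tend to $+\infty$. Your added details (reality of the index via the anti-holomorphic symmetry, reality and positivity of the repelling multiplier as in Subsection~\ref{sec_hyp_Tricorn}, and real-analytic dependence via the critical Ecalle height parametrization) are exactly the points the paper leaves implicit, and they are argued correctly.
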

\begin{proof}
The proof is similar to that of \cite[Proposition~3.7]{HS}. Indeed, as we move along a parabolic arc towards one of the cusp points at its end, the simple parabolic cycle merges with a repelling cycle, and the sum of their holomorphic fixed point indices converges to the fixed point index of the double parabolic cycle of the cusp parameter (see Subsection~\ref{bif_odd_per_hyp_subsubsec} for the definition of holomorphic fixed point index). But it is easy to see that, as in the anti-polynomial case, the holomorphic fixed point index of the repelling cycle is real and diverges to $-\infty$ as the parameter converges to a cusp. Since the limiting double parabolic cycle has a finite index, it follows that the holomorphic fixed point index of the simple parabolic cycle (which is also real) must tend to $+\infty$. 
\end{proof}

It now follows by arguments similar to those used in \cite[Theorem~3.8, Corollary~3.9]{HS} that:

\begin{theorem}[Bifurcations along arcs]\label{ThmBifArc_schwarz}
Every parabolic arc of period $k$ intersects the boundary of a hyperbolic component of period $2k$ along an arc consisting of the set of parameters where the parabolic fixed point index is at least $1$. In particular, every parabolic arc has, at both ends, an interval of positive length at which bifurcation from a hyperbolic component of odd period $k$ to a hyperbolic component of period $2k$ occurs.
\end{theorem}

Roughly speaking, when a parameter on such a bifurcating arc is perturbed outside the odd period hyperbolic component, the simple parabolic periodic point splits into two attracting periodic points that lie on the same orbit of $F_a$.

The next proposition contains some partial information about the relation between critical {\'E}calle height and parabolic fixed point index on the bifurcating region of a parabolic arc. For any $h$ in $\mathbb{R}$, we denote the residue fixed point index of the unique parabolic cycle of $F_{a(h)}^{\circ 2}$ by $\ind_{\mathscr{C}}(F_{a(h)}^{\circ 2})$. 

\begin{proposition}\label{index_increasing}
Let $H$ be a hyperbolic component of odd period $k$ in $\cC(\mathcal{S})$, $\mathscr{C}$ be a parabolic arc on $\partial H$, $a:\mathbb{R}\to\mathscr{C}$ be the critical {\'E}calle height parametrization of $\mathscr{C}$, and let $H'$ be a hyperbolic component of period $2k$ bifurcating from $H$ across $\mathscr{C}$. Then there exists some $h_0>0$ such that 
$$
\mathscr{C}\cap\partial H'=a[h_0,+\infty).
$$
Moreover, the function 
$$
\ind_{\mathscr{C}}: [h_0,+\infty) \to [1,+\infty),\quad h \mapsto \ind_{\mathscr{C}}(F_{a(h)}^{\circ 2}).
$$
is strictly increasing, and hence a bijection.
 \end{proposition}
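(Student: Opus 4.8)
The plan is to treat this as the exact Schwarz-reflection counterpart of Proposition~\ref{index_increasing_1} (proved for the Tricorn in \cite[Corollary~2.10]{IM2}), exploiting that every quantity in the statement is read off the local dynamics near the characteristic parabolic cycle and its basin. First I would fix, as in the statement, the orientation of the critical Ecalle height parametrization $a:\mathbb{R}\to\mathcal{C}$ so that $H'$ is the period-$2k$ component bifurcating across the $h\to+\infty$ end of $\mathcal{C}$; by Proposition~\ref{rigidity_para_odd} the map $a$ is a genuine bijection (all parameters on $\mathcal{C}$ carry a common parabolic orbit portrait, and distinct Ecalle heights give distinct parameters), so this parametrization is well defined. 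With this setup, the identification $\mathcal{C}\cap\partial H'=a[h_0,+\infty)$ and the target interval $[1,+\infty)$ follow from the two results already established in this subsection: Theorem~\ref{ThmBifArc_schwarz} says the bifurcation locus along $\mathcal{C}$ is exactly $\{a(h):\ind_{\mathcal{C}}(h)\geq 1\}$, while Proposition~\ref{index_to_infinity_schwarz} gives $\ind_{\mathcal{C}}(h)\to+\infty$ as $h\to+\infty$. Thus the content that remains, and that pins down $h_0$ as the unique point of this end with $\ind_{\mathcal{C}}(h_0)=1$ while upgrading surjectivity to a bijection, is the strict monotonicity of $\ind_{\mathcal{C}}$ on the bifurcating end.

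For the monotonicity I would argue locally. By Theorem~\ref{parabolic_arcs_schwarz} the quasiconformal deformation producing $\mathcal{C}$ is supported on the parabolic basin, so both the critical Ecalle height and $\ind_{\mathcal{C}}(h)$ depend only on the conformal conjugacy class of the return-map germ at the characteristic parabolic cycle together with its marked critical orbit. Writing $f:=F_{a(h)}^{\circ k}$ for the anti-holomorphic first-return map, its square $f^{\circ 2}=F_{a(h)}^{\circ 2k}$ is a holomorphic parabolic germ with multiplier $+1$ (Proposition~\ref{hyp_odd_parabolic}) whose residue fixed point index is, by definition, $\ind_{\mathcal{C}}(h)$, while $h$ itself is the imaginary part of the image of the critical value under the anti-holomorphic Fatou coordinate of $f$. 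Since this germ-with-marked-orbit carries precisely the same data as in the unicritical anti-polynomial setting, the perturbed Fatou coordinate / Ecalle--Voronin analysis underlying \cite[Corollary~2.10]{IM2} applies verbatim and yields that $\ind_{\mathcal{C}}$ is strictly increasing in $h$ on the interval where it is at least $1$. Combined with the structural reduction of the previous paragraph, this gives the claimed bijection onto $[1,+\infty)$.

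The genuine obstacle is the strict monotonicity itself: unlike Proposition~\ref{index_to_infinity_schwarz} and Theorem~\ref{ThmBifArc_schwarz}, which are soft statements about limiting behavior and the location of the bifurcation locus, monotonicity is a quantitative comparison of two \emph{a priori} unrelated conformal invariants (the residue index and the critical Ecalle height) and cannot be seen from the topology of $\partial H'$ alone. Indeed, the multiplier map of $H'$ (Theorem~\ref{unif_hyp_schwarz}) collapses the entire arc $\mathcal{C}\cap\partial H'$ to the single boundary value $1$, precisely the ``bifurcation across an arc rather than a point'' phenomenon, so it detects nothing along $\mathcal{C}$. The only available leverage is the delicate local description of parabolic bifurcation, and my justification for importing it wholesale from \cite{IM2} is locality: the deformation defining $\mathcal{C}$ is confined to the parabolic basin, the invariants are computed from the germ $(f,\text{marked critical orbit})$, and this germ is indistinguishable from its anti-polynomial analogue, exactly as in the proofs of Theorem~\ref{parabolic_arcs_schwarz} and Proposition~\ref{index_to_infinity_schwarz} earlier in this subsection.
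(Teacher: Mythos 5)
Your top-level skeleton does match the paper's route: the paper disposes of this proposition in one sentence, asserting that the proofs of Lemma~2.8 and Corollary~2.10 of \cite{IM2} apply mutatis mutandis, and your reduction to Theorem~\ref{ThmBifArc_schwarz}, Proposition~\ref{index_to_infinity_schwarz}, plus an imported monotonicity statement is exactly that reduction. The genuine gap is in your justification of the one step that carries all the content, the strict monotonicity of $\ind_{\mathcal{C}}$. You claim the \cite{IM2} analysis ``applies verbatim'' because the deformation defining $\mathcal{C}$ is supported on the parabolic basin and both invariants are computed from the return-map germ with marked critical orbit. But monotonicity is not a property of a single germ: it is a statement about how two conformal invariants co-vary as the parameter moves along the arc, and no purely germ-local argument can produce it --- abstractly, one can reparametrize a one-parameter family of such germs so that the index is an arbitrary real-analytic function of the critical Ecalle height. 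Indeed, your heuristic proves too much: if the height--index relation were pinned down by local data ``indistinguishable from its anti-polynomial analogue,'' the index functions of corresponding arcs in $\cC(\mathcal{S})$ and $\mathcal{L}$ would coincide, which is precisely what Section~\ref{straightening_discont} (Proposition~\ref{uniform} and the numerical discussion following it) shows to be false; that failure is the entire point of the discontinuity of straightening.

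What actually licenses the ``mutatis mutandis'' transfer is that every \emph{global} ingredient of the \cite{IM2} proof has a Schwarz counterpart established in this paper, and the crucial one is the ingredient you explicitly discard: the multiplier map of the even-period component $H'$. Contrary to your remark that it ``detects nothing along $\mathcal{C}$,'' it is the engine of the argument. On $H'$ one considers $I(a)=\frac{1}{1-\lambda(a)}+\frac{1}{1-\overline{\lambda(a)}}$; by the continuity of the index under this degeneration (the same relation~(\ref{multiplier_index}) used in Proposition~\ref{uniform}), $I$ extends to $\mathcal{C}\cap\partial H'$ with boundary values $\ind_{\mathcal{C}}$, and its level sets are the pull-backs, under the multiplier map, of the horocycles of $\D$ tangent at $1$. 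Injectivity of the multiplier map (Theorem~\ref{unif_hyp_schwarz}, proved for $\mathcal{S}$ by a family-specific quasiconformal surgery) makes these level curves disjoint and nested, and chasing their endpoints on the arc is what forces $\ind_{\mathcal{C}}$ to be strictly monotone; it is also what yields the connectedness of $\mathcal{C}\cap\partial H'$ and the positivity of $h_0$, neither of which follows from Theorem~\ref{ThmBifArc_schwarz} and Proposition~\ref{index_to_infinity_schwarz} alone (the locus $\{\ind_{\mathcal{C}}\geq 1\}$ contains neighborhoods of \emph{both} ends of $\mathcal{C}$, and the two ends bifurcate to two \emph{different} period-$2k$ components, so it is not itself $\mathcal{C}\cap\partial H'$). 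So your proposal should be repaired by replacing the locality argument with a re-run of the \cite{IM2} argument in which Theorem~\ref{unif_hyp_schwarz} and relation~(\ref{multiplier_index}) are cited as the Schwarz-side substitutes for the corresponding Tricorn inputs.
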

 \begin{proof}
The proof of  \cite[Lemma~2.13 , Corollary~2.21]{IM2} can be applied mutatis mutandis to our setting.
 \end{proof}

\subsection{Parameter rays at periodic angles}\label{para_orbit_portrait}

In this subsection, we will first look at the connection between orbit portraits associated with parabolic parameters on the boundary of an odd period hyperbolic component $H$ and the angles of parameter rays accumulating on $\partial H$. Subsequently, we will discuss the relation between the orbit portrait associated with the root of an even period hyperbolic component $H$ that does not bifurcate from an odd period hyperbolic component and the angles of parameter rays landing at the root of $H$.

We begin with a preliminary result.

\begin{lemma}[Orbit separation lemma]\label{orbit_separation}
Let $F_a$ have a parabolic cycle. Then, the characteristic parabolic point of $F_a$ can be separated from any other point on the parabolic orbit by two (pre-)periodic dynamical rays landing at a common repelling (pre-)periodic point.
\end{lemma}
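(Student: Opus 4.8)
The plan is to reduce the statement to a purely combinatorial separation fact about angles under $\rho$, which can be imported from the anti-polynomial setting through $\mathcal{E}$, and then to \emph{realize} the separating configuration dynamically using local connectivity of the limit set. First I would record the standing hypotheses. Since $F_a$ is parabolic, its non-escaping set is connected and its limit set $\Gamma_a$ is locally connected, so by Proposition~\ref{rays_land_schwarz} every dynamical ray lands and, conversely, every point of $\Gamma_a$ is the landing point of at least one ray; moreover $\Gamma_a$ carries a pinched-disk (lamination) model. The parabolic cycle $\mathcal{O}=\{z_1,\dots,z_p\}$ (with $z_1$ the characteristic parabolic point on $\partial U_a$) is the landing set of finitely many $\rho$-periodic rays, assembled into an orbit portrait $\mathcal{P}$ (Theorem~\ref{complete_antiholomorphic_schwarz}). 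It suffices to separate $z_1$ from a single, arbitrary $z_j$ with $j\neq 1$.

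Second, I would produce the separation combinatorially. In the lamination model the classes corresponding to $z_1$ and $z_j$ are distinct points of the quotient, so there is a leaf $\ell$ of the geodesic lamination whose two complementary sides contain $z_1$ and $z_j$ respectively. The endpoints of $\ell$ are the angles of two dynamical rays co-landing at a common point $w\in\Gamma_a$, and $\{R_a(t),R_a(t')\}\cup\{w\}$ then separates $z_1$ from $z_j$. Because $\rho$-(pre-)periodic angles are dense in the circle and $z_1,z_j$ lie in distinct gaps, $\ell$ can be chosen with (pre-)periodic endpoints; equivalently, one may transport the whole picture to the anti-doubling map $m_{-2}$ via $\mathcal{E}$ (using Proposition~\ref{orbit_portraits_preserved} to preserve the unlinking/portrait combinatorics) and invoke the analogous separation statement for quadratic anti-polynomials, then pull the separating angles back by $\mathcal{E}$. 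A constructive alternative, avoiding density, is to pull back the two characteristic rays $R_a(t^{\pm})$ landing at $z_1$: a non-cyclic preimage $w$ of $z_1$ is a strictly pre-parabolic, hence repelling, pre-periodic point at which the corresponding ray preimages co-land, and a finite number of such pull-backs along the orbit yields the required separation.

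The main obstacle is ensuring that the common landing point $w$ is genuinely a \emph{repelling} (pre-)periodic point rather than a further parabolic point, and that the two chosen rays truly co-land. Co-landing at a common point is guaranteed by local connectivity of $\Gamma_a$ together with Proposition~\ref{rays_land_schwarz}. For the repelling property I would use that $F_a$ has only finitely many parabolic cycles, whose grand orbit is a countable, nowhere dense set; hence a generic (pre-)periodic separating leaf lands either at a point of a repelling periodic orbit or at a strictly pre-parabolic point, and in both cases the landing point is a cut point at which $F_a$ (or an inverse branch thereof) is locally expanding, as required. The anti-holomorphic subtlety — that periodicity is measured under $\rho$, a double cover with three \emph{parabolic} fixed points rather than $m_{-2}$ — is handled throughout by $\mathcal{E}$, which matches $\rho$-(pre-)periodic angles with $m_{-2}$-(pre-)periodic angles and preserves the combinatorial structure of the orbit portrait.
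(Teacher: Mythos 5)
The central gap is exactly where you locate ``the main obstacle'': producing a landing point that is \emph{repelling} (pre-)periodic. Your constructive alternative pulls back the characteristic rays of $z_1$, but every point so obtained is an iterated preimage of the \emph{parabolic} point, and your assertion that such a point is ``strictly pre-parabolic, hence repelling'' is false: a pre-parabolic point is a pre-periodic point whose eventual cycle is parabolic, not repelling, and no appeal to inverse branches being ``locally expanding'' repairs this (expansion fails along the attracting directions of the limiting parabolic cycle). The same error occurs in your ``generic leaf'' argument, where the strictly pre-parabolic case is declared to be ``as required''. This is not a terminological quibble: the lemma is invoked in Propositions~\ref{para_ray_periodic} and~\ref{misi_para_land} precisely because a repelling (pre-)periodic point, and the rays landing at it, persist under perturbation of $a$, while a parabolic cycle---and hence anything pre-parabolic---does not. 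Moreover, since $F_a$ has a unique critical point, its parabolic cycle is its only non-repelling cycle, and the lamination of a parabolic map is generated by pullbacks of the characteristic geodesic (the parabolic analogue of Proposition~\ref{char_angles_lamination}), i.e.\ by leaves all of whose landing points are pre-parabolic; so density of (pre-)periodic angles does not hand you a single separating leaf of the good kind, and exhibiting one is the actual content of the lemma. The paper obtains such points from the dynamics of $F_a$ on the parabolic Hubbard tree, following \cite[Lemma~3.7]{S1a}: branch points of the tree are repelling pre-periodic (as noted in Subsection~\ref{comb_orbit_portrait}), and hyperbolic-metric expansion along the tree (in the spirit of Lemma~\ref{basin_dense_tree}) places iterated preimages of repelling periodic cut points on the arc of the tree between $z_1$ and $z_j$.

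Two further inferences also need repair. First, ``the classes of $z_1$ and $z_j$ are distinct points of the quotient, so there is a leaf separating them'' is not valid in lamination theory: two distinct classes can both lie on the closure of a common gap (Fatou component), in which case no leaf---and indeed no pair of co-landing rays---separates them. For two points of the parabolic orbit this must be excluded; it can be (by a Denjoy--Wolff argument, the first return map of each periodic Fatou component has exactly one non-repelling fixed point on its boundary, so distinct points of the parabolic cycle never share a Fatou component boundary), but your write-up never addresses it. Second, the $\mathcal{E}$-transport route imports only combinatorics, not dynamics: $\mathcal{E}$ conjugates $\rho$ to $m_{-2}$ on the circle (Proposition~\ref{orbit_portraits_preserved}), so co-landing of two rays at a repelling point in the plane of some quadratic anti-polynomial says nothing about co-landing of the $\mathcal{E}$-corresponding rays in the plane of $F_a$ unless one already knows that $\mathcal{E}_\ast(\lambda(F_a))$ \emph{is} the rational lamination of that anti-polynomial. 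In this paper that identification for parabolic maps (Proposition~\ref{char_angles_lamination_2}, Theorem~\ref{realization_orbit_portrait_parabolic_schwarz}, Corollary~\ref{hyp_para_bijec_cor}) is established downstream of the present lemma, so invoking it here would be circular.
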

\begin{proof}
The proof uses the dynamics of $F_a$ on the parabolic Hubbard tree, and is analogous to that of \cite[Lemma~3.7]{S1a}.
\end{proof}

\begin{proposition}[Accumulation points of periodic parameter rays]\label{para_ray_periodic}
Let $\theta\in(1/3,2/3)$ be periodic under $\rho$, and $a_0\in\cC(\mathcal{S})$ be an accumulation point of the parameter ray at angle $\theta$. Then, $F_{a_0}$ has a parabolic cycle of period $k$ dividing the period of $\theta$ such that the corresponding dynamical ray at angle $\theta$ lands at the characteristic parabolic point of $F_{a_0}$.
\end{proposition}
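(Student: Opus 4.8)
The plan is to transfer information from the parameter ray into the dynamical plane of $a_0$ via the duality recorded after Definition~\ref{para_ray_schwarz}, and then to run an orbit-portrait style argument in the spirit of the anti-polynomial case (\cite[Lemma~4.1]{IM1}, \cite[Lemma~3.7]{S1a}). Let $m$ denote the period of $\theta$ under $\rho$. For any $a$ on the parameter ray $\mathcal{R}_\theta$, the critical value $\infty$ lies on the dynamical ray $R_a(\theta)$; since $\rho^{\circ m}(\theta)=\theta$, the entire critical orbit of $F_a$ is then confined to the $\rho$-periodic cycle of dynamical rays $\bigcup_{i=0}^{m-1} R_a(\rho^{\circ i}(\theta))$. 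First I would choose a sequence $a_n\in\mathcal{R}_\theta$ with $a_n\to a_0$. Because $\mathbf{\Psi}$ is proper (Lemma~\ref{prop_proper}) and $a_0\in\cC(\mathcal{S})$, the images $\psi_{a_n}(\infty)=\mathbf{\Psi}(a_n)$ leave every compact subset of $\D_2$; as they lie on the $\mathcal{G}$-ray at angle $\theta$, they converge to the landing point of that ray on $\partial\D$, and the depths $n(a_n)$ tend to $\infty$.

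Next I would analyze the dynamical plane of $F_{a_0}$. The ray $R_{a_0}(\theta)$ is $\rho^{\circ m}$-invariant, so adapting the Douady--Hubbard landing theorem for periodic rays (which goes through here because $\rho$ is expansive on $\mathbb{T}$, as exploited in Proposition~\ref{char_angles_lamination}), this ray lands at a periodic point $w_0\in\Gamma_{a_0}$ of some period $k$ dividing $m$, and $w_0$ is either repelling or parabolic. I would then show that $w_0$ cannot be repelling. The key point is that landing of a periodic ray at a repelling cycle is stable: there is a parameter neighborhood $N\ni a_0$ throughout which the repelling cycle, together with the cycle of rays landing on it, moves continuously, so that $R_a(\rho^{\circ i}(\theta))$ lands at the continuation of $w_0$ for every $a\in N$. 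For the parameters $a_n\in N\cap\mathcal{R}_\theta$ the critical value $\infty$ is trapped on this stable ray cycle, which forces the critical orbit to shadow the repelling cycle; this is a positive-codimension condition cut out precisely by the parameter rays themselves, and hence cannot persist on the open set $N$ where the ray-landing combinatorics are constant. Hence $w_0$ is parabolic, so $F_{a_0}$ has a parabolic cycle of period $k\mid m$.

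Finally I would identify $w_0$ with the characteristic parabolic point. Since $F_{a_0}$ is parabolic, its critical value $\infty$ lies in a periodic Fatou component $U_{a_0}$, the critical orbit being attracted to the parabolic cycle by \cite[Proposition~6.26]{LLMM1}; by definition $U_{a_0}$ is the characteristic Fatou component. The rays $R_{a_n}(\theta)$ carry $\infty$ and land at the continuations $w_0(a_n)\to w_0$, so $\infty$ accumulates on $w_0$; combined with $\infty\in U_{a_0}$ this yields $w_0\in\partial U_{a_0}$, and being parabolic, $w_0$ is the characteristic parabolic point, the unique parabolic point of $\partial U_{a_0}$ (that $\theta$ records the ray landing at this distinguished point of the parabolic orbit, rather than another point of the cycle, is ensured by the orbit separation lemma, Lemma~\ref{orbit_separation}). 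I expect the main obstacle to be the rigorous exclusion of the repelling case: turning the heuristic ``stability of the ray cycle versus thinness of the parameter-ray condition'' into a genuine contradiction requires a careful perturbation analysis of the landing of periodic rays and of the position of the critical value relative to the cycle of rays, which is precisely the technical heart of the dynamical-to-parameter-plane orbit-portrait correspondence.
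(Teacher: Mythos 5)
Your skeleton --- the periodic ray lands at a repelling-or-parabolic periodic point, the repelling case is excluded by a perturbation argument, and the landing point is identified via orbit separation --- is the same as the paper's (which, for the first step, simply cites \cite[Proposition~6.34]{LLMM1} rather than re-deriving a landing theorem). But the step you yourself flag as ``the technical heart'', the exclusion of the repelling case, is genuinely missing, and the codimension heuristic you offer in its place cannot close it. The missing idea is a concrete dynamical fact: for a parameter $a$ on the parameter ray at angle $\theta$, with $\theta$ periodic of period $m$ under $\rho$, the dynamical ray $R_a(\theta)$ \emph{bifurcates}, i.e.\ it crashes into an iterated pre-critical point and therefore does not land at all. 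Indeed, since $\infty\in R_a(\theta)$ and $F_a^{-1}(\infty)=\{0\}$, the $F_a$-preimage of $R_a(\theta)$ branches at the critical point $0$; the ray that crashes into $0$ is the one at angle $\rho^{\circ(m-1)}(\theta)$, which belongs to the \emph{same} cycle of angles, and pulling this failure back $m-1$ more times along the cycle shows that $R_a(\theta)$ itself terminates at a pre-critical point at finite depth. On the other hand, if $w_0$ were repelling, the implicit function theorem together with the analogue of \cite[Lemma~2.2]{S1a} shows that $R_a(\theta)$ lands at the real-analytic continuation $w(a)$ for \emph{every} $a$ in a neighborhood $N$ of $a_0$, in particular for the parameters of $N$ lying on the parameter ray: landing and bifurcating are incompatible, and that is the contradiction. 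Your version --- a condition ``cut out by the parameter rays'' cannot ``persist on the open set $N$'' --- is not a contradiction as stated, because the condition ($\infty$ lying on the ray cycle) is only asserted on the intersection of $N$ with the parameter ray, which is not open; stability of the landing pattern on $N$ is in no tension with a closed condition holding on a one-dimensional subset of $N$.

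The identification of $w_0$ as the characteristic parabolic point is also broken as written. Once $w_0$ is known to be parabolic, it has no continuations $w_0(a_n)$ (parabolic cycles do not persist under perturbation), and the phrase ``$\infty$ accumulates on $w_0$'' has no content: $\infty$ is the critical value itself, a single point lying in the \emph{interior} of the characteristic Fatou component, so it neither lies on nor converges to $\partial U_{a_0}$. The paper's argument is again a perturbation contradiction, with Lemma~\ref{orbit_separation} as the engine rather than a parenthetical assurance: assume $w_0$ is \emph{not} the characteristic parabolic point; the lemma produces two (pre-)periodic rays landing at a common \emph{repelling} (pre-)periodic point which separate the characteristic parabolic point (hence the critical value $\infty$) from $w_0$, and hence from $R_{a_0}(\theta)$; since this configuration involves only repelling points, it persists for all $a$ near $a_0$; but for parameters $a$ near $a_0$ on the parameter ray, $\infty$ lies on $R_a(\theta)$, on the wrong side of the separating curve --- contradiction. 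So both exclusion steps in your proposal need the same mechanism you did not supply: the duality ``$a$ on the $\theta$-parameter ray $\Rightarrow\infty\in R_a(\theta)$'' played against a dynamically defined configuration (a landing ray, or a separating pair of rays) that is stable under perturbation.
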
 
\begin{proof}
The proof is similar to the classical proof of landing of rational parameter rays of the Mandelbrot set (cf. \cite[Theorem~C.7]{GM1} \cite[Propositions~3.1, 3.2]{S1a}).

By Proposition~\ref{per_rays_land}, the dynamical ray of $F_{a_0}$ at angle $\theta$ lands at a $k$-periodic repelling or parabolic point $w_0\in \Gamma_{a_0}$. Clearly, $k$ divides the period of the angle $\theta$ (under $\rho$).

Let $w_0$ be a repelling periodic point. Then, by the Implicit Function Theorem and the existence of Koenigs linearization coordinates for repelling points, there exist a neighborhood $U$ of $a_0$ in the parameter space and a real-analytic function $w:U\to\widehat{\C}$ such that $w(a_0)=w_0$, the point $w(a)$ is a repelling periodic point of period $k$ of $F_a$, and the dynamical ray of $F_a$ at angle $\theta$ lands at $w(a)$ for all $a\in U$ (cf. \cite[Lemma~B.1]{GM1}). But there are parameters $a$ near $a_0$ and lying on the parameter ray at angle $\theta$. For such parameters $a$, the corresponding dynamical ray at angle $\theta$ bifurcates. This contradiction proves that $w_0$ is a parabolic periodic point.

Let us assume that $w_0$ is not the characteristic parabolic point of $F_{a_0}$. By Lemma~\ref{orbit_separation}, the characteristic parabolic point of $F_{a_0}$ can be separated from $w_0$ by two (pre-)periodic dynamical rays landing at a common repelling (pre-)periodic point. Evidently, this separation line persists under perturbation, and separates $\infty$ from the dynamical ray at angle $\theta$. However, for parameters $a$ near $a_0$ and lying on the parameter ray at angle $\theta$, the critical value $\infty$ lies on the dynamical ray at angle $\theta$. Once again, this is a contradiction, which proves that $w_0$ is the characteristic parabolic point of $F_{a_0}$. 
\end{proof}

Recall the notions of $\rho$-FOP and $m_{-2}$-FOP from Definitions~\ref{def_orbit_portrait_schwarz} and~\ref{def_orbit_portrait}, respectively. Using Proposition~\ref{orbit_portraits_preserved}, one can transfer combinatorial/topological results about $m_{-2}$-FOPs to corresponding results for $\rho$-FOPs. In particular, among all the complementary arcs of the various $\mathcal{A}_j$ of a $\rho$-FOP $\mathcal{P}$, there is a unique one of minimum length. As in the anti-polynomial case, we call this shortest arc $\mathcal{I}_{\mathcal{P}}$ the \emph{characteristic arc} of $\mathcal{P}$, and refer to the two angles $t^{-},t^{+}$ at the ends of this arc as the \emph{characteristic angles} of $\mathcal{P}$ (compare \cite[Lemma~3.2]{Sa}). We can assume, without loss of generality, that $0<t^+-t^-<1/2$.

Let $t^\pm\in(1/3,2/3)$ with $t^-=Q((i_1,i_2,\cdots))$, $t^+=Q((j_1,j_2,\cdots,))$ (see  \cite[\S 2]{LLMM1}). Then, there exists a complementary component of $\left(\mathfrak{T}^{i_1}\cup\mathfrak{T}^{i_1,i_2}\cup\cdots\right)\cup\left(\mathfrak{T}^{j_1}\cup\mathfrak{T}^{j_1,j_2}\cup\cdots\right)\cup\cC(\mathcal{S})$ (see Definition~\ref{para_tiles} for the definition of parameter tiles) that contains the tail of any sequence of tiles determined by any $\theta\in(t^-,t^+)$.  We say that $a$ lies between the parameter rays at angles $t^-$ and $t^+$ if $a$ lies in this component.
 
The next theorem asserts that every $\rho$-FOP with characteristic angles in $(1/3,2/3)$ is realized by some member of $\mathcal{S}$ outside $\cC(\mathcal{S})$. 

\begin{theorem}[Realization of $\rho$-FOP outside $\cC(\mathcal{S})$]\label{realization_orbit_portrait_outside_schwarz}
Let $\mathcal{P} = \{\mathcal{A}_1,$ $\mathcal{A}_2,$ $\cdots,$ $\mathcal{A}_p \}$ be a formal orbit portrait under $\rho$ with its characteristic angles $t^\pm$ in $(1/3,2/3)$. Then, there exists some $a\in\mathbb{C}\setminus\left((-\infty,-1/12)\cup\cC(\mathcal{S})\right)$ such that $F_a$ has a periodic orbit with associated orbit portrait $\mathcal{P}$.
\end{theorem}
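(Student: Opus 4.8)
The plan is to realize $\mathcal{P}$ by locating a suitable parameter in the escape locus via the tessellation of Theorem~\ref{thm_unif_exterior_conn_locus}, and then reading off the orbit portrait in its dynamical plane. First I would record the combinatorial data: by Proposition~\ref{orbit_portraits_preserved} the pull-back $\mathcal{E}^{\ast}(\mathcal{P})$ is an $m_{-2}$-FOP, and (as in the discussion following that proposition) its characteristic angles pull back under $\mathcal{E}$ to $t^{\pm}$. In particular, all the standard relations among the angles of $\mathcal{P}$ furnished by Theorem~\ref{complete_antiholomorphic_schwarz} --- periodicity, the fact that $\rho$ maps $\mathcal{A}_j$ bijectively onto $\mathcal{A}_{j+1}$, and unlinkedness of the $\mathcal{A}_i$ --- are available, and let me treat the $\rho$-orbits of $t^{\pm}$ purely combinatorially while working with $F_a$.

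Next I would choose the parameter. Because $\mathbf{\Psi}$ is a homeomorphism onto $\D_2$ and $t^{\pm}\in(1/3,2/3)$, the two parameter rays $\mathcal{R}_{t^-}$ and $\mathcal{R}_{t^+}$ bound a nonempty ``parameter wake'' in the escape locus, and I would pick a parameter $a$ lying between them (in the sense defined just before the theorem) whose critical orbit avoids the finitely many periodic angles occurring in $\mathcal{P}$. By the ray-duality noted after Definition~\ref{para_ray_schwarz}, the fact that $a$ lies between $\mathcal{R}_{t^-}$ and $\mathcal{R}_{t^+}$ forces the critical value $\infty$ of $F_a$ to lie in the dynamical sector bounded by $R_a(t^-)$ and $R_a(t^+)$. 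Since $a$ is in the escape locus, every periodic cycle of $F_a$ is repelling, and by Proposition~\ref{rays_land_schwarz}(2) each dynamical ray at a periodic angle of $\mathcal{P}$ either bifurcates or lands; the choice of $a$ (whose critical orbit meets none of these rays) rules out bifurcation for them, so they all land at repelling periodic points.

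It then remains to check that the resulting landing pattern is exactly $\mathcal{P}$. The core step is to show that $R_a(t^-)$ and $R_a(t^+)$ co-land at a common repelling periodic point $w_1$: this is where the trapping of $\infty$ in the characteristic sector is used, via a pullback/orbit-separation argument in the spirit of Lemma~\ref{orbit_separation} and the classical co-landing proofs, ruling out that a forward image of $\infty$ separates the two rays along their common backward orbit. Once the characteristic rays co-land, applying $F_a$ (equivalently $\rho$ on angles) propagates the identification around the cycle; the $m_{-2}$-combinatorics transported by $\mathcal{E}$, together with unlinkedness of the $\mathcal{A}_j$, guarantees that each $\mathcal{A}_j$ is precisely the set of angles landing at $w_j$, so the orbit of $w_1$ has orbit portrait $\mathcal{P}$.

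The main obstacle is this last verification --- establishing the co-landing of the characteristic dynamical rays, and then that no coarser or finer identification occurs, i.e.\ that the realized portrait is exactly $\mathcal{P}$ rather than a sub- or super-portrait. Controlling this requires a careful analysis of the position of the escaping critical value relative to the configuration of periodic rays, plus a stability-of-ray-pairs argument as $a$ ranges over the wake. The non-holomorphic dependence of $F_a$ on $a$, and the fact that the external map $\rho$ has three \emph{parabolic} fixed points (in contrast to the three repelling fixed points of $m_{-2}$), mean that the classical arguments cannot be quoted verbatim but must be run through the conjugacy $\mathcal{E}$ and the tessellation of Theorem~\ref{thm_unif_exterior_conn_locus}.
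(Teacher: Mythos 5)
Your proposal takes essentially the same route as the paper's proof: pick a parameter lying between the parameter rays at the characteristic angles $t^\pm$, establish that the dynamical rays $R_a(t^-)$ and $R_a(t^+)$ co-land at a common repelling periodic point, and then use the combinatorial transfer via $\mathcal{E}$ (Proposition~\ref{orbit_portraits_preserved}) plus a judicious choice of parameter to conclude that the realized portrait is exactly $\mathcal{P}$. The paper compresses precisely these steps into adaptations of \cite[Lemma~3.4]{Sa} (co-landing for every parameter in the wake) and \cite[Lemmas~2.9,~3.3, Theorem~3.1]{Sa} (characteristic angles determine the portrait, and exactness), which is the same technical core you identify.
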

\begin{proof}
Adapting the proof of \cite[Lemma~3.4]{Sa}, one can show that in the dynamical plane of every parameter $a$ lying between the parameter rays at angles $t^-$ and $t^+$, the dynamical rays at angles $t^-$ and $t^+$ land at a common point $w\in \Gamma_a$. Using Proposition~\ref{orbit_portraits_preserved}, one then obtains analogues of \cite[Lemma~2.9, Lemma~3.3]{Sa} to the effect that the characteristic angles $t^-$ and $t^+$ essentially determine the actual orbit portrait $\mathcal{P}'$ associated with the periodic point $w$. Finally, one proceeds as in \cite[Theorem~3.1]{Sa} to conclude that for judicious choices of $a$ between the parameter rays at angles $t^-$ and $t^+$, the actual orbit portrait $\mathcal{P}'$ associated with $w$ coincides with $\mathcal{P}$.
\end{proof}

The following proposition, which is an analogue of Theorem~\ref{realization_orbit_portrait_parabolic} for the family $\mathcal{S}$, will play an important role in the sequel.

\begin{theorem}[Realization of $\rho$-FOP at parabolic parameters]\label{realization_orbit_portrait_parabolic_schwarz}
Let $\mathcal{P}= \{\mathcal{A}_1,$ $\mathcal{A}_2,$ $\cdots,$ $\mathcal{A}_p \}$ be a $\rho$-FOP with characteristic angles $t^\pm\in(1/3,2/3)$.

1) Suppose that $p$ is odd, and $t^\pm$ have period $2p$. Then the parameter rays of $\mathcal{S}$ at angles $t^-$ and $t^+$ accumulate on a common root parabolic arc $\mathscr{C}$ such that for every parameter $a\in\mathscr{C}$, the map $F_a$ has a parabolic cycle of period $p$ and the orbit portrait associated with the parabolic cycle of $F_a$ is $\mathcal{P}$. 

2) Suppose that $p$ is even. Then the parameter rays of $\mathcal{S}$ at angles $t^-$ and $t^+$ land at a common parabolic parameter $a$ (whose parabolic cycle has period $p$) such that the orbit portrait associated with the parabolic cycle of $F_a$ is $\mathcal{P}$. 
\end{theorem}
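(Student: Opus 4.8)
The plan is to follow the proof of Theorem~\ref{realization_orbit_portrait_parabolic} for the Tricorn essentially line by line, replacing each combinatorial input about quadratic anti-polynomials by its $\mathcal{S}$-counterpart. The transfer mechanism is the circle conjugacy $\mathcal{E}$: by Proposition~\ref{orbit_portraits_preserved}, $\mathcal{E}^{\ast}(\mathcal{P})$ is an $m_{-2}$-FOP of the same combinatorial type (period $p$, characteristic angles $\mathcal{E}^{-1}(t^\pm)$), so every purely combinatorial statement about orbit portraits of anti-polynomials (\cite[Lemmas~2.9,~3.3]{Sa}, \cite[Lemmas~2.7,~2.8]{M2a}) pushes forward verbatim to $\rho$-FOPs. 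The parameter-plane input, which in the Tricorn case was supplied by \cite[Lemma~4.1]{IM1}, will here come from Proposition~\ref{para_ray_periodic} together with the theory of parabolic arcs and period-doubling bifurcations developed in Subsection~\ref{odd_para_doubling} and the rigidity statement Proposition~\ref{rigidity_para_even}.

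For part~(1), I would first apply the $\mathcal{E}$-transferred version of \cite[Lemma~2.9]{Sa} to conclude that, since $p$ is odd and $t^\pm$ have period $2p$, the characteristic class of $\mathcal{P}$ is exactly $\mathcal{A}_1=\{t^-,t^+\}$ with $t^+=\rho^{\circ p}(t^-)$. The substantive step is then the Schwarz analogue of \cite[Lemma~4.1]{IM1}, namely that $\mathcal{R}^{\mathcal{S}}_{t^-}$ and $\mathcal{R}^{\mathcal{S}}_{t^+}$ accumulate on a \emph{common} root parabolic arc $\cC$. I would argue it as follows: both rays accumulate on $\cC(\mathcal{S})$ (remark following Definition~\ref{para_ray_schwarz}), and by Proposition~\ref{para_ray_periodic} each accumulation point carries a parabolic cycle of period dividing $2p$ whose characteristic parabolic point is the landing point of the relevant $2p$-periodic dynamical ray. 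Using the $\mathcal{E}$-transferred classification of Theorem~\ref{complete_antiholomorphic_schwarz}, a pair of period-$2p$ angles of the form $\{t^-,\rho^{\circ p}(t^-)\}$ can only land at a parabolic point of odd period $p$ which is a cut point, i.e. a dynamical root (Proposition~\ref{prop_root_schwarz}); hence the accumulation set lies on root parabolic arcs of odd period $p$. Since the deformation defining a parabolic arc is conformal on the tiling set (discussion preceding Proposition~\ref{rigidity_para_odd}), the parabolic orbit portrait is constant along each such arc, so an entire arc $\cC$ is filled out, and the $\mathcal{E}$-transferred \cite[Lemma~4.8]{MNS} certifies that $t^-,t^+$ are the only rays landing at the characteristic parabolic point, giving the realized portrait $\mathcal{P}$.

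For part~(2), with $p$ even, I would reproduce the dichotomy of the Tricorn argument via the same $\mathcal{S}$-version of \cite[Lemma~4.1]{IM1}: the rays $\mathcal{R}^{\mathcal{S}}_{t^-}$ and $\mathcal{R}^{\mathcal{S}}_{t^+}$ either accumulate on a common root parabolic arc $\cC$ or land at a common parabolic parameter of even period. The first alternative is excluded exactly as before: a root arc has odd period $k$ with characteristic angles of period $2k$, forcing $p\mid k$, which is impossible since $p$ is even and $k$ is odd. Thus the rays land at a common even-period parabolic parameter $a$ whose parabolic cycle has some portrait $\mathcal{P}'$; by the $\mathcal{E}$-transferred \cite[Lemma~3.3]{Sa} both $\mathcal{P}$ and $\mathcal{P}'$ are primitive or satellite, and the transferred \cite[Lemma~2.8]{M2a} identifies them from their common characteristic angles, so $\mathcal{P}'=\mathcal{P}$. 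That the landing point is a single parameter (rather than a larger accumulation set) follows from the even-period rigidity Proposition~\ref{rigidity_para_even}.

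The genuine obstacle is the parameter-plane coupling — the Schwarz analogue of \cite[Lemma~4.1]{IM1} — since Proposition~\ref{para_ray_periodic} only controls the accumulation behavior of one ray at a time, and one must show that the two characteristic rays accumulate on the \emph{same} arc (part 1) and, in the even case, \emph{land} rather than merely accumulate at a single parabolic parameter. Upgrading individual accumulation to a common arc or point will require combining the orbit-separation Lemma~\ref{orbit_separation} (to forbid any competing parabolic orbit portrait from interleaving between the two rays), the finiteness of parabolic cusps of a given period, and Corollary~\ref{para_arcs_no_intersect}. The remaining combinatorial bookkeeping is routine once Proposition~\ref{orbit_portraits_preserved} is in hand.
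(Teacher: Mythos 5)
Your overall strategy (transfer the combinatorics through $\mathcal{E}$ and mimic the Tricorn proof of Theorem~\ref{realization_orbit_portrait_parabolic}) is indeed the paper's strategy, and your handling of the purely combinatorial steps — the transferred \cite[Lemma~2.9]{Sa} and \cite[Lemma~3.3]{Sa}, the parity exclusion of the root-arc alternative in part~2, the identification $\mathcal{P}=\mathcal{P}'$ via \cite[Lemma~2.8]{M2a}, and landing via Proposition~\ref{rigidity_para_even} — matches the paper. But there is a genuine gap at precisely the step you flag as ``the genuine obstacle'': the paper's substitute for \cite[Lemma~4.1]{IM1} is \emph{not} assembled from Proposition~\ref{para_ray_periodic}, Lemma~\ref{orbit_separation}, finiteness of cusps and Corollary~\ref{para_arcs_no_intersect}. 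It rests on Theorem~\ref{realization_orbit_portrait_outside_schwarz} — the realization of $\mathcal{P}$ by a (repelling) periodic orbit in the dynamical plane of \emph{every} parameter lying between the two parameter rays outside $\cC(\mathcal{S})$ — which the paper establishes immediately beforehand precisely for this purpose, and which your proposal never invokes. That theorem creates the wake: since the dynamical rays at $t^\pm$ co-land throughout the wake, a stability argument shows that at any accumulation point of \emph{either} parameter ray the co-landing persists while the cycle must have become parabolic, and only then does the combinatorics force the period to be $p$ and the portrait to be $\mathcal{P}$.

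Without this input, your classification step is circular. At an accumulation point $a_0$ of the single ray $\mathcal{R}^{\mathcal{S}}_{t^-}$, Proposition~\ref{para_ray_periodic} only yields that the dynamical ray at $t^-$ lands at the characteristic parabolic point, whose period $k$ divides $2p$. Your assertion that ``a pair of period-$2p$ angles of the form $\{t^-,\rho^{\circ p}(t^-)\}$ can only land at a parabolic point of odd period $p$'' presupposes that $t^+$ lands at the \emph{same} point, which is exactly what is unproven: case~(4a) of Theorem~\ref{complete_antiholomorphic_schwarz} permits $k$ even (say $k=2d$ with $d\mid p$, e.g.\ $k=2p$ with $r=1$), in which case $t^+=\rho^{\circ p}(t^-)$ lands at a \emph{different} point of the parabolic orbit and the portrait of $F_{a_0}$ is not $\mathcal{P}$. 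The tools you propose cannot close this: Lemma~\ref{orbit_separation} separates the characteristic point from other orbit points, but in this bad scenario $t^-$ genuinely does land at the characteristic point, so no contradiction with nearby ray parameters arises; and finiteness of cusps or non-intersection of arcs say nothing about which portraits occur on the accumulation sets, nor why the two rays should see the same one. The same omission undercuts part~2, since the dichotomy (common root arc versus common even-period parabolic parameter) is itself an output of the wake argument. To repair the proof, first prove the analogue of \cite[Theorem~3.1]{Sa}, i.e.\ Theorem~\ref{realization_orbit_portrait_outside_schwarz}, and run the wake argument with it, as the paper does.
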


\begin{proof}
1) By Theorem~\ref{realization_orbit_portrait_outside_schwarz}, the dynamical rays at angles $t^-$ and $t^+$ land at a common point of $\Gamma_a$ for all parameters $a\notin\cC(\mathcal{S})$ lying between the parameter rays at angles $t^-$ and $t^+$. One can now employ a standard wake argument, analogous to the one in \cite[Lemma~4.1]{IM1}, to conclude that the parameter rays of $\mathcal{S}$ at angles $t^-$ and $t^+$ must accumulate on a common root parabolic arc $\mathscr{C}$ (such that the rays together with $\mathscr{C}$ separate the plane). The details are similar to the proof of Theorem~\ref{realization_orbit_portrait_parabolic}.

2) Once again, the proof is completely similar to that of Theorem~\ref{realization_orbit_portrait_parabolic}. The fact that the parameter rays at angles $t^-$ and $t^+$ land follows from discreteness of (even-type) parabolic parameters with a prescribed orbit portrait (see Proposition~\ref{rigidity_para_even}).
\end{proof}

Let us now fix a hyperbolic component $H$ of odd period $k$ and center $a_0$. The first return map of the closure of the characteristic Fatou component of $a_0$ fixes exactly three points on its boundary. Only one of these fixed points disconnects the non-escaping set, and is the landing point of two distinct dynamical rays at $2k$-periodic angles (see Proposition~\ref{prop_root_schwarz}). Let the set of the angles of these two rays be $S' = \{\alpha_1,\alpha_2 \}$. Then, $\alpha_2=(-2)^k\alpha_1$, and $S'$ is the set of characteristic angles of the corresponding orbit portrait. Each of the remaining two fixed points is the landing point of precisely one dynamical ray at a $k$-periodic angle; let the collection of the angles of these rays be $S = \{ \theta_1, \theta_2\}$. We can, possibly after renumbering, assume that $0 < \alpha_1 < \theta_1 < \theta_2 < \alpha_2$ and $\alpha_2 - \alpha_1 < \frac{1}{2}$. 

Since parabolic cusps are isolated (see Proposition~\ref{rigidity_para_even}) and $\partial H$ is connected, every parabolic cusp is the common limit point of two distinct parabolic arcs. By Theorem~\ref{ThmBifArc_schwarz}, every parabolic cusp and its nearby simple parabolic parameters are points of bifurcation from $H$ to a hyperbolic component of period $2k$. It follows that every parabolic cusp lies in the interior of $\cC(\mathcal{S})$ (compare \cite[Lemma~5.12]{MNS}). 

One can now argue as in the proof of \cite[Theorem~1.2]{MNS} to give a complete description of $\partial H$. Indeed, Theorem~\ref{realization_orbit_portrait_parabolic_schwarz}, Proposition~\ref{para_ray_periodic} and Proposition~\ref{rigidity_para_odd} imply that there are exactly three parabolic arcs on $\partial H$ which can be numbered in the following way. 

There is a unique parabolic arc (say, $\mathscr{C}_3$) such that the characteristic parabolic point in the dynamical plane of any parameter on $\mathscr{C}_3$ is the landing point of precisely two dynamical rays at angles $\alpha_1$ and $\alpha_2$. Moreover, the parameter rays at angles $\alpha_1$ and $\alpha_2$ (and no other) accumulate on $\mathscr{C}_3$. The rest of the two parabolic arcs (say, $\mathscr{C}_1$ and $\mathscr{C}_2$) on $\partial H$ have the property that the characteristic parabolic point in the dynamical plane of any parameter on $\mathscr{C}_i$ ($i=1,2$) is the landing point of precisely one dynamical ray at angle $\theta_i$. Furthermore, $\mathscr{C}_i$ ($i=1,2$) contains the accumulation set of exactly one parameter ray at angle $\theta_i$. 

At the parabolic cusp on $\partial H$ where $\mathscr{C}_1$ and $\mathscr{C}_2$ meet, the characteristic parabolic point is the landing point of exactly two dynamical rays at angles $\theta_1$ and $\theta_2$. The same is true at the center of the hyperbolic component of period $2k$ that bifurcates from $H$ across this parabolic cusp. Moreover, these angles are the characteristic angles of the corresponding orbit portrait.

On the other hand, at the parabolic cusp where $\mathscr{C}_1$ and $\mathscr{C}_3$ (respectively, $\mathscr{C}_2$ and $\mathscr{C}_3$) meet, the characteristic parabolic point is the landing point of precisely three dynamical rays at angles $\alpha_1$, $\alpha_2$ and $\theta_1$ (respectively, $\alpha_1$, $\alpha_2$ and $\theta_2$). As before, the same is true at the center of the hyperbolic component of period $2k$ that bifurcates from $H$ across this parabolic cusp. The characteristic angles of the corresponding orbit portrait are $\alpha_1$ and $\theta_1$ (respectively, $\theta_2$ and $\alpha_2$). Finally, Proposition~\ref{orbit_portraits_preserved} allows us to translate the second statement of \cite[Lemma~3.5]{Sa} to the current setting implying the following relation 
\begin{equation}
\rho^{\circ k}((\alpha_1,\theta_1))=(\theta_1,\alpha_2),\ \rho^{\circ k}((\theta_2,\alpha_2))=(\alpha_1,\theta_2).
\label{char_rays_relation_2}
\end{equation}

\begin{theorem}[Boundaries of odd period hyperbolic components]\label{Exactly 3}
The boundary of every hyperbolic component of odd period of $\cC(\mathcal{S})$ is a topological triangle having parabolic cusps as vertices and parabolic arcs as sides.
\end{theorem}

Let us briefly carry out a similar analysis for even period hyperbolic components that do not bifurcate from odd period ones. Let $H$ be a hyperbolic component of even period $k$ such that $H$ does not bifurcate from an odd period hyperbolic component. Let $\mathcal{A}_1$ be the set of angles of the dynamical rays landing at the dynamical root of $F_{a}$ (where $a\in H$ or $a$ is the root point of $H$). Then, the first return map of the dynamical root either fixes every angle in $\mathcal{A}_1$ and $\vert\mathcal{A}_1\vert=2$, or permutes the angles in $\mathcal{A}_1$ transitively (by Proposition~\ref{orbit_portraits_preserved} and \cite[Lemma~3.3]{Sa}). Moreover, the characteristic angles $t^\pm$ of the orbit portrait $\mathcal{P}$ generated by $\mathcal{A}_1$ are precisely the two adjacent angles in $\mathcal{A}_1$ (with respect to circular order) that separate $0$ from $\infty$, and bound a sector of angular width less that $\frac12$. It now follows from Theorem~\ref{realization_orbit_portrait_parabolic_schwarz} and Proposition~\ref{rigidity_para_even} that the parameter rays at angles $t^\pm$ land at the root point of $H$.

To conclude this subsection, let us state a generalization of Proposition~\ref{char_angles_lamination}.

\begin{proposition}\label{char_angles_lamination_2}
For any hyperbolic or parabolic map $F_a$, the pre-periodic lamination is completely determined by the characteristic angles of $F_a$.
\end{proposition}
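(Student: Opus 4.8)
The plan is to establish, exactly as in the super-attracting case of Proposition~\ref{char_angles_lamination}, that the pre-periodic lamination $\lambda(F_a)$ equals the closure of the family of iterated $\rho$-preimages of the characteristic geodesic $\ell$ (the leaf terminating at the characteristic angles $t^\pm$). Since $\ell$ is built from the characteristic angles alone and $\rho$ is a fixed combinatorial model independent of $a$, such a description shows at once that $\lambda(F_a)$ is determined by $t^\pm$. The inclusion ``closure of $\rho$-preimages of $\ell\subseteq\lambda(F_a)$'' is the easy direction: the rays $R_a(t^\pm)$ land at the dynamical root, so $\ell$ is a leaf of $\lambda(F_a)$, and because $\rho$ models the action of $F_a$ on angles of landing rays, every $\rho$-preimage of a leaf is again a pair of rays landing at a common (pre-)periodic point, hence a leaf; unlinkedness and disjointness follow as in the super-attracting case.

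First I would dispose of the hyperbolic case by reduction to the center. Let $H$ be the hyperbolic component containing $a$, with super-attracting center $a_0$. The dynamical uniformization of Theorem~\ref{unif_hyp_schwarz} realizes the passage from $a_0$ to $a$ by a quasiconformal surgery supported inside the Fatou component of $\Int K_a$ containing $0$; the resulting conjugacy is therefore conformal on $T_a^\infty$, respects the external conjugacies $\psi_{a_0}$ and $\psi_a$, and so carries the set of dynamical rays landing at common points of $F_{a_0}$ onto that of $F_a$. Consequently $F_a$ and $F_{a_0}$ share the same dynamical root, the same characteristic angles, and the same pre-periodic lamination. Applying Proposition~\ref{char_angles_lamination} to the center $a_0$ then exhibits $\lambda(F_a)=\lambda(F_{a_0})$ as the closure of the $\rho$-preimages of $\ell$, settling all hyperbolic $F_a$.

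The parabolic case cannot be reduced this way, since a parabolic map is not quasiconformally conjugate to any map in its bounding hyperbolic component, so I would run the spine-and-tree argument directly. Here $\Gamma_a$ is locally connected and every dynamical ray lands (Proposition~\ref{rays_land_schwarz}(1)), so $\lambda(F_a)$ is well defined. One then repeats the scheme of Proposition~\ref{char_angles_lamination}: every cut point of $\Gamma_a$ eventually lands on the spine $\mathfrak{S}_a$, every spine point eventually lands on the parabolic Hubbard tree $\mathcal{H}_a$, and every cut point on $\mathcal{H}_a$ is approximated by $\rho$-preimages of the characteristic leaf. The crucial input is the parabolic analogue of Lemma~\ref{basin_dense_tree}, that $\Int K_a\cap\mathcal{H}_a$ is dense in $\mathcal{H}_a$, where now $\Int K_a$ is the parabolic basin.

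The main obstacle will be precisely this density statement. The expansion estimate \eqref{expansion} driving the super-attracting proof relied on orbits staying a definite distance from the singular points $\tfrac14,\alpha_a$ and from the cycle $\mathbf{0}$, which there was \emph{repelling}; in the parabolic setting $\mathbf{0}$ is replaced by a \emph{neutral} parabolic cycle, the hyperbolic metric on $\hat{\C}\setminus(T_a\cup\mathbf{0})$ degenerates there, and no uniform expansion is available near the parabolic points. To circumvent this I would excise attracting and repelling petals around the parabolic cycle, using the Fatou coordinates from Proposition~\ref{normalization of fatou}, and run the expansion argument on the complement where uniform expansion is restored, while arguing separately that arcs of $\mathcal{H}_a$ accumulating on the parabolic cycle necessarily meet the parabolic basin. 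Throughout, the homotopy ambiguity of $\mathcal{H}_a$ inside the Fatou components must be tracked, but it does not affect $\lambda(F_a)$, which lives on $\Gamma_a$.
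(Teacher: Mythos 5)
Your treatment of the hyperbolic case is correct and is essentially the paper's own mechanism: the paper proves the whole proposition in one stroke by observing that the pre-periodic lamination and the characteristic angles stay constant throughout an odd period hyperbolic component together with the parabolic arcs on its boundary (respectively, an even period component together with its root), and then invoking Proposition~\ref{char_angles_lamination} at the center. Your surgery argument via Theorem~\ref{unif_hyp_schwarz}, with the conjugacy conformal on the tiling set, is precisely a justification of that constancy inside a component.

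The parabolic case is where your proposal breaks down, and it starts from a misconception. You discard the reduction to the center on the grounds that a parabolic map is not quasiconformally conjugate to any map of its bounding hyperbolic component. That is true but irrelevant: the reduction needs no conjugacy between $F_a$ and the center, only the equality of $\lambda(F_a)$ and of the characteristic angles with those of the center, and this is exactly what the parameter-space analysis of Sections~\ref{hyperbolic} and~\ref{odd_period_parabolic} supplies. Along a parabolic arc the deformation of Theorem~\ref{parabolic_arcs_schwarz} is conformal on the tiling set, so the lamination is constant on each arc; leaves carried by (pre-)repelling points persist as one passes from $H$ to $\partial H$; and the description of $\partial H$ (Theorem~\ref{realization_orbit_portrait_parabolic_schwarz}, Proposition~\ref{para_ray_periodic}, Proposition~\ref{rigidity_para_odd}) identifies the characteristic angles of every parabolic parameter on $\partial H$, and of every even-period root, with those of the corresponding center. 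Having rejected this soft route, you are forced to prove a parabolic analogue of Lemma~\ref{basin_dense_tree}, and that is a genuine gap, not a detail. In the super-attracting case the critical cycle $\mathbf{0}$ lies in the \emph{open} basin, so any arc of $\mathcal{H}_a$ avoiding the immediate basin automatically stays a definite distance from $\mathbf{0}$; this is what makes estimate~(\ref{expansion}) applicable at infinitely many times. For a parabolic map the cycle lies on $\Gamma_a$ itself, so arcs of the tree inside the limit set can accumulate on, indeed terminate at, the parabolic points. Excising petals via Fatou coordinates does not by itself restore uniformity: the comparison of hyperbolic metrics degenerates near the cycle, and forward images of such arcs may linger there for arbitrarily long times. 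Worse, your auxiliary claim that an arc of $\mathcal{H}_a$ accumulating on the parabolic cycle must meet the basin is not a ``separate'' argument at all --- it is a local form of the very density statement you are trying to prove. The direct route is viable in principle (it is the standard parabolic Hubbard tree argument, cf.\ \cite{HS,S1a}), but as written its hardest step is asserted rather than proved, whereas the paper's two-line reduction avoids it entirely.
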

\begin{proof}
It follows from the above discussion that the pre-periodic lamination and the characteristic angles remain unaltered throughout an odd period hyperbolic component and the parabolic arcs on its boundary (respectively, throughout an even period hyperbolic component and its root). The result now follows from Proposition~\ref{char_angles_lamination}.
\end{proof}

\subsection{Parameter rays at pre-periodic angles}\label{para_pre_per} 

In this section, we will study the landing properties of parameter rays of $\mathcal{S}$ at strictly pre-periodic angles. Let $a_0$ be a Misiurewicz parameter, and $\mathcal{A}'$ be the set of angles of the dynamical rays of $F_{a_0}$ landing at the critical point $0$. The set of angles of the dynamical rays of $F_{a_0}$ that land at the critical value $\infty$ is then given by $\mathcal{A}:=\rho(\mathcal{A}')\subset(1/3,2/3)$. Moreover, $\rho$ is two-to-one from $\mathcal{A}'$ onto $\mathcal{A}$, and is injective on all other $\lambda(F_{a_0})-$classes. As for quadratic anti-polynomials, the existence of a unique $\lambda(F_{a_0})$-equivalence class that maps two-to-one onto its image equivalence class (under $\rho$) characterizes pre-periodic laminations of Misiurewicz maps. It is called a pre-periodic lamination of Misiurewicz type.

\begin{proposition}[Landing of parameter rays at pre-periodic angles]\label{misi_para_land}
Let $\theta\in(1/3,2/3)$ be strictly pre-periodic under $\rho$. Then the parameter ray of $\mathcal{S}$ at angle $\theta$ lands at a Misiurewicz parameter such that in the corresponding dynamical plane, the dynamical ray at angle $\theta$ lands at the critical value $\infty$.
\end{proposition}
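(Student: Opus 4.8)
The plan is to adapt the classical proof that pre-periodic parameter rays of the Mandelbrot set (and of the Tricorn, cf.\ Theorem~\ref{Tricorn_para_misi_ray}) land at Misiurewicz parameters, using the duality between the dynamical and parameter planes furnished by Theorem~\ref{thm_unif_exterior_conn_locus} together with the combinatorial rigidity of Misiurewicz maps (Proposition~\ref{rigidity_misi}). The overall architecture mirrors the proof of Proposition~\ref{para_ray_periodic}, the principal difference being that here the accumulation set will turn out to be a single Misiurewicz point rather than a parabolic arc.

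First I would record that the accumulation set $A$ of the parameter ray $\mathcal{R}_\theta$ is a non-empty, compact, connected subset of $\cC(\mathcal{S})$: connectedness is automatic for the accumulation set of a ray, and containment in $\cC(\mathcal{S})$ is the content of the remark following Definition~\ref{para_ray_schwarz} (properness of $\mathbf{\Psi}$ prevents accumulation on the slit). The heart of the argument is to show that for every $a_0\in A$, the dynamical ray $R_{a_0}(\theta)$ lands at the critical value $\infty$. Since $\theta$ is pre-periodic under $\rho$, the ray $R_{a_0}(\theta)$ lands at a pre-periodic point $w_0\in\Gamma_{a_0}$ by \cite[Proposition~6.34]{LLMM1}. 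To identify $w_0$ with $\infty$, I would pick a sequence $a_n\to a_0$ with $a_n\in\mathcal{R}_\theta$; by the duality recorded after Definition~\ref{para_ray_schwarz}, for each $n$ the critical value $\infty$ lies on the dynamical ray $R_{a_n}(\theta)$, which (as $a_n$ belongs to the escape locus) passes through $\infty$ and is obstructed there. Continuity of the external conjugacy $\psi_a$ in the parameter over tiles of bounded rank yields convergence of the rays $R_{a_n}(\theta)$ to $R_{a_0}(\theta)$ up to their endpoints, forcing the landing point $w_0$ of the limiting ray to coincide with the common terminal point $\infty$. Crucially, $\infty\notin T_{a_0}^\infty$ because $a_0\in\cC(\mathcal{S})$, so $\infty$ cannot be an interior point of the ray and must therefore be its landing point. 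As $\theta$ is \emph{strictly} pre-periodic under $\rho$, the landing point $\infty$ is then strictly pre-periodic, i.e.\ $a_0$ is a Misiurewicz parameter at whose critical value the dynamical ray $R_{a_0}(\theta)$ lands.

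With this in hand, the landing of $\mathcal{R}_\theta$ follows from rigidity. Every point of the connected set $A$ is a Misiurewicz parameter, so $A$ contains no parabolic parameters and the landing pattern of all (pre-)periodic dynamical rays at the (persistent) repelling pre-periodic points is stable under perturbation within $A$; consequently the pre-periodic lamination $\lambda(F_{a_0})$ is locally constant as $a_0$ ranges over $A$. Connectedness of $A$ then makes this lamination constant on $A$, and Proposition~\ref{rigidity_misi} forces $A$ to be a single point. Hence $\mathcal{R}_\theta$ lands, and by the identification above its landing point is a Misiurewicz parameter whose critical value receives the dynamical ray at angle $\theta$, as claimed.

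The main obstacle I anticipate is the second step, namely the convergence argument that pins the landing point of $R_{a_0}(\theta)$ to the critical value $\infty$. Because the maps $F_a$ depend only real-analytically (and not holomorphically) on $a$, I cannot invoke holomorphic motions; instead I must control the rays $R_{a_n}(\theta)$ uniformly up to their endpoints using continuous dependence of $\psi_a$ on $a$ over tiles of bounded rank (as in the proof of properness within Theorem~\ref{thm_unif_exterior_conn_locus}), together with a distortion/length estimate guaranteeing that the terminal point does not prematurely escape to the boundary. An alternative, and perhaps cleaner, route is the stability-and-separation argument of \cite[\S 3]{S1a}: assuming $w_0\neq\infty$, the point $w_0$ and the critical value $\infty$ would be separated by a pair of (pre-)periodic rays landing at a common repelling point, a separation that persists under perturbation and contradicts the fact that $\infty$ lies on $R_a(\theta)$ for parameters $a\in\mathcal{R}_\theta$ arbitrarily close to $a_0$.
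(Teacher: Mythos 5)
Your overall skeleton---show that every accumulation point $a_0$ of $\mathcal{R}_\theta$ is a Misiurewicz parameter whose $\theta$-dynamical ray lands at the critical value, then collapse the connected accumulation set $A$ to a point by rigidity---matches the paper, and your first step (compactness, connectedness, containment in $\cC(\mathcal{S})$) is fine. But both delicate steps have genuine gaps. For the identification $w_0=\infty$, your primary route fails outright: continuity of $\psi_a$ over tiles of \emph{bounded} rank gives no control on landing points, because for $a_n\in\mathcal{R}_\theta$ with $a_n\to a_0\in\cC(\mathcal{S})$ the critical value sits on $R_{a_n}(\theta)$ at combinatorial depth tending to $+\infty$ (properness of $\mathbf{\Psi}$ pushes $\psi_{a_n}(\infty)$ toward $\partial\D_2$ along the $\mathcal{G}$-ray), i.e.\ exactly in the part of the ray that escapes every region where you have convergence; this failure is the very mechanism behind non-landing of Tricorn parameter rays (Theorem~\ref{most rays wiggle}). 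Your fallback separation argument is the right idea---it is what the paper does---but as stated it is unjustified: the existence of a pair of (pre-)periodic rays landing at a common repelling point and separating $w_0$ from $\infty$ is only available when the limit set is known to be locally connected, which holds for parabolic and Misiurewicz maps but is unknown for an arbitrary $a_0\in\cC(\mathcal{S})$. The paper therefore runs a case analysis in a specific order: (i) if $F_{a_0}$ were parabolic, the landing point $w_0$ is non-characteristic (a strictly pre-periodic angle cannot land at a periodic point), and separation inside the locally connected parabolic limit set gives a contradiction; (ii) knowing $w_0$ is repelling pre-periodic, if $w_0$ were not pre-critical it admits a real-analytic continuation and stability of landing (no separation needed, and none available) yields a contradiction, forcing $w_0$ to be pre-critical, hence $a_0$ Misiurewicz; (iii) only then does the dendrite structure of $\Gamma_{a_0}$ supply the separating ray pair ruling out $w_0\neq\infty$.

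The second gap is where you genuinely diverge from the paper, in the landing step. The claim that $\lambda(F_{a_0})$ is locally constant on $A$ does not follow from "stability of landing patterns": stability at repelling (pre-)periodic points persists each individual ray pair only on a neighborhood shrinking with the pre-period (the lamination has infinitely many classes), and it says nothing about classes through pre-critical points---the real-analytic continuation of the pre-periodic point sitting at $\infty$ has no reason to remain the critical value unless you feed step 2, applied at the perturbed parameter, back into the argument. Even granting that the critical value class is locally constant on $A$, passing to constancy of the full lamination requires knowing that a Misiurewicz lamination is the pullback closure of its critical value class, a Poirier-type rigidity statement the paper never proves directly for $\mathcal{S}$ (Proposition~\ref{char_angles_lamination} covers only super-attracting maps). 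The paper sidesteps all of this with a \emph{global} uniqueness statement: if $a_0,a_1\in\cC(\mathcal{S})$ are both Misiurewicz with the $\theta$-ray landing at $\infty$, push both laminations forward by $\mathcal{E}$, realize them by Misiurewicz parameters $c_0,c_1\in\mathcal{L}$ (Theorem~\ref{rat_lam_realized}), note that Theorem~\ref{Tricorn_para_misi_ray} forces the single Tricorn ray $\mathcal{R}_{\mathcal{E}(\theta)}$ to land at both, whence $c_0=c_1$, so $\lambda(F_{a_0})=\lambda(F_{a_1})$ and $a_0=a_1$ by Proposition~\ref{rigidity_misi}; connectedness of $A$ then gives landing with no stability analysis at all. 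If you wish to avoid the Tricorn transfer, you must supply this missing rigidity input; as written, your "consequently" conceals it.
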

\begin{proof}
Let $a_0$ be an accumulation point of the parameter ray of $\mathcal{S}$ at angle $\theta$. Arguing as in the second part of \cite[Theorem~37.35]{L6}, we will conclude that $a_0$ is a Misiurewicz parameter such that in the dynamical plane of $F_{a_0}$, the dynamical ray at angle $\theta$ lands at the critical value $\infty$. 

By Proposition~\ref{per_rays_land}, the dynamical ray of $F_{a_0}$ at angle $\theta$ lands at some repelling or parabolic pre-periodic point $w$ (as $\theta$ is strictly pre-periodic under $\rho$, the landing point cannot be periodic). Let us suppose that $F_{a_0}$ is a parabolic map. Note that as the landing point of the dynamical $\theta$-ray of $F_{a_0}$ is not periodic, the ray does not land at the characteristic parabolic point of $F_{a_0}$. Since the limit set of a parabolic map is locally connected and repelling periodic points are dense on the limit set (see Theorem~\ref{geom_finite_limit_set}), it follows that there exists a cut-line through repelling periodic points on $\Gamma_{a_0}$ separating the $\theta$-dynamical ray from the critical value. But such cut-lines remain stable under small perturbation. Therefore, for parameters sufficiently close to $a_0$, the $\theta$-dynamical ray stays away from the critical value. However, this is impossible as there are parameters near $a_0$ on the $\theta$-parameter ray for which the critical value lies on the $\theta$-dynamical ray. This contradiction shows that $F_{a_0}$ is not parabolic; i.e., the dynamical ray of $F_{a_0}$ at angle $\theta$ lands at some repelling pre-periodic point $w$.

We suppose that $w$ is not the critical value of $F_{a_0}$, and will arrive at a contradiction. If $w$ is not a pre-critical point either, then for nearby parameters, the $\theta$-dynamical ray would land at the real-analytic continuation of the repelling pre-periodic point $w$, and would stay away from the critical value. But there are parameters near $a_0$ on the $\theta$-parameter ray for which the critical value lies on the $\theta$-dynamical ray, a contradiction. Hence $w$ must be a pre-critical point implying that the critical point of $F_{a_0}$ is strictly pre-periodic. So $a_0$ is a Misiurewicz parameter. As the limit set of a Misiurewicz map is a dendrite (by Theorem~\ref{geom_finite_limit_set}) and repelling periodic points are dense on it (see Theorem~\ref{geom_finite_limit_set}), the dynamical ray at angle $\theta$ landing at $w$ can be separated from the critical value by a pair of dynamical rays landing at a common repelling periodic point. Once again, this separation line remains stable under perturbation, contradicting the existence of parameters near $a_0$ on the $\theta$-parameter ray. Hence, $w$ must be the critical value of $F_{a_0}$.

We claim that $a_0$ is the unique parameter in $\cC(\mathcal{S})$ with the property that the dynamical ray at angle $\theta$ lands at the critical value $\infty$. Since the limit set of a ray is connected, this will prove that the parameter ray at angle $\theta$ indeed lands at $a_0$.

To prove the claim, let us assume that there exists another parameter $a_1$ with the same property. Note that both the pre-periodic laminations $\lambda(F_{a_0})$ and $\lambda(F_{a_1})$ are of Misiurewicz type. Hence, the formal rational laminations $\mathcal{E}_{\ast}(\lambda(F_{a_0}))$ and $\mathcal{E}_{\ast}(\lambda(F_{a_1}))$ are also of Misiurewicz type. By Theorem~\ref{rat_lam_realized}, there exist Misiurewicz parameters $c_0$ and $c_1$ in $\mathcal{L}$ realizing these rational laminations. By construction, the dynamical ray $R_{c_0}(\mathcal{E}(\theta))$ (respectively, $R_{c_1}(\mathcal{E}(\theta))$) lands at the critical value $c_0$ (respectively, $c_1$) of $f_{c_0}$ (respectively, of $f_{c_1}$). It now follows by Theorem~\ref{Tricorn_para_misi_ray} that the parameter ray $\mathcal{R}_{\mathcal{E}(\theta)}$ lands both at $c_0$ and $c_1$ implying that $c_0=c_1$. Therefore, we have $\lambda(F_{a_0})=\lambda(F_{a_1})$. By Proposition~\ref{rigidity_misi}, we conclude that $a_0=a_1$. This completes the proof.
\end{proof}

Recall that for a Misiurewicz map $f_{c_0}$, the angles of the parameter rays of $\mathcal{T}$ landing at $c_0$ are exactly the external angles of the dynamical rays that land at the critical value $c_0$ in the dynamical plane of $f_{c_0}$ (see Theorem~\ref{Tricorn_para_misi_ray}). We will now prove an analogous statement for Misiurewicz parameters of $\cC(\mathcal{S})$.

\begin{proposition}[Correspondence between dynamical and parameter rays]\label{misi_dyn_para_rays}
Let $a_0\in\cC(\mathcal{S})$ be a Misiurewicz parameter. Then, the angles of the parameter rays (at pre-periodic angles) of $\mathcal{S}$ landing at $a_0$ are exactly the external angles of the dynamical rays that land at the critical value $\infty$ in the dynamical plane of $F_{a_0}$. 
\end{proposition}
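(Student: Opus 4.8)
The plan is to deduce this proposition almost entirely from Proposition~\ref{misi_para_land} together with the uniqueness statement established inside its proof. Write $\mathcal{A}$ for the set of external angles of the dynamical rays of $F_{a_0}$ landing at the critical value $\infty$, and $\mathcal{B}$ for the set of angles $\theta\in(1/3,2/3)$ such that the parameter ray $\mathcal{R}_\theta$ of $\mathcal{S}$ lands at $a_0$. The goal is to show $\mathcal{A}=\mathcal{B}$. First I would record that every $\theta\in\mathcal{A}$ is strictly pre-periodic under $\rho$ and lies in $(1/3,2/3)$: since $a_0$ is Misiurewicz, $\infty$ is a strictly pre-periodic point of $F_{a_0}$, so a ray landing at it cannot be periodic, and by the dynamical/parameter duality recorded after Definition~\ref{para_ray_schwarz} the critical value is accessed only from the $\D_2$-side of the tiling set, i.e.\ by rays at angles in $(1/3,2/3)$. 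Because the limit set of a Misiurewicz map is a dendrite (\cite[Proposition~7.9]{LLMM1}) and $\infty$ is a pre-periodic cut point of it, $\mathcal{A}$ is finite and non-empty; this already yields the ``finite (non-zero) number'' assertion.

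For the inclusion $\mathcal{B}\subseteq\mathcal{A}$: if $\mathcal{R}_\theta$ lands at $a_0$, then $\theta$ is pre-periodic, so Proposition~\ref{misi_para_land} applies and produces a Misiurewicz parameter at which $\mathcal{R}_\theta$ lands and whose dynamical ray at angle $\theta$ lands at $\infty$. Since a parameter ray has a unique landing point, that parameter must be $a_0$, whence $R_{a_0}(\theta)$ lands at $\infty$ and $\theta\in\mathcal{A}$.

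For the reverse inclusion $\mathcal{A}\subseteq\mathcal{B}$, fix $\theta\in\mathcal{A}$. By the previous paragraph $\theta\in(1/3,2/3)$ is strictly pre-periodic, so Proposition~\ref{misi_para_land} shows that $\mathcal{R}_\theta$ lands at some Misiurewicz parameter $a_1$ for which $R_{a_1}(\theta)$ lands at the critical value $\infty$ of $F_{a_1}$. Now I would invoke the uniqueness statement proved inside Proposition~\ref{misi_para_land}: $a_0$ is the \emph{unique} parameter in $\cC(\mathcal{S})$ whose dynamical ray at angle $\theta$ lands at the critical value. (That uniqueness is obtained by pushing the laminations $\lambda(F_{a_0})$ and $\lambda(F_{a_1})$ forward by $\mathcal{E}$ to Misiurewicz-type rational laminations, realizing them by Tricorn parameters via Theorem~\ref{rat_lam_realized}, matching them through the landing of $\mathcal{R}_{\mathcal{E}(\theta)}$ by Theorem~\ref{Tricorn_para_misi_ray}, and applying the rigidity Proposition~\ref{rigidity_misi}.) Since both $a_0$ and $a_1$ have their dynamical $\theta$-ray landing at the critical value, this forces $a_1=a_0$; hence $\mathcal{R}_\theta$ lands at $a_0$ and $\theta\in\mathcal{B}$.

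The proof is therefore a short combination of the existence and uniqueness parts of Proposition~\ref{misi_para_land}, and the only genuine point requiring care is the reverse inclusion: one must ensure that the uniqueness mechanism (the detour through the Tricorn via $\mathcal{E}$) is available for \emph{every} angle $\theta\in\mathcal{A}$, not merely for the specific angle used to establish landing, and that all rays landing at $\infty$ indeed have angles in $(1/3,2/3)$ so that the matching parameter rays are defined at all. Both facts are guaranteed by the Misiurewicz-type structure of $\lambda(F_{a_0})$ and the location of the critical value in $\D_2$, so no input beyond Proposition~\ref{misi_para_land} is needed.
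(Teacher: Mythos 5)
Your proposal is correct and follows essentially the same route as the paper: the forward inclusion is exactly the paper's application of Proposition~\ref{misi_para_land}, and the reverse inclusion invokes, for each $\theta\in\mathcal{A}$, the landing statement of Proposition~\ref{misi_para_land} together with the uniqueness claim established inside its proof (at most one parameter in $\cC(\mathcal{S})$ whose dynamical $\theta$-ray lands at the critical value), forcing the landing point $a_1$ to coincide with $a_0$. The extra preliminary observations you make (that angles in $\mathcal{A}$ are strictly pre-periodic and lie in $(1/3,2/3)$, so the uniqueness mechanism applies to every such angle) are implicit in the paper's setup and only make the argument more self-contained.
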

\begin{proof}
Let $\mathcal{A}\subset(1/3,2/3)$ be the set of angles of the dynamical rays of $F_{a_0}$ that land at the critical value $\infty$. By Proposition~\ref{misi_para_land}, the angles of the parameter rays (at pre-periodic angles) of $\mathcal{S}$ landing at $a_0$ are contained in $\mathcal{A}$.

Now pick $\theta\in\mathcal{A}$, and let $a_1$ be the landing point of the parameter ray of $\mathcal{S}$ at angle $\theta$. Then, the dynamical ray of $F_{a_1}$ at angle $\theta$ lands at the critical value $\infty$. By the proof of Proposition~\ref{misi_para_land}, we know that there can be at most one parameter in $\cC(\mathcal{S})$ whose dynamical $\theta$-ray lands at the critical value $\infty$. Therefore, $a_0=a_1$, i.e., the parameter ray of $\mathcal{S}$ at angle $\theta$ lands at $a_0$. As $\theta$ was an arbitrary element of $\mathcal{A}$, it follows that all parameter rays at angles in $\mathcal{A}$ land at $a_0$. The proof is now complete.
\end{proof}

\section{Combinatorial straightening}\label{comb_bijec_pcf}

In this section, we prove the main theorems of the paper.

The proof of Theorem~\ref{thm_comb_bijec_pcf} will be carried out in two stages. In Subsection~\ref{comb_bijec_centers}, we will construct a lamination-preserving bijection between the centers of $\cC(\mathcal{S})$ and the centers of $\mathcal{L}$. A lamination-preserving bijection between the Misiurewicz parameters of $\cC(\mathcal{S})$ and $\mathcal{L}$ will be constructed in Subsection~\ref{comb_bijec_misi}. 

\subsection{Combinatorial bijection for hyperbolic and parabolic parameters}\label{comb_bijec_centers}

In this subsection, we will prove Theorem~\ref{thm_comb_bijec_pcf} for the hyperbolic and parabolic parameters of $\cC(\mathcal{S})$. Recall that for every hyperbolic and parabolic parameter $a$, the first return map of the characteristic Fatou component $U_a$ has a unique fixed point on $\partial U_a$ that is a cut-point of $K_a$ (which we call the dynamical root of $F_a$). The orbit portrait (more precisely, its characteristic angles $t^\pm$) associated with the dynamical root of $F_a$ completely determines $\lambda(F_a)$ (see Propositions~\ref{char_angles_lamination} and~\ref{char_angles_lamination_2}). In fact, all the iterated pullbacks of the leaf connecting $t^-$ and $t^+$ under $\rho$ are pairwise disjoint, and their closure in $\Q/\Z$ is equal to $\lambda(F_a)$.

\begin{lemma}\label{thm_comb_bijec_centers}
For every super-attracting map $F_{a_0}\in\cC(\mathcal{S})$ with pre-periodic lamination $\lambda(F_{a_0})$, there exists a unique super-attracting map $f_{c_0}\in\mathcal{L}$ with associated rational lamination $\mathcal{E}_{\ast}(\lambda(F_{a_0}))$. Moreover, this correspondence is a bijection between the super-attracting maps of $\cC(\mathcal{S})$ and $\mathcal{L}$.
\end{lemma}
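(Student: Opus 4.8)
The plan is to construct the desired correspondence $\chi$ directly from characteristic angles and prove it is a bijection in the usual three steps (well-definedness, injectivity, surjectivity), using the circle homeomorphism $\mathcal{E}$ to transport combinatorics between the two worlds and the realization theorems to pass from parabolic data back to centers. First I would record the combinatorial normalization: for a super-attracting $F_{a_0}\in\cC(\mathcal{S})$, let $\mathcal{P}$ be the orbit portrait of its dynamical root and $t^\pm$ its characteristic angles. Since the critical value $\infty$ always satisfies $\psi_{a_0}(\infty)\in\D_2$, we have $t^\pm\in(1/3,2/3)$, and by Proposition~\ref{char_angles_lamination} (together with Proposition~\ref{char_angles_lamination_2}) the lamination $\lambda(F_{a_0})$ is the closure of the $\rho$-pullbacks of the leaf $\{t^-,t^+\}$. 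By Proposition~\ref{orbit_portraits_preserved}, $\mathcal{E}_{\ast}(\mathcal{P})$ is an $m_{-2}$-FOP, and since $\mathcal{E}$ is an orientation-preserving circle homeomorphism conjugating $\rho$ to $m_{-2}$ (so that it transfers the characteristic-arc structure, as used in Subsection~\ref{comb_orbit_portrait}), its characteristic angles are $\mathcal{E}(t^\pm)\in(1/3,2/3)$.

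Next I would \emph{construct} $f_{c_0}$. Applying Theorem~\ref{realization_orbit_portrait_parabolic} to the $m_{-2}$-FOP $\mathcal{E}_{\ast}(\mathcal{P})$ (case~1 if the orbit period $p$ is odd, case~2 if $p$ is even) produces either a common root parabolic arc or a common parabolic landing parameter realizing $\mathcal{E}_{\ast}(\mathcal{P})$ as a parabolic orbit portrait. By the description of boundaries of hyperbolic components of the Tricorn (Theorem~\ref{Exactly 3} for odd period, and the even-period discussion following Theorem~\ref{realization_orbit_portrait_parabolic}), this parabolic locus lies on the boundary of a unique hyperbolic component $H'$, whose unique center $c_0$ (Theorem~\ref{hyp_unif}) is a super-attracting parameter whose dynamical root carries the orbit portrait $\mathcal{E}_{\ast}(\mathcal{P})$, since characteristic angles are constant on the closure of $H'$. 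Because $1/3<\mathcal{E}(t^-)<\mathcal{E}(t^+)<2/3$, the parameter $c_0$ lies between $\mathcal{R}_{1/3}$ and $\mathcal{R}_{2/3}$, hence in the basilica wake, so $c_0\in\mathcal{L}$. Its rational lamination is then the closure of the $m_{-2}$-pullbacks of $\{\mathcal{E}(t^-),\mathcal{E}(t^+)\}$, which is exactly $\mathcal{E}_{\ast}(\lambda(F_{a_0}))$. Uniqueness of such a $c_0$ follows from the rigidity of post-critically finite anti-polynomials: two super-attracting parameters of $\mathcal{L}$ with the same rational lamination have the same orbit portrait at the dynamical root and hence coincide (cf. \cite{Po} and Theorem~\ref{hyp_unif}). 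This defines $\chi(F_{a_0}):=f_{c_0}$.

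For injectivity, suppose $\chi(F_{a_0})=\chi(F_{a_0'})$. Then $\mathcal{E}_{\ast}(\lambda(F_{a_0}))=\mathcal{E}_{\ast}(\lambda(F_{a_0'}))$, and applying $\mathcal{E}^{\ast}$ (which is a bijection by Proposition~\ref{orbit_portraits_preserved}) gives $\lambda(F_{a_0})=\lambda(F_{a_0'})$; in particular their dynamical roots share the same orbit portrait, so Proposition~\ref{rigidity_center} forces $a_0=a_0'$. For surjectivity, let $f_{c_0}\in\mathcal{L}$ be super-attracting with dynamical root orbit portrait $\mathcal{Q}$ and characteristic angles $s^\pm\in(1/3,2/3)$. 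Then $\mathcal{E}^{\ast}(\mathcal{Q})$ is a $\rho$-FOP with characteristic angles $\mathcal{E}^{-1}(s^\pm)\in(1/3,2/3)$, and the \emph{Schwarz} realization theorem, Theorem~\ref{realization_orbit_portrait_parabolic_schwarz}, together with the structure of odd/even period hyperbolic components of $\cC(\mathcal{S})$ (Theorem~\ref{Exactly 3} and Theorem~\ref{unif_hyp_schwarz}), produces a hyperbolic component of $\cC(\mathcal{S})$ whose unique center $a_0$ is super-attracting with orbit portrait $\mathcal{E}^{\ast}(\mathcal{Q})$; by construction $\mathcal{E}_{\ast}(\lambda(F_{a_0}))=\lambda(f_{c_0})$, so $\chi(F_{a_0})=f_{c_0}$.

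The step I expect to be the main obstacle is the passage from a realized \emph{parabolic} orbit portrait to the \emph{super-attracting center} carrying the same combinatorics, while keeping the periods aligned: one must check that the parabolic locus furnished by Theorem~\ref{realization_orbit_portrait_parabolic} (resp.\ its Schwarz analogue) actually bounds a hyperbolic component of the correct period whose center is super-attracting, and that the orbit portrait of the dynamical root is genuinely constant up to and including the center. This is where the detailed boundary descriptions (Theorem~\ref{Exactly 3} and the even-period analysis) and the existence/uniqueness of centers (Theorems~\ref{hyp_unif} and~\ref{unif_hyp_schwarz}) do the real work; the transport of laminations via $\mathcal{E}$ and the rigidity input (Proposition~\ref{rigidity_center}) are then comparatively routine.
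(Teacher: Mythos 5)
Your overall strategy coincides with the paper's: transport the dynamical-root orbit portrait by $\mathcal{E}$ (Proposition~\ref{orbit_portraits_preserved}), realize the pushed-forward portrait as a parabolic parameter/arc in the Tricorn via Theorem~\ref{realization_orbit_portrait_parabolic}, pass to the center of the adjacent hyperbolic component, recover laminations from characteristic angles (Proposition~\ref{char_angles_lamination}), and get injectivity from Proposition~\ref{rigidity_center} and surjectivity from Theorem~\ref{realization_orbit_portrait_parabolic_schwarz}. However, there is a genuine gap in your construction step. You dispatch all cases with ``case~1 if the orbit period $p$ is odd, case~2 if $p$ is even,'' but case~1 of Theorem~\ref{realization_orbit_portrait_parabolic} carries the extra hypothesis that \emph{both} characteristic angles $t^\pm$ have period $2p$. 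This fails precisely for the centers of even-period hyperbolic components that bifurcate from odd-period ones: there the dynamical root is a repelling point of odd period $k$, and its characteristic angles are either $\{\theta_1,\theta_2\}$ (both of period $k$) or $\{\alpha_i,\theta_i\}$ (periods $2k$ and $k$ mixed), as recorded in the boundary analysis of odd-period components. For such portraits neither case of the realization theorem applies, so your map $\chi$ is simply not defined on these satellite centers. The paper closes exactly this hole by a separate argument: it first realizes the parent odd-period component $H'$ in $\mathcal{T}$ (case~1 applied to the root portrait with angles $\alpha_1,\alpha_2$ of period $2k$), then uses the angle relations~(\ref{char_rays_relation_1}) and~(\ref{char_rays_relation_2}) to show that $\mathcal{E}(\theta_1),\mathcal{E}(\theta_2)$ are the co-root angles of the center of $H'$, whence the three period-$2k$ components bifurcating from $H'$ have characteristic angle pairs $\{\mathcal{E}(\theta_1),\mathcal{E}(\theta_2)\}$, $\{\mathcal{E}(\alpha_1),\mathcal{E}(\theta_1)\}$, $\{\mathcal{E}(\theta_2),\mathcal{E}(\alpha_2)\}$ — matching them bijectively with the three components bifurcating from $H$.

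A second, smaller omission of the same nature: the period-two components (centers $0\in\cC(\mathcal{S})$ and $-1\in\mathcal{L}$) also escape both cases, since their dynamical root is a fixed point whose rays have angles $1/3$ and $2/3$ — fixed angles, not of period $2p$, and not in the open interval $(1/3,2/3)$, so your opening claim that $t^\pm\in(1/3,2/3)$ fails here as well. The paper treats this component by hand, using that $\mathcal{E}$ fixes $1/3$ and $2/3$. The same two issues recur in your surjectivity step, where Theorem~\ref{realization_orbit_portrait_parabolic_schwarz} has the identical period restrictions. You did flag ``keeping the periods aligned'' as the delicate point, but the resolution is not a refinement of your uniform argument — it requires the bifurcation-structure detour above, without which the correspondence is undefined on an infinite family of centers.
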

\begin{proof}
Note that super-attracting maps are precisely the centers of hyperbolic components. 

Let $a_0$ be the center of a hyperbolic component $H$ of even period (other than two) of $\cC(\mathcal{S})$ that does not bifurcate from a hyperbolic component of odd period. Let $\mathcal{P}$ be the orbit portrait associated with the dynamical root of $F_{a_0}$, the characteristic angles of $\mathcal{P}$ be $t^\pm$, and the pre-periodic lamination of $F_{a_0}$ be $\lambda(F_{a_0})$. By Proposition~\ref{orbit_portraits_preserved}, $\mathcal{E}_{\ast}(\mathcal{P})$ is an $m_{-2}$-FOP with characteristic angles $\mathcal{E}(t^\pm)$. Since the orbit period of $\mathcal{P}$ is even, the second statement of Theorem~\ref{realization_orbit_portrait_parabolic} implies that the parameter rays $\mathcal{R}_{\mathcal{E}(t^-)}$ and $\mathcal{R}_{\mathcal{E}(t^+)}$ of the Tricorn land at the root point of some hyperbolic component $H'$ of $\mathcal{T}$. Moreover, the orbit portrait associated with the parabolic cycle of the root of $H'$ is given by $\mathcal{E}_{\ast}(\mathcal{P})$. It follows that in the dynamical plane of the center $c_0$ of $H'$, the orbit portrait associated with the dynamical root is $\mathcal{E}_{\ast}(\mathcal{P})$. In particular, $\mathcal{E}(t^\pm)$ are the characteristic angles of $f_{c_0}$. 

By Proposition~\ref{char_angles_lamination}, the two angles $t^\pm$ (respectively, $\mathcal{E}(t^\pm)$) completely determine the pre-periodic lamination of $F_{a_0}$ (respectively, the rational lamination of $f_{c_0}$); more precisely, all the pullbacks of the leaf connecting $t^+$ and $t^-$ (respectively, $\mathcal{E}(t^+)$ and $\mathcal{E}(t^-)$) under iterations of $\rho$ (respectively, of $m_{-2}$) are pairwise disjoint, and their closure in $\mathrm{Per}(\rho)$ (respectively, in $\Q/\Z$) is equal to $\lambda(F_{a_0})$ (respectively, $\lambda(f_{c_0})$). Therefore, $\lambda(f_{c_0})=\mathcal{E}_{\ast}(\lambda(F_{a_0}))$. Since $1/3\sim2/3$ in $\lambda(F_{a_0})$, it follows that the dynamical rays $R_{c_0}(1/3)$ and $R_{c_0}(2/3)$ land at a common point of $J(f_{c_0})$. Hence, $c_0\in\mathcal{L}$. Finally, by \cite[Theorem~5.1]{MNS} (also compare \cite[Theorem~35.1]{L6}), $f_{c_0}$ is the unique PCF map in $\mathcal{L}$ with rational lamination $\mathcal{E}_{\ast}(\lambda(F_{a_0}))$. 

Thanks to Theorem~\ref{realization_orbit_portrait_parabolic_schwarz}, the previous argument also goes in the opposite direction demonstrating that if $c_0$ is the center of a hyperbolic component of even period of $\mathcal{L}$ that does not bifurcate from a hyperbolic component of odd period, then there exists some super-attracting map $F_{a_0}$ with associated pre-periodic lamination $\mathcal{E}^{\ast}(\lambda(f_{c_0}))$.

We now turn our attention to hyperbolic components of odd period and hyperbolic components of even period bifurcating from them. Let $a_0$ be the center of a hyperbolic component $H$ of odd period $k$. Let $\mathcal{P}$ be the orbit portrait associated with the dynamical root of $F_{a_0}$, and the characteristic angles of $\mathcal{P}$ be $\alpha_1$ and $\alpha_2$. By Subsection~\ref{para_orbit_portrait}, each of the two co-roots of $F_{a_0}$ is the landing point of exactly one dynamical ray at angle $\theta_i$ ($i=1,2$). There are three hyperbolic components of period $2k$ bifurcating from $H$, and the characteristic angles (of the orbit portraits associated with the dynamical roots) of their centers are $\{\theta_1,\theta_2\}$, $\{\alpha_1,\theta_1\}$, and $\{\theta_2,\alpha_2\}$. Moreover, these angles satisfy Relation~\eqref{char_rays_relation_2}.

By Proposition~\ref{orbit_portraits_preserved} and Theorem~\ref{realization_orbit_portrait_parabolic}, the parameter rays $\mathcal{R}_{\mathcal{E}(\alpha_1)}$ and $\mathcal{R}_{\mathcal{E}(\alpha_2)}$ of the Tricorn accumulate on a common root arc $\mathscr{C}$ of $\mathcal{T}$, and for every parameter $c\in\mathscr{C}$, the parabolic orbit portrait is $\mathcal{E}_{\ast}(\mathcal{P})$. Let $\mathscr{C}\subset\partial H'$ (where $H'$ is a hyperbolic component of period $k$ of $\mathcal{T}$), and $c_0$ be the center of $H'$. Then, the orbit portrait associated with the dynamical root of $f_{c_0}$ is $\mathcal{E}_{\ast}(\mathcal{P})$. As in the previous case, this implies that $\lambda(f_{c_0})=\mathcal{E}_{\ast}(\lambda(F_{a_0}))$, and $c_0\in\mathcal{L}$. Moreover, $f_{c_0}$ is the unique PCF map in $\mathcal{L}$ with rational lamination $\mathcal{E}_{\ast}(\lambda(F_{a_0}))$.

It also follows from the above discussion that the angles $\mathcal{E}(\alpha_i)$ and $\mathcal{E}(\theta_i)$ (where $i=1,2$) satisfy Relation~\eqref{char_rays_relation_1}. Hence, the dynamical rays at angles $\mathcal{E}(\theta_i)$ land at the dynamical co-roots of $f_{c_0}$. Therefore, the characteristic angles (of the orbit portraits associated with the dynamical roots) of the centers of the hyperbolic components of period $2k$ bifurcating from $H'$ are given by $\{\mathcal{E}(\theta_1),\mathcal{E}(\theta_2)\}$, $\{\mathcal{E}(\alpha_1),\mathcal{E}(\theta_1)\}$, and $\{\mathcal{E}(\theta_2),\mathcal{E}(\alpha_2)\}$. It follows that the push-forwards of the pre-periodic laminations of the centers of the three hyperbolic components bifurcating from $H$ are precisely the rational laminations of the centers of the three hyperbolic components bifurcating from $H'$.

As in the even period case, one can use Theorem~\ref{realization_orbit_portrait_parabolic_schwarz} and the combinatorial description of odd period hyperbolic components of $\cC(\mathcal{S})$ and $\mathcal{L}$ to conclude surjectivity of the map between centers. 

It remains to discuss hyperbolic components of period two. There is exactly one hyperbolic component $H_2$ (respectively, $H'_2$) of period two in $\cC(\mathcal{S})$ (respectively, in $\mathcal{L}$). The center of $H_2$ (respectively, $H'_2$) is $0$ (respectively, $-1$). In the dynamical planes of the centers of these components, the dynamical rays at angles $1/3$ and $2/3$ land at the $\alpha$-fixed point (which is the dynamical root), and these are the characteristic angles of the corresponding orbit portrait. Moreover, these two angles completely determine the corresponding pre-periodic (respectively, rational) lamination (compare Proposition~\ref{char_angles_lamination}). Since $\mathcal{E}$ fixes $1/3$ and $2/3$, it follows that $\mathcal{E}_{\ast}(\lambda(F_0))=\lambda(f_{-1})$.

Finally, by Proposition~\ref{rigidity_center}, two distinct super-attracting maps in $\cC(\mathcal{S})$ cannot have the same pre-periodic lamination. Hence, the correspondence between the centers of $\cC(\mathcal{S})$ and $\mathcal{L}$ defined above is injective.
\end{proof}

As a corollary of the above proof (combined with our analysis of the hyperbolic components of $\mathcal{S}$ and their boundaries, and the rigidity results of Subsection~\ref{rigidity_hyp_para}), we get a combinatorial bijection between the hyperbolic and parabolic parameters of $\cC(\mathcal{S})$ and those of $\mathcal{L}$. 

\begin{corollary}[Bijection between hyperbolic and parabolic parameters of $\cC(\mathcal{S})$ and $\mathcal{L}$]\label{hyp_para_bijec_cor}
1) If $a\in H\subset\cC(\mathcal{S})$ is a hyperbolic parameter (contained in the hyperbolic component $H$) with associated pre-periodic lamination $\lambda(F_a)$, then there exists a unique hyperbolic parameter $c\in H'\subset\mathcal{L}$ (contained in the hyperbolic component $H'$) with associated rational lamination $\lambda(f_c)=\mathcal{E}_\ast(\lambda(F_a))$ satisfying $\eta_{H'}(c)=\widetilde{\eta}_H(a)$. In particular, the dynamics on the respective periodic Fatou components of $F_a$ and $f_c$ are conformally conjugate. Moreover, this correspondence is a bijection between the hyperbolic parameters of $\cC(\mathcal{S})$ and $\mathcal{L}$.

2) If $a\in\cC(\mathcal{S})$ is a parabolic parameter with associated pre-periodic lamination $\lambda(F_a)$ (and critical {\'E}calle height $h$, if $F_a$ has an odd-periodic simple parabolic cycle), then there exists a unique parabolic parameter $c\in\mathcal{L}$ with associated rational lamination $\mathcal{E}_\ast(\lambda(F_a))$ (with the same critical {\'E}calle height $h$ in the odd-periodic simple parabolic case). In particular, the dynamics on the respective periodic Fatou components of $F_a$ and $f_c$ are conformally conjugate. Moreover, this correspondence is a bijection between the parabolic parameters of $\cC(\mathcal{S})$ and $\mathcal{L}$.
\end{corollary}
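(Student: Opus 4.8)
The plan is to bootstrap from the center bijection of Lemma~\ref{thm_comb_bijec_centers}. That lemma already pairs each hyperbolic component $H\subset\cC(\mathcal{S})$ with a unique hyperbolic component $H'\subset\mathcal{L}$ by matching their (unique) centers, in such a way that the push-forward under $\mathcal{E}$ of the pre-periodic lamination of the center of $H$ is the rational lamination of the center of $H'$. Since both $H$ and $H'$ are uniformized by the \emph{same} Blaschke model space $\mathcal{B}^{-}$ or $\mathcal{B}^{+}$ (of the appropriate parity) via the homeomorphisms $\widetilde{\eta}_H$ and $\eta_{H'}$ of Theorems~\ref{unif_hyp_schwarz} and~\ref{hyp_unif}, I would define the correspondence on $H$ by $a\mapsto c:=\eta_{H'}^{-1}(\widetilde{\eta}_H(a))$. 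By construction this is a homeomorphism $H\to H'$ that respects the Blaschke model, hence the Koenigs ratio (odd period) or multiplier (even period) of the attracting cycle; that is, $\eta_{H'}(c)=\widetilde{\eta}_H(a)$.

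For the hyperbolic statement it then remains to verify the lamination condition and bijectivity. The lamination is constant throughout a hyperbolic component, because by Propositions~\ref{char_angles_lamination} and~\ref{char_angles_lamination_2} it is completely determined by the characteristic angles, which do not vary across $H$; combining this with the center-level identity $\lambda(f_{c_0})=\mathcal{E}_\ast(\lambda(F_{a_0}))$ from Lemma~\ref{thm_comb_bijec_centers} yields $\lambda(f_c)=\mathcal{E}_\ast(\lambda(F_a))$ for every $a\in H$. Bijectivity of the global map is then inherited: the hyperbolic components of $\cC(\mathcal{S})$ and of $\mathcal{L}$ are in bijection through their centers, and each $a\mapsto c$ is a homeomorphism, so the union over matched pairs is a bijection between the two sets of hyperbolic parameters. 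Injectivity is underpinned by the rigidity of Proposition~\ref{rigidity_hyp_prop}: a hyperbolic map in $\cC(\mathcal{S})$ is determined by the orbit portrait of its dynamical root together with the conformal conjugacy class of the first return map of its characteristic Fatou component, both of which are recorded by the pair $(\lambda(F_a),\widetilde{\eta}_H(a))$.

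The parabolic case splits according to parity. Even-type parabolic parameters (roots of even period components and the parabolic cusps) are combinatorially rigid by Proposition~\ref{rigidity_para_even}, so here I would match parameters directly through their orbit portraits: Proposition~\ref{orbit_portraits_preserved} transports a $\rho$-FOP to an $m_{-2}$-FOP, Theorems~\ref{realization_orbit_portrait_parabolic_schwarz} and~\ref{realization_orbit_portrait_parabolic} realize each such portrait at a unique even-type parabolic parameter in $\cC(\mathcal{S})$ and in $\mathcal{L}$ respectively, and rigidity makes the assignment a bijection preserving $\mathcal{E}_\ast$-laminations. For odd period simple parabolics, which lie on parabolic arcs, I would instead match arcs: the boundary of every odd period hyperbolic component is a topological triangle with three parabolic arcs (Theorems~\ref{Exactly 3} and~\ref{Exactly d+1}), and the center correspondence of Lemma~\ref{thm_comb_bijec_centers}, together with the combinatorial description of these boundaries in Subsection~\ref{para_orbit_portrait}, identifies the arcs of $\partial H$ with those of $\partial H'$ compatibly with their landing angles. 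On each matched pair of arcs I would use the critical Ecalle height parametrizations of Theorems~\ref{parabolic_arcs_schwarz} and~\ref{parabolic arcs}, sending the height-$h$ parameter on one arc to the height-$h$ parameter on the other. Proposition~\ref{rigidity_para_odd} guarantees that this assignment is injective and that equal critical Ecalle height forces the parameters to correspond, while the arc matching supplies surjectivity.

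The main obstacle is the odd period parabolic stratum. Unlike the rigid even-type parameters, simple odd-period parabolics are genuinely non-rigid (Remark~\ref{odd_non_rigid}), so naive orbit-portrait matching fails and one must carry the critical Ecalle height as an extra invariant. The delicate point is to check that the arc-to-arc identification coming from the center correspondence is the \emph{same} as the one dictated by the landing angles of accumulating parameter rays, and that the even-type and odd-type identifications agree at the shared cusps where the arcs meet and where even period components bifurcate off (Theorems~\ref{ThmBifArc_schwarz} and~\ref{ThmBifArc}). Establishing this consistency, so that the even-type and odd-type pieces glue into a single well-defined bijection on all parabolic parameters, is precisely where the combinatorial preparation of Subsection~\ref{para_orbit_portrait} is essential.
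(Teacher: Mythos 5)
Your proposal is correct and follows essentially the same route as the paper: the hyperbolic case is handled by transporting the Blaschke-model uniformization through the center bijection of Lemma~\ref{thm_comb_bijec_centers} (i.e.\ $c:=\eta_{H'}^{-1}\circ\widetilde{\eta}_H(a)$) together with Proposition~\ref{rigidity_hyp_prop}, and the parabolic case by matching even-type parameters through their orbit portraits (Proposition~\ref{rigidity_para_even}) and matching parabolic arcs via the critical Ecalle height parametrization (Proposition~\ref{rigidity_para_odd}). Your added attention to the consistency of the arc-to-arc identification at cusps is a point the paper leaves implicit in the combinatorial description of Subsection~\ref{para_orbit_portrait}, but it does not change the argument.
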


\begin{proof}
1) Let $a_0$ be the center of the hyperbolic component $H\ni a$ of $\cC(\mathcal{S})$. By Lemma~\ref{thm_comb_bijec_centers}, there exists a super-attracting parameter $c_0\in\mathcal{L}$ such that $\lambda(f_{c_0})=\mathcal{E}_\ast(\lambda(F_{a_0}))$. We can assume that $c_0$ is the center of the hyperbolic component $H'$ of $\mathcal{L}$.

According to Theorem~\ref{unif_hyp_schwarz}, there exists a homeomorphism $\widetilde{\eta}_H:H\to\mathcal{B}^{\pm}$ (respectively, $\eta_{H'}:H'\to\mathcal{B}^{\pm}$) that maps the center of $H$ (respectively, $H'$) to the super-attracting Blaschke product $B_{0,0}^{\pm}$. We now define the parameter $c:=\eta_{H'}^{-1}\circ\widetilde{\eta}_H(a)$. By construction, the first return maps of $F_a$ and $f_c$ to their respective characteristic Fatou components are conformally conjugate to a common Blaschke product in $\mathcal{B}^{\pm}$, and hence these return maps are conformally conjugate to each other. Thus, the dynamics on the respective periodic Fatou components of $F_a$ and $f_c$ are conformally conjugate. In particular, the Koenigs ratio/multiplier of the corresponding attracting cycles are equal. Moreover, the laminations of the two maps are related in the desired way.

The fact that the above correspondence induces a bijection between the hyperbolic parameters of $\cC(\mathcal{S})$ and $\mathcal{L}$ can now be proved mimicking the arguments of Lemma~\ref{thm_comb_bijec_centers} (and invoking Proposition~\ref{rigidity_hyp_prop}).

2) Given a parabolic parameter $a\in\cC(\mathcal{S})$ with associated pre-periodic lamination $\lambda(F_a)$ (and critical {\'E}calle height $h$, if $F_a$ has an odd-periodic simple parabolic cycle), the existence of a unique parabolic parameter $c\in\mathcal{L}$ with associated rational lamination $\mathcal{E}_\ast(\lambda(F_a))$ (and with critical {\'E}calle height $h$ in the odd-periodic simple parabolic case) follows from the proof of Lemma~\ref{thm_comb_bijec_centers} (cf. \cite[Lemma~5.2]{MNS}). The rigidity statements of Propositions~\ref{rigidity_para_even} and~\ref{rigidity_para_odd} imply that this defines a bijective correspondence between the parabolic parameters of $\cC(\mathcal{S})$ and $\mathcal{L}$.

Note further that if the characteristic Fatou components of $F_a$ and $f_c$ have even period, then the first return maps of $F_a$ and $f_c$ to these components are conformally conjugate to the unique quadratic parabolic Blaschke product $\frac{3z^2+1}{z^2+3}$. Hence, these return maps are conformally conjugate to each other; i.e., the dynamics on the respective periodic Fatou components of $F_a$ and $f_c$ are conformally conjugate. On the other hand, if the characteristic Fatou components of $F_a$ and $f_c$ have odd period, then the first return maps of $F_a$ and $f_c$ to these components are conformally conjugate to a common quadratic parabolic anti-holomorphic Blaschke product, precisely because the maps are required to have equal critical {\'E}calle height. Thus, in this case too, the dynamics on the respective periodic Fatou components of $F_a$ and $f_c$ are conformally conjugate.
\end{proof}

The proof of Lemma~\ref{thm_comb_bijec_centers} also implies that the map $\mathcal{E}$ induces a bijection between the parameter rays at periodic angles landing/accumulating at the parabolic parameters of $\cC(\mathcal{S})$ and $\mathcal{L}$.

\begin{corollary}[Correspondence of parameter rays at periodic angles]\label{para_rays_correspond}
1) If a parabolic parameter $a\in\cC(\mathcal{S})$ of even parabolic period corresponds to the parabolic parameter $c\in\mathcal{L}$ under the above bijection, then the angles of the two parameter rays (at periodic angles) landing at $a$ are precisely the $\mathcal{E}$-pre-images of the angles of the two parameter rays (at periodic angles) landing at $c$.

2) If a root (respectively, co-root) parabolic arc $\mathscr{C}\subset\cC(\mathcal{S})$ corresponds to the arc $\mathscr{C}'\subset\mathcal{L}$, then the angles of the two parameter rays at periodic angles (respectively, the angle of the unique parameter ray at a periodic angle) accumulating on $\mathscr{C}$ are precisely the $\mathcal{E}$-pre-images of the angles of the two parameter rays at periodic angles (respectively, the angle of the unique parameter ray at a periodic angle) accumulating on $\mathscr{C}'$.
\end{corollary}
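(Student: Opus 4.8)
The plan is to obtain the corollary by feeding the parabolic bijection of Corollary~\ref{hyp_para_bijec_cor} into the realization theorems, so that the only substantive point becomes the assertion that $\mathcal{E}$ carries characteristic angles to characteristic angles. By Corollary~\ref{hyp_para_bijec_cor} the parabolic parameter $a\in\cC(\mathcal{S})$ and its counterpart $c\in\mathcal{L}$ satisfy $\lambda(f_c)=\mathcal{E}_\ast(\lambda(F_a))$; writing $\mathcal{P}$ for the $\rho$-FOP of the parabolic cycle of $F_a$, Proposition~\ref{orbit_portraits_preserved} shows that $\mathcal{E}_\ast(\mathcal{P})$ is exactly the $m_{-2}$-FOP of the parabolic cycle of $f_c$. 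I would first fix, in each case, which periodic parameter rays are attached to the parameter (or arc) on each side, and then match them through $\mathcal{E}$ using the equality of characteristic angles.

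For an even parabolic period, let $t^\pm$ be the characteristic angles of $\mathcal{P}$. Theorem~\ref{realization_orbit_portrait_parabolic_schwarz}(2) shows that the $\mathcal{S}$-parameter rays at $t^-$ and $t^+$ land at $a$, while Proposition~\ref{para_ray_periodic} guarantees that any periodic parameter ray landing at $a$ has its dynamical ray landing at the characteristic parabolic point; since that point carries precisely the two rays $t^\pm$ when the characteristic Fatou component has even period, these are the only landing rays. The Tricorn side is the mirror statement via Theorem~\ref{realization_orbit_portrait_parabolic}(2). For an odd period root arc $\cC$ with characteristic angles $\alpha_1,\alpha_2$, the description of $\partial H$ in Subsection~\ref{para_orbit_portrait} together with Theorem~\ref{realization_orbit_portrait_parabolic_schwarz}(1) shows that exactly the two parameter rays at $\alpha_1,\alpha_2$ accumulate on $\cC$ (uniqueness again by Proposition~\ref{para_ray_periodic}), whereas on a co-root arc $\cC_i$ the characteristic parabolic point carries the single ray at $\theta_i$, so exactly one parameter ray accumulates there. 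The corresponding ray data on $\mathcal{L}$ was assembled inside the proof of Lemma~\ref{thm_comb_bijec_centers}, where the angles $\mathcal{E}(\alpha_i)$ and $\mathcal{E}(\theta_i)$ were shown to obey Relations~(\ref{char_rays_relation_1}) and~(\ref{char_rays_relation_2}) and to be, respectively, the characteristic and co-root angles on the Tricorn side.

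The main obstacle is precisely the reduction step: the characteristic arc is defined as the \emph{shortest} complementary arc of $\bigcup_j\mathcal{A}_j$, and since $\mathcal{E}$ is merely a circle homeomorphism it does not preserve Euclidean length, so one cannot directly conclude that $\mathcal{E}$ sends the minimal arc to the minimal arc. I would circumvent this with the order-theoretic characterization already in force in Subsection~\ref{para_orbit_portrait} and in \cite[Lemma~3.2]{Sa}: the characteristic angles are the two adjacent angles of the relevant equivalence class that separate the critical point from the critical value and bound a sector of angular width less than $\tfrac12$. This description refers only to cyclic order and to the position of the critical value, both of which $\mathcal{E}$ respects: $\mathcal{E}$ is orientation preserving, fixes $1/3$ and $2/3$, and maps the arc $(1/3,2/3)$ (where the critical value angles live) onto itself, so it carries the critical-value sector of $\lambda(F_a)$ onto that of $\lambda(f_c)$ and therefore the characteristic angles $t^\pm$ (or $\alpha_1,\alpha_2$) of $\mathcal{P}$ to the characteristic angles of $\mathcal{E}_\ast(\mathcal{P})$. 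The co-root angles $\theta_i$ then correspond automatically: the combinatorial identity (\ref{char_rays_relation_2}), being an equation in the $\rho$-dynamics, is carried by the conjugacy $\mathcal{E}$ to its $m_{-2}$-counterpart (\ref{char_rays_relation_1}), which pins down $\mathcal{E}(\theta_i)$ as the co-root angles once the characteristic angles $\mathcal{E}(\alpha_i)$ are known. Combining these equalities with the case-by-case identification of the attached rays in the previous paragraph yields the asserted $\mathcal{E}$-correspondence between the periodic parameter rays landing at $a$ and $c$ in part~(1), and between those accumulating on $\cC$ and $\cC'$ in part~(2).
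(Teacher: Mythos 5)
Your assembly of the corollary — realization theorems plus Proposition~\ref{para_ray_periodic}, the boundary structure of odd period components, rigidity, and the ray data from the proof of Lemma~\ref{thm_comb_bijec_centers} — is exactly the paper's (implicit) route, and you are right that the whole statement reduces to the claim that $\mathcal{E}$ takes characteristic angles to characteristic angles. But your treatment of that reduction, which is the only genuinely new content of your proposal, does not work as stated. The characterization you invoke is \emph{not} order-theoretic: the clause ``bound a sector of angular width less than $\frac12$'' is metric, and it is doing essential work there. Indeed, when $\vert\mathcal{A}_1\vert\geq 3$ there are \emph{two} adjacent pairs that separate the critical point from the critical value — the pair bounding the critical-value sector and the pair bounding the critical-point sector — and the width clause is precisely what selects the former; when $\vert\mathcal{A}_1\vert=2$ it selects which of the two complementary arcs is characteristic. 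Likewise, the inference ``$\mathcal{E}$ preserves orientation, fixes $1/3$ and $2/3$, and maps $(1/3,2/3)$ to itself, hence carries the critical-value sector of $\lambda(F_a)$ onto that of $\lambda(f_c)$'' is a non sequitur: $\mathcal{E}$ acts only on the circle and carries no information about where the critical values of the two different dynamical planes sit; preservation of $(1/3,2/3)$ only rules out the critical-point pair (whose arc contains the angle $0$). For satellite-type portraits with $\vert\mathcal{A}_1\vert=q\geq 3$ — which genuinely occur in part (1), e.g.\ the root of a period-$6$ component attached to the period-$2$ component of $\mathcal{L}$ and its counterpart in $\cC(\mathcal{S})$ — there remain $q-1\geq 2$ adjacent pairs whose arcs lie in $(1/3,2/3)$, and cyclic order together with the marked points cannot single out the characteristic one. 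The same oversight makes your parenthetical claim that in the even case the characteristic parabolic point ``carries precisely the two rays $t^\pm$'' false for such parameters: there $q\geq 3$ dynamical rays land at it, and the fact that only two \emph{parameter} rays land is not a ray count but requires the wake/sector argument (or is simply what the corollary's phrasing presupposes).

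The repair is to use a characterization of the characteristic arc stated purely in terms of the circle dynamics, hence transported by the conjugacy $\mathcal{E}$. For each $j$ there is a unique complementary arc of $\mathcal{A}_j$ on which the angle map ($\rho$ or $m_{-2}$) fails to be injective (equivalently, whose image is the whole circle) — the critical arc; its image double-covers exactly one complementary arc of $\mathcal{A}_{j+1}$ — the critical value arc (that the doubly covered arc contains no further point of $\mathcal{A}_{j+1}$ follows because both preimages of such a point would lie in the critical arc); the critical value arcs are nested, and their smallest element is the characteristic arc — this is the content of \cite[Lemma~2.7]{M2a} and \cite[Lemma~3.2]{Sa}, whose proofs identify the shortest complementary arc with the smallest critical value arc. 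Every notion in this chain is invariant under conjugation by a circle homeomorphism, so $\mathcal{E}(t^\pm)$ are indeed the characteristic angles of $\mathcal{E}_\ast(\mathcal{P})$; this also puts the paper's definition of the characteristic arc of a $\rho$-FOP (as the minimal-length arc) on solid, $\mathcal{E}$-compatible footing, which the minimal-length phrasing alone does not provide. Equivalently, one can argue with laminations: the critical value gap is the image of the unique gap on whose boundary the dynamics is two-to-one — a conjugacy-invariant property — so $\mathcal{E}_\ast$ matches critical value gaps and hence the edges of $\mathcal{A}_1$ adjacent to them. With either substitution your reduction becomes correct, and the rest of your argument — including the transfer of Relation~(\ref{char_rays_relation_2}) to Relation~(\ref{char_rays_relation_1}) to pin down the co-root angles — is sound and coincides with the paper's proof.
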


\subsection{Combinatorial bijection for Misiurewicz parameters}\label{comb_bijec_misi}

We now turn our attention to the Misiurewicz parameters.

\begin{lemma}[Bijection between Misiurewicz parameters of $\cC(\mathcal{S})$ and $\mathcal{L}$]\label{thm_comb_bijec_misi}
For every Misiurewicz parameter $a_0\in\cC(\mathcal{S})$ with pre-periodic lamination $\lambda(F_{a_0})$, there exists a unique Misiurewicz parameter $c_0\in\mathcal{L}$ with associated rational lamination $\mathcal{E}_{\ast}(\lambda(F_{a_0}))$. Moreover, this correspondence is a bijection between the Misiurewicz parameters of $\cC(\mathcal{S})$ and $\mathcal{L}$.
\end{lemma}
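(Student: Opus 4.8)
The plan is to run the same dictionary used for centers in Lemma~\ref{thm_comb_bijec_centers}, but now transporting the entire pre-periodic lamination through $\mathcal{E}$ rather than just a pair of characteristic angles. For \textbf{existence}, I start with a Misiurewicz parameter $a_0\in\cC(\mathcal{S})$. Its pre-periodic lamination $\lambda(F_{a_0})$ is of Misiurewicz type, i.e. it has a unique $\lambda$-class collapsing two-to-one onto its image under $\rho$. Since $\mathcal{E}$ conjugates $\rho$ to $m_{-2}$ and is a circle homeomorphism, the push-forward $\mathcal{E}_{\ast}(\lambda(F_{a_0}))$ is a formal rational lamination of Misiurewicz type (this transfer was already noted in the proof of Proposition~\ref{misi_para_land}). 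By Theorem~\ref{rat_lam_realized} there is a unique Misiurewicz map $f_{c_0}\in\mathcal{T}$ with $\lambda(f_{c_0})=\mathcal{E}_{\ast}(\lambda(F_{a_0}))$. Because every parameter of $\cC(\mathcal{S})$ has $1/3\sim2/3$ in its lamination and $\mathcal{E}$ fixes $1/3$ and $2/3$, we get $1/3\sim2/3$ in $\lambda(f_{c_0})$; hence $R_{c_0}(1/3)$ and $R_{c_0}(2/3)$ land together and $c_0\in\mathcal{L}$. Uniqueness of $c_0$ inside $\mathcal{L}$ is inherited from the uniqueness clause of Theorem~\ref{rat_lam_realized}.

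For \textbf{injectivity}, suppose two Misiurewicz parameters $a_0,a_0'\in\cC(\mathcal{S})$ are sent to the same $c_0$. Then $\mathcal{E}_{\ast}(\lambda(F_{a_0}))=\mathcal{E}_{\ast}(\lambda(F_{a_0'}))$, and applying the bijection $\mathcal{E}^{\ast}$ gives $\lambda(F_{a_0})=\lambda(F_{a_0'})$; the rigidity statement Proposition~\ref{rigidity_misi} then forces $a_0=a_0'$.

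The crux is \textbf{surjectivity}, which I would prove from outside using parameter rays. Fix a Misiurewicz parameter $c_0\in\mathcal{L}$. Since $c_0$ lies in the $1/3$--$2/3$ wake, every dynamical ray of $f_{c_0}$ landing at the critical value has angle in $(1/3,2/3)$, so by Theorem~\ref{Tricorn_para_misi_ray} there is a pre-periodic angle $\eta\in(1/3,2/3)$ with $\mathcal{R}_\eta$ landing at $c_0$. Set $\theta:=\mathcal{E}^{-1}(\eta)\in(1/3,2/3)$, strictly pre-periodic under $\rho$. By Proposition~\ref{misi_para_land} the $\theta$-parameter ray of $\mathcal{S}$ lands at a Misiurewicz parameter $a_0\in\cC(\mathcal{S})$ in whose dynamical plane the $\theta$-ray lands at the critical value $\infty$; thus $\theta$ lies in the critical-value $\lambda$-class of $F_{a_0}$. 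Feeding $a_0$ into the existence construction produces a Misiurewicz parameter $c_0'\in\mathcal{L}$ with $\lambda(f_{c_0'})=\mathcal{E}_{\ast}(\lambda(F_{a_0}))$. Since $\mathcal{E}$ carries the distinguished (critical-value) class of $\lambda(F_{a_0})$ to the critical-value class of $\lambda(f_{c_0'})$, the angle $\eta=\mathcal{E}(\theta)$ lands at the critical value of $f_{c_0'}$, so by the converse in Theorem~\ref{Tricorn_para_misi_ray} the ray $\mathcal{R}_\eta$ lands at $c_0'$. But $\mathcal{R}_\eta$ lands at $c_0$, whence $c_0'=c_0$ and $a_0$ is the desired preimage.

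The main obstacle I anticipate is exactly this surjectivity step, and specifically the bookkeeping that makes the two parameter-ray pictures match up: I must guarantee that a landing angle of $c_0$ can be chosen in $(1/3,2/3)$ so that its $\mathcal{E}^{-1}$-image is a genuine parameter ray of $\mathcal{S}$ in the sense of Definition~\ref{para_ray_schwarz}, and that the critical-value class is a combinatorial invariant transported correctly by $\mathcal{E}$. Both points hinge on $\mathcal{E}$ respecting the distinguished two-to-one class of a Misiurewicz lamination, combined with the landing statements Propositions~\ref{misi_para_land} and~\ref{misi_dyn_para_rays} on the $\mathcal{S}$-side and Theorem~\ref{Tricorn_para_misi_ray} on the Tricorn side; no analytic input beyond these should be required.
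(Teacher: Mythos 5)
Your proposal is correct and follows essentially the same route as the paper: existence via push-forward of the Misiurewicz-type lamination and Theorem~\ref{rat_lam_realized} (with the $1/3\sim2/3$ argument placing $c_0$ in $\mathcal{L}$), injectivity via Proposition~\ref{rigidity_misi}, and surjectivity "from outside" by pulling back a critical-value angle through $\mathcal{E}$, landing the corresponding parameter ray of $\mathcal{S}$ via Proposition~\ref{misi_para_land}, and matching landing points with Theorem~\ref{Tricorn_para_misi_ray}. The only cosmetic difference is that the paper cites \cite[Proposition~6.39]{LLMM1} for the preservation of the Misiurewicz-type property under $\mathcal{E}_{\ast}$, where you invoke the observation made inside the proof of Proposition~\ref{misi_para_land}.
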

\begin{proof}
Pick a Misiurewicz parameter $a_0\in\cC(\mathcal{S})$ with pre-periodic lamination $\lambda(F_{a_0})$. Then by Proposition~\ref{prop_preper_lami}, $\mathcal{E}_{\ast}(\lambda(F_{a_0}))$ is a formal rational lamination of Misiurewicz type. By Theorem~\ref{rat_lam_realized}, there exists a unique Misiurewicz parameter $c_0\in\mathcal{L}$ such that $\lambda(f_{c_0})= \mathcal{E}_{\ast}(\lambda(F_{a_0}))$. Since $1/3\sim2/3$ in $\lambda(F_{a_0})$, the same identification holds in $\lambda(f_{c_0})$ as well (recall that $\mathcal{E}$ fixes $1/3$ and $2/3$). Therefore, the dynamical rays $R_{c_0}(1/3)$ and $R_{c_0}(2/3)$ land at a common point of $\mathcal{J}_{c_0}$; hence $c_0\in\mathcal{L}$.

According to Proposition~\ref{rigidity_misi}, two distinct Misiurewicz parameters of $\cC(\mathcal{S})$ cannot have the same pre-periodic lamination. This shows that the map between Misiurewicz parameters of $\cC(\mathcal{S})$ and $\mathcal{L}$ defined above is injective.

It remains to prove surjectivity of the above map. Pick a Misiurewicz parameter $c_0\in\mathcal{L}$ with rational lamination $\lambda(f_{c_0})$. Suppose that the set of critical value angles of $f_{c_0}$ is $\mathcal{A}$. Then by Theorem~\ref{Tricorn_para_misi_ray}, the parameter $c_0$ is the landing point of the parameter rays of $\mathcal{T}$ at angles in $\mathcal{A}$. Moreover, as $c_0\in\mathcal{L}$, we have that $\mathcal{A}\subset(1/3,2/3)$. Pick $\theta\in\mathcal{E}^{-1}(\mathcal{A})$. Clearly, $\theta\in(1/3,2/3)$. By Proposition~\ref{misi_para_land}, the parameter ray of $\mathcal{S}$ at angle $\theta$ lands at a Misiurewicz parameter $a_0$ such that the dynamical ray at angle $\theta$ of $F_{a_0}$ lands at the critical value $\infty$. By Proposition~\ref{prop_preper_lami}, the push-forward $\mathcal{E}_{\ast}(\lambda(F_{a_0}))$ is a formal rational lamination of Misiurewicz type, and hence is realized as the actual rational lamination of some Misiurewicz parameter $c_1\in\mathcal{T}$. Our construction implies that $\mathcal{E}(\theta)$ is a critical value angle for $f_{c_1}$. Once again by Theorem~\ref{Tricorn_para_misi_ray}, the parameter ray $\mathcal{R}_{\mathcal{E}(\theta)}$ (of $\mathcal{T}$) lands at $c_1$. Since $\mathcal{E}(\theta)\in\mathcal{A}$, it follows that $c_0=c_1$. Hence, $\mathcal{E}_{\ast}(\lambda(F_{a_0}))=\lambda(f_{c_0})$.
\end{proof}

The following important result relates the angles of the parameter rays of $\mathcal{S}$ landing at a Misiurewicz parameter $a_0$ to those of the parameter rays of $\mathcal{T}$ landing at the corresponding parameter $c_0$.

\begin{corollary}[Correspondence of parameter rays at pre-periodic angles]\label{misi_rays_correspond}
Let $a_0$ and $c_0$ be Misiurewicz parameters in $\cC(\mathcal{S})$ and $\mathcal{L}$ (respectively) such that $\mathcal{E}_{\ast}(\lambda(F_{a_0}))=\lambda(f_{c_0})$, and $\mathcal{A}$ be the set of angles of the parameter rays of $\mathcal{S}$ (at pre-periodic angles) landing at $a_0$. Then, $\mathcal{E}(\mathcal{A})$ is precisely the set of angles of the parameter rays of $\mathcal{T}$ (at pre-periodic angles) landing at $c_0$. 
\end{corollary}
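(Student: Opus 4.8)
The plan is to identify both sides with the critical-value equivalence class of the respective pre-periodic (rational) lamination, and then transport one class to the other via $\mathcal{E}$.

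First I would invoke Proposition~\ref{misi_dyn_para_rays}, which asserts that $\mathcal{A}$ — the set of angles of the parameter rays of $\mathcal{S}$ landing at $a_0$ — is exactly the set of external angles of the dynamical rays landing at the critical value $\infty$ in the dynamical plane of $F_{a_0}$. In the notation set up at the beginning of Subsection~\ref{para_pre_per}, this is precisely the critical-value class $\mathcal{A}=\rho(\mathcal{A}')$ of $\lambda(F_{a_0})$, where $\mathcal{A}'$ is the unique $\lambda(F_{a_0})$-class that $\rho$ maps two-to-one onto its image. Since $\mathcal{A}\subset(1/3,2/3)$ (again by the discussion opening Subsection~\ref{para_pre_per}), these parameter rays are genuinely defined in the sense of Definition~\ref{para_ray_schwarz}.

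Next I would carry out the combinatorial transport under $\mathcal{E}$. Because $\mathcal{E}$ conjugates $\rho$ to $m_{-2}$ and $\lambda(f_{c_0})=\mathcal{E}_{\ast}(\lambda(F_{a_0}))$, the class $\mathcal{E}(\mathcal{A}')$ is the unique $\lambda(f_{c_0})$-class on which $m_{-2}$ acts two-to-one; that is, $\mathcal{E}(\mathcal{A}')$ is the critical-point class of $f_{c_0}$. The conjugacy relation $\mathcal{E}\circ\rho=m_{-2}\circ\mathcal{E}$ then yields
\[
\mathcal{E}(\mathcal{A})=\mathcal{E}(\rho(\mathcal{A}'))=m_{-2}(\mathcal{E}(\mathcal{A}')),
\]
so $\mathcal{E}(\mathcal{A})$ is exactly the critical-value class of $\lambda(f_{c_0})$ — equivalently, the set of external angles of the dynamical rays landing at the critical value $c_0$ of $f_{c_0}$. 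Finally I would apply the converse part of Theorem~\ref{Tricorn_para_misi_ray}: the set of angles of the parameter rays of $\mathcal{T}$ landing at the Misiurewicz parameter $c_0$ coincides with the set of external angles of the dynamical rays landing at the critical value $c_0$, which by the previous step is $\mathcal{E}(\mathcal{A})$. This is the assertion.

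The argument is essentially bookkeeping that splices together the two ray-landing theorems, so there is no serious analytic difficulty; the only point demanding care — and the nearest thing to an obstacle — is the verification that $\mathcal{E}_{\ast}$ carries the critical-value class of $\lambda(F_{a_0})$ onto that of $\lambda(f_{c_0})$. This follows because the critical-value class is the combinatorially distinguished class (the image under the dynamics of the unique two-to-one class), because $\mathcal{E}$ intertwines $\rho$ and $m_{-2}$, and because the normalization $\mathcal{E}(1/3)=1/3$, $\mathcal{E}(2/3)=2/3$ keeps all the relevant angles inside $(1/3,2/3)$ where the parameter rays of $\mathcal{S}$ are defined.
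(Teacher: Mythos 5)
Your proposal is correct and follows essentially the same route as the paper's proof: identify $\mathcal{A}$ with the critical-value dynamical angles of $F_{a_0}$ via Proposition~\ref{misi_dyn_para_rays}, transport this class under $\mathcal{E}$ using $\mathcal{E}_{\ast}(\lambda(F_{a_0}))=\lambda(f_{c_0})$, and conclude with the converse part of Theorem~\ref{Tricorn_para_misi_ray}. Your explicit verification that $\mathcal{E}$ carries the critical-value class to the critical-value class (via the unique two-to-one class and the conjugacy $\mathcal{E}\circ\rho=m_{-2}\circ\mathcal{E}$) spells out a step the paper leaves implicit, but it is the same argument.
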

\begin{proof}
By Proposition~\ref{misi_dyn_para_rays}, the set of external angles of the dynamical rays that land at the critical value $\infty$ in the dynamical plane of $F_{a_0}$ is precisely $\mathcal{A}$. Since $\mathcal{E}_{\ast}(\lambda(F_{a_0}))=\lambda(f_{c_0})$, the set of external angles of the dynamical rays that land at the critical value $c_0$ in the dynamical plane of $f_{c_0}$ is equal to $\mathcal{E}(\mathcal{A})$. Finally by Theorem~\ref{Tricorn_para_misi_ray}, $\mathcal{E}(\mathcal{A})$ is the set of angles of the parameter rays of $\mathcal{T}$ (at pre-periodic angles) landing at $c_0$.
\end{proof}

Theorem~\ref{thm_comb_bijec_pcf} now readily follows.

\begin{proof}[Proof of Theorem~\ref{thm_comb_bijec_pcf}]
This clearly follows from Corollary~\ref{hyp_para_bijec_cor} and Lemma~\ref{thm_comb_bijec_misi}. 
\end{proof}

Let us give a name to the bijection between the geometrically finite parameters of $\cC(\mathcal{S})$ and $\mathcal{L}$ established in Theorem~\ref{thm_comb_bijec_pcf}.

\begin{definition}\label{comb_straightening}
For a geometrically finite parameter $a_0\in\cC(\mathcal{S})$, we denote by $\chi(a_0)$ the unique geometrically finite parameter $c_0\in\mathcal{L}$ such that $\mathcal{E}_{\ast}(\lambda(F_{a_0}))=\lambda(f_{c_0})$ and the first return maps of the characteristic Fatou components (of $F_{a_0}$ and $f_{c_0}$) are conformally conjugate. The map $f_{\chi(a_0)}$ will be called the \emph{combinatorial straightening} of $F_{a_0}$.
\end{definition}

\section{Conformal mating in the circle-and-cardioid family}\label{sec_PCF_mating}

\subsection{A combinatorial condition for mating}\label{mating_comb_condition}

Let us state and prove a general combinatorial condition that guarantees mateability of a post-critically finite quadratic anti-polynomial and the reflection map $\rho$.

\begin{proposition}\label{lami_correspond_mating}
Let $a_0\in\cC(\mathcal{S})$ and $c_0\in\mathcal{L}$ be post-critically finite maps satisfying $\mathcal{E}_{\ast}(\lambda(F_{a_0}))=\lambda(f_{c_0})$. Then, $F_{a_0}:K_{a_0}\to K_{a_0}$ is topologically conjugate to $f_{c_0}:\mathcal{K}_{c_0}\to\mathcal{K}_{c_0}$ such that the conjugacy is conformal on $\Int{K_{a_0}}$, and $F_{a_0}:T_{a_0}^\infty\setminus \Int{T_{a_0}^0}\to T_{a_0}^\infty$ is conformally conjugate to $\rho:\D\setminus\Int{\Pi}\to\D$.
\end{proposition}

We start with an intermediate lemma, which states that the homeomorphism $\mathcal{E}:\R/\Z\to\R/\Z$ induces a topological conjugacy between the maps $F_{a_0}$ and $f_{c_0}$ on their respective limit and Julia sets. 

\begin{lemma}\label{julia_top_conjugacy}
With $a_0$ and $c_0$ as in Proposition~\ref{lami_correspond_mating}, the homeomorphism $\mathcal{E}$ of the circle induces a topological conjugacy $\mathcal{E}_{a_0}$ between $F_{a_0}:\Gamma_{a_0}\to \Gamma_{a_0}$ and $f_{c_0}:\mathcal{J}_{c_0}\to\mathcal{J}_{c_0}$.
\end{lemma}
\begin{proof}
As $f_{c_0}$ is a post-critically finite map, $\mathcal{J}_{c_0}$ is locally connected. Hence, the inverse of the B{\"o}ttcher conjugacy $\phi_{c_0}:\widehat{\C}\setminus\mathcal{K}_{c_0}\to\widehat{\C}\setminus\overline{\D}$ between $f_{c_0}$ and $\overline{z}^2$ extends continuously to $\R/\Z$, and yields a semi-conjugacy between $m_{-2}:\R/\Z\to\R/\Z$ and $f_{c_0}:\mathcal{J}_{c_0}\to\mathcal{J}_{c_0}$. The fibers of this semi-conjugacy give rise to an equivalence relation $\lambda_{\R}(f_{c_0})$ (on $\R/\Z$), which is the smallest closed equivalence relation containing the rational lamination $\lambda(f_{c_0})$ (see \cite[Lemma~4.17]{kiwi}). Therefore, $f_{c_0}:\mathcal{J}_{c_0}\to\mathcal{J}_{c_0}$ is topologically conjugate to the quotient of $m_{-2}:\R/\Z\to\R/\Z$ by the $m_{-2}$-invariant lamination $\lambda_{\R}(f_{c_0})$.

Again, for a post-critically finite parameter $a_0$, the limit set $\Gamma_{a_0}$ is locally connected (see Theorem~\ref{geom_finite_limit_set}). Hence, the inverse of the external conjugacy $\psi_{a_0}:T_{a_0}^\infty\to\D$ between $F_{a_0}$ and $\rho$ extends continuously to the boundary, and yields a semi-conjugacy between $\rho:\R/\Z\to\R/\Z$ and $F_{a_0}:\Gamma_{a_0}\to \Gamma_{a_0}$. The fibers of this semi-conjugacy give rise to an equivalence relation $\lambda_{\R}(F_{a_0})$ (on $\R/\Z$), which is the smallest closed equivalence relation on $\R/\Z$ containing the pre-periodic lamination $\lambda(F_{a_0})$ (this can be proved following the arguments of \cite[Lemma~4.17]{kiwi}). Therefore, $F_{a_0}:\Gamma_{a_0}\to \Gamma_{a_0}$ is topologically conjugate to the quotient of $\rho:\R/\Z\to\R/\Z$ by the $\rho$-invariant lamination $\lambda_{\R}(F_{a_0})$.

\begin{figure}[ht!]
\captionsetup{width=0.96\linewidth}
\begin{center}
\includegraphics[scale=0.36]{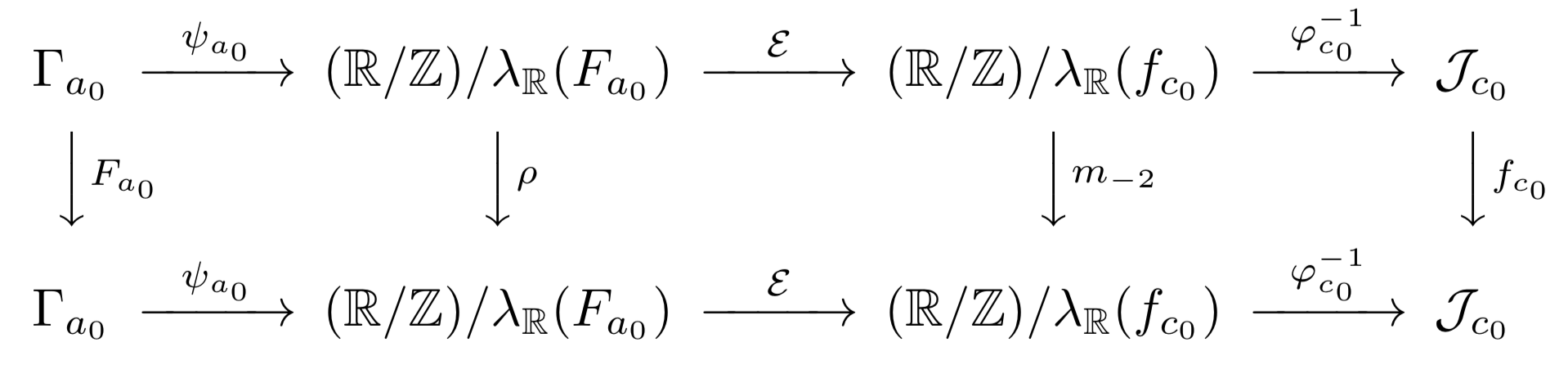}
\end{center}
\caption{The maps $\psi_{a_0}$ and $\phi_{c_0}^{-1}$ in the commutative diagram denote the homeomorphisms  induced by the continuous boundary extensions of the corresponding conformal maps. The composition of the three horizontal maps yields a topological conjugacy between  $F_{a_0}:\Gamma_{a_0}\to \Gamma_{a_0}$ and $f_{c_0}:\mathcal{J}_{c_0}\to\mathcal{J}_{c_0}$.}
\label{top_conjugacy_julia_limit}
\end{figure}

Since $\mathcal{E}_{\ast}(\lambda(F_{a_0}))=\lambda(f_{c_0})$, and $\lambda(F_{a_0})$ (respectively $\lambda(f_{c_0})$) generate $\lambda_{\R}(F_{a_0})$ (respectively, $\lambda_{\R}(f_{c_0})$), we have that $\mathcal{E}_{\ast}(\lambda_{\R}(F_{a_0}))=\lambda_{\R}(f_{c_0})$ (recall that $\mathcal{E}$ is a homeomorphism of the circle). Therefore, the conjugacy $\mathcal{E}$ between $\rho$ and $m_{-2}$ descends to a topological conjugacy between their quotients by $\lambda_{\R}(F_{a_0})$ and $\lambda_{\R}(f_{c_0})$ respectively. It follows that $F_{a_0}:\Gamma_{a_0}\to \Gamma_{a_0}$ is topologically conjugate to $f_{c_0}:\mathcal{J}_{c_0}\to\mathcal{J}_{c_0}$ by the composition of the above three conjugacies (see Figure~\ref{top_conjugacy_julia_limit}). 
\end{proof}

We denote the topological conjugacy between $F_{a_0}:\Gamma_{a_0}\to \Gamma_{a_0}$ and $f_{c_0}:\mathcal{J}_{c_0}\to\mathcal{J}_{c_0}$ by $\mathcal{E}_{a_0}$.

Let us now proceed to prove Proposition~\ref{lami_correspond_mating}, which gives a complete description of the dynamics of the post-critically finite map $F_{a_0}$ as a ``mating'' of the reflection map $\rho$ arising from the ideal triangle group $\mathcal{G}$ and the anti-polynomial $\overline{z}^2+c_0$.

\begin{proof}[Proof of Proposition~\ref{lami_correspond_mating}]
Let us first assume that $a_0$ is a Misiurewicz parameter. Then the corresponding pre-periodic lamination $\lambda(F_{a_0})$ is of Misiurewicz type, and hence, $\lambda(f_{c_0})=\mathcal{E}_{\ast}(\lambda(F_{a_0}))$ is a rational lamination of Misiurewicz type. It follows that $c_0$ is a Misiurewicz parameter of the Tricorn. Thus we have $\Gamma_{a_0}=K_{a_0}$, and $\mathcal{J}_{c_0}=\mathcal{K}_{c_0}$. The theorem (for Misiurewicz parameters) now follows from Lemma~\ref{julia_top_conjugacy} and Proposition~\ref{schwarz_group}.

Now let $a_0$ be a super-attracting parameter. We will construct a conformal conjugacy between $F_{a_0}$ and $f_{c_0}$ in the interior of $K_{a_0}$ that matches continuously with the conjugacy on the limit set constructed in Lemma~\ref{julia_top_conjugacy}.

We assume that $F_{a_0}$ has a super-attracting cycle of odd period $k$ (the even period case is similar). Let $U$ be the Fatou component of $F_{a_0}$ containing the critical point $0$. Since $K_{a_0}$ is full and $\Gamma_{a_0}$ is locally connected, it follows that $U$ is a Jordan disk. Hence, the Riemann uniformization $\mathfrak{b}:\D\to U$ (which is the inverse of a B{\"o}ttcher coordinate) that conjugates $\overline{z}^2\vert_{\D}$ to the first return map $F_{a_0}^{\circ k}\vert_U$ extends homeomorphically to the boundary $\partial\D$. We can normalize the Riemann uniformization so that it sends $1$ to the dynamical root on $\partial U$. Moreover, if $\widetilde{U}$ is the Fatou component of $f_{c_0}$ containing $0$, then the same is true for the Riemann uniformization $\widetilde{\mathfrak{b}}:\D\to\widetilde{U}$ that conjugates $\overline{z}^2\vert_{\D}$ to the first return map $f_{c_0}^{\circ k}\vert_{\widetilde{U}}$. Therefore, $\mathfrak{h}:=\widetilde{\mathfrak{b}}\circ\mathfrak{b}^{-1}:\overline{U}\to\overline{\widetilde{U}}$ conjugates $F_{a_0}^{\circ k}$ to $f_{c_0}^{\circ k}$.

We claim that $\mathfrak{h}$ continuously matches with $\mathcal{E}_{a_0}$ on $\partial U$. Since both $\mathfrak{h}$ and $\mathcal{E}_{a_0}$ conjugate $F_{a_0}^{\circ k}\vert_{\partial U}$ to $f_{c_0}^{\circ k}\vert_{\partial\widetilde{U}}$, it follows that $\mathcal{E}_{a_0}^{-1}\circ\mathfrak{h}:\partial U\to\partial U$ is an orientation-preserving homeomorphism commuting with $F_{a_0}^{\circ k}\vert_{\partial U}$. Moreover, it fixes the dynamical root on $\partial U$. But $F_{a_0}^{\circ k}\vert_{\partial U}$ is topologically equivalent to $\overline{z}^2\vert_{\partial\D}$, and the only orientation-preserving homeomorphism of the circle commuting with $\overline{z}^2$ and having a fixed point is the identity map. Hence, $\mathcal{E}_{a_0}^{-1}\circ\mathfrak{h}\vert_{\partial U}=\mathrm{id}$, and the claim follows.

Let us now consider another component $D$ of $\Int{K_{a_0}}$. Since $\Int{K_{a_0}}$ is the basin of the super-attracting cycle of $F_{a_0}$, there exists some positive integer $n$ such that $F_{a_0}^{\circ n}$ maps $D$ univalently onto $U$. Let $\widetilde{D}$ be the Fatou component of $f_{c_0}$ such that $\mathcal{E}_{a_0}(\partial D)=\partial\widetilde{D}$, and $^{\widetilde{D}}f_{c_0}^{-n}$ be the inverse branch that maps $\overline{\widetilde{U}}$ to $\overline{\widetilde{D}}$. Since $\mathcal{E}_{a_0}$ is a conjugacy on the whole limit set, we have $$\mathcal{E}_{a_0}\vert_{\partial D}=\ ^{\widetilde{D}}f_{c_0}^{-n}\circ\mathcal{E}_{a_0}\vert_{\partial U}\circ F_{a_0}^{\circ n}:\partial D\to\partial\widetilde{D}.$$ We now define $$\mathfrak{h}_D:=\ ^{\widetilde{D}}f_{c_0}^{-n}\circ\mathfrak{h}\circ F_{a_0}^{\circ n}:\overline{D}\to\overline{\widetilde{D}}.$$ Since, $\mathfrak{h}$ agrees with $\mathcal{E}_{a_0}$ on $\partial U$, it now follows that $\mathcal{E}_{a_0}\vert_{\partial D}=\mathfrak{h}_D\vert_{\partial D}$.

Thus, we have extended the conjugacy $\mathcal{E}_{a_0}$ (which was defined between the limit set of $F_{a_0}$ and the Julia set of $f_{c_0}$) conformally and equivariantly to all of $\Int{K_{a_0}}$. Since $\Gamma_{a_0}$ is locally connected, the diameters of the Fatou components of $F_{a_0}$ tend to $0$. It is now easy to verify that the extension is a homeomorphism on $K_{a_0}$. 

This, together with the proof of Proposition~\ref{schwarz_group}, completes the proof of the theorem for super-attracting parameters.
\end{proof}

The proof of Proposition~\ref{lami_correspond_mating} can be easily modified to cover the case of hyperbolic and parabolic maps. 

\begin{proposition}\label{lami_correspond_mating_1}
Let $a_0\in\cC(\mathcal{S})$ be geometrically finite, and $c_0:=\chi(a_0)\in\mathcal{L}$ (see Definition~\ref{comb_straightening}). Then, $F_{a_0}:K_{a_0}\to K_{a_0}$ is topologically conjugate to $f_{c_0}:\mathcal{K}_{c_0}\to\mathcal{K}_{c_0}$ such that the conjugacy is conformal on $\Int{K_{a_0}}$, and $F_{a_0}:T_{a_0}^\infty\setminus \Int{T_{a_0}^0}\to T_{a_0}^\infty$ is conformally conjugate to $\rho:\D\setminus\Int{\Pi}\to\D$.
\end{proposition}

\subsection{Geometrically finite maps are matings}\label{geom_fin_conf_mating_subsec}

\begin{proof}[Proof of Theorem~\ref{filled_julia_top_conjugacy}]
Let $a_0$ be a geometrically finite map in $\cC(\mathcal{S})$. It follows from Theorem~\ref{thm_comb_bijec_pcf} and Proposition~\ref{lami_correspond_mating_1} that $F_{a_0}:K_{a_0}\to K_{a_0}$ is topologically conjugate to $f_{\chi(a_0)}:\mathcal{K}_{\chi(a_0)}\to\mathcal{K}_{\chi(a_0)}$ such that the conjugacy is conformal on $\Int{K_{a_0}}$, and $F_{a_0}:T_{a_0}^\infty\setminus \Int{T_{a_0}^0}\to T_{a_0}^\infty$ is conformally conjugate to $\rho:\D\setminus\Int{\Pi}\to\D$. 

Let us consider the two conformal dynamical systems 
$$f_{\chi(a_0)}:\mathcal{K}_{\chi(a_0)}\to\mathcal{K}_{\chi(a_0)}\quad \mathrm{and}\quad \rho:\overline{\mathbb{D}}\setminus\Int{\Pi}\to\overline{\mathbb{D}}.$$ We use the mating tool $\xi:=\phi_{\chi(a_0)}^{-1}\circ\mathcal{E}:\mathbb{T}\to\mathcal{J}_{\chi(a_0)}$ (where $\phi_{\chi(a_0)}^{-1}:\mathbb{T}\to\mathcal{J}_{\chi(a_0)}$ is the continuous boundary extension of the inverse of the B{\"o}ttcher coordinate of $f_{\chi(a_0)}$) to glue $\overline{\mathbb{D}}$ outside $\mathcal{K}_{\chi(a_0)}$. Note that $\xi$ semi-conjugates $\rho$ to $f_{\chi(a_0)}$.

Denote $$X~=~\overline{\mathbb D}~\vee_{\xi}~\mathcal{K}_{\chi(a_0)}, \qquad Y=X\setminus\Int{\Pi}.$$ (This is a slight abuse of notation. We have denoted the image of $\Int{\Pi}\subset\D$ in $X$ under the gluing by $\Int{\Pi}$.)

We will argue that $X$ is a topological sphere. Since $\mathcal{K}_{\chi(a_0)}$ is homeomorphic to $\overline{\mathbb{D}}/\lambda_{\R}(f_{\chi(a_0)})$ (where $\lambda_{\R}(f_{\chi(a_0)})$ is the real lamination of $f_{\chi(a_0)}$, which has a locally connected Julia set), it follows that $X$ is topologically the quotient of the $2$-sphere by a closed equivalence relation such that all equivalence classes are connected and non-separating, and not all points are equivalent. It follows by Moore's Theorem that $X$ is a topological $2$-sphere \cite[Theorem~25]{Moore}. Moreover, $Y$ is the union of two closed Jordan disks (with a single point of intersection) in $X$.

The well-defined topological map 
$$\eta~\equiv~\rho~\vee_{\xi}~f_{\chi(a_0)}:~ Y\to X$$
is the topological mating between $\rho$ and $f_{\chi(a_0)}$.

The conjugacies obtained in Proposition~\ref{lami_correspond_mating_1} glue together to produce a homeomorphism 
$$
        \mathfrak{H}:  (X,Y) \rightarrow  (\widehat{\C}, \overline{\Omega}_{a_0}) 
$$
which is conformal outside $\mathfrak{H}^{-1}(\Gamma_{a_0})$, and which conjugates $\eta:Y\to X$ to $F_{a_0}: \overline{\Omega}_{a_0}\to\widehat{\C}$ (see Subsection~\ref{C_and_C_subsec} for the definition of $\Omega_{a_0}$ and $\Gamma_{a_0}$).
It endows $X$ with a conformal structure compatible with the one on $X\setminus\mathfrak{H}^{-1}(\Gamma_{a_0})$
that turns $\eta$ into an anti-holomorphic map conformally conjugate to $F_{a_0}$. 

In this sense, $F_{a_0}$ is a conformal mating of the reflection map $\rho$ arising from the ideal triangle group and the anti-polynomial $\overline{z}^2+\chi(a_0)$.   
\end{proof}

Note that the proof of Theorem~\ref{filled_julia_top_conjugacy} uses local connectivity of $\Gamma_{a_0}$ (respectively, of $\mathcal{J}_{\chi(a_0)}$), rigidity of the corresponding maps, and our understanding of the dynamics of $F_{a_0}$ (respectively, of $f_{\chi(a_0)}$) on $K_{a_0}$ (respectively, on $\mathcal{K}_{\chi(a_0)}$) in a crucial way. This is precisely the reason why we restricted ourselves to geometrically finite maps in Theorem~\ref{filled_julia_top_conjugacy}.

We will now provide a weaker (and more combinatorial) mating description for the \emph{periodically repelling} maps in $\cC(\mathcal{S})$.

\begin{definition}\label{per_rep_rigid_def}
1) A map $F_a\in\cC(\mathcal{S})\setminus\{-\frac{1}{12}\}$ is called \emph{periodically repelling} if every periodic point of $F_a$ (except the fixed points $\frac14$ and $\alpha_a$) is repelling.

2) For a periodically repelling map $F_a$ (respectively, $f_c$), we define the \emph{real lamination} $\lambda_{\R}(F_a)$ (respectively, $\lambda_{\R}(f_c)$) of $F_a$ (respectively, of $f_c$) to be the smallest closed equivalence relation in $\R/\Z$ containing $\lambda(F_a)$ (respectively, $\lambda(f_c)$).
\end{definition}

Note that by Proposition~\ref{fatou_classification}, a periodically repelling map $F_a$ has no Fatou components; i.e., $K_a=\Gamma_a$. Thus, the dynamics of a periodically repelling map $F_a$ on its non-escaping set $K_a$ is combinatorially modeled by the quotient of $\rho:\mathbb{T}\to\mathbb{T}$ by the $\rho$-invariant lamination $\lambda_{\R}(F_a)$. By an anti-holomorphic version of \cite[Theorem~1.1]{kiwi} (which can be proved along the lines of Kiwi's arguments), there exists some periodically repelling map $f_c\in\mathcal{L}$ such that $\lambda(f_c)=\mathcal{E}_\ast(\lambda(F_a))$. Clearly, we have that $\mathcal{K}_c=\mathcal{J}_c$, and the dynamics of $f_c$ on its filled Julia set $\mathcal{K}_c$ is combinatorially modeled by the quotient of $m_{-2}:\R/\Z\to\R/\Z$ by the $m_{-2}$-invariant lamination $\lambda_{\R}(f_c)$. Moreover, the combinatorial models of $F_{a}:K_a\to K_a$ and $f_{c}:\mathcal{K}_{c}\to\mathcal{K}_{c}$ are topologically conjugate by a factor of $\mathcal{E}$. 

Therefore, the dynamics of a periodically repelling map $F_a$ can be decomposed into $F_a:T_a^\infty\setminus \Int{T_a^0}\to T_a^\infty$, which is conformally equivalent to $\rho:\D\setminus\Int{\Pi}\to\D$, and $F_a:K_a\to K_a$, which is combinatorially equivalent to $f_{c}:\mathcal{K}_{c}\to\mathcal{K}_{c}$. In this sense, every periodically repelling map $F_a$ is a \emph{combinatorial mating} of a (periodically repelling) quadratic anti-polynomial $f_c$ (restricted to its filled Julia set), and the reflection map $\rho$. 

\begin{proposition}\label{purely_repelling_comb_mating}
Every periodically repelling map $F_a$ is a combinatorial mating of a periodically repelling quadratic anti-polynomial $f_{c}:\mathcal{K}_{c}\to\mathcal{K}_{c}$ and the reflection map $\rho:\D\setminus\Int{\Pi}\to\D$.
\end{proposition}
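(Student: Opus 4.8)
The plan is to assemble the two pieces of the decomposition sketched in the paragraph preceding the statement and to verify that each matches the corresponding piece of $\rho$ and of a periodically repelling anti-polynomial. For the external piece, I would invoke \cite[Proposition~6.31]{LLMM1}: since $a\in\cC(\mathcal{S})$, the map $\psi_a\colon T_a^\infty\to\D$ is a conformal isomorphism conjugating $F_a$ to $\rho$ and carrying the fundamental tile $T_a^0$ onto $\Pi$. Restricting $\psi_a$ therefore realizes $F_a\colon T_a^\infty\setminus T_a^0\to T_a^\infty$ as conformally conjugate to $\rho\colon\D\setminus\Pi\to\D$, which is precisely the first half of the combinatorial mating and requires no additional argument.

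For the internal piece, I would first use \cite[Proposition~6.25]{LLMM1} to record that a periodically repelling $F_a$ has no Fatou component, so that $K_a=\Gamma_a$. The rational lamination $\lambda(F_a)$ is $\rho$-invariant, and its closure $\lambda_{\R}(F_a)$ (Definition~\ref{per_rep_rigid_def}) is again a $\rho$-invariant geodesic lamination; the quotient of $\rho\colon\mathbb{T}\to\mathbb{T}$ by $\lambda_{\R}(F_a)$ thus yields a well-defined topological dynamical system, which serves as the combinatorial model of $F_a\colon K_a\to K_a$. Next I would apply the anti-holomorphic analogue of \cite[Theorem~1.1]{kiwi} to produce a periodically repelling anti-polynomial $f_c$ with $\lambda(f_c)=\mathcal{E}_{\ast}(\lambda(F_a))$; since $1/3\sim 2/3$ in $\lambda(F_a)$ and $\mathcal{E}$ fixes $1/3$ and $2/3$, the rays $R_c(1/3)$ and $R_c(2/3)$ co-land, placing $c\in\mathcal{L}$. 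Finally, because $\mathcal{E}$ conjugates $\rho$ to $m_{-2}$ (see \cite[\S 3]{LLMM1}) and carries $\lambda_{\R}(F_a)$ onto $\lambda_{\R}(f_c)$, it descends to a homeomorphism between the two quotient models intertwining the induced maps. This exhibits $F_a\colon K_a\to K_a$ as combinatorially equivalent, through the factor $\mathcal{E}$, to $f_c\colon\mathcal{K}_c\to\mathcal{K}_c$, completing the second half of the decomposition and hence the proof.

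The step I expect to be the main obstacle is the realization: passing from the formal lamination $\mathcal{E}_{\ast}(\lambda(F_a))$ to an honest periodically repelling $f_c$ carrying it. This rests on the anti-holomorphic version of Kiwi's theorem, and one must check both that $\mathcal{E}_{\ast}(\lambda(F_a))$ meets its combinatorial hypotheses (it is the $\mathcal{E}$-image of a genuine $\rho$-invariant rational lamination, hence an $m_{-2}$-invariant formal lamination) and that the realizing map has \emph{all} cycles repelling rather than, say, a hidden parabolic or attracting cycle. A secondary subtlety, and the reason the conclusion is only a \emph{combinatorial} mating rather than the conformal mating of Theorem~\ref{filled_julia_top_conjugacy}, is that the limit set $\Gamma_a$ of a periodically repelling map need not be locally connected; consequently the identification of $F_a\colon K_a\to K_a$ with the abstract quotient $\mathbb{T}/\lambda_{\R}(F_a)$ is purely combinatorial and cannot proceed through a Carath\'eodory boundary extension of $\psi_a$.
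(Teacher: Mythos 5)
Your proposal is correct and follows essentially the same route as the paper: the external conjugacy via $\psi_a$ from \cite[Proposition~6.31]{LLMM1}, the identification $K_a=\Gamma_a$ via \cite[Proposition~6.25]{LLMM1}, realization of $\mathcal{E}_{\ast}(\lambda(F_a))$ by a periodically repelling $f_c\in\mathcal{L}$ through the anti-holomorphic version of Kiwi's theorem, and the descent of $\mathcal{E}$ to a conjugacy of the quotient models. Your closing remarks on the realization step and on why only a combinatorial (not conformal) mating is obtained match the paper's own caveats, including the local connectivity issue addressed in Corollary~\ref{model_honest_lc_cor}.
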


\begin{corollary}\label{model_honest_lc_cor}
If the limit set of $F_a$ and the Julia set of $f_c$ (appearing in Proposition~\ref{purely_repelling_comb_mating}) are locally connected, then $F_a$ is a conformal mating of $f_c$ (restricted to its Julia set) and the reflection map $\rho$.
\end{corollary}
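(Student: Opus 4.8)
The plan is to upgrade the combinatorial mating of Proposition~\ref{purely_repelling_comb_mating} to a conformal one by running the gluing construction from the proof of Theorem~\ref{filled_julia_top_conjugacy} almost verbatim, with the local connectivity hypothesis now playing the role that geometric finiteness (and its attendant local connectivity and rigidity) played there. First I would record the consequences of local connectivity for the boundary behaviour of the uniformizations. Since $\Gamma_a$ is locally connected, Carath\'eodory's theorem shows that the inverse external conjugacy $\psi_a^{-1}:\D\to T_a^\infty$ extends continuously to $\overline{\D}$; as $F_a$ is periodically repelling it has no Fatou components, so $K_a=\Gamma_a$ and this extension exhibits $K_a$ as the quotient $\overline{\D}/\lambda_{\R}(F_a)$. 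The same reasoning applied to $f_c$ (whose Julia set $\mathcal{J}_c=\mathcal{K}_c$ is locally connected by hypothesis), via the inverse B\"ottcher coordinate $\phi_c^{-1}$, shows that $\mathcal{K}_c=\overline{\D}/\lambda_{\R}(f_c)$.

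Next I would promote the combinatorial conjugacy supplied by Proposition~\ref{purely_repelling_comb_mating} to an honest topological conjugacy $F_a|_{K_a}\cong f_c|_{\mathcal{K}_c}$. Because $\lambda_{\R}(f_c)=\mathcal{E}_\ast(\lambda_{\R}(F_a))$ and $\mathcal{E}$ conjugates $\rho$ to $m_{-2}$, the circle homeomorphism $\mathcal{E}$ carries the classes of $\lambda_{\R}(F_a)$ onto those of $\lambda_{\R}(f_c)$; it therefore extends to a homeomorphism of $\overline{\D}$ sending the leaves and convex hulls of the first lamination to those of the second, and descends to a homeomorphism of the quotients. By the previous paragraph this descended map is exactly an equivariant homeomorphism $K_a\to\mathcal{K}_c$ conjugating $F_a$ to $f_c$. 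With these two conjugacies in hand (the external one conformal, the internal one topological) I would then carry out the gluing step as in Theorem~\ref{filled_julia_top_conjugacy}: set $\xi:=\phi_c^{-1}\circ\mathcal{E}:\mathbb{T}\to\mathcal{J}_c$, form $X=\overline{\D}\vee_\xi\mathcal{K}_c$ and $Y=X\setminus\Int{\Pi}$, and invoke Moore's theorem \cite[Theorem~25]{Moore} (local connectivity of $\mathcal{J}_c$ guarantees that the relevant equivalence relation on the sphere is closed, with connected non-separating classes and not all points identified) to conclude that $X$ is a topological sphere carrying the topological mating $\eta=\rho\vee_\xi f_c:Y\to X$.

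The final step is to assemble the homeomorphism $\mathfrak{H}:(X,Y)\to(\hat{\C},\overline{\Omega}_a)$ conjugating $\eta$ to $F_a$ by gluing the external conformal conjugacy coming from $\psi_a$ to the topological conjugacy $K_a\to\mathcal{K}_c$ of the second paragraph. This $\mathfrak{H}$ is conformal off $\mathfrak{H}^{-1}(\Gamma_a)$, i.e.\ on the tiling set, and conjugates $\eta$ to $F_a$, which is precisely the statement that $F_a$ is a conformal mating of $f_c$ (restricted to its Julia set) and $\rho$. The hard part will be verifying that $\mathfrak{H}$ is genuinely well-defined, continuous, and conformal on the tiling set: this amounts to checking that the external (conformal) identification along $\mathbb{T}$ and the internal (topological) identification of $K_a$ with $\mathcal{K}_c$ fit together continuously across the limit set. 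Both identifications are dictated by $\mathcal{E}$ together with the respective Carath\'eodory loops, and their compatibility is exactly the content of the combinatorial mating of Proposition~\ref{purely_repelling_comb_mating}; it is the local connectivity of $\Gamma_a$ and $\mathcal{J}_c$ that renders these boundary identifications continuous and hence makes the glued map a bona fide homeomorphism rather than merely a combinatorial equivalence.
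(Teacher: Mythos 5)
Your proposal is correct and is essentially the argument the paper intends: local connectivity plus Carath\'eodory's theorem turn the combinatorial models underlying Proposition~\ref{purely_repelling_comb_mating} into genuine topological models ($K_a\cong\overline{\D}/\lambda_{\R}(F_a)$ and $\mathcal{K}_c\cong\overline{\D}/\lambda_{\R}(f_c)$), after which the gluing construction and Moore's theorem from the proof of Theorem~\ref{filled_julia_top_conjugacy} apply verbatim. The one step you assert rather than justify---that the landing relation induced by the Carath\'eodory loop \emph{equals} $\lambda_{\R}$ instead of merely containing it, which is exactly where the absence of non-repelling cycles enters (a Kiwi-type result)---is invoked at the same level of detail by the paper itself, so your route matches its proof.
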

\begin{proof}
The discussion preceding the proof of Proposition~\ref{purely_repelling_comb_mating}, combined with local connectivity of $\Gamma_a, \mathcal{J}_c$ and \cite[Lemma~4.17]{kiwi}, implies that $F_a\vert_{K_a}$ and $f_c\vert_{\mathcal{K}_c}$ are topologically conjugate by a factor of $\mathcal{E}$. 
\end{proof}

\section{Homeomorphism between topological models of parameter spaces}\label{model_homeo}

In this subsection, we will construct a locally connected model $\widetilde{\cC(\mathcal{S})}$ of $\cC(\mathcal{S})$, and show that it is homeomorphic to the combinatorial model $\widetilde{\mathcal{L}}$ of $\mathcal{L}$. 

The construction of $\widetilde{\cC(\mathcal{S})}$ will be similar to that of $\widetilde{\mathcal{L}}$ (compare Subsection~\ref{sec_basilica_limb}). We first construct an equivalence relation on $\mathrm{Per}(\rho)\cap\partial\D_2$ by identifying the angles of all parameter rays of $\cC(\mathcal{S})$ at pre-periodic angles (under $\rho$) that land at a common (parabolic or Misiurewicz) parameter or accumulate on a common root parabolic arc of $\cC(\mathcal{S})$. We also identify $1/3$ and $2/3$. We then consider the smallest closed equivalence relation on $\partial\D\cap\partial\D_2$ generated by the above relation. Taking the hyperbolic convex hull of each of these equivalence classes in $\overline{\D}$, we obtain a geodesic lamination of $\D_2$ (by hyperbolic geodesics of $\D$). The \emph{abstract connectedness locus} $\widetilde{\cC(\mathcal{S})}$ is defined as the quotient of $\overline{\D}_2$ obtained by collapsing each of these hyperbolic convex hulls to a single point.

\begin{proof}[Proof of Theorem~\ref{thm_model_homeo}]
By Corollaries~\ref{para_rays_correspond} and~\ref{misi_rays_correspond}, the parameter rays of $\mathcal{S}$ at angles $\theta,\theta'\in\mathrm{Per}(\rho)$ land at a common parabolic/Misiurewicz parameter or accumulate on a common root parabolic arc if and only if the parameter rays of $\mathcal{T}$ at angles $\mathcal{E}(\theta),\mathcal{E}(\theta')\in\Q/\Z$ have the same property. Hence, $\mathcal{E}\times\mathcal{E}$ is a bijection between the equivalence relation on $\mathrm{Per}(\rho)\cap\partial\D_2$ induced by the (co-landing or co-accumulation property of) parameter rays of $\mathcal{S}$ at pre-periodic angles (under $\rho$) and the equivalence relation on $\Q/\Z\cap\partial\D_2$ induced by the rational parameter rays of $\mathcal{T}$. Since $\mathcal{E}$ is a homeomorphism, the smallest closed equivalent relations on $\partial\D\cap\partial\D_2$ generated by the above relations are also homeomorphic under $\mathcal{E}\times\mathcal{E}$. It follows that the corresponding geodesic laminations of $\D_2$ are topologically equivalent; i.e., there is a homeomorphism of $\overline{\D}_2$ mapping the leaves and gaps of one lamination to those of the other. Clearly, this homeomorphism descends to a homeomorphism between the quotient spaces $\widetilde{\cC(\mathcal{S})}$ and $\widetilde{\mathcal{L}}$.
\end{proof}

\section{Discontinuity of straightening}\label{straightening_discont}

By construction of the map $\chi$ (see Corollary~\ref{hyp_para_bijec_cor}), it is a homeomorphism between hyperbolic components of $\cC(\mathcal{S})$ and $\mathcal{L}$. Preserving critical {\'E}calle heights, we extended $\chi$ to the boundaries of odd period hyperbolic components. Thus, in light of Theorem~\ref{ThmBifArc_schwarz}, $\chi$ is defined in small neighborhoods of parabolic cusps. The goal of this section is to show that $\chi$ is not always continuous in neighborhoods of parabolic cusps. This is one of the reasons why we construct a homeomorphism between the models of the connectedness loci in a purely combinatorial way.

Let us now assume that $H\subset\cC(\mathcal{S})$ is a hyperbolic component of odd period $k$, and $H'=\chi(H)$ is the corresponding hyperbolic component in $\mathcal{L}$. Recall that $\partial H$ consists of three parabolic arcs and an equal number of parabolic cusps. The map $\chi$ sends parabolic cusps on $\partial H$ to parabolic cusps on $\partial H'$, and simple parabolic parameters on $\partial H$ to simple parabolic parameters on $\partial H'$ (preserving their parabolic orbit portraits and critical {\'E}calle heights).

The next proposition shows that $\chi$ is a homeomorphism from $\overline{H}$ onto $\overline{H}'$.

We denote the Koenigs ratio map of the hyperbolic component $H$ (respectively, $H'$) by $\zeta_H$ (respectively, $\zeta_{H'}$). 

\begin{proposition}\label{homeo_odd_closure}
As the parameter $a$ in $H$ (respectively, $c$ is $H'$) approaches a simple parabolic parameter with critical {\'E}calle height $h$ on the boundary of $H$ (respectively, $H'$), the quantity $\frac{1-\zeta_H(a)}{1-\vert\zeta_H(a)\vert^2}$ (respectively, $\frac{1-\zeta_{H'}(c)}{1-\vert\zeta_{H'}(c)\vert^2}$) converges to $\frac{1}{2}-2ih$. Consequently, $\chi$ maps the closure $\overline{H}$ of the hyperbolic component $H$ homeomorphically onto the closure $\overline{H'}$ of the hyperbolic component $H'$.
\end{proposition}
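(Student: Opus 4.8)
The plan is to reduce the homeomorphism statement to the asymptotic formula, prove the formula by a parabolic-perturbation analysis of the Koenigs coordinate, and then read off continuity of $\chi$ across $\partial H$. I will argue on the $\cC(\mathcal{S})$ side; the corresponding statement for $H'\subset\mathcal{L}$ is \cite[Corollary~2.10]{IM2}, and the proof below transfers verbatim since all the relevant analysis is local near the parabolic cycle. Throughout, recall from Corollary~\ref{hyp_para_bijec_cor} that $\chi=\eta_{H'}^{-1}\circ\widetilde{\eta}_H$ is a homeomorphism $H\to H'$ respecting the Koenigs ratio, i.e.\ $\zeta_{H'}(\chi(a))=\zeta_H(a)$, and that it extends to a bijection $\partial H\to\partial H'$ preserving cusps, arcs, parabolic orbit portraits, and critical Ecalle heights.

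\textbf{The asymptotic formula.} Fix a simple parabolic parameter $a_*$ of odd period $k$ on $\partial H$ with critical Ecalle height $h$, and let $a\to a_*$ inside $H$. The attracting cycle of $F_a^{\circ 2}$ has period $k$ and multiplier $\mu_a:=(F_a^{\circ 2k})'(z_a)\in(0,1)$ tending to $1$; write $\mu_a=e^{-s_a}$ with $s_a\to 0^+$. Let $\kappa_a$ be the Koenigs coordinate normalizing $F_a^{\circ 2k}$ near $z_a$, so that $\zeta_H(a)=\kappa_a(F_a^{\circ k}(\infty))/\kappa_a(\infty)$, where $\infty$ and $F_a^{\circ k}(\infty)$ represent the two distinct critical orbits of $F_a^{\circ 2}$. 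The analytic heart of the argument is the parabolic-perturbation statement that the rescaled Koenigs coordinate $-s_a^{-1}\log\kappa_a$ converges, uniformly on compact subsets of the attracting petal, to the attracting Fatou coordinate $\psi^{\mathrm{att}}$ of $F_{a_*}$ from Proposition~\ref{normalization of fatou}. Since this convergence depends only on the local holomorphic germ of $F_a^{\circ 2k}$ at the cycle and on the position of the critical value in the petal, the proof of \cite[Lemma~2.8]{IM2} applies, with \cite[Lemma~6.24]{LLMM1} guaranteeing that the deformed maps stay in $\mathcal{S}$.

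\textbf{The computation.} Normalize $\psi^{\mathrm{att}}$ by the glide reflection $\psi^{\mathrm{att}}(F_{a_*}^{\circ k}(z))=\overline{\psi^{\mathrm{att}}(z)}+\tfrac12$, so that $\im\psi^{\mathrm{att}}(\infty)=h$. Writing $\psi^{\mathrm{att}}(\infty)=u+ih$, the previous step gives
\begin{align*}
\kappa_a(\infty)&\sim e^{-s_a(u+ih)}, & \kappa_a(F_a^{\circ k}(\infty))&\sim e^{-s_a((u+1/2)-ih)},
\end{align*}
whence $\zeta_H(a)\sim e^{-s_a(1/2-2ih)}$ and $\vert\zeta_H(a)\vert^2\sim e^{-s_a}$. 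Using $1-e^{-w}=w+O(w^2)$,
\[
\frac{1-\zeta_H(a)}{1-\vert\zeta_H(a)\vert^2}\ \sim\ \frac{s_a\left(\tfrac12-2ih\right)}{s_a}\ \longrightarrow\ \frac12-2ih,
\]
which is the asserted limit; the identical computation on $\partial H'$ yields the companion statement.

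\textbf{Homeomorphism of closures.} Since $\overline{H}$ and $\overline{H'}$ are compact (closed topological triangles by Theorem~\ref{Exactly 3}), and $\chi$ is a bijection $\overline{H}\to\overline{H'}$, it suffices to prove that $\chi$ is continuous: a continuous bijection of compact Hausdorff spaces is a homeomorphism. Let $a_n\to a_*\in\partial H$, with $a_*$ simple parabolic of Ecalle height $h$. As $\zeta_H(a_n)\to 1\in\partial\D$, the points $\chi(a_n)$ approach $\partial H'$, so any accumulation point $c_*$ lies on $\partial H'$. From $\zeta_{H'}(\chi(a_n))=\zeta_H(a_n)$ and the formula, $\frac{1-\zeta_{H'}(\chi(a_n))}{1-\vert\zeta_{H'}(\chi(a_n))\vert^2}\to\frac12-2ih$; comparing with the $H'$-side formula forces $c_*$ to be simple parabolic of Ecalle height $h$, while accumulation at a cusp is excluded because there $h=\pm\infty$ and the fixed-point index blows up by Proposition~\ref{index_to_infinity_schwarz}. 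By the weak rigidity of Proposition~\ref{rigidity_para_odd}, the parabolic parameter on $\partial H'$ with this orbit portrait and Ecalle height is unique, namely $\chi(a_*)$; hence $\chi(a_n)\to\chi(a_*)$. The case of cusp limits $a_*$ is handled by the same index argument. The main obstacle is precisely the first analytic input: one must verify that the estimates of \cite[Lemma~2.8]{IM2}, stated for anti-polynomials, depend only on local data that are available and vary continuously in $\mathcal{S}$; everything after that is bookkeeping.
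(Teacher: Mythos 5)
Your strategy is the one the paper itself follows: establish the asymptotic formula by the Tricorn-style perturbation argument (the paper simply defers this to \cite[Lemma~6.1]{IM2}; your citations of Lemma~2.8/Corollary~2.10 of \cite{IM2} point at the wrong statements, but that is cosmetic), then deduce the boundary extension from compactness. Your reconstruction of the formula is correct: the normalization $\psi^{\mathrm{att}}\circ F_{a_*}^{\circ k}=\overline{\psi^{\mathrm{att}}}+\tfrac12$ gives the exponents $-s_a(u+ih)$ and $-s_a\left((u+\tfrac12)-ih\right)$, and $\vert\zeta_H(a)\vert^2=e^{-s_a}$ is in fact an exact identity (because $\overline{\kappa_a\circ F_a^{\circ k}}$ is again a Koenigs coordinate for $F_a^{\circ 2k}$), so the ratio indeed tends to $\tfrac12-2ih$. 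It is also worth noting why the perturbed-Fatou-coordinate convergence is available at all: for odd period the return-map multiplier is real and positive, so the degeneration is radial and no parabolic-implosion subtleties arise.

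The gap is in the continuity endgame, at exactly the two points where the formula cannot decide the limit. First, the formula only recovers the critical Ecalle height $h$, and by Theorem~\ref{parabolic_arcs_schwarz} \emph{each of the three} parabolic arcs on $\partial H'$ contains a parameter of critical Ecalle height $h$; these three candidates carry different parabolic orbit portraits (root versus the two co-roots), so Proposition~\ref{rigidity_para_odd} can single out $\chi(a_*)$ only if one already knows the orbit portrait of the accumulation point $c_*$ --- which is precisely what has not been established. Your phrase ``the parabolic parameter on $\partial H'$ with this orbit portrait and Ecalle height is unique'' silently assumes the conclusion; ruling out the two wrong arcs needs genuine extra input, e.g.\ tracking through $\eta_{H'}\circ\chi=\widetilde{\eta}_H$ which of the three boundary fixed points of the characteristic Fatou component the attracting cycle merges into, or a monodromy argument for the $3$-fold cover $\zeta_{H'}$. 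Second, excluding accumulation at a cusp does not follow from Proposition~\ref{index_to_infinity_schwarz}: that proposition describes the index as a function \emph{on the arc} (of parabolic parameters), whereas you need that $\frac{1-\zeta_{H'}(c)}{1-\vert\zeta_{H'}(c)\vert^2}$ diverges, or at least avoids the value $\tfrac12-2ih$, along sequences in the \emph{interior} of $H'$ tending to a cusp --- a separate perturbation statement supplied neither by your argument nor by the cited results. A step that would help you organize the endgame: near a simple parabolic point $H'\cap B(c_*,\delta)$ is connected, so the cluster set of $\chi$ at $a_*$ is a continuum contained in the finite set consisting of the three cusps and the three height-$h$ points, hence a single point; but identifying that point still requires closing the two gaps above. (The paper's own three-line proof glosses over the same two points by delegating to \cite{IM2}; as a self-contained argument, however, yours presents them as resolved when they are not.)
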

\begin{proof}
The proof of the first statement is similar to that of \cite[Lemma~6.3]{IM2}. Since $\chi$ preserves Koenigs ratio (of parameters in $H$) and critical {\'E}calle height (of simple parabolic parameters on $\partial H$), it follows that $\chi$ extends continuously to $\partial H$. Since $\chi$ is defined in an injective fashion and it is continuous on $H$, it is a homeomorphism from $\overline{H}$ onto $\overline{H'}$.
\end{proof}

Let $H_1$ be a hyperbolic component of even period $2k$ bifurcating from $H$ across a parabolic arc $\mathscr{C}$. It is now easy to see from the proof of Lemma~\ref{thm_comb_bijec_centers} that the hyperbolic component $\chi(H_1)$ bifurcates from $H'=\chi(H)$ across the parabolic arc $\chi(\mathscr{C})$. We will denote the critical {\'E}calle height parametrization of the parabolic arc $\mathscr{C}$ (respectively, $\chi(\mathscr{C})$) by $a:\R\to\mathscr{C}$ (respectively, $c:\R\to\chi(\mathscr{C})$). Recall that for any $h$ in $\mathbb{R}$, the fixed point index of the unique parabolic cycle of $F_{a(h)}^{\circ 2}$ (respectively of $f_{c(h)}^{\circ 2}$) is denoted by $\ind_{\mathscr{C}}(F_{a(h)}^{\circ 2})$ (respectively $\ind_{\chi(\mathscr{C})}(f_{c(h)}^{\circ 2})$). This defines a pair of real-analytic functions (which we will refer to as index functions) 
$$
\ind_{\mathscr{C}}: \mathbb{R}\to\mathbb{R},\ h\mapsto\ind_{\mathscr{C}}(F_{a(h)}^{\circ 2})\quad \mathrm{and}\quad \ind_{\chi(\mathscr{C})}:\mathbb{R}\to\mathbb{R},\ h\mapsto\ind_{\chi(\mathscr{C})}(f_{c(h)}^{\circ 2}).
$$

We are now in a position to show that continuity of $\chi$ on $\mathscr{C}$ imposes a severe restriction on the index functions $\ind_{\mathscr{C}}$ and $\ind_{\chi(\mathscr{C})}$.

\begin{proposition}[Uniform height-index relation]\label{uniform}
If $\chi$ is continuous on $\mathscr{C}\cap\partial H_1$, then the functions $ \ind_{\mathscr{C}}$ and $\ind_{\chi(\mathscr{C})}$ are identically equal.
\end{proposition}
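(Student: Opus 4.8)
The plan is to show that continuity of $\chi$ across the bifurcating sub-arc forces two \emph{universal} asymptotic relations, one on each side, to take identical values, which can only happen if the index functions coincide; real-analyticity then upgrades this to an identity on all of $\R$.

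First I would record the constraints $\chi$ satisfies near the arc. By Corollary~\ref{hyp_para_bijec_cor}, $\chi\vert_{H_1}$ is multiplier-preserving: writing $\mu_{H_1}$ and $\mu_{\chi(H_1)}$ for the multiplier maps furnished by Theorem~\ref{unif_hyp_schwarz} and Theorem~\ref{hyp_unif}, we have $\mu_{\chi(H_1)}(\chi(a))=\mu_{H_1}(a)$ for all $a\in H_1$. On the arcs, $\chi$ is defined so as to preserve critical Ecalle height, whence $\chi(a(h))=c(h)$ for every $h$. By Proposition~\ref{index_increasing} the bifurcating sub-arc on the Schwarz side is $\cC\cap\partial H_1=a[h_0,+\infty)$, and by the corresponding statement for the Tricorn $\chi(\cC)\cap\partial\chi(H_1)=c[h_0',+\infty)$; I will work on the half-line $h\geq\max(h_0,h_0')$, where $a(h)$ and $c(h)$ lie on the two bifurcating sub-arcs simultaneously. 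The continuity hypothesis means precisely that, for such $h$, if $a_n\to a(h)$ from inside $H_1$ then $\chi(a_n)\to c(h)$ from inside $\chi(H_1)$.

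The crucial input is an even-period analogue of Proposition~\ref{homeo_odd_closure}, obtained by the method of \cite[Lemma~6.1]{IM2}: as $a\to a(h)\in\cC\cap\partial H_1$ from inside $H_1$, the normalized multiplier
\[
\frac{1-\mu_{H_1}(a)}{1-\vert\mu_{H_1}(a)\vert^2}
\]
converges, with its imaginary part recording the critical Ecalle height $h$ and its real part recording the parabolic fixed point index $\ind_{\cC}(h)$ through a \emph{universal}, strictly monotone relation. The essential point is that this asymptotic is extracted purely from the local normal form $z\mapsto z+z^2+\cdots$ of a simple ($q=1$) parabolic germ, together with the standard parabolic-bifurcation splitting of the merging cycle, and this local picture is identical for the Schwarz reflections $F_a^{\circ 2k}$ and for the anti-polynomials $f_c^{\circ 2k}$. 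Consequently the very same limiting function governs $\tfrac{1-\mu_{\chi(H_1)}(c)}{1-\vert\mu_{\chi(H_1)}(c)\vert^2}$ as $c\to c(h)\in\chi(\cC)\cap\partial\chi(H_1)$, now with real part recording $\ind_{\chi(\cC)}(h)$. Establishing this universal relation, and in particular verifying that the local analysis transfers verbatim between the two families, is the main obstacle; everything else is bookkeeping.

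With the universal relation in hand the conclusion is immediate. Fix $h\geq\max(h_0,h_0')$ and let $a\to a(h)$ inside $H_1$; by continuity $\chi(a)\to c(h)$ inside $\chi(H_1)$, while $\mu_{\chi(H_1)}(\chi(a))=\mu_{H_1}(a)$ shows that the two normalized-multiplier quantities are \emph{literally equal} along this approach. Passing to the limit and comparing imaginary parts merely re-confirms that the Ecalle heights agree, whereas comparing real parts yields, via injectivity of the universal relation in the index variable, the equality $\ind_{\cC}(h)=\ind_{\chi(\cC)}(h)$ for all $h\geq\max(h_0,h_0')$. Finally, both $\ind_{\cC}$ and $\ind_{\chi(\cC)}$ are real-analytic on all of $\R$ (Proposition~\ref{index_to_infinity_schwarz} and Proposition~\ref{index goes to infinity}), so agreement on the half-line propagates to an identity on $\R$ by the identity theorem for real-analytic functions. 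Hence $\ind_{\cC}\equiv\ind_{\chi(\cC)}$, as claimed.
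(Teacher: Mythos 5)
Your overall skeleton (multiplier preservation on $H_1$, the continuity hypothesis, a limit formula extracted from the parabolic bifurcation, then real-analytic continuation) is the same as the paper's, but the key lemma you hang everything on is false as stated, and you flag it as the main step without proving it. Write $\lambda=\mu_{H_1}(a)$ and $u=1/(1-\lambda)$. As $a\to a(h)\in\cC\cap\partial H_1$ from inside $H_1$, the attracting $2k$-cycle converges to the parabolic $k$-cycle, so $\lambda\to 1$ and hence $\vert u\vert\to\infty$; on the other hand, continuity of the residue fixed point index gives $\frac{1}{1-\lambda}+\frac{1}{1-\overline{\lambda}}=2\re u\to\ind_{\cC}(h)$, so $\re u$ stays bounded and therefore $\vert\im u\vert\to\infty$ along \emph{every} approach. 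Since $\frac{1-\lambda}{1-\vert\lambda\vert^2}=\frac{\overline{u}}{2\re u-1}$, your ``normalized multiplier'' has imaginary part $\frac{-\im u}{2\re u-1}$, which blows up: the quantity does not converge at all, and in particular its imaginary part cannot record the critical Ecalle height. This is not an accident. The multiplier is a purely local invariant of the cycle, whereas Ecalle height involves the conformal position of the critical orbit; that is exactly why Proposition~\ref{homeo_odd_closure} (following \cite[Lemma~6.1]{IM2}) is stated for the Koenigs ratio map of the \emph{odd} period component, and no even-period analogue of the kind you assert exists.

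What survives of your claim is precisely its real part: $\re\bigl(\frac{1-\lambda}{1-\vert\lambda\vert^2}\bigr)=\frac{\re u}{2\re u-1}\to\frac{\ind_{\cC}(h)}{2(\ind_{\cC}(h)-1)}$, and this is just a repackaging of the index-continuity statement $2\re u\to\ind_{\cC}(h)$. The paper's proof uses that statement directly and needs nothing else: for $a_n\in H_1$ with $a_n\to a(h)$, the two $k$-periodic attracting cycles of $F_{a_n}^{\circ 2}$ born from the parabolic have conjugate multipliers $\lambda_{a_n},\overline{\lambda}_{a_n}$ and $\frac{1}{1-\lambda_{a_n}}+\frac{1}{1-\overline{\lambda}_{a_n}}\to\ind_{\cC}(h)$; since $\chi$ preserves multipliers on $H_1$ and, by the continuity hypothesis, $\chi(a_n)\to c(h)$, the same sum converges to the parabolic index of $f_{c(h)}^{\circ 2}$, forcing $\ind_{\chi(\cC)}(h)=\ind_{\cC}(h)$ on the unbounded bifurcating interval; real-analyticity then finishes, as in your last step. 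So your argument is repaired by replacing the false convergence claim with the residue-index continuity argument, which is the paper's proof. A side remark: you need not introduce $h_0'$ or know a priori that $c(h)\in\partial\chi(H_1)$; the continuity hypothesis already places $c(h)$ in $\overline{\chi(H_1)}$, and the index computation does the rest.
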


\begin{figure}[ht!]
\captionsetup{width=0.96\linewidth}
\begin{tikzpicture}
\node[anchor=south west,inner sep=0] at (0,0) {\includegraphics[width=0.46\textwidth]{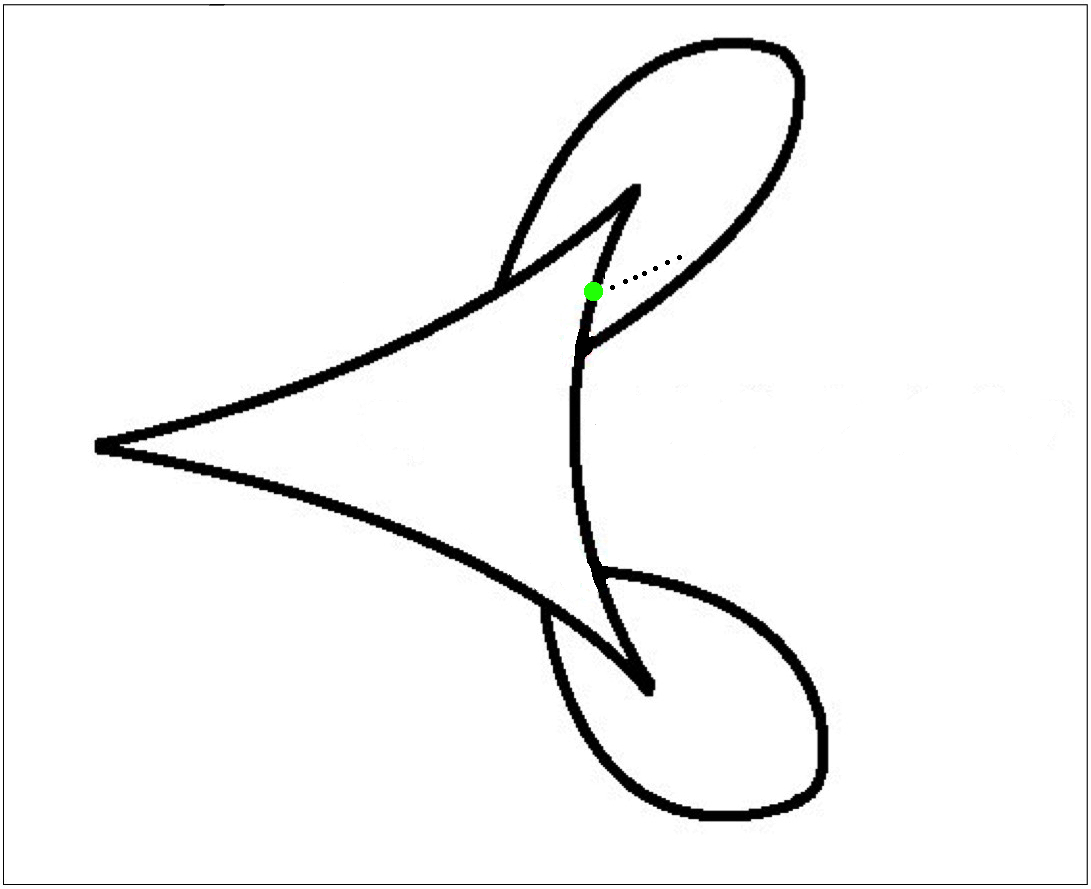}};
\node[anchor=south west,inner sep=0] at (5,0) {\includegraphics[width=0.46\textwidth]{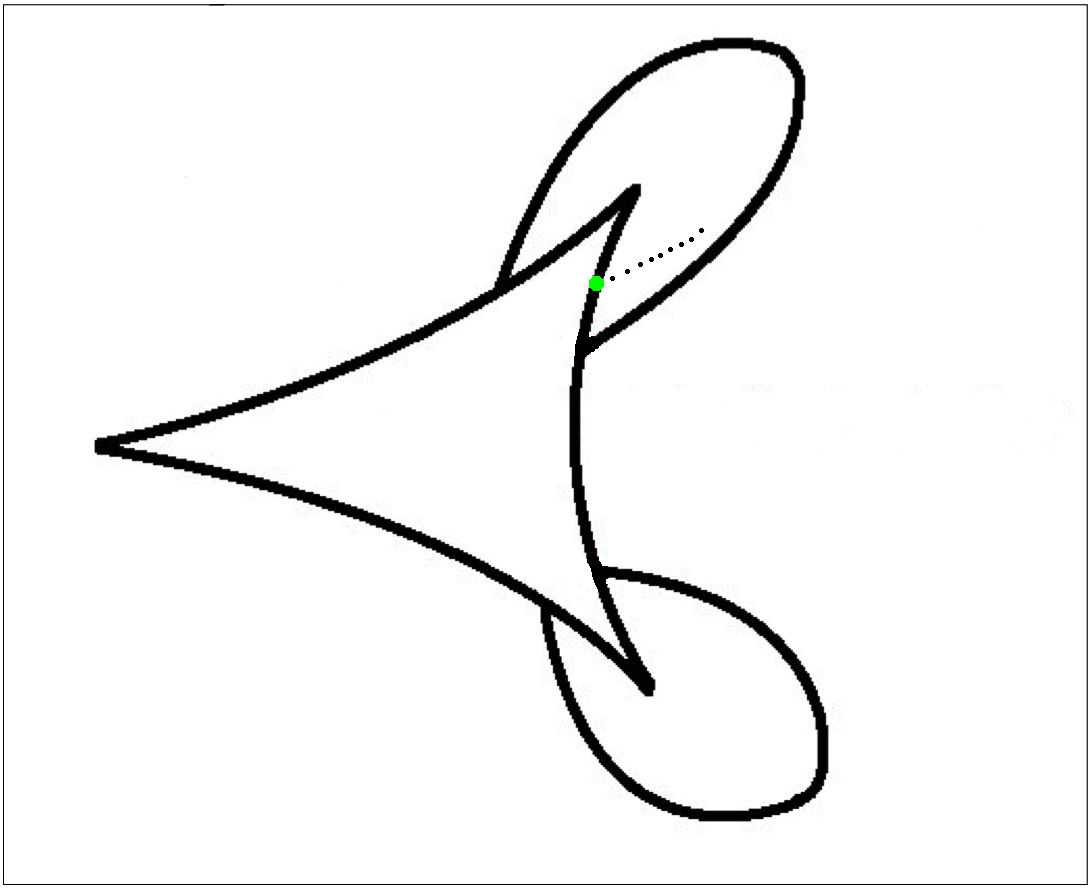}};
\node at (2.2,2.4) {$H$};
\node at (7.36,2.4) {$\chi(H)$};
\node at (3.36,2.4) {$\mathscr{C}$};
\node at (8.56,2.4) {$\chi(\mathscr{C})$};
\node at (3.66,4.12) {\begin{small}$H_1$\end{small}};
\node at (3.54,3.5) {\begin{tiny}$a_n$\end{tiny}};
\node at (2.86,3.08) {\begin{tiny}$a(h)$\end{tiny}};
\node at (8.72,4.12) {\begin{small}$\chi(H_1)$\end{small}};
\node at (7.88,3.1) {\begin{tiny}$c(h)$\end{tiny}};
\node at (9.4,3) {\begin{tiny}$\chi(a_n)$\end{tiny}};
\draw [->,line width=0.5pt] (9.1,3.1) to (8.54,3.36);
\end{tikzpicture}
\caption{The straightening map $\chi$, restricted to $\overline{H}$, is a homeomorphism. On the other hand, continuity of $\chi$ on $\mathscr{C}\cap\partial H_1$ would force parameters with equal critical {\'E}calle height to have the same parabolic fixed point index.}
\label{bifurcation_point_2}
\end{figure}

\begin{proof}
Let us pick a parameter $a(h)\in\mathscr{C}\cap\partial H_1$. Consider a sequence $\{a_n\} \in H_1$ with $a_n\to a(h)$. If $\chi$ is continuous at $a(h)$, then $\chi(a_n)\to\chi(a(h))=c(h)$. We will now show that $\ind_{\mathscr{C}}(F_{a(h)}^{\circ 2})=\ind_{\chi(\mathscr{C})}(f_{c(h)}^{\circ 2})$.

Let, $\ind_{\mathscr{C}}(F_{a(h)}^{\circ 2})=\tau$. For any $n$, the map  $F_{a_n}^{\circ 2}$ has two distinct $k$-periodic attracting cycles (which are born out of the parabolic cycle of $F_{a(h)}^{\circ 2}$) with multipliers $\mu_{a_n}$ and $\overline{\mu}_{a_n}$. Since $a_n$ converges to $a(h)$, we have that

\begin{equation}\label{multiplier_index}
\frac{1}{1-\mu_{a_n}} + \frac{1}{1-\overline{\mu}_{a_n}} \longrightarrow \tau
\end{equation}
as $n\to\infty$. 

Since the multipliers of attracting periodic orbits are preserved by $\chi$, it follows that $f_{\chi(a_n)}^{\circ 2}$ has two distinct $k$-periodic attracting cycles with multipliers $\mu_{a_n}$ and $\overline{\mu}_{a_n}$. As $\{\chi(a_n)\}$ converges to   the odd period parabolic parameter $c(h)$, the same limiting Relation~\eqref{multiplier_index} holds for the fixed point index of the parabolic cycle of $f_{c(h)}^{\circ 2}$ as well. In particular, the parabolic fixed point index of $f_{c(h)}^{\circ 2}$ is also $\tau$ (see Figure~\ref{bifurcation_point_2}). 

Since $h$ was arbitrarily chosen (with $\vert h\vert$ large enough so that $a(h)$ is a bifurcating parameter), we conclude that $\ind_{\mathscr{C}}(h)=\ind_{\chi(\mathscr{C})}(h)$ for all $h$ in an unbounded interval. As the functions $\ind_{\mathscr{C}}$ and $\ind_{\chi(\mathscr{C})}$ are real-analytic, it follows that $\ind_{\mathscr{C}}\equiv\ind_{\chi(\mathscr{C})}$.
\end{proof}

The above condition on index functions seems unlikely to hold in general. It can be numerically verified that the index function of the period $3$ parabolic arc of $\cC(\mathcal{S})$ does not identically agree with the index function of the period $3$ parabolic arc of $\mathcal{L}$. More precisely, it is easy to check that the parabolic cycles of the critical {\'E}calle height $0$ parameters of these two arcs have different fixed point indices. This proves that the map $\chi$ is discontinuous on the period $3$ parabolic arc of $\cC(\mathcal{S})$.

\begin{remark}
For an analogous discussion of discontinuity of straightening maps for the Tricorn, see \cite[Proposition~8.1]{IM2}.
\end{remark}

\appendix

\section{Uniqueness of matings}\label{unique_app}

In this appendix, we will give elementary proofs of uniqueness of the conformal matings of $\rho$ with $\overline{z}^2$ and $\overline{z}^2-1$. We remark that the following proofs do not appeal to conformal removability of the limit sets of the Schwarz reflection maps (cf. \cite[\S 4.4.3]{LLMM1}).

\subsection{Uniqueness of the mating of $\rho$ and $\overline{z}^2$}

The construction of the topological mating in \cite[\S 4]{LLMM1} implies that any conformal mating $\mathfrak{F}$ of $\overline{z}^2\vert_{\overline{\D}}$ and $\rho$ is an anti-holomorphic map defined on the closure of a Jordan domain in the Riemann sphere. Indeed, since $\rho$ is not defined on the interior of the ideal triangle $\Pi$, the map $\mathfrak{F}$ must be defined on the complement of a homeomorphic copy of $\Int{\Pi}$. Moreover, as $\rho$ fixes $\partial \Pi$ pointwise, $\mathfrak{F}$ must fix the boundary of its domain of definition $\mathfrak{D}_{\mathfrak{F}}$. It follows that $\mathfrak{D}_{\mathfrak{F}}$ is the closure of a simply connected quadrature domain, and $\mathfrak{F}$ is the associated Schwarz reflection map. 

By Proposition~\ref{simp_conn_quad}, there exists a rational map $R$ which maps $\widehat{\C}\setminus\overline{\mathbb{D}}$ univalently onto $\Int{\mathfrak{D}_\mathfrak{F}}$. Pre-composing $R$ with a conformal automorphism of $\widehat{\C}\setminus\overline{\D}$, we may assume that $R(\infty)=\infty$. Again, as $\rho:\rho^{-1}(\Int{\Pi})\to\Int{\Pi}$ is a three-to-one covering, $\mathfrak{F}$ must send the preimage of $\widehat{\C}\setminus \mathfrak{D}_\mathfrak{F}$ as a three-to-one cover onto $\widehat{\C}\setminus \mathfrak{D}_\mathfrak{F}$. In light of the commutative diagram defining $\mathfrak{F}$, we have that $R$ is a degree $3$ rational map.

As $\overline{z}^2$ has a unique critical point and this critical point is a fixed point, the same is true for $\mathfrak{F}$. Possibly after conjugating $\mathfrak{F}$ by a M{\"o}bius map, we can assume that the unique critical point of $\mathfrak{F}$ is at $\infty$. As $\infty$ is a (simple) fixed critical point of $\mathfrak{F}$, the commutative diagram defining $\mathfrak{F}$ now shows that $R$ has a pole of order two at $0$. Hence, $R$ is of the form $R(z)=az+b+\frac{c}{z}+\frac{d}{z^2}$, for some $a,d\in\C^*$, and $b,c\in\C$. Possibly after post-composing $R$ with an affine map (which amounts to replacing $\mathfrak{D}_\mathfrak{F}$ by an affine image of it, and conjugating $\mathfrak{F}$ by the same affine map), we may write $R(z)=z+\frac{c}{z}+\frac{d}{z^2}$, for some $c\in\C$ and $d\in\C^*$. 

Note that the cubic anti-rational map $R$ has four critical points (counted multiplicity), one of which is at the origin. Since $\mathfrak{F}$ has only one critical point, the same commutative diagram implies that the other three critical points of $R$ lie on the unit circle (in fact, univalence of $R$ on $\widehat{\C}\setminus\overline{\D}$ implies that these critical points are distinct). Thus, the equation $z^3R'(z)\equiv z^3-cz-2d=0$ has three distinct solutions on $\mathbb{T}$. An elementary algebraic computation involving Vieta's formulas now shows that possibly after conjugating $R$ by a rotation (once again, this amounts to replacing $\mathfrak{D}_\mathfrak{F}$ by a rotated image of it, and conjugating $\mathfrak{F}$ by the same rotation), one can choose $R$ to be the rational map $z+\frac{1}{2z^2}$. Thus, $\mathfrak{D}_\mathfrak{F}$ is the closure of the quadrature domain $\Omega$ (up to post-composition by a M{\"o}bius map), and $\mathfrak{F}$ is the Schwarz reflection map $\sigma$ (up to M{\"o}bius conjugation) introduced in \cite[\S 4.1]{LLMM1}.



\subsection{Uniqueness of the mating of $\rho$ and $\overline{z}^2-1$}

Consider the anti-polynomial $g(z)\equiv g(z)=\overline{z}^2-1$. For notational simplicity, we set $\mathcal{J}:=\mathcal{J}_{-1}$ and $\mathcal{K}:=\mathcal{K}_{-1}$ The unique finite critical point $0$ of $g$ forms a $2$-cycle.

Note that the $1/3$ and $2/3$ rays of $g$ land at a common fixed point $\alpha$ on the Julia set $\mathcal{J}$. We denote the components of $\mathcal{K}\setminus\{\alpha\}$ by $\mathcal{K}^1\ni 0$ and $\mathcal{K}^2\ni -1$. We have the following mapping properties of the action of $g$ on its filled Julia set.
\begin{enumerate}\upshape
\item $g:\mathcal{K}^2\to \mathcal{K}^1$ is a homeomorphism, and 

\item $g: g^{-1}\left(\mathcal{K}^1\right)\cap\mathcal{K}^1\to\mathcal{K}^1$ is a homeomorphism.
\end{enumerate}

Suppose that $\mathfrak{F}$ is a conformal mating of $g$ and $\rho$. Note that $\mathfrak{F}$ is defined outside a simply connected domain whose boundary is a topological triangle with two vertices identified. We denote the domain of $\mathfrak{F}$ by $\mathfrak{D}_\mathfrak{F}$. Clearly, $\Int{\mathfrak{D}_\mathfrak{F}}$ consists of two Jordan domains $\Omega_1$ and $\Omega_2$, and these domains touch at a single point. As $\rho$ fixes $\partial\Pi$ pointwise, $\mathfrak{F}$ must fix the boundary of each $\Omega_i$ pointwise. Hence, each $\Omega_i$ is a simply connected quadrature domain with Schwarz reflection map $\mathfrak{F}_1, \mathfrak{F}_2$, and $\mathfrak{F}$ is the piecewise defined Schwarz reflection map of these quadrature domains. 

After a M{\"o}bius change of coordinates, we can assume that the unique critical point and critical value of $\mathfrak{F}$ are at the origin and at the point at $\infty$, respectively. Possibly after reindexing, we can also assume that $0\in\Omega_1$ and $\infty\in\Omega_2$.

By Proposition~\ref{simp_conn_quad}, there exist rational maps $R_1, R_2$ which map $\D$ univalently onto $\Omega_1, \Omega_2$ respectively. Pre-composing $R_i$ with conformal automorphisms of $\D$, we may assume that $R_1(0)=0, R_2(0)=\infty$. 

The mapping properties of $g$ on $\mathcal{K}_2$ imply that $\mathfrak{F}_2$ carries $\mathfrak{F}_2^{-1}(\Int{\Omega_2^c})$ univalently onto $\Int{\Omega_2^c}$. In light of the commutative diagram defining $\mathfrak{F}_2$, we have that $R_2$ is a degree $1$ rational map; i.e., a M{\"o}bius map. Thus, $\Omega_2\ni\infty$ is a round disk.

On the other hand, the mapping properties of $g$ on $\mathcal{K}_1$ imply that $\mathfrak{F}_1$ carries $\mathfrak{F}_1^{-1}(\Omega_1)$ univalently onto $\Omega_1$. In light of the commutative diagram defining $\mathfrak{F}_1$, we have that $R_1$ is a quadratic rational map. As $0$ is a critical point of $\mathfrak{F}_1$ with associated critical value $\infty$, the same commutative diagram also implies that $\infty$ is a double pole of the quadratic rational map $R_1$. Hence, $R_1$ is a quadratic polynomial. As $R_1(0)=0$, we have that $R_1(z)=az^2+bz$, for $a\in\C\setminus\{0\}$ and $b\in\C$. Since $0$ is the only critical point of $\mathfrak{F}_1$, it follows that the only finite critical point of $R_1$ is on the unit circle. Possibly after pre-composing $R_1$ with a rotation, we can assume that this critical point is at $1$. Furthermore, after conjugating $\mathfrak{F}$ by a map of the form $w\mapsto \mu w$ (which amounts to post-composing $R_1$ with $w\mapsto \mu w$), we can also assume that $R_1(1)=\frac14$. A simple computation now yields that $R_1(z)=z/2-z^2/4$; i.e., $\Omega_1= \heartsuit$.

As $\Omega_2\ni\infty$ is a round disk, we have that $\mathfrak{F}^{\circ 2}(0)$ is the center of the round disk $\overline{\Omega_2}^c$ in $\C$. The fact that $0$ is $2$-periodic under $\mathfrak{F}$ now implies that $\overline{\Omega_2}^c$ is a round disk centered at the origin. Finally, since $\partial\Omega\cap\partial\Omega_2$ is a singleton, we conclude that (up to M{\"o}bius conjugation) $\mathfrak{F}$ is precisely the map $F_a$ in the circle-and-cardioid family $\mathcal{S}$ where $a=0$.

\begin{remark}
The above argument also shows that if a quadratic anti-polynomial $\overline{z}^2+c$ in the real basilica limb of the Tricorn is conformally mateable with the refection map $\rho$, then such a conformal mating necessarily lies in the circle-and-cardioid family (up to M{\"o}bius conjugation).
\end{remark}

\section{Warschawski's theorem and its applications}\label{conf_asymp_appendix}

\subsection*{Cusps and double points}


Let $f:B(0,\epsilon)\to\C,\ f(0)=0$ be a holomorphic map that is univalent on the closure of $B^+:= B(0,\epsilon)\cap\{\re{z}>0\}$ and has a simple critical point at $0$. Then the curve $\gamma:=f(-i\epsilon,i\epsilon)$ has a singularity at the origin, which we refer to as a \emph{conformal cusp} (we will often drop the word conformal and call it a cusp). We note that $f(B^+)$ is a Jordan domain, and by univalence of $f\vert_{B^+}$, the cusp points in the inward direction towards $f(B^+)$.
We further define the \emph{type} of the cusp on $\gamma$  according to the Taylor series expansion of $f$ at $0$. Since $0$ is a simple critical point of $f$,  we can assume (after scaling $f$, if necessary) that $f(w)= w^2 +\sum_{k\geq 3} C_k w^k.$
Then, $\gamma$ can be parametrized near $0$ as $f(it)= (-t^2+O(t^3))+i(ct^n+O(t^{n+1})$, where $t\in(-\epsilon,\epsilon)$, $c\in\R\setminus\{0\}$, and $n\geq 3$. By definition, we say that the cusp at $0$ is of the type $(n,2)$. We note that the arc $\gamma$ disconnects a small neighborhood $U\ni 0$ into two components; one of which (the narrow side) has zero angle at $0$, while the other component (the wide component that the cusp points towards) has a full angle $2\pi$ at $0$. We denote the narrow component by $N$, and the wide component by $W$ (see Figure~\ref{cusp_dp_fig} (left)).
\begin{figure}[ht!]
\captionsetup{width=0.96\linewidth}
\begin{tikzpicture}
\node[anchor=south west,inner sep=0] at (0,0) {\includegraphics[width=0.32\textwidth]{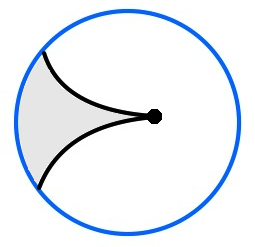}};
\node[anchor=south west,inner sep=0] at (6,0) {\includegraphics[width=0.33\textwidth]{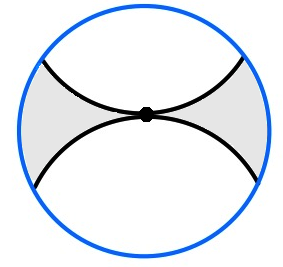}};
\node at (2.75,2.1) {$0$};
\node at (8.2,1.92) {$p$};
\node at (2.25,1) {$W$};
\node at (8.32,0.8) {$W^+$};
\node at (8.32,3.3) {$W^-$};
\node at (0.7,2) {$N$};
\node at (6.75,2.2) {$N^+$};
\node at (9.48,2.2) {$N^-$};
\node at (1.54,2.55) {$\gamma$};
\node at (7.36,2.7) {$\alpha$};
\node at (7.36,1.66) {$\beta$};
\node at (0.72,0.36) {$U$};
\node at (6.72,0.36) {$U$};
\end{tikzpicture}
\caption{Left: The real-analytic arc $\gamma$ has a cusp at the origin, and it divides the neighborhood $U$ of $0$ into a narrow part $N$ and a wide part $W$. Right: The non-singular real-analytic arcs $\alpha,\beta$ touch at the point $p$. These curves divide the neighborhood $U$ of $p$ into a pair of narrow parts $N^\pm$ and a pair of wide parts $W^\pm$.}
\label{cusp_dp_fig}
\end{figure}

Let us now define the notion of a double point singularity. By definition, a \emph{double point} $p$ is a point of tangency  of two distinct non-singular real-analytic arcs $\alpha,\beta$. In particular, the double point $p$ is a non-singular point on each of these arcs. One can classify double points according to the order of contact of these two real-analytic branches. The arcs $\alpha,\beta$ divide a small neighborhood $U\ni p$ into four connected components: two of which (in the tangential directions) have zero angle at $p$, and each of the other two (in the normal directions) has angle $\pi$ at $p$. We denote the zero angle components by $N^\pm$ (the narrow sides), and the $\pi$ angle components by $W^\pm$ (the wide sides). See Figure~\ref{cusp_dp_fig} (right) for an illustration.

\subsection*{Warschawski's theorem on conformal mapping of infinite strips}

We consider the curvilinear strip
$$
\mathfrak{S}:=\{w=u+iv: u>0,\ \nu_-(u)<v<\nu_+(u)\}
$$
where $\nu_\pm:[0,\infty)\to\R$ are $C^2$ maps with $\nu_+>\nu_-$ . Let $\theta_1(u)=\nu_+(u)-\nu_-(u)$ and $\theta_2(u)=(\nu_+(u)+\nu_-(u))/2$. Let $\Phi$ be a conformal map from the curvilinear strip $\mathfrak{S}$ onto the horizontal strip $\{z=x+iy\in\C: x>0, \vert y\vert<\pi/2\}$ such that $\lim_{u\to+\infty} \re{\Phi(w)}=+\infty$.

We suppose that 
\begin{enumerate}
\item the strip $\mathfrak{S}$ has \emph{boundary inclination $0$} at $u=+\infty$; i.e., for $u_2>u_1$, the quotients
$$
\frac{\nu_\pm(u_2)-\nu_\pm(u_1)}{u_2-u_1}
$$
approach the limit $0$, as $u_1\to+\infty$, and


\item the integral $\int_{0}^\infty (\theta_1'(u))^2/\theta_1(u) du$ converges.
\end{enumerate}
Warschawski proved that under the above assumptions,

\begin{theorem}\cite[Theorem~IX]{War}\label{war_thm} 
$$
\Phi(w)= k+\pi\int_{0}^u\frac{1+(\theta_2'(t))^2}{\theta_1(t)} dt +i\pi\frac{v-\theta_2(u)}{\theta_1(u)}+o(1),\quad \textrm{as}\ u\to+\infty,
$$
uniformly with respect to $v$. Here $k$ is a real constant.
\end{theorem}

\subsection*{Applications to boundary behavior of conformal maps between cusps}

Let $\gamma_1, \gamma_2$ be a pair of real-analytic arcs with $\gamma_1(0)=\gamma_2(0)=0$ such that both of them have a cusp of type $(3,2)$ at $0$. Possibly after truncating the curves, we may assume that $\gamma_j\setminus\{0\}$ is non-singular. For open neighborhoods $U_1, U_2$ of $0$ (in the planes of $\gamma_1,\gamma_2$, respectively), we denote the narrow component of $U_j\setminus\gamma_j$ by $N_j$, $j\in\{1,2\}$. We will keep the notation $\gamma_j$ for the truncated arcs $\gamma_j\cap U_j$.

\begin{lemma}\label{asymp_lin_1_lem}
With the setup as above, let $\phi:N_1\to N_2$ be a conformal map whose continuous extension to $\gamma_1$ sends $0$ to $0$. Then, 
\noindent\begin{enumerate}
\item $\phi$ is asymptotic to a linear map at $0$; i.e., $\phi(z)= c z +o(z)$ as $ \overline{N_1}\ni z\to 0$, where $c\in\C^\ast$, and 
\item $\phi$ extends to a quasiconformal map in a neighborhood of $0$.
\end{enumerate}
\end{lemma}
\begin{proof}
1) We can send the cusp of $\gamma_j$ at the origin to the point at $\infty$ by a M{\"o}bius map such that the image of the narrow region $N_j$ is a curvilinear strip $\mathfrak{S}_j$ bounded by the non-singular real-analytic curves. For definiteness, let us work with the strip $\mathfrak{S}_1$, which is bounded by the curves $v=\nu_+(u)=k_1+k_2\sqrt{u}+k_3/\sqrt{u}+o(1/\sqrt{u})$ and $v=\nu_-(u)=k_1-k_2\sqrt{u}-k_3/\sqrt{u}+o(1/\sqrt{u})$, where $u\geq u_0$ for some large $u_0>0$ (with possibly different constants for the two strips). The strips have boundary inclination $0$ at $\infty$ since $\nu_\pm'(u)\to 0$ as $u\to+\infty$. Moreover, real-analyticity of the curve $\gamma_1$ implies that the maps $\nu_\pm$ are real-analytic.
A simple computation shows that the integral convergence condition of Theorem~\ref{war_thm} is also satisfied by the strip. 

Let us now consider a conformal map
$$
\mathfrak{b}:\mathfrak{S}_1\to \{x+iy\in\C:x>0, \vert y\vert<\pi/2\},
$$
such that $\displaystyle\lim_{u\to+\infty} \re{\mathfrak{b}(w)}=+\infty$.
We set $\theta_1(u):=\nu_+(u)-\nu_-(u)$ and $\theta_2(u):=(\nu_+(u)+\nu_-(u))/2$. Theorem~\ref{war_thm} implies that as $u\to+\infty$, the conformal map $\mathfrak{b}$ takes the form
\begin{equation}
w=u+iv\mapsto k_4+\pi\int_{u_0}^u\frac{1+(\theta_2'(t))^2}{\theta_1(t)} dt +i\pi\frac{v-\theta_2(u)}{\theta_1(u)}+o(1)=k_5\sqrt{w}+O(1),
\label{asymp_riemann_cusp}
\end{equation}
uniformly in $v$, where $k_4\in\R$, $k_5\in\C^\ast$. 
It follows that a conformal isomorphism from $\mathfrak{S}_1$ onto $\mathfrak{S}_2$ (that sends $+\infty$ to $+\infty$) admits the asymptotics $k_6 w+O(1)$ as $\re(w)\to+\infty$, for some $k_6\in\C^\ast$ (uniformly in $\im(w)$). Due to analyticity of the boundaries of the above curvilinear strips, this conformal map extends continuously to the boundary. Furthermore, since the above asymptotics is uniform in $\im(w)$, the boundary extension admits the same asymptotics $k_6 w+O(1)$ as $\re(w)\to+\infty$. Now going back to the regions $N_1, N_2$ by the M{\"o}bius maps used earlier, we conclude that $\phi$ is asymptotically linear near the origin; i.e., $\phi(z)= k_7 z +o(z)$ as $\overline{N_1}\ni z\to 0$, where $k_7\in\C^\ast$. 

2) By definition of conformal cusps, for $j\in\{1,2\}$, there exist open sets $B_j\ni 0$ and holomorphic maps $f_j:B_j\to\C,\ f_j(0)=0$, that are univalent on the closure of $B_j^+:= B_j\cap\{\re{z}>0\}$ and have a simple critical point at $0$, such that $\gamma_j=f_j(B_j\cap i\R)$. We set $W_j:=f_j(B_j^+)$. 

Let us now consider the map 
$$
\Psi:\mathcal{I}_1:= B_1\cap i\R \longrightarrow\mathcal{I}_2:=B_2\cap i\R,\quad \Psi\equiv f_2^{-1}\circ\phi\circ f_1.
$$ 
By construction, $\Psi(0)=0$. We will show that $\Psi$ is a quasisymmetric map. Note that $f_j$ admits a conformal extension in a neighborhood of the arc $\mathcal{I}_j\setminus\{0\}$, and $\phi:\overline{N_1}\to\overline{N_2}$ admits a conformal extension (by the Schwarz reflection principle) in a neighborhood of the arc $\gamma_1\setminus\{0\}$. Hence, $\Psi$ is quasisymmetric on $\mathcal{I}_1\setminus\{0\}$. It remains to show that $\Psi$ is quasisymmetric at the origin.
By definition, the map $f_1$ has an asymptotic development
\begin{equation}
f_1(\xi)=c_1 \xi^2+o(\xi^2),
\label{asymp_quad}
\end{equation} 
near $0$, for some constant $c_1\in\C^\ast$. Similarly, $f_2^{-1}$ admits an asymptotic development 
\begin{equation}
f_2^{-1}(\zeta)=c_2 \zeta^{\frac12}+o(\zeta^\frac12),
\label{asymp_root}
\end{equation} 
near $0$, for some constant $c_2\in\C^\ast$ and an appropriate branch of square root. Using these asymptotics of $f_j$, one readily sees that the asymptotic linearity of $\phi: \gamma_1\to\gamma_2$ near $0$ translates to asymptotic linearity of $\Psi$ near $0$. This proves that $\Psi$ is quasisymmetric at the origin.

We continuously extend the quasisymmetric homeomorphism $\Psi: \mathcal{I}_1\to\mathcal{I}_2$ to a quasiconformal homeomorphism $\Psi: B_1^+\to B_2^+$, and define a quasiconformal homeomorphism 
$$
\widetilde{\phi}: W_1\to W_2,\quad \widetilde{\phi}\equiv f_2\circ\Psi\circ \left(f_1\vert_{B_1^+}\right)^{-1}.
$$ 
By construction, $\widetilde{\phi}$ matches continuously with $\phi$ on $\gamma_1$. Thus, we have a homeomorphism in a neighborhood of $0$ that extends $\phi:N_1\to\ N_2$, and is quasiconformal outside $\gamma_1$. Since analytic arcs are quasiconformally removable, it follows that this map is a desired quasiconformal extension of $\phi: N_1\to N_2$.
\end{proof}

We now proceed to prove a similar result for double points. For $j\in\{1,2\}$, let $p_j$ be a point of tangency of two distinct non-singular real-analytic arcs $\alpha_j,\beta_j$. We further assume that $\alpha_j$ and $\beta_j$ have a simple (or non-degenerate) tangency at $p_j$. For an open neighborhood $U_j$ of $p_j$, we denote the (narrow) components of $U_j\setminus(\alpha_j\cup\beta_j)$ that subtend a zero angle at $p_j$ by $N^\pm_j$, and the (wide) components that subtend an angle $\pi$ at $p_j$ by $W^\pm_j$.  We will keep the notation $\alpha_j, \beta_j$ for the truncated arcs $\alpha_j\cap U_j, \beta_j\cap U_j$, for $j\in\{1,2\}$.

\begin{lemma}\label{asymp_lin_2_lem}
With the setup as above, let $\phi:N^+_1\cup N^-_1\to N^+_2\cup N^-_2$ be a conformal map whose continuous extension to $\alpha_1\cup\beta_1$ sends $p_1$ to $p_2$. Then,
\noindent\begin{enumerate}
\item $\phi$ is asymptotic to a linear map at $p_1$; i.e., $\phi(z)= p_2+k (z-p_1) +o(z-p_1)$ as $\overline{N_1^+\cup N_2^-}\ni z\to p_1$, where $k\in\C^\ast$,
\item $\phi$ extend to a quasiconformal map in a neighborhood of $p_1$.
\end{enumerate}
\end{lemma}
\begin{proof}
1) We will show that $\phi:N^+_1\to N^+_2$ is asymptotically linear at $p_1$. The same statement for $\phi:N^-_1\to N^-_2$ can be be proved similarly.
 
The assumption that $\alpha_j$ and $\beta_j$ have a simple tangency at $p_j$ imply that these two arcs have distinct osculating circles at $p_j$. We denote these (distinct) osculating circles by $\mathbf{C}_{\alpha_j}, \mathbf{C}_{\beta_j}$, $j\in\{1,2\}$.
We can send $p_j$ to $\infty$ by a M{\"o}bius map such that the images of $\mathbf{C}_{\alpha_j}, \mathbf{C}_{\beta_j}$ are the horizontal straight lines $y=0$ and $y=1$. Since the two non-singular arcs $\alpha_j,\beta_j$ have at least second order contact with their corresponding osculating circles, the images of $\alpha_j,\beta_j$ under the above M{\"o}bius map are curves of the form $v=\nu_+^j(u)=0+O(1/u)$ and $v=\nu_-^j(u)=1+O(1/u)$ for $u$ large enough. The image of $N_j^+$ under this M{\"o}bius map is a curvilinear strip $\mathfrak{S}_j$ bounded by the real-analytic curves $v=\nu_+^j(u)$ and $v=\nu_-^j(u)$, where $u\geq u_0$ for some large $u_0>0$. As in the proof of Lemma~\ref{asymp_lin_1_lem}, one sees that the maps $(\nu_\pm^j)'$ are real-analytic, the strips $\mathfrak{S}_j$ have boundary inclination $0$ at $\infty$, and the integral convergence condition of Theorem~\ref{war_thm} is satisfied by the strips, for $j\in\{1,2\}$. 

Let us now consider conformal maps $\mathfrak{b}_j:\mathfrak{S}_j\to \{x+iy\in\C:x>0, \vert y\vert<\pi/2\}$, such that $\displaystyle\lim_{u\to+\infty} \re{\mathfrak{b}_j(w)}=+\infty$.
, $j\in\{1,2\}$. Theorem~\ref{war_thm} now implies that as $u\to+\infty$, the conformal map $\mathfrak{b}_j$ has the asymptotic development $w=u+iv\mapsto \pi w+o(w)$, uniformly in $v$. Therefore, the conformal map $\mathfrak{b}_2^{-1}\circ \mathfrak{b}_1:\mathfrak{S}_1\to\mathfrak{S}_2$ is of the form $w+o(w)$ as $u\to+\infty$ (uniformly in $v$). Once again, due to uniformity, the same asymptotics hold up to the boundary. Finally, going back to $N_1^+, N_2^+$ by the M{\"o}bius maps used earlier, we conclude that $\phi:\overline{N^+_1}\to \overline{N^+_2}$ is asymptotically linear near $p_1$; i.e., $\phi(z)= p_2+k (z-p_1) +o(z-p_1)$ as $\overline{N_1^+}\ni z\to p_1$, where $k\in\C^\ast$. 


Moreover, the above proof shows that the linearity constants of the maps\\ $\phi:\overline{N^+_1}\to \overline{N^+_2}$ and $\phi:\overline{N_1^-}\to\overline{N_2^-}$ are the same.


2) Since analytic arcs are quasiconformally removable, it suffices to extend $\phi:N^\pm_1\to N^\pm_2$ to a homeomorphism from a neighborhood of $p_1$ to a neighborhood of $p_2$ that is quasiconformal outside $\alpha_1\cup\beta_1$. 

Observe that $\phi:\overline{N_1^\pm}\to\overline{N_2^\pm}$ admits a conformal extension (by the Schwarz reflection principle) in a neighborhood of $(\alpha_1\cup\beta_1)\setminus\{p_1\}$. Moreover, $\phi:\overline{N^+_1}\to \overline{N^+_2}$ and $\phi:\overline{N^-_1}\to \overline{N^-_2}$ are asymptotically linear at $p_1$ with the same linearity constant. These facts together imply that $\phi$ induces a pair of quasisymmetric maps $\phi:\alpha_1\to\alpha_2$ and $\phi:\beta_1\to\beta_2$. Since $\alpha_j,\beta_j$ are quasi-arcs, these quasisymmetric maps can be continuously extended to quasiconformal maps $\phi:W_1^+\to W_2^+$ and $\phi:W_1^-\to W_2^-$.
This provides a desired quasiconformal extension of $\phi:N^\pm_1\to N^\pm_2$.
\end{proof}

\bibliographystyle{alpha}
\bibliography{schwarz}

\end{document}